\begin{document}

\makeatletter
\def\theequation{\thesection.\arabic{equation}}
\makeatother

\theoremstyle{definition}
\newtheorem{dfn}{Definition}[section]
\newtheorem{thm}[dfn]{Theorem}
\newtheorem*{th*}{Theorem}
\newtheorem{lem}[dfn]{Lemma}
\newtheorem{prop}[dfn]{Proposition}
\newtheorem{rem}[dfn]{Remark}
\newtheorem{cor}[dfn]{Corollary}
\newtheorem*{cor*}{Corollary}
\newtheorem*{prop*}{Proposition}
\newtheorem{quest}[dfn]{Question}
\newtheorem{conj}[dfn]{Conjecture}

\newcommand{\bbA}{\mathbb{A}}
\newcommand{\bbC}{\mathbb{C}}
\newcommand{\bbR}{\mathbb{R}}
\newcommand{\bbQ}{\mathbb{Q}}
\newcommand{\bbZ}{\mathbb{Z}}
\newcommand{\bbH}{\mathbb{H}}

\newcommand{\calA}{\mathcal{A}}
\newcommand{\calC}{\mathcal{C}}
\newcommand{\calD}{\mathcal{D}}
\newcommand{\calE}{\mathcal{E}}
\newcommand{\calH}{\mathcal{H}}
\newcommand{\calI}{\mathcal{I}}
\newcommand{\calN}{\mathcal{N}}
\newcommand{\calO}{\mathcal{O}}
\newcommand{\calP}{\mathcal{P}}
\newcommand{\calS}{\mathcal{S}}
\newcommand{\calU}{\mathcal{U}}
\newcommand{\calV}{\mathcal{V}}
\newcommand{\calZ}{\mathcal{Z}}

\newcommand{\fraka}{\mathfrak{a}}
\newcommand{\frakH}{\mathfrak{H}}
\newcommand{\frakh}{\mathfrak{h}}
\newcommand{\fraki}{\mathfrak{i}}
\newcommand{\frakg}{\mathfrak{g}}
\newcommand{\frakk}{\mathfrak{k}}
\newcommand{\frakn}{\mathfrak{n}}
\newcommand{\frakN}{\mathfrak{N}}
\newcommand{\frakp}{\mathfrak{p}}
\newcommand{\frakX}{\mathfrak{X}}

\newcommand{\bfs}{\mathbf{s}}
\newcommand{\bfe}{\mathbf{e}}
\newcommand{\bfa}{\mathbf{a}}

\newcommand{\rank}{\mathop{\mathrm{rank}}}
\newcommand{\corank}{\mathop{\mathrm{Corank}}}
\newcommand{\im}{\mathop{\mathrm{Im}}}
\newcommand{\Hom}{\mathop{\mathrm{Hom}}}
\newcommand{\op}{\mathrm{op}}

\newcommand{\GL}{\mathrm{GL}}
\newcommand{\Sym}{\mathrm{Sym}}
\newcommand{\Sp}{\mathrm{Sp}}
\newcommand{\Mat}{\mathrm{Mat}}
\newcommand{\Mp}{\mathrm{Mp}}
\newcommand{\SL}{\mathrm{SL}}

\newcommand{\automforms}{\mathcal{A}(\Gamma)}
\newcommand{\NHMFonG}{\mathcal{N}(\Gamma)}
\newcommand{\NHMFonGwtlam}{\mathcal{N}_\lambda(\Gamma)}
\newcommand{\gk}{(\frakg, K_\infty)}
\newcommand{\NHMFonGcharchi}{\mathcal{N}(\Gamma,\chi)}
\newcommand{\NHAFonPG}{\mathcal{N}(P \backslash G)}
\newcommand{\NHAFonG}{\mathcal{N}(G)}
\newcommand{\NHAFonBG}{\mathcal{N}(B \backslash G)}

\newcommand{\ul}{\underline}
\newcommand{\pr}{\mathop{\mathrm{pr}}}
\newcommand{\Ext}{\mathop{\mathrm{Ext}}}
\newcommand{\fin}{{\rm \mathchar`- fin}}
\newcommand{\fini}{\mathrm{fin}}
\newcommand{\Ind}{\mathrm{Ind}}
\newcommand{\ind}{\mathrm{ind}}
\newcommand{\Lie}{\mathrm{Lie}}
\newcommand{\Res}{\mathop{\mathrm{Res}}}
\newcommand{\bs}{\backslash}
\newcommand{\tr}{\mathrm{tr}}
\newcommand{\bole}{\bold{e}}
\newcommand{\supp}{\mathop{\mathrm{supp}}}
\newcommand{\sgn}{\mathrm{sgn}}
\newcommand{\bfi}{\mathbf{i}}
\newcommand{\Wh}{\mathrm{Wh}}
\newcommand{\cusp}{\mathrm{cusp}}
\newcommand{\bfm}{\mathbf{m}}


\title{Nearly holomorphic automorphic forms on $\mathrm{Sp}_{2n}$ with sufficiently regular infinitesimal characters and applications}
\author{Shuji Horinaga}

\begin{abstract}
In this paper, we decompose the space of nearly holomorphic Hilbert-Siegel automorphic forms as representations of the adele group under certain assumptions.
We also give an application for classical holomorphic Hilbert-Siegel modular forms.
In particular, we show the surjectivity of the global Siegel operator $\widetilde{\Phi}$ for certain congruence subgroups with large weights.
\end{abstract}

\maketitle
\markboth{Nearly holomorphic automorphic forms on $\Sp_{2n}$}{Nearly holomorphic automorphic forms on $\Sp_{2n}$}

\section{Introduction}
	
	The purpose of this paper is the generalization of \cite{Horinaga}.
	Roughly speaking, we aim the spectral decomposition of the space of nearly holomorphic automorphic forms.
	The spectral theory of the space of automorphic forms can be found in \cite{Franke, langlands} and \cite{MW}.
	As a consequence of their results \cite[Appendix II]{MW}, \cite[Corollary 1]{Franke}, any automorphic form can be written as a sum of cusp forms and the Taylor coefficients of Eisenstein series.
	Some refinements are known in the case of holomorphic modular forms.
	By \cite[Theorem 8.3]{85_shimura}, the space of holomorphic Hilbert modular forms is spanned by cusp forms and the leading terms of Eisenstein series.
	Also, cusp forms and the leading terms of Eisenstein series span the space of full-modular scalar valued Siegel modular forms of large weight by \cite[Proposition 8]{Klingen}.
	In \cite[Conjecture A.2]{Horinaga}, we conjecture that any nearly holomorphic modular form (Hilbert-Siegel modular forms, Hermitian modular forms, etc.) can be written as a sum of cusp forms and the leading terms of Eisenstein series.
	In this paper, we show that the conjecture is true if the modular form has a ``sufficiently regular'' infinitesimal character.
	
	In order to state our main result, we introduce some notation.
	Let $F$ be a totally real field with degree $d$.
	We denote by $\bfa$ the set of embeddings of $F$ into $\bbR$.
	Put $G_n = \Res_{F/\bbQ}\Sp_{2n}$.
	Here $\Res$ is the Weil restriction and $\Sp_{2n}$ is the symplectic group of rank $n$.
	Let $\frakH_n$ be the Siegel upper half space of degree $n$.
	Then the Lie group $G_n(\bbR)$ acts on the $d$-fold product $\frakH_n^d$ by the linear fractional transformation.
	Put $\frakg_n = \Lie(G_n(\bbR)) \otimes _\bbR \bbC$.
	We denote by $K_{n,\infty}$ and $\calZ_n$ the stabilizer of $\mathbf{i} = (\sqrt{-1}\,\mathbf{1}_n,\ldots,\sqrt{-1}\,\mathbf{1}_n) \in \frakH_n^d$ and the center of the universal enveloping algebra $\calU(\frakg_n)$, respectively.
	Set $\frakk_n = \Lie(K_{n,\infty}) \otimes_\bbR\bbC$.
	We then have the well-known decomposition:
	\[
	\frakg_n = \frakk_n + \frakp_{n,+} + \frakp_{n,-}.
	\]
	Here $\frakp_{n,+}$ (resp.~$\frakp_{n,-}$) is the Lie subalgebra of $\frakg_n$ corresponding to the holomorphic tangent space (resp.~anti-holomorphic tangent space) of $\frakH_n^d$ at $\mathbf{i}$.
	We take a Cartan subalgebra of $\frakk_n$.
	Then the Cartan subalgebra of $\frakk_n$ is a Cartan subalgebra of $\frakg_n$.
	The root system $\Phi$ of $\mathfrak{sp}_{2n}(\bbC)$ is 
	\[
	\Phi = \{ \; \pm (e_i + e_j), \; \pm(e_k - e_l), \quad 1\leq i \leq j \leq n, 1 \leq k < l \leq n \; \}.
	\] 
	We put the set
	\[
	\Phi^+ = \{ \; - (e_i + e_j), \; e_k - e_l, \quad 1\leq i \leq j \leq n, 1 \leq k < l \leq n \; \}
	\]
	to be a positive root system.
	Let $\rho$ be half the sum of positive roots.
	Note that $\frakg_n = \bigoplus_{v \in \bfa} \mathfrak{sp}_{2n}(\bbC)$.
	We say that a weight $\lambda = (\lambda_{1,v},\ldots,\lambda_{n,v})_{v\in \bfa}$ which lies in $\bigoplus_{v \in \bfa}\bbC^{n}$ is $\frakk_n$-dominant if $\lambda_{i,v}-\lambda_{i+1,v} \in \bbZ_{\geq 0}$ for any $1 \leq i \leq n-1$ and $v \in \bfa$.
	If any entry of a weight $\lambda$ is integral, we say that $\lambda$ is integral.
	For any $\frakk_n$-dominant integral weight $\lambda$, there exists a unique irreducible highest weight $(\frakg_n,K_{n,\infty})$-module $L(\lambda)$ of highest weight $\lambda$.
	
	Fix a Borel subgroup $B_n$ of $G_n$ with a maximal split torus $T_n$ as usual.
	For a parabolic subgroup $P$ of $G_n$, we say that $P$ is standard if $P$ contains $B_n$.
	We denote by $P_{i,n}$ and $Q_{i,n}$ the standard parabolic subgroups of $G_n$ with the Levi subgroup $\Res_{F/\bbQ}\GL_i \times G_{n-i}$ and $(\Res_{F/\bbQ}\GL_1)^i \times G_{n-i}$, respectively.
	Throughout this paper, for a standard parabolic subgroup $P$, we denote by $M_P$ and $N_P$ the standard Levi subgroup of $P$ and the unipotent subgroup of $P$, respectively.
	
	For a while, we assume $F=\bbQ$.
	For a character $\mu$ of $T_n(\bbR)$, let $I_{B_n}(\mu) = \Ind_{B_n(\bbR)}^{G_n(\bbR)} (\mu)$ be the principal series representation of $G_n(\bbR)$.
	By the result of Yamashita \cite[Theorem 2.6]{Yamashita}, $L(\lambda)$  can be embedded into $I_{B_n}(\mu)$ if and only if $\mu$ is equal to
	\[
	\mu = \mathrm{sgn}^{\lambda_n}|\cdot|^{\lambda_n - n} \boxtimes \mathrm{sgn}^{\lambda_{n-1}}|\cdot|^{\lambda_{n-1} - n+1} \boxtimes \cdots \boxtimes \mathrm{sgn}^{\lambda_1}|\cdot|^{\lambda_1 - 1}
	\]
	as in (\ref{princ_ser}).
	Here we regard $T_n$ as the $n$-fold product $(\Res_{F/\bbQ}\GL_1)^n$.
	For a $\frakk_n$-dominant integral weight $\lambda = (\lambda_1,\ldots,\lambda_n) \in \bbZ^n$, put
	\[
	I_{P_{i,n}}(\lambda) = \Ind_{P_{i,n}(\bbR)}^{G_n(\bbR)} \left(\mathrm{sgn}^{\lambda_n} |\cdot|^{\lambda_n-n+(i-1)/2} \boxtimes L(\lambda_1,\ldots,\lambda_{n-i})\right).
	\]
	Then there exists a canonical inclusion
	\[
	I_{P_n}(\lambda) \xhookrightarrow{\qquad} I_{B_n}(\lambda).
	\]
	In order to construct the holomorphic Klingen Eisenstein series, we need the following refinement of the Yamashita's result:
	
	\begin{thm}[Theorem \ref{emb_main}]
	Take a $\frakk_n$-dominant integral weight $\lambda = (\lambda_1,\ldots,\lambda_n) \in \bbZ^n$.
	Then $I_{P_{i,n}}(\lambda)$ contains a highest weight vector of weight $\lambda$ if $\lambda_n = \cdots = \lambda_{n-i+1}$.
	Moreover such a highest weight vector generates $L(\lambda)$.
	Conversely, if the induced representation
	\[
	\Ind_{P_{i,n}(\bbR)}^{G_n(\bbR)} \left(\mu \boxtimes L(\omega_1,\ldots,\omega_{n-i})\right)
	\]
	has a highest weight vector of weight $\lambda$, we get
	\begin{itemize}
	\item $\lambda_n = \cdots = \lambda_{n-i+1}$.
	\item $\mu = \mathrm{sgn}^{\lambda_n} |\cdot|^{\lambda_n-n+(i-1)/2}$.
	\item $(\omega_1,\ldots,\omega_{n-i}) = (\lambda_1,\ldots,\lambda_{n-i})$.
	\end{itemize}
	\end{thm}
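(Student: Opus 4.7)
The plan is to leverage Yamashita's theorem (\cite[Theorem 2.6]{Yamashita}), which attaches to every $\frakk_n$-dominant integral weight $\lambda$ a specific torus character---call it $\mu_\lambda$, namely the one displayed in the excerpt---such that $L(\lambda)$ embeds uniquely (up to scalar) into $I_{B_n}(\mu_\lambda)$. I will apply this theorem both to $\lambda$ on $G_n$ and to $(\lambda_1,\ldots,\lambda_{n-i})$ on $G_{n-i}$, and then exploit induction in stages along $B_n \subseteq P_{i,n}$.

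For the forward direction, assume $\lambda_n = \cdots = \lambda_{n-i+1} = c$. Yamashita applied to $L(\lambda_1,\ldots,\lambda_{n-i})$ followed by induction from $P_{i,n}$ to $G_n$ yields a canonical embedding $I_{P_{i,n}}(\lambda) \hookrightarrow I_{B_n}(\nu)$ for an explicit character $\nu$ on $T_n(\bbR)$. A direct computation---carefully tracking the $\delta^{1/2}$ modular shift associated to the Borel of $\GL_i$ inside $P_{i,n}$---shows $\nu = \mu_\lambda$. The constancy hypothesis on $\lambda_{n-i+1},\ldots,\lambda_n$ is exactly what is needed: after the modular shift, the first $i$ coordinates of $\mu_\lambda$ become invariant under the Weyl group of $\GL_i$ and therefore extend from $T_i$ to the one-dimensional character $\sgn^c|\cdot|^{c-n+(i-1)/2}$ of $\GL_i$. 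The highest weight vector $v_\lambda$ of $L(\lambda) \subseteq I_{B_n}(\mu_\lambda)$ is annihilated by all root vectors of $\Phi^+$, in particular by the nilpotent part of $\Lie(M_{P_{i,n}})$; combined with the torus transformation rule just described, this forces $v_\lambda$ to be left $M_{P_{i,n}}$-equivariant under $\sgn^c|\cdot|^{c-n+(i-1)/2} \boxtimes L(\lambda_1,\ldots,\lambda_{n-i})$, so $v_\lambda$ already lies inside $I_{P_{i,n}}(\lambda)$. That it generates $L(\lambda)$ is then immediate from Yamashita.

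For the converse, the strategy is to reduce to Yamashita's uniqueness. Given a highest weight vector $w$ of weight $\lambda$ in $\Ind_{P_{i,n}(\bbR)}^{G_n(\bbR)}(\mu \boxtimes L(\omega_1,\ldots,\omega_{n-i}))$, embedding $L(\omega_1,\ldots,\omega_{n-i})$ via Yamashita and inducing in stages realizes the ambient module inside some $I_{B_n}(\tilde\nu)$ whose character is explicitly determined by $\mu$ and $(\omega_1,\ldots,\omega_{n-i})$. A Frobenius-reciprocity argument for highest weight vectors---using that $w$ generates a quotient of the Verma $M(\lambda)$ which inside the finite-length admissible module $I_{B_n}(\tilde\nu)$ produces $L(\lambda)$ as a submodule---combined with Yamashita's ``only if'' direction forces $\tilde\nu = \mu_\lambda$. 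Matching $\tilde\nu$ with $\mu_\lambda$ coordinate by coordinate on $T_n(\bbR)$ then yields the three claimed identities: the sign/exponent match on the first $i$ coordinates forces $\lambda_n = \cdots = \lambda_{n-i+1}$ and the displayed formula for $\mu$, while matching the last $n-i$ coordinates combined with Yamashita's uniqueness for $G_{n-i}$ forces $(\omega_1,\ldots,\omega_{n-i}) = (\lambda_1,\ldots,\lambda_{n-i})$.

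The main obstacle is the verification in the forward direction that the Yamashita highest weight vector $v_\lambda$ is genuinely $M_{P_{i,n}}$-equivariant and not only $T_n$-equivariant. This reduces to a careful bookkeeping exercise in which the hypothesis $\lambda_n = \cdots = \lambda_{n-i+1}$ is indispensable: exactly this constancy is what upgrades the $T_i$-character obtained from Yamashita, after incorporating the $\delta^{1/2}$-shift, into a genuine character of $\GL_i$.
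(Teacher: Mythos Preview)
Your converse direction is essentially the paper's argument: embed via Yamashita for $G_{n-i}$, induce in stages to land in a principal series $I_{B_n}(\tilde\nu)$, and then invoke Yamashita's ``only if'' for $G_n$ to pin down $\tilde\nu$ and hence $\mu$ and the $\omega_j$. That part is fine.

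The forward direction has a genuine gap at exactly the point you flag as the ``main obstacle''. You argue that since the highest weight vector $v_\lambda \in I_{B_n}(\mu_\lambda)$ is annihilated by the positive root vectors, in particular by those lying in $\Lie(M_{P_{i,n}})$, it must be left $M_{P_{i,n}}$-equivariant. But annihilation by a root vector under the representation action is a statement about the \emph{right} regular action, whereas membership in $I_{P_{i,n}}(\lambda)$ requires \emph{left} $\SL_i(\bbR)$-invariance of $v_\lambda$. These are unrelated in general: the Borel transformation law already gives left invariance under the upper-triangular unipotent of $\GL_i$, but nothing you have written forces invariance under the opposite unipotent. Your observation that the constancy $\lambda_{n-i+1}=\cdots=\lambda_n$ makes the $T_i$-character extend to a $\GL_i$-character is correct and necessary, but it only tells you what the left transformation law \emph{would have to be} if $v_\lambda$ were $\GL_i$-equivariant; it does not establish that equivariance. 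This is not bookkeeping.

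The paper closes this gap via a separate lemma (Lemma~\ref{emb_lem}): one restricts $r(k)v_\lambda$ to the block-diagonal subgroup $G_\lambda\cong\prod_\ell G_{r_\ell}$ for each $k\in K_{n,\infty}$, uses $\mathrm{Ad}(K_{n,\infty})\frakp_{n,-}=\frakp_{n,-}$ to see that the restriction is still $\frakp_-$-annihilated on each symplectic factor, and then invokes the degenerate principal series case (Lemma~\ref{case_deg_princ_ser}, ultimately resting on S.~T.~Lee's socle computations) to conclude that on each factor the vector lies in the $\SL_{r_\ell}$-invariant subspace. That external input about degenerate principal series is the missing idea in your sketch.
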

	
	This theorem plays a fundamental roll in this paper.
	In the rest of the introduction, we assume $F$ is a totally real field.
	We denote by $\calA(G_n)$ the space of automorphic forms on $G_n(\bbA_\bbQ)$.
	Put
	\[
	\calN(G_n) = \{\varphi \in \calA(G_n) \mid \text{$\varphi$ is $\frakp_{n,-}$-finite}\}.
	\]
	We call an automorphic form in $\calN(G_n)$ a nearly holomorphic automorphic form.
	In this paper, we say that an automorphic representation is a $G_n(\bbA_\fini)\times(\frakg_n,K_\infty)$-subrepresentation in the space of automorphic forms.
	In the usual terminology, an automorphic representation is an irreducible quotient of it.
	However, for the purpose of this paper, we do not consider the irreducible quotient of it.
	Note that automorphic representations in this paper can not be expressed by restricted tensor products of local representations.
	
	In \cite[Appendix]{Horinaga}, for a cuspidal pair $(M,\tau)$, we define a certain space of Eisenstein series $\calE_0(M,\tau)$.
	The space $\calE_0(M,\tau)$ can be considered as the space of the leading terms of certain Eisenstein series.
	We then conjecture the following:
	
	\begin{conj}[\cite{Horinaga} Conjecture A.2]\label{conj}
	Let $\calN(G_n)_{(M,\tau)}$ be the space of nearly holomorphic automorphic forms with the cuspidal support $(M,\tau)$.
	Then the following statements hold:
	\begin{enumerate}
	\item $\calN(G_n)_{(M,\tau)} \subset \calE_0(M,\tau)$.
	\item The action of $\calZ_n$ on $\calN(G_n)$ is semisimple.
	\item If $\calN(G_n)_{(M,\tau)} \neq 0$, the infinitesimal character of $\tau$ is integral.
	\end{enumerate}
	\end{conj}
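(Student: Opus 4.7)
The plan is to combine Franke's structural decomposition of $\calA(G_n)$ by cuspidal support with the refined embedding result (Theorem~\ref{emb_main}) to pin down the possible pairs $(M,\tau)$ for which $\calN(G_n)_{(M,\tau)} \neq 0$. First I would invoke Franke's theorem to express an arbitrary $\varphi \in \calN(G_n)_{(M,\tau)}$ as a finite sum of Taylor coefficients (in the inducing parameter) of Eisenstein series built from $\tau$. Each contribution lives in a parabolically induced representation $\Ind_{P(\bbA_\bbQ)}^{G_n(\bbA_\bbQ)}(\tau \otimes e^{\langle s,\cdot\rangle})$ evaluated at a distinguished parameter, and the ``sufficiently regular'' assumption on the infinitesimal character will be used to guarantee that these Eisenstein series and the relevant intertwining operators are holomorphic at that parameter.

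The next step is to analyze the archimedean factor. Since $\varphi$ is $\frakp_{n,-}$-finite and has $\calZ_n$-infinitesimal character matching that of some $\frakk_n$-dominant integral weight $\lambda$, the $(\frakg_n,K_{n,\infty})$-module it generates contains a highest weight vector of weight $\lambda$ inside an induction from $\tau_\infty$. The uniqueness clause of Theorem~\ref{emb_main} then forces $M$ to be associate to $M_{P_{i,n}}$ for some $i$, and the archimedean component of $\tau$ to have the precise form $\sgn^{\lambda_n}|\cdot|^{\lambda_n-n+(i-1)/2} \boxtimes L(\lambda_1,\ldots,\lambda_{n-i})$ subject to $\lambda_n = \cdots = \lambda_{n-i+1}$. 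This already yields part~(3), since the infinitesimal character of such a $\tau$ is determined by $\lambda$ and is therefore integral. Part~(1) then follows: under sufficient regularity the Eisenstein series attached to $(M,\tau)$ at the predicted parameter is holomorphic, so its leading Taylor coefficient is a genuine Eisenstein series and lies in $\calE_0(M,\tau)$ by the definition given in the appendix of \cite{Horinaga}.

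The main obstacle is part~(2), the semisimplicity of the $\calZ_n$-action on $\calN(G_n)$. Higher-order Taylor coefficients of the Eisenstein series would \emph{a priori} produce Jordan blocks for $\calZ_n$, and these must be ruled out. The plan is to show that any genuinely higher-order contribution would realize a highest weight vector of $L(\lambda)$ inside a non-highest-weight quotient of $I_{P_{i,n}}(\lambda)$, contradicting the uniqueness half of Theorem~\ref{emb_main}. Carrying this out cleanly requires matching the Taylor expansion in the inducing parameter against the $\frakp_{n,-}$-finiteness of the derivative by passing through the constant term along $P_{i,n}$, and this is where I expect the regularity hypothesis to be used most heavily: via a translation functor argument it guarantees that the archimedean intertwiners are holomorphic isomorphisms on the relevant $\calZ_n$-eigenspaces, so that no spurious $\calZ_n$-nilpotent contribution can survive projection against a $\frakp_{n,-}$-finite vector.
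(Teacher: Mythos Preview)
The statement you are attempting to prove is labeled in the paper as a \emph{conjecture}, not a theorem: the paper does not claim a full proof. It establishes parts (2) and (3) unconditionally (Proposition~\ref{ss_Z} and Lemma~\ref{integrality_wt}), and establishes part (1) only under the additional hypothesis that the infinitesimal character is sufficiently regular. Your proposal repeatedly invokes ``the sufficiently regular assumption,'' but that hypothesis is not part of the conjecture; so at best you are sketching the paper's partial result on (1), not the conjecture itself. For part (1) in general there is no proof to compare against.

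For parts (2) and (3), where the paper does give unconditional arguments, your route is quite different from the paper's and is both more elaborate and dependent on regularity where the paper is not. For (3) the paper's argument is immediate: given nonzero $\varphi \in \calN(G_n,\chi)$, apply some $X \in \calU(\frakg_n)$ so that $X\cdot\varphi$ is a highest weight vector of some weight $\lambda'$; since $K_{n,\infty}$ acts, $\lambda'$ is integral, and $\chi = \chi_{\lambda'}$. No Franke decomposition, no Theorem~\ref{emb_main}, no regularity. For (2) the paper argues by induction on $n$: cuspidal $\varphi$ generate unitary modules and hence decompose into $\calZ_n$-eigenvectors; for non-cuspidal $\varphi$ in $\calN(G_n)_{\{Q_{j,n}\}}$, Proposition~\ref{Wh_coeff_main} places $\varphi_{P_{j,n}}$ in a finite sum $\sum_\ell \Ind(\mu_\ell \boxtimes \Pi_\ell)$ with $\Pi_\ell \subset \calN(G_{n-j})$, the inductive hypothesis gives each $\Pi_\ell$ an infinitesimal character, hence each induced representation has one, and injectivity of the constant term map on $\calN(G_n)_{\{Q_{j,n}\}}$ transfers this back to $\varphi$. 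Your plan to exclude higher Taylor coefficients via Theorem~\ref{emb_main} and translation functors is a genuinely different idea, but as you yourself note it leans on regularity and on holomorphy of intertwiners at the relevant point; the paper avoids all of this by never writing $\varphi$ as a derivative of an Eisenstein series in the first place.
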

	
	For an associated class $\{P\}$ of a standard parabolic subgroup $P$ of $G_n$, let $\calN(G_n)_{\{P\}}$ be the subspace of $\calN(G_n)$ consisting of automorphic forms that is concentrated on the class $\{P\}$.
	By the general theory of automorphic forms \cite{langlands, MW}, the space $\calN(G_n)$ decomposes as the direct sum 
	\[
	\calN(G_n) = \bigoplus_{\{P\}} \calN(G_n)_{\{P\}},
	\]
	where $\{P\}$ runs through all associated classes of parabolic subgroups.
	Then we have the following:
	
	\begin{prop}[Proposition \ref{decomp_NHAF_parab}]
	With the above notation, we have
	\[
	\calN(G_n) = \bigoplus_{i=0}^n \calN(G_n)_{\{Q_{i,n}\}}.
	\]
	\end{prop}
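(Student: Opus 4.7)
The plan is to show $\calN(G_n)_{\{P\}}=0$ for every associated class $\{P\}$ whose standard Levi contains a general linear factor of rank at least two. Every standard Levi of $G_n$ has the form $\Res_{F/\bbQ}(\GL_{n_1}\times\cdots\times\GL_{n_r})\times G_{n-m}$, and the $Q_{i,n}$ are exactly those with $n_1=\cdots=n_r=1$; so, combined with the general Langlands--Moeglin--Waldspurger decomposition
\[
\calN(G_n)=\bigoplus_{\{P\}}\calN(G_n)_{\{P\}}
\]
over associated classes, this vanishing yields the proposition.

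To carry this out, I would first reduce to a holomorphic highest weight vector. Let $\varphi\in\calN(G_n)_{\{P\}}$ be nonzero, and let $V$ be the $G_n(\bbA_\fini)\times(\frakg_n,K_{n,\infty})$-submodule of $\calA(G_n)$ it generates. Since $V$ is $\frakp_{n,-}$-finite, a standard Poincar\'e--Birkhoff--Witt argument yields a nonzero $\frakp_{n,-}$-null vector $\varphi'\in V$ of some $\frakk_n$-dominant integral highest weight $\lambda$, still in $\calN(G_n)_{\{P\}}$, whose archimedean component contains $L(\lambda)$. By the theorem of Franke, $\varphi'$ is built from a cuspidal datum $(M_P,\sigma=\tau_1\boxtimes\cdots\boxtimes\tau_r\boxtimes\pi)$ as a residue or Taylor coefficient of an Eisenstein series, so the $(\frakg_n,K_{n,\infty})$-module it generates is realized as a subrepresentation of $\Ind_{P(\bbR)}^{G_n(\bbR)}(\sigma_\infty\otimes|\cdot|^{s_0})$ for a suitable parameter $s_0$. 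Assuming now that some $n_j\ge 2$, I would pass to the intermediate Siegel-type parabolic $P_{n_j,n}\supset P$ (after a Weyl rearrangement placing the $j$-th $\GL$-block first); induction in stages then exhibits $L(\lambda)$ as a submodule of $\Ind_{P_{n_j,n}(\bbR)}^{G_n(\bbR)}(\tilde\tau\boxtimes V')$ for a $\GL_{n_j}(\bbR)$-module $\tilde\tau$ inherited from $\tau_{j,\infty}$, and the converse part of Theorem~\ref{emb_main} forces $\tilde\tau$ to be the one-dimensional character $\mathrm{sgn}^{\lambda_n}|\cdot|^{\lambda_n-n+(n_j-1)/2}\circ\det$; in particular $\tau_{j,\infty}$ is one-dimensional.

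This contradicts the cuspidality of $\tau_j$: any cuspidal automorphic representation of $\GL_{n_j}(\bbA)$ with $n_j\ge 2$ is globally generic and therefore locally generic, while a one-dimensional representation of $\GL_{n_j}(\bbR)$ admits no nonzero Whittaker functional. Hence every $n_j=1$ and $P$ is associated to some $Q_{i,n}$. The main obstacle I anticipate is the step promoting $L(\lambda)$ from a subquotient of the principal-series realization of the residual module to an honest submodule of a parabolic induction from $P_{n_j,n}$ to which Theorem~\ref{emb_main} applies; I would address this by using that $\varphi'$ is $\frakp_{n,-}$-null to track the highest weight vector through the intertwining operators defining the residue or Taylor coefficient, and by checking that the Weyl element used to move the $j$-th $\GL$-block into Siegel position stays inside the associated class $\{P\}$.
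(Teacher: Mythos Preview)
Your route is genuinely different from the paper's, and it has a real gap at the point you invoke Theorem~\ref{emb_main}.

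\textbf{What the paper does.} The paper never touches Franke's theorem or the archimedean embedding results here. It argues by Fourier analysis and induction on $n$. The key input is Proposition~\ref{equiv_def_cusp}: for a nearly holomorphic form, cuspidality is equivalent to the vanishing of the single constant term along $P_{1,n}$. So if $P$ is a \emph{maximal} standard parabolic with $P\neq P_{1,n}$ and $\varphi\in\calN(G_n)_{\{P\}}$, then $\varphi_{P_{1,n}}=0$ (since $P_{1,n}\notin\{P\}$), hence $\varphi$ is cuspidal, contradicting $P\neq G$. For non-maximal $P$ one restricts $\varphi_{P_{1,n}}$ to $G_{n-1}(\bbA_\bbQ)$ (after right translation by $\bfm(a)k$); Proposition~\ref{Wh_coeff_main} says this restriction is again nearly holomorphic, so the induction hypothesis applies on $G_{n-1}$, and one reads off from Lemma~\ref{Wh_coeff} that the cuspidal support is of type $Q_{j,n}$. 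No highest weight vector, no cuspidal datum, no intertwining operators.

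\textbf{The gap in your argument.} The converse direction of Theorem~\ref{emb_main} has as hypothesis that the induced representation is $\Ind_{P_{i,n}(\bbR)}^{G_n(\bbR)}(\mu\boxtimes L(\omega))$ with $\mu$ a \emph{character} of $\GL_i(\bbR)$ and $L(\omega)$ an \emph{irreducible highest weight module} of $G_{n-i}(\bbR)$. In your situation you have $\Ind_{P_{n_j,n}(\bbR)}^{G_n(\bbR)}(\tilde\tau\boxtimes V')$ where $\tilde\tau$ is a priori an arbitrary irreducible unitary representation of $\GL_{n_j}(\bbR)$ (that is precisely what you are trying to constrain) and $V'$ is a parabolically induced representation of $G_{n-n_j}(\bbR)$, not an $L(\omega)$. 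So Theorem~\ref{emb_main} simply does not apply, and the sentence ``the converse part of Theorem~\ref{emb_main} forces $\tilde\tau$ to be the one-dimensional character'' is unjustified. Your acknowledged obstacle about subquotient versus submodule is in fact the lesser problem: the constant term along $P$ is injective on $\calA(G_n)_{(M_P,\sigma)}$ and lands in $\bigoplus_{w}\Ind_P^{G_n}(\sigma^w)$, so you do get an honest embedding.

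Your strategy is salvageable, but not via Theorem~\ref{emb_main}. What you would need is the argument hidden in the proof of Lemma~\ref{emb_lem}: restrict your highest weight vector to the copy of $G_{n_j}(\bbR)$ sitting inside $G_n(\bbR)$, observe that the restriction is $\frakp_{n_j,-}$-null and lies in a Siegel-induced representation $\Ind_{P_{n_j,n_j}(\bbR)}^{G_{n_j}(\bbR)}(\tilde\tau)$, then embed $\tilde\tau$ into a $\GL_{n_j}(\bbR)$-principal series and use Yamashita's uniqueness (Theorem~\ref{Yamashita}) together with Lemma~\ref{case_deg_princ_ser} to conclude that this restriction is left $\SL_{n_j}(\bbR)$-invariant; irreducibility of $\tilde\tau$ then forces $\tilde\tau$ to be a character. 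That works, but it is exactly the $\SL$-invariance phenomenon that the paper establishes globally and far more directly in Lemma~\ref{left_SL_inv} and Proposition~\ref{Wh_coeff_main}.
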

	
	To show this, we need to investigate the constant terms of nearly holomorphic automorphic forms.
	Let $F_v$ be the $v$-completion of $F$ for a place $v$ of $F$.
	Put $F_\infty = \prod_{v \in \bfa}F_v$.
	We denote by $\calO_{F_v}$ the ring of integers of $F_v$.
	Put
	\[
	K_n = \left(\prod_{v} \Sp_{2n}(\calO_{F_v})\right) \times K_{n,\infty},
	\] 
	where $v$ runs through all finite places of $F$.
	Then $K_n$ is a maximal compact subgroup of $G_n(\bbA_\bbQ)$.
	For a $G_{n-i}(\bbA_{\bbQ,\fini})\times(\frakg_{n-i},K_{n-i,\infty})$-subrepresentation $(\pi,V)$ of $\calN(G_{n-i})$, a character $\mu$ of $\GL_i(\bbA_F)$ and a complex number $s$, we denote by $I_{P_{i,n}}(\mu,s,\pi)$ the space of smooth functions $\varphi$ on $N_{P_{i,n}}(\bbA_\bbQ)M_{P_{i,n}}(\bbQ) \bs G_n(\bbA_\bbQ)$ such that
	\begin{itemize}
	\item $\varphi$ is right $K_n$-finite.
	\item For any $k \in K_n$, the function $m \longmapsto \delta_{P_{i,n}}^{-1/2}(m)\varphi(mk)$ on $M_{P_{i,n}}(\bbA_\bbQ)$ lies in the representation space of $\mu|\det|^s \boxtimes \pi$.
	\end{itemize}
	Here $\delta_{P_{i,n}}$ is the modulas character of $P_{i,n}$.
	For a cuspidal automorphic representation $\pi$ of $G_{n-i}(\bbA_F)$, we consider the representation space of $\pi$ as the $\pi$-isotypic component of the space of cusp forms in $\calA(G_{n-i})$.
	Then the constant terms of nearly holomorphic automorphic forms satisfy the following:
	
	\begin{prop}[Proposition \ref{Wh_coeff_main}]
	For a nearly holomorphic automorphic form $\varphi$, there exist finite collections of Hecke characters $\{\mu_\ell\}_\ell$ of $F^\times F_{\infty,+}^\times \bs \bbA_F^\times$, complex numbers $\{s_\ell\}_\ell$ and subrepresentations $\{(\pi_\ell,V_\ell)\}_\ell$ of $\calN(G_{n-i})$ such that
	\[
	\varphi_{P_{i,n}} \in \sum_{\ell} I_{P_{i,n}}(\mu_\ell,s_\ell,\pi_\ell).
	\]
	Here $F_{\infty,+}^\times$ is the identity component of $F_{\infty}^\times$.
	\end{prop}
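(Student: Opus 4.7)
The strategy is to combine the Langlands--Moeglin--Waldspurger spectral decomposition of constant terms with the archimedean rigidity afforded by Theorem \ref{emb_main}.

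I would first observe that $\varphi_{P_{i,n}}$ inherits the defining properties of $\varphi$: the right-regular actions of $K_n$, $\calZ_n$, and $\frakp_{n,-}$ all commute with integration along $N_{P_{i,n}}(\bbQ)\bs N_{P_{i,n}}(\bbA_\bbQ)$, so $\varphi_{P_{i,n}}$ is an automorphic form on $N_{P_{i,n}}(\bbA_\bbQ) M_{P_{i,n}}(\bbQ)\bs G_n(\bbA_\bbQ)$ that is again $\frakp_{n,-}$-finite. Applying the general spectral decomposition on the Levi $M_{P_{i,n}} = \Res_{F/\bbQ}\GL_i \times G_{n-i}$ (cf.~\cite{langlands, MW}), $\varphi_{P_{i,n}}$ then lies in a finite sum of subspaces $I_{P_{i,n}}(\sigma_\ell, \pi'_\ell)$ for various automorphic representations $\sigma_\ell$ of $\GL_i(\bbA_F)$ and $\pi'_\ell$ of $G_{n-i}(\bbA_\bbQ)$ drawn from cuspidal supports in the Levi spectrum.

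The core step is to constrain each inducing datum. Since $\varphi$ is nearly holomorphic, the $(\frakg_n, K_{n,\infty})$-module it generates is a finite sum of highest weight modules $L(\lambda)$, so each highest weight vector of weight $\lambda$ appearing in $\varphi$ must, via the constant-term map, produce a highest weight vector of the same weight $\lambda$ inside $\Ind_{P_{i,n}(\bbR)}^{G_n(\bbR)}(\sigma_{\ell,\infty}\boxtimes \pi'_{\ell,\infty})$ for each summand to which it contributes. The converse direction of Theorem \ref{emb_main} then forces, at every archimedean place, $\sigma_{\ell,\infty} = \sgn^{\lambda_n}|\cdot|^{\lambda_n - n + (i-1)/2}$ (a one-dimensional character of $\GL_i(F_v)$) and $\pi'_{\ell,\infty} = L(\lambda_1,\ldots,\lambda_{n-i})$. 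Global rigidity---specifically, the fact that cuspidal automorphic representations of $\GL_i(\bbA_F)$ for $i \geq 2$ have infinite-dimensional archimedean components, together with the description of the residual spectrum---collapses each $\sigma_\ell$ to a one-dimensional representation of $\GL_i(\bbA_F)$, i.e., to $(\mu_\ell\circ\det)\,|\det|^{s_\ell}$ for a Hecke character $\mu_\ell$ of $F^\times F_{\infty,+}^\times\bs\bbA_F^\times$ and a complex number $s_\ell$. The remaining $G_{n-i}$-factor $\pi'_\ell$ has nearly holomorphic archimedean type, hence embeds into $\calN(G_{n-i})$ and provides the desired $\pi_\ell$.

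The main obstacle is tracking highest weight vectors under the constant-term map so that Theorem \ref{emb_main} can be invoked summand-by-summand; in particular, one needs to verify that the image of a highest weight vector is nonzero in each induced piece that genuinely contributes to $\varphi_{P_{i,n}}$, so that the converse of Theorem \ref{emb_main} pins down the archimedean data uniformly across $v \in \bfa$. A secondary technical point is the global-to-local rigidity for $\GL_i$, which is needed to pass from one-dimensional archimedean data to a global Hecke character together with the appropriate $|\det|^{s_\ell}$ twist.
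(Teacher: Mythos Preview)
Your approach differs substantially from the paper's, and it contains a genuine gap at the key step.

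The paper does \emph{not} invoke Theorem~\ref{emb_main} or any spectral decomposition to prove this proposition. Instead it works directly with Fourier coefficients: Lemma~\ref{left_SL_inv} (built on Proposition~\ref{Fourier_coeff} and the Borel density theorem) shows that each Whittaker--Fourier coefficient $\Wh_{\varphi,h}$ with $h\in\Sym_n^{(j)}(F)$ is left $\bfm(\SL_j(\bbA_F)N_{j,n,\GL}(\bbA_F))$-invariant. From Lemma~\ref{Wh_coeff} this forces $(\varphi_{P_{j,n}})_k$ restricted to $\GL_j(\bbA_F)$ to factor through the determinant. Near holomorphy then realizes the $\GL_j$-representation inside a finite-dimensional polynomial space $P(\Sym_n,V_{\rho^*})_{\leq N_\varphi}$, so only finitely many characters occur. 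The $G_{n-j}$-pieces are shown to be $\frakp_{n-j,-}$-finite by a direct computation with $\mathrm{Ad}(K_{n,\infty})\frakp_{n-j,-}\subset\frakp_{n,-}$, and finiteness of $P_{j,n}(\bbA_\bbQ)\bs G_n(\bbA_\bbQ)/K_\fini$ closes the argument.

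Your proposal attempts to replace this with archimedean rigidity, but the invocation of Theorem~\ref{emb_main} is circular. That theorem's converse direction is stated only for inductions of the form $\Ind_{P_{i,n}(\bbR)}^{G_n(\bbR)}(\mu\boxtimes L(\omega))$, where the $\GL_i$-slot is \emph{already assumed} to be a character. It therefore cannot be used to deduce that an a priori arbitrary $\sigma_{\ell,\infty}$ is one-dimensional. You would need a statement of the form ``if $\Ind_{P_{i,n}(\bbR)}^{G_n(\bbR)}(\sigma\boxtimes\tau)$ contains a highest weight vector for arbitrary admissible $\sigma,\tau$, then $\sigma$ is a character,'' and no such result is available in the paper. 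Embedding further into a Borel principal series via Yamashita's theorem pins down principal-series parameters but does not by itself force the intermediate $\GL_i$-inducing datum to be a character. Your ``global rigidity'' step also presupposes that the spectral decomposition of $\varphi_{P_{i,n}}$ is a finite direct sum of honestly induced representations from irreducibles, whereas the Langlands--Moeglin--Waldspurger description in general involves derivatives of Eisenstein series and does not deliver this without further argument. The paper's Fourier-coefficient route sidesteps all of this by proving $\SL_j$-invariance directly.
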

	
	In terms of classical modular forms, this is essentially proved in \cite[\S 2]{Weissauer} for holomorphic Siegel modular forms.
	
	For a weight $\lambda$, let $\chi_\lambda$ be the infinitesimal character with the Harish-Chandra parameter $\lambda+\rho$.
	Here $\rho$ is half the sum of positive roots.
	Then the irreducible representation $L(\lambda)$ has the infinitesimal character $\chi_\lambda$ for a $\frakk$-dominant weight $\lambda$.
	Put
	\[
	\calN(G_n,\chi_\lambda) = \{\varphi \in \calN(G_n) \mid \text{there exists an integer $m$ such that $(\chi_\lambda(z)-z)^m \cdot \varphi = 0$ for any $z \in \calZ_n$}\}
	\]
	and
	\[
	\calN(G_n,\chi_\lambda)_{\{Q_{i,n}\}} = \calN(G_n,\chi_\lambda) \cap \calN(G_n)_{\{Q_{i,n}\}}.
	\]
	
	Then the computations of constant terms of nearly holomorphic modular forms imply the following:
	\begin{lem}[Lemma \ref{ss_Z}, Lemma \ref{integrality_wt}]
	Let $\chi$ be an infinitesimal character.
	\begin{enumerate}
	\item If $\calN(G_n,\chi)$ is non-zero, the infinitesimal character is integral.
	\item Any nearly holomorphic automorphic form $\varphi$ in $\calN(G_n,\chi)$ is a $\chi$-eigenfunction, i.e., for any $z \in \calZ_n$, we have $z \cdot \varphi = \chi(z)\varphi$.
	\end{enumerate}
	\end{lem}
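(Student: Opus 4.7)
The plan is to establish both parts simultaneously by induction on $n$. The base case $n=0$ is vacuous. Assuming both (1) and (2) for all $n' < n$, I would first invoke the decomposition $\calN(G_n) = \bigoplus_i \calN(G_n)_{\{Q_{i,n}\}}$ from Proposition \ref{decomp_NHAF_parab}. Since the summands are stable under $\calZ_n$, it suffices to prove the two claims for $\varphi$ lying in a single stratum $\calN(G_n,\chi)_{\{Q_{i,n}\}}$.

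I would first dispose of the cuspidal stratum $\{G_n\} = \{Q_{0,n}\}$. A cuspidal nearly holomorphic form $\varphi$ generates, at the archimedean place, a cuspidal and hence unitarizable $(\frakg_n, K_{n,\infty})$-module. Its $\frakp_{n,-}$-finite part decomposes as a direct sum of irreducible unitary highest weight modules, each of the form $L(\lambda)$ for some integral $\frakk_n$-dominant weight $\lambda$. Since $L(\lambda)$ carries the actual integral infinitesimal character $\chi_\lambda$, both (1) and (2) follow immediately on the cuspidal stratum.

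For non-cuspidal $\varphi \in \calN(G_n,\chi)_{\{Q_{i,n}\}}$ with $i \geq 1$, Proposition \ref{Wh_coeff_main} gives an appropriate $P_{j,n}$ with
\[
\varphi_{P_{j,n}} \in \sum_\ell I_{P_{j,n}}(\mu_\ell, s_\ell, \pi_\ell), \qquad \pi_\ell \subset \calN(G_{n-j}).
\]
Applying the induction hypothesis to each $\pi_\ell$, I would decompose $\pi_\ell$ into $\calZ_{n-j}$-eigenspaces for actual integral infinitesimal characters $\chi'_\ell$. With such refined data, $I_{P_{j,n}}(\mu_\ell, s_\ell, \pi_\ell)$ has an actual infinitesimal character at infinity, obtained by concatenating the Harish-Chandra parameter of $\mu_{\ell,\infty}|\cdot|^{s_\ell}$ with $\chi'_\ell$. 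Because the constant-term map is $\calZ_n$-equivariant, $\varphi_{P_{j,n}}$ has generalized infinitesimal character $\chi$, so $\chi$ must equal one of these concatenations. Combining this with the fact that the $K_\infty$-types of $\varphi$ are integral dominant weights and applying Theorem \ref{emb_main}, the archimedean inducing character is pinned down to the integral form $\mathrm{sgn}^{a}|\cdot|^{a - n + (j-1)/2}$ with $a \in \bbZ$, yielding integrality of $\chi$, i.e. (1).

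For (2), the same analysis shows that each $I_{P_{j,n}}(\mu_\ell, s_\ell, \pi_\ell^{(\chi'_\ell)})$ carries an actual infinitesimal character, so $(z - \chi(z))\varphi_{P_{j,n}} = 0$ for every $z \in \calZ_n$ and every $P_{j,n}$. Hence $(z - \chi(z))\varphi$ has vanishing constant term along every proper parabolic and is itself a cuspidal nearly holomorphic form with generalized infinitesimal character $\chi$; the already-settled cuspidal case of (2) then forces $(z - \chi(z))\varphi = 0$, completing the induction. The main obstacle is the $K_\infty$-type step in paragraph three: pinning the archimedean characters $\mu_{\ell,\infty}|\cdot|^{s_\ell}$ down to the explicit form demanded by Theorem \ref{emb_main} requires carefully matching Frobenius reciprocity, the $\frakp_{n,-}$-finiteness of $\varphi$, and the embedding description of $L(\lambda)$ inside principal series of $G_n(\bbR)$, which is precisely where the refined embedding theorem proved earlier in the paper does the heavy lifting.
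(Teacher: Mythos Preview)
Your inductive scheme for part (2) matches the paper's proof of Proposition \ref{ss_Z} almost exactly: cuspidal case via unitarity, non-cuspidal case via Proposition \ref{Wh_coeff_main} plus the inductive hypothesis on $\calN(G_{n-j})$. One small correction: in your last step you say that $(z-\chi(z))\varphi$ is cuspidal and then appeal to ``the already-settled cuspidal case of (2)''. That does not quite close the argument; what you actually need is that on the stratum $\calN(G_n)_{\{Q_{i,n}\}}$ with $i\geq 1$ the constant-term map along $P_{i,n}$ (equivalently $Q_{i,n}$, by Corollary \ref{P=Q}) is injective, so $((z-\chi(z))\varphi)_{P_{i,n}}=0$ already forces $(z-\chi(z))\varphi=0$. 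This is exactly how the paper phrases it.

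For part (1), however, you are taking a substantial detour. The paper's proof of Lemma \ref{integrality_wt} is a two-line direct argument with no induction and no appeal to Theorem \ref{emb_main}: since $\varphi$ is $\frakp_{n,-}$-finite, there exists $X\in\calU(\frakg_n)$ with $\frakp_{n,-}\cdot(X\varphi)=0$; then $X\varphi$ is a highest weight vector of some weight $\lambda'$, and the $(\frakg_n,K_{n,\infty})$-module it generates is a quotient of $N(\lambda')$, so has infinitesimal character $\chi_{\lambda'}$. Since $K_{n,\infty}$ acts, $\lambda'$ is integral, hence $\chi=\chi_{\lambda'}$ is integral. Your route through constant terms, the inductive hypothesis on $\chi'_\ell$, and Theorem \ref{emb_main} does eventually arrive at the same conclusion (once you observe that $X\varphi$ as above gives a highest weight vector in the induced representation, to which Theorem \ref{emb_main} applies), but the ``main obstacle'' you identify is entirely self-inflicted: the existence of a highest weight vector already gives integrality before any parabolic induction enters the picture.
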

	
	Hence Conjecture \ref{conj} (2) is true for $\Res_{F/\bbQ}\Sp_{2n}$.
	
	\begin{rem}
	For an infinitesimal character $\chi_\lambda$, if $\calN(G_n,\chi_\lambda) \neq 0$, we may choose that $\lambda$ is $\frakk_n$-dominant integral.
	Indeed, for a non-zero nearly holomorphic automorphic form $\varphi \in \calN(G_n,\chi_\lambda)$, there exists $X \in \calU(\frakg_n)$ such that $\frakp_{n,-} \cdot (X \cdot \varphi) = 0$.
	Then $X \cdot \varphi$ generates a highest weight module.
	\end{rem}

	The main result of this paper is the decomposition of $\calN(G_n,\chi_\lambda)$ as a $G_n(\bbA_{\bbQ,\fini}) \times (\frakg_n,K_{n,\infty})$-module under certain assumptions for $\lambda$.
	Set
	\[
	\mathfrak{X}_{1} = \{\mu = \otimes_v \mu_v \in \mathrm{Hom}_{\mathrm{conti}}(F^\times F_{\infty,+}^\times \bs \bbA_{F}^\times,\bbC^\times) \mid \text{$\mu_v = \mathbf{1}_v$ for any $v \in \bfa$}\}
	\]
	and
	\[
	\mathfrak{X}_{-1} = \{\mu = \otimes_v \mu_v \in \mathrm{Hom}_{\mathrm{conti}}(F^\times F_{\infty,+}^\times \bs \bbA_{F}^\times,\bbC^\times) \mid \text{$\mu_v = \mathrm{sgn}_v$ for any $v \in \bfa$}\}.
	\]
	Here $\mathbf{1}_v$ is the trivial character of $\bbR^\times$ and $\mathrm{sgn}$ is the sign character of $\bbR^\times$.
	For an infinitesimal character $\chi_\lambda$ and a positive integer $i$, we say that $\chi_\lambda$ is sufficiently regular relative to $i$ if there exists a $\frakk_n$-dominant weight $\omega = (\omega_{1,v},\ldots,\omega_{n,v})_v$ such that $\chi_\lambda = \chi_\omega $ and $\omega_{n,v} > 2n-i+1$ for any $v$.
	In the rest of this paper, for a sufficiently regular infinitesimal character $\chi_\lambda$ relative to $i$, we assume that $\lambda$ is a $\frakk_n$-dominant integral weight and $\lambda_{n,v} > 2n -i + 1$ for any $v$.
	To simplify the notation, we denote by $\mu^{\boxtimes i} \boxtimes \pi$ the cuspidal automorphic representation $\mu \boxtimes \cdots \boxtimes \mu \boxtimes \pi$ of $(\Res_{F/\bbQ}\GL_1(\bbA_F))^i \times G_{n-i}(\bbA_F)$.
	We then state the main theorem:
	
	\begin{thm}[Theorem \ref{main}]
	Fix a positive integer $i \leq n$ and a weight $\lambda$.
	\begin{enumerate}
	\item If $\calN(G_n,\chi_\lambda)_{\{Q_{i,n}\}}$ is non-zero, there exists a $\frakk_n$-dominant weight $\omega = (\omega_{1,v},\ldots,\omega_{n,v})_v$ such that $\chi_\omega = \chi_\lambda$, $\omega_{n,v} = \cdots = \omega_{n-i+1,v}$ for any $v$ and $\omega_{n,v} = \omega_{n,v'}$ for any $v, v' \in \bfa$.
	\item Suppose $\chi_\lambda$ is sufficiently regular relative to $i$ and a weight $\lambda = (\lambda_v)_v$ satisfies the following two conditions:
	\begin{itemize}
	\item $\lambda_{n,v} = \cdots = \lambda_{n-i+1,v}$ for any $v \in \bfa$.
	\item $\lambda_{n,v}$ is independent of $v \in \bfa$.
	\end{itemize}
	Let $\pi$ be an irreducible holomorphic cuspidal representation of $G_{n-i}(\bbA_F)$ with $\pi_\infty = \boxtimes_{v \in \bfa} L(\lambda_v)$ such that $\lambda_{n-i,v}>2n-i+1$ for any $v \in \bfa$.
	We then have the isomorphism
	\[
	\calN(G_n,\chi_\lambda)_{(M_{Q_{i,n}}, \mu^{\boxtimes i} \boxtimes \pi)} 
	\cong 
	\left(\Ind_{P_{i,n}(\bbA_{F,\fini})}^{G(\bbA_{F,{\fini}})}(\mu |\cdot|^{\lambda_{n,v}-n+(i-1)/2} \boxtimes \pi)\right) 
	\boxtimes \left(\boxtimes_{v \in \bfa} L(\lambda_v)\right)
	\]
	if $\mu \in \mathfrak{X}_{(-1)^{\lambda_{n,v}}}$.
	If $\mu \not\in \mathfrak{X}_{(-1)^{\lambda_{n,v}}}$, we have $\calN(G_n,\chi_\lambda)_{(M_{Q_{i,n}}, \mu^{\boxtimes i} \boxtimes \pi)} = 0$.
		\end{enumerate}
	\end{thm}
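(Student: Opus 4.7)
The proof naturally splits along the two parts of the statement. For Part~(1), I extract the necessary numerical constraints by analyzing the archimedean component of the $P_{i,n}$-constant term. For Part~(2), I construct the isomorphism via Klingen Eisenstein series and establish bijectivity through constant-term bookkeeping combined with the parabolic-support decomposition (Proposition~\ref{decomp_NHAF_parab}).

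For Part~(1), let $\varphi \in \calN(G_n,\chi_\lambda)_{\{Q_{i,n}\}}$ be non-zero. By the remark preceding the theorem, after applying a suitable $X \in \calU(\frakg_n)$ I may assume that the archimedean component of $\varphi$ contains a highest weight vector of some $\frakk_n$-dominant integral weight $\omega=(\omega_v)_v$ with $\chi_\omega = \chi_\lambda$. Since $Q_{i,n} \subseteq P_{i,n}$, the constant term $\varphi_{P_{i,n}}$ is non-zero, and Proposition~\ref{Wh_coeff_main} places it inside $\sum_\ell I_{P_{i,n}}(\mu_\ell, s_\ell, \pi_\ell)$. Restricting to a summand that retains the highest weight vector of weight $\omega$ at infinity and invoking the converse direction of Theorem~\ref{emb_main} place by place forces $\omega_{n,v} = \cdots = \omega_{n-i+1,v}$ and identifies the archimedean component of $\mu_\ell |\cdot|^{s_\ell}$ as $\mathrm{sgn}^{\omega_{n,v}} |\cdot|^{\omega_{n,v}-n+(i-1)/2}$ at each $v$. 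Since $s_\ell$ is a single complex number, matching exponents at every $v \in \bfa$ forces $\omega_{n,v}$ to be independent of $v$.

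For Part~(2), I use a Klingen Eisenstein series construction. Given a section $f$ in the target induced representation, Theorem~\ref{emb_main} embeds each $L(\lambda_v)$ into $I_{P_{i,n}(F_v)}(\mathrm{sgn}^{\lambda_{n,v}}|\cdot|^{\lambda_{n,v}-n+(i-1)/2} \boxtimes L(\lambda_{1,v},\ldots,\lambda_{n-i,v}))$, so $f$ can be realized as a global section of $I_{P_{i,n}}(\mu|\cdot|^{\lambda_{n,v}-n+(i-1)/2} \boxtimes \pi)$. I then form
\[
E(f,g) = \sum_{\gamma \in P_{i,n}(\bbQ)\bs G_n(\bbQ)} f(\gamma g),
\]
which by the hypothesis $\lambda_{n,v}>2n-i+1$ lies in the region of absolute convergence for Klingen Eisenstein series attached to $P_{i,n}$. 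The archimedean $\frakp_{n,-}$-finiteness of $\boxtimes_v L(\lambda_v)$ ensures $E(f,\cdot)\in \calN(G_n)$ with infinitesimal character $\chi_\lambda$, and the standard computation of the constant term locates it in the $(M_{Q_{i,n}},\mu^{\boxtimes i}\boxtimes \pi)$-component. Injectivity of $f \mapsto E(f,\cdot)$ follows from the regularity hypothesis, under which the $P_{i,n}$-constant term of $E(f,\cdot)$ has $f$ as a non-cancellable summand. For surjectivity, given $\varphi$ in the target, Part~(1) combined with Proposition~\ref{Wh_coeff_main} identifies $\varphi_{P_{i,n}}$ as a section of the prescribed induced representation; I then choose $f$ so that $E(f,\cdot)_{P_{i,n}} = \varphi_{P_{i,n}}$, and the difference $\varphi - E(f,\cdot)$ is concentrated on strictly smaller parabolics by Proposition~\ref{decomp_NHAF_parab}, incompatible with the cuspidal support, hence zero. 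The vanishing statement when $\mu \notin \mathfrak{X}_{(-1)^{\lambda_{n,v}}}$ is immediate from the sign constraint in Part~(1).

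The main obstacle is the bijectivity of the Eisenstein map, and in particular ruling out cancellation between the identity-Weyl summand and the other intertwining-operator contributions to the constant term. The sufficient regularity condition $\lambda_{n,v} > 2n-i+1$ is precisely calibrated to make all relevant intertwining operators holomorphic and to keep the inducing exponent well outside the critical strip where such cancellations could occur; this is what ultimately closes both injectivity (non-cancellation of the $f$-summand) and surjectivity (no residual Eisenstein contributions outside the principal Klingen stream).
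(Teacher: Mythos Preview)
Your overall architecture matches the paper's: embed $\calN(G_n,\chi_\lambda)_{(M_{Q_{i,n}},\mu^{\boxtimes i}\boxtimes\pi)}$ into an induced representation via the $P_{i,n}$-constant term (Proposition~\ref{Wh_coeff_main}), and invert with a Klingen Eisenstein series. But your surjectivity argument has a real gap, and your diagnosis of the mechanism is wrong.

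You assert that you can ``choose $f$ so that $E(f,\cdot)_{P_{i,n}}=\varphi_{P_{i,n}}$'' and then conclude that $\varphi-E(f,\cdot)$ is concentrated on strictly smaller parabolics. But $E(f,\cdot)_{P_{i,n}}$ is a sum over Weyl representatives $w_j$, $0\le j\le i$, and for $j>0$ the summand $f_{w_j,s_0}$ lands in an induced representation whose cuspidal datum is Weyl-equivalent to $(M_{Q_{i,n}},\mu^{\boxtimes i}\boxtimes\pi)$. These terms therefore live in the \emph{same} cuspidal-support component, so the difference $\varphi-E(f,\cdot)$ has no reason to be supported on smaller parabolics. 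Your explanation that regularity ``makes all relevant intertwining operators holomorphic'' and keeps things ``outside the critical strip'' does not help: holomorphicity of an intertwining operator does not make it vanish, and non-vanishing is exactly the obstruction. The paper's actual mechanism (Proposition~\ref{const_term_Eis_ser}) is archimedean and representation-theoretic: for $j\neq 0$ the map $f_\infty\mapsto f_{w_j,s_0,\infty}$ is a $(\frakg_n,K_{n,\infty})$-intertwiner into an induction from $\GL_{i-j}\times\GL_j\times G_{n-i}$, and by Theorem~\ref{emb_main} together with the double-induction formula that target contains no highest weight vector of weight $\lambda_v$. Since $f_\infty$ lies in $L(\lambda_v)$, its image is forced to be zero. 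This yields $E(f,\cdot)_{P_{i,n}}=f$ exactly, after which the constant-term map and the Eisenstein map are literal inverses.

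A secondary gap: you claim Part~(1) plus Proposition~\ref{Wh_coeff_main} ``identifies $\varphi_{P_{i,n}}$ as a section of the prescribed induced representation,'' but Part~(1) only produces some $\frakk_n$-dominant $\omega$ with $\chi_\omega=\chi_\lambda$; it does not force $\omega=\lambda$ or pin down $s_0$. The paper closes this with the dot-orbit computation Proposition~\ref{dot-orbit} (uniqueness of $s_0$ under sufficient regularity) and Corollary~\ref{p_-_fin_vects} (the $\frakp_{n,-}$-finite part of the archimedean induction is exactly $L(\lambda_v)$, not larger). Without these, the image of the constant-term map is not yet shown to sit inside the specific right-hand side of the claimed isomorphism.
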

	
	\begin{rem}
	\begin{enumerate}
	\item By the theorem, the archimedean component is $\boxtimes_{v \in \bfa} L(\lambda_v)$-isotypic.
	\item If $\lambda$ is not sufficiently regular, the space $\calN(G_n,\chi_\lambda)$ is much more complicated.
	In particular, the archimedean component is not isobaric.
	For details, see \cite[Theorem 1.6]{Horinaga}.
	\end{enumerate}
	\end{rem}
	
	The theorem and its proof show that any nearly holomorphic automorphic form with a sufficiently regular infinitesimal character can be written as a sum of cusp forms and the leading term of Eisenstein series.
	We may regard the results of this paper as the first step to show Conjecture \ref{conj} (1) in the general case.
	
	The following statement follows immediately from the main theorem.
	
	\begin{cor}[Corollary \ref{suff_reg_case}]
	Let $\lambda = (\lambda_v)_v = (\lambda_{1,v},\ldots,\lambda_{n,v})_v$ be a $\frakk$-dominant integral weight such that $\lambda_{n,v}$ is independent of $v \in \bfa$ and $\lambda_{n-1,v} = \lambda_{n,v}$ for any $v \in \bfa$.
	For any positive integer $i \leq n$ and a sufficiently regular infinitesimal character $\chi_\lambda$ relative to $n$, we have
	\[
	\calN(G_n,\chi_\lambda)_{\{Q_{i,n}\}} \cong \bigoplus_{\mu,\pi} \left(\Ind_{P_{i,n}(\bbA_{F,\fini})}^{G(\bbA_{F,{\fini}})}(\mu |\cdot|^{\lambda_{n,v}-n+(i-1)/2} \boxtimes \pi)\right) 
	\boxtimes \left(\boxtimes_{v \in \bfa} L(\lambda_v)\right),
	\]
	where $\mu$ runs through all Hecke characters in $\frakX_{(-1)^{\lambda_{n,v}}}$ and $\pi$ runs through all irreducible holomorphic cuspidal representation of $G_{n-i}(\bbA_F)$ such that $\pi_\infty \cong \boxtimes_{v \in \bfa} L(\lambda_{1,v},\ldots,\lambda_{n-i,v})$.
	\end{cor}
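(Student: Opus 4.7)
The approach is to combine the decomposition of $\calN(G_n,\chi_\lambda)_{\{Q_{i,n}\}}$ by cuspidal support with Theorem \ref{main}. By the standard spectral theory of automorphic forms, one has
\[
\calN(G_n,\chi_\lambda)_{\{Q_{i,n}\}} = \bigoplus_\tau \calN(G_n,\chi_\lambda)_{(M_{Q_{i,n}},\tau)},
\]
where $\tau$ runs over isomorphism classes of cuspidal automorphic representations of the standard Levi $M_{Q_{i,n}}\cong(\Res_{F/\bbQ}\GL_1)^i\times G_{n-i}$, so any such $\tau$ has the form $\mu_1\boxtimes\cdots\boxtimes\mu_i\boxtimes\pi$ for Hecke characters $\mu_j$ and a cuspidal automorphic representation $\pi$ of $G_{n-i}(\bbA_F)$. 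The proof reduces to identifying the $\tau$ giving a nonzero summand and then identifying each summand via Theorem \ref{main}(2).

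First I would apply Theorem \ref{main}(1): non-vanishing of the summand requires a $\frakk_n$-dominant weight $\omega$ with $\chi_\omega=\chi_\lambda$, $\omega_{n,v}=\cdots=\omega_{n-i+1,v}$ for every $v$, and $\omega_{n,v}$ independent of $v$. Under the hypothesis that $\lambda_{n,v}$ is independent of $v$ and $\lambda_{n-1,v}=\lambda_{n,v}$, the entries of $\lambda+\rho$ are distinct and positive by sufficient regularity, so $\omega=\lambda$ itself satisfies these constraints. Inspection of the archimedean side, using Theorem \ref{emb_main} on the highest-weight vectors in induced representations, then forces $\mu_1=\cdots=\mu_i=:\mu$, $\mu\in\frakX_{(-1)^{\lambda_{n,v}}}$, and $\pi_\infty\cong\boxtimes_{v\in\bfa}L(\lambda_{1,v},\ldots,\lambda_{n-i,v})$, so that $\pi$ must be holomorphic of the stated type.

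Having restricted to cuspidal supports of the form $\mu^{\boxtimes i}\boxtimes\pi$, I would then apply Theorem \ref{main}(2) to each nonzero summand, producing
\[
\calN(G_n,\chi_\lambda)_{(M_{Q_{i,n}},\mu^{\boxtimes i}\boxtimes\pi)} \cong \Ind_{P_{i,n}(\bbA_{F,\fini})}^{G(\bbA_{F,\fini})}\!\bigl(\mu|\cdot|^{\lambda_{n,v}-n+(i-1)/2}\boxtimes\pi\bigr)\boxtimes\bigl(\boxtimes_{v\in\bfa}L(\lambda_v)\bigr).
\]
Taking the direct sum over all admissible pairs $(\mu,\pi)$ then yields the stated decomposition. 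The principal obstacle lies in the second step: one must check that the numerical conditions required by Theorem \ref{main}(2) --- namely sufficient regularity relative to $i$ and the bound $\lambda_{n-i,v}>2n-i+1$ on the cuspidal $\pi$ --- do follow from the corollary's weaker hypothesis of sufficient regularity relative to $n$ together with the $\frakk_n$-dominance $\lambda_{n-i,v}\geq\lambda_{n-1,v}=\lambda_{n,v}$, and that no cuspidal support outside the restricted form $\mu^{\boxtimes i}\boxtimes\pi$ contributes under these hypotheses.
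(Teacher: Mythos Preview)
Your overall approach matches the paper's: decompose $\calN(G_n,\chi_\lambda)_{\{Q_{i,n}\}}$ by cuspidal support, pin down the shape of the contributing $\tau$, and then apply Theorem \ref{main}(2) termwise. One small correction: the reduction to $\tau\cong\mu^{\boxtimes i}\boxtimes\pi$ with $\pi$ holomorphic is obtained in the paper from Proposition \ref{decomp_NHAF_cusp_supp} (the analysis of constant terms), not from Theorem \ref{emb_main}.

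The step you leave underspecified is precisely the one you flag at the end: why must the highest weight of $\pi_\infty$ equal $(\lambda_{1,v},\ldots,\lambda_{n-i,v})$ rather than some other $\frakk_{n-i}$-dominant weight compatible with the infinitesimal character $\chi_\lambda$? Your observation that the entries of $\lambda+\rho$ are distinct and positive is the right starting point, but it does not by itself exclude the signed permutations in $W_{G_n}\cong\mathfrak{S}_n\ltimes(\bbZ/2\bbZ)^n$; saying ``$\omega=\lambda$ itself satisfies these constraints'' only shows $\lambda$ is a candidate, not the unique one. The paper isolates this as a separate Weyl-orbit computation (Lemma \ref{lem1}): if $\lambda_n>2n$, then every $\frakk_n$-dominant $\omega$ with $\chi_\omega=\chi_\lambda$ satisfies either $\omega=\lambda$ or $\omega_n<0$, and the second alternative is ruled out because a holomorphic cuspidal $\pi$ cannot contribute a weight with negative last entry (cf.\ Corollary \ref{leq_0} and Proposition \ref{vanishing}). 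This lemma also clarifies your numerical worry: its hypothesis is $\lambda_n>2n$, i.e.\ ``sufficiently regular relative to $1$'', which is strictly stronger than ``relative to $n$'', so in effect the stronger bound is what the argument actually uses.
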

	
	Finally we give an application to classical modular forms.
	For a $\frakk_n$-dominant integral weight $\lambda$, we denote by $(\rho_\lambda,V_\lambda)$ the irreducible representation of $K_{n,\bbC}$ with highest weight $\lambda$.
	Here $K_{n,\bbC}$ is the complexification of $K_{n,\infty}$.
	For an integral ideal $\frakn$ of $F$, put
	\[
	\Gamma (\frakn) = \left\{\begin{pmatrix}a & b \\ c & d \end{pmatrix} \in \Sp_{2n}(\calO_F) \,\middle|\, a,b,c,d \in \Mat_{n}(\calO_F), c \in \Mat_n(\frakn) \right\}.
	\]
	We say that an integral ideal $\frakn = \prod_{i} \frakp_i^{e_i}$ is square-free if $e_i \leq 1$ for all $i$.
	Here $\frakp_i$ runs through all prime ideals of $\calO_F$.
	For an irreducible finite-dimensional representation $(\rho_\lambda,V_\lambda)$ of $K_{n,\bbC}$ with highest weight $\lambda = (\lambda_{1,v},\ldots,\lambda_{n,v})_{v \in \bfa}$ and a congruence subgroup $\Gamma$ of $\Sp_{2n}(F)$, we denote by $M_{\rho_\lambda}(\Gamma)$ the space of holomorphic modular forms $f$ of weight $\rho_\lambda$ with respect to $\Gamma$.
	Put $\lambda' = (\lambda_{1,v},\ldots,\lambda_{n-1,v})_v \in \bigoplus_{v \in \bfa} \bbZ^n$.
	We define the Siegel operator $\Phi$ of a holomorphic modular form $f \in M_{\rho_\lambda}(\Gamma)$ by
	\[
	\Phi(f)(z) = \lim_{t \rightarrow \infty} f \left( \begin{pmatrix} t & 0 \\ 0 & z \end{pmatrix} \right), \qquad z \in \frakH_{n-1}^d.
	\]
	By the same method of \cite[Chap.~1]{1991_Freitag}, we may regard $\Phi(f)$ as a holomorphic modular form of weight $\rho_{\lambda'}$ with respect to $\Gamma \cap \Sp_{2(n-1)}(F)$.
	Fix a set of complete representatives $\{g_1,\ldots,g_h\}$ of $P_{1,n}(F) \bs G_n(\bbQ) / \Gamma$.
	We then define the ``global'' Siegel operator $\widetilde{\Phi}$ by
	\[
	\widetilde{\Phi}(f) = \left(\Phi(f|_{\rho}g_\ell)\right)_{1 \leq \ell \leq h}. 
	\]
	Here $|_\rho$ is the slash operator.
	Then the global Siegel operator $\widetilde{\Phi}$ defines a linear map
	\[
	\widetilde{\Phi} \colon M_{\rho_\lambda}(\Gamma) \longrightarrow \bigoplus_{\ell=1}^h M_{\rho_{\lambda'}}(\Gamma_\ell),
	\]
	where $\Gamma_\ell = \Sp_{2(n-1)}(F) \cap g_\ell \Gamma g_\ell^{-1}$.
	One of the most interesting problem is to determine the image of $\widetilde{\Phi}$.
	In general, $\widetilde{\Phi}$ is not surjective, but in many cases, $\widetilde{\Phi}$ is surjective.
	For the details, see \cite[p.123 Remark]{boe-Ib}.
	In this paper, we show the following:
	
	\begin{thm}[Theorem \ref{surjectivity}]
	Let $\lambda = (\lambda_{1,v},\ldots,\lambda_{n,v})_v$ be a $\frakk_n$-dominant integral weight.
	Suppose $\lambda_{n,v}$ is independent of $v$.
	We assume the either of the following two conditions:
	\begin{itemize}
	\item $\lambda_{n-1,v} = \lambda_{n,v}$ for any $v$.
	\item There exist $v, v' \in \bfa$ such that $\lambda_{n-1,v} \neq \lambda_{n-1,v'}$.
	\end{itemize}
	If an integral ideal $\frakn$ is square-free and $\lambda_{n,v}>2n$ for any $v$, the global Siegel operator $\widetilde{\Phi}$
	\[
	\widetilde{\Phi} \colon M_{\rho_\lambda}(\Gamma_0(\frakn)) \longrightarrow \bigoplus_\ell M_{\rho_{\lambda'}} (\Gamma_\ell)
	\]
	is surjective.
	\end{thm}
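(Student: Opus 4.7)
The plan is to recast the problem in adelic language and then apply the main decomposition Theorem \ref{main} together with the embedding Theorem \ref{emb_main}. A holomorphic modular form $f \in M_{\rho_\lambda}(\Gamma_0(\frakn))$ lifts to an adelic automorphic form $\varphi_f \in \calN(G_n,\chi_\lambda)$, right-invariant under the open compact subgroup $K_0(\frakn) \subset G_n(\bbA_{F,\fini})$ attached to $\Gamma_0(\frakn)$, whose archimedean component is a highest-weight vector of $\boxtimes_{v \in \bfa} L(\lambda_v)$; the holomorphicity condition translates into annihilation by $\frakp_{n,-}$. Under this identification the global Siegel operator $\widetilde{\Phi}$ becomes the constant term along $P_{1,n}$ paired against the archimedean highest-weight vector, so the surjectivity statement reduces to an adelic lifting question.

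Next I would decompose the target $\bigoplus_\ell M_{\rho_{\lambda'}}(\Gamma_\ell)$ into its cuspidal and Eisenstein constituents. The inequality $\lambda_{n-1,v} \geq \lambda_{n,v} > 2n$ means that the sufficient regularity hypothesis propagates to $G_{n-1}$, so by induction on rank and Theorem \ref{main} applied to $G_{n-1}$, each target summand splits as a sum of cusp forms and Klingen-type Eisenstein series built from cuspidal pairs $(\mu^{\boxtimes j} \boxtimes \pi)$ with $\pi$ cuspidal on $G_{n-1-j}(\bbA_F)$. The two alternative hypotheses on $\lambda_{n-1,v}$ are exactly what is needed in Theorem \ref{main}(1), applied to $G_{n-1}$, to force the archimedean parameters of any surviving Eisenstein datum to be compatible with the weight $\lambda_v$ on $G_n$. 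Thus surjectivity is reduced to producing, for each cuspidal datum of the decomposition, an explicit preimage in $\calN(G_n,\chi_\lambda)$.

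For such a fixed datum $(\mu, \pi)$ with $\pi$ cuspidal on $G_{n-i}$, I would invoke Theorem \ref{main}(2) with the same index $i$ on $G_n$, which yields
\[
\calN(G_n,\chi_\lambda)_{(M_{Q_{i,n}}, \mu^{\boxtimes i} \boxtimes \pi)} \cong \Ind_{P_{i,n}(\bbA_{F,\fini})}^{G_n(\bbA_{F,\fini})}\!\left(\mu|\cdot|^{\lambda_{n,v}-n+(i-1)/2} \boxtimes \pi\right) \boxtimes \left(\boxtimes_{v\in\bfa} L(\lambda_v)\right),
\]
valid since $\lambda_{n,v} > 2n \geq 2n-i+1$. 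A vector in this space is realized as the leading Taylor coefficient of a Klingen-type Eisenstein series attached to $(\mu,\pi)$, and its constant term along $P_{1,n}$ is computable by standard unfolding. Theorem \ref{emb_main} identifies the unique archimedean highest-weight vector inside $I_{P_{i,n}}(\lambda)$ and fixes, up to non-zero local intertwining constants, the scalar through which this constant term recovers the prescribed target piece on $G_{n-1}$.

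The main obstacle is level compatibility at the finite places: the constructed Eisenstein section must be right-$K_0(\frakn)$-invariant and its local $P_{1,n}$-constant terms must match the prescribed target section at every bad prime. The square-freeness of $\frakn$ is crucial here, for it makes $K_0(\frakn)$ of parahoric type at each prime dividing $\frakn$, so the induced representation $\Ind_{P_{i,n}(\bbA_{F,\fini})}^{G_n(\bbA_{F,\fini})}(\mu|\cdot|^{s} \boxtimes \pi)$ carries enough parahoric-fixed vectors to surject, under the constant-term map, onto the $K_0(\frakn) \cap M_{P_{i,n}}$-invariant sections on the Levi. Non-vanishing of the relevant local and archimedean intertwining operators at $s = \lambda_{n,v}-n+(i-1)/2$ is guaranteed by $\lambda_{n,v} > 2n$, which places the evaluation point in the region of absolute convergence and away from the poles of the normalizing $L$-factors. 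Combining this local surjection with the global decomposition produces the required preimage and establishes the surjectivity of $\widetilde{\Phi}$.
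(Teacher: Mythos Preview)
Your proposal is correct and follows essentially the same route as the paper: adelize via Lemma \ref{comm_diag}, decompose both source and target through Theorem \ref{main} on $G_n$ and $G_{n-1}$, handle the archimedean lifting with Theorem \ref{emb_main}, and use square-freeness of $\frakn$ for the finite-place level compatibility. The paper makes this last step concrete by a Bruhat-decomposition count showing that the natural map $P(\bbA_{\bbQ,\fini})\backslash G_n(\bbA_{\bbQ,\fini})/K_\Gamma \to P_{i,n}(\bbA_{\bbQ,\fini})\backslash G_n(\bbA_{\bbQ,\fini})/K_\Gamma$ (with $P$ the standard parabolic of Levi $\GL_1\times\GL_{i-1}\times G_{n-i}$) is a bijection, which is exactly the precise content behind your parahoric remark.
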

	
	\begin{rem}
	If there exist places $v,v' \in \bfa$ such that $\lambda_{n,v} \neq \lambda_{n.v'}$, the Siegel operator $\Phi$ is zero map.
	Hence we may assume that $\lambda_{n,v}$ is independent of $v$.
	If there exists $v \in \bfa $ such that $\lambda_{n-1,v} \neq \lambda_{n,v}$, the image of Siegel operator is contained in the space of cusp forms.
	However, in such a case, the image $\bigoplus_{\ell=1}^h M_{\rho_{\lambda'}}(\Gamma_\ell)$ may have non-cusp forms if $\lambda_{n-1,v}$ is independent of $v$.
	Hence the Siegel operator may not be surjective.
	\end{rem}
	
	To show this, we need to translate the global Siegel operator $\widetilde{\Phi}$ to the operator from $\calN(G_n)$ to $\calN(G_{n-1})$ and use some results in this paper.

\section{Basic properties of Fourier coefficients of nearly holomorphic modular forms}\label{FC_NHMF}
	In this section, we discuss certain properties of nearly holomorphic modular forms and its Fourier coefficients.
	
	\subsection{Definition}
	We denote by $\mathrm{Mat}_{m,n}$ the set of $m \times n$-matrices.
	Put $\Mat_{n} = \Mat_{n,n}$.
	Let $\GL_n$ and $\Sp_{2n}$ be the algebraic groups over $\bbZ$ defined by
	\[
	\GL_n(R) = \{g \in \Mat_n \mid \det g \in R^\times \}
	\]
	and
	\[
	\Sp_{2n} (R) = \left\{ g \in \GL_{2n}(R) \, \middle| \, {^t{g}}\begin{pmatrix}0_n & -\mathbf{1}_n \\ \mathbf{1}_n & 0_n \end{pmatrix} g = \begin{pmatrix}0_n & -\mathbf{1}_n \\ \mathbf{1}_n & 0_n \end{pmatrix}\right\}
	\]
	for a ring $R$, respectively.
	Set $\Sym_n = \{g \in \Mat_n \mid {^tg}=g\}$.
	Put
	\[
	\Sym^{(j)}_{n} = \left\{g = \begin{pmatrix}0_j & 0 \\ 0 & *\end{pmatrix} \in \Sym_n \,\middle|\, * \in \Sym_{n-j}\right\}
	\]
	for $0 \leq j \leq n$.
	
	For $n \in \bbZ_{\geq 1}$, set
	\[
	\frakH_n = \{z \in \Sym_n(\bbC) \mid \mathrm{Im}(z) > 0\}.
	\]
	The space $\frakH_n$ is called the Siegel upper half space of degree $n$.
	The Lie group $\Sp_{2n}(\bbR)$ acts on $\frakH_n$ by the rule
	\[
	\begin{pmatrix}a & b \\ c& d\end{pmatrix}(z) = (az+b)(cz+d)^{-1} , \qquad \begin{pmatrix}a & b \\ c& d\end{pmatrix} \in \Sp_{2n}(\bbR), \, z \in \frakH_n.
	\]
	Put
	\[
	K_{n,\infty} = \left\{g = \begin{pmatrix} a & b \\ c & d \end{pmatrix} \in \Sp_{2n}(\bbR) \, \middle| \, a = d, \, c = -b\right\}.
	\]
	Then $K_{n,\infty}$ is the group of stabilizers of $\sqrt{-1} \, \mathbf{1}_n \in \frakH_n$.
	Since the action of $\Sp_{2n}(\bbR)$ on $\frakH_n$ is transitive, we have $\frakH_n \cong \Sp_{2n}(\bbR)/ K_{n,\infty}$.
	
	We define the functions $r_{i,j}$ on $\frakH_n$ by $\mathrm{Im}(z)^{-1} = (r_{i,j}(z))_{i,j} \in \Sym_n(\bbR)$ for $z \in \frakH_n$.
	Note that $r_{i,j} = r_{j,i}$.
	For a polynomial $P$ in $n(n+1)/2$ variables with coefficients in $\bbC$, set $r_P = P((r_{i,j})_{1 \leq i \leq j \leq n})$.
	For a representation of $(\rho,V)$ of $K_{n,\bbC} \cong \GL_n(\bbC)$, the complexification of $K_{n,\infty}$, we say that a $V$-valued $C^\infty$-function $f$ is nearly holomorphic if there exists a finite collection of polynomials $P$ and $V$-valued holomorphic functions $f_P$ such that
		\[
			f(z) = \sum_P r_P(z) f_P(z), \qquad z \in \frakH_n.
		\]
		
	Let $F$ be a totally real field of degree $d$ with the ring of integers $\calO_{F}$.
	We denote by $\bfa = \{\infty_1,\ldots,\infty_d\}$ the set of embeddings of $F$ into $\bbR$.
	For a place $v$ of $F$, let $F_v$ be the $v$-completion of $F$.
	For an integral ideal $\frakn$ of $F$, set
	\[
	\Gamma(\frakn) = \left\{ \gamma \in \Sp_{2n}(\calO_F)\, \middle|\, \gamma - \mathbf{1}_{2n} \in \Mat_{2n}(\frakn)\right\}.
	\]
	The group $\Gamma(\frakn)$ is called the principal congruence subgroup of $\Sp_{2n}(F)$ of level $\frakn$.
	We say that a subgroup of $\Sp_{2n}(F)$ is a congruence subgroup if there exists an integral ideal $\frakn$ such that $\Gamma$ contains $\Gamma(\frakn)$ and $[\Gamma \colon \Gamma(\frakn)] < \infty$.
	In this section, we regard $\Sp_{2n}(F)$ as a subgroup of $\Sp_{2n}(\bbR)^d = \prod_{j=1}^d \Sp_{2n}(F_{\infty_j})$ by $\gamma \longmapsto (\infty_1(\gamma), \ldots, \infty_d(\gamma))$.
	Similarly, we regard a congruence subgroup $\Gamma$ of $\Sp_{2n}(F)$ as a subgroup of $\Sp_{2n}(\bbR)^d$.
	To simplify the notation, we denote by $K_{n,\infty}$ the maximal compact subgroup of $\Sp_{2n}(\bbR)^d$ which stabilizes $(\sqrt{-1}\,\mathbf{1}_n, \ldots, \sqrt{-1}\,\mathbf{1}_n) \in \frakH_n^d$ under the linear fractional transformation.
	Then the complexification of $K_{n,\bbC}$ is equal to $\GL_n(\bbC)^d$.
	For a finite-dimensional representation $(\rho,V)$, we denote by $c_f(h,y,\gamma)$ the Fourier coefficient of $f |_\rho \gamma$ at $h$, i.e.,
	\[
	(f|_{\rho}\gamma)(z) = \sum_{h \in \Sym_n(F)} c_f(h,y,\gamma) \mathbf{e}(\tr({hz})), \qquad z \in \frakH_n^d, \, y = \mathrm{Im}(z)
	\]
	where $\mathbf{e}(\tr(hz)) = \exp(2\pi\sqrt{-1}\,\sum_{j=1}^h (\infty_j(h)z_j))$ for $(z_1,\ldots,z_d) \in \frakH_n^d$ and $h \in \Sym_n(F)$.
	We define the slash operator $|_\rho$ on $C^\infty(\frakH_n^d,V)$ by
	\[
	(f|_\rho \gamma) (z_1,\ldots,z_d) = \rho((\infty_1(r) z_1 +\infty_1(s)), \ldots, (\infty_d(r) z_d + \infty_d(s)))^{-1} f(\gamma(z_1,\ldots,z_d)),
	\]
	for $f \in C^\infty(\frakH_n,V)$, $\gamma = (\begin{smallmatrix} p & q \\ r & s\end{smallmatrix}) \in \Sp_{2n}(F),$ and $(z_1,\ldots,z_d) \in \frakH^d_n$.
	The cusp condition of $f$ means that for any $\gamma \in \Sp_{2n}(F)$ and $h \in \Sym_n(F)$ which is not positive semidefinite, we have $c_f(h,y,\gamma) = 0$ .
	We say that a $V$-valued $C^\infty$-function $f$ is a nearly holomorphic modular form of weight $\rho$ with respect to $\Gamma$ if $f$ satisfies the following conditions (NH1), (NH2), and (NH3):
		\begin{itemize}
			\item[(NH1)] $f$ is a nearly holomorphic function.
			\item[(NH2)] $f|_\rho \gamma = f$ for all $\gamma \in \Gamma$.
			\item[(NH3)] $f$ satisfies the cusp condition.
		\end{itemize}

	We denote by $N_\rho(\Gamma)$ the space of nearly holomorphic modular forms of weight $\rho$ with respect to $\Gamma$.
	By Koecher principle, we can remove the condition (NH3) if $n >1$ or $F \neq \bbQ$. 
	For the proof, see \cite[Proposition 4.1]{Horinaga} for $n>1$.
	We can give the same proof for the case of $F \neq \bbQ$.
	For simplicity, if $\rho = \det^k$, we say that a modular form of weight $\det^k$ is a modular form of weight $k$.

\subsection{Fourier coefficients of nearly holomorphic modular forms}
	Let $(\rho,V)$ be a finite-dimensional representation of $K_{n,\bbC} \cong \GL_n(\bbC)^d$.
	Take a non-zero $V$-valued nearly holomorphic modular form $f$.
	We define an embedding $\bfm$ of $\GL_n$ into $\Sp_{2n}$ by
	\[
	\bfm(a) = \begin{pmatrix} a & 0 \\ 0 & {^{t}a^{-1}}\end{pmatrix}, \qquad a \in \GL_n.
	\]
	For $j \leq n$, we also regard $\Sp_{2j}$ as the subgroup of $\Sp_{2n}$ by
	\[
	\Sp_{2j} \longrightarrow \Sp_{2n} \colon \begin{pmatrix} a & b \\ c & d \end{pmatrix} \longmapsto \begin{pmatrix} \mathbf{1}_{n-j} &&&\\ &  a & & b \\ && \mathbf{1}_{n-j} & \\ & c & & d \end{pmatrix}.
	\]
	We denote by $\Sym_{n}(\bbR)_{>0}$ the set of positive-definite real symmetric matrices.
	
	\begin{lem}\label{lem_inv_FC}
	Suppose $f$ is a nearly holomorphic modular form of weight $\rho$ with respect to a congruence subgroup $\Gamma$.
	For $\bfm(a) \in \bfm(\GL_n(F)) \cap \Gamma$ with $a \in \GL_n(F)$, we have
	\[
	c_f(h,y,1) = \rho({^{t}a}) c_f({^{t}a^{-1}}h{a^{-1}},ay{^{t}a},1), \qquad y \in (\Sym_n(\bbR)_{>0})^d.
	\]
	Here for $y = (y_1,\ldots,y_d)$, we put $ay{^{t}a}=(\infty_1(a)y_1\cdot \infty_1({^ta}),\ldots, \infty_d(a)y_d \cdot \infty_d({^ta}))$ with $\bfa = \{\infty_1,\ldots,\infty_d\}$.
	\end{lem}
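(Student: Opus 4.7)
The plan is to apply the modularity identity $f|_\rho \bfm(a) = f$, which holds by hypothesis, and then compare the Fourier expansions of the two sides. This is a routine change-of-variables argument in the Fourier series, so there is no serious obstacle — the main care needed is in bookkeeping the $\rho$-factor and the symplectic action of $\bfm(a)$ on the Siegel upper half space.

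First I would unwind the slash operator. Writing $\bfm(a) = \begin{pmatrix} a & 0 \\ 0 & {^ta^{-1}} \end{pmatrix}$, the linear fractional action is $\bfm(a)(z) = aza^{T}$ componentwise across the $d$ archimedean places, and the automorphy factor is $rz + s = {^ta^{-1}}$. Hence, using that $\rho$ is a representation of $\GL_n(\bbC)^d$,
\[
(f|_\rho \bfm(a))(z) = \rho({^ta})\, f(a z {^ta}),
\]
where $a$ acts diagonally via the embeddings $\infty_j$. The assumption $\bfm(a) \in \Gamma$ then gives $f(z) = \rho({^ta})\, f(aza^{T})$.

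Next I would expand both sides in Fourier series. On the left, one has $f(z) = \sum_h c_f(h,y,1)\mathbf{e}(\tr(hz))$. On the right, we use the expansion of $f$ at the point $az{^ta}$, whose imaginary part is $ay{^ta}$, obtaining
\[
f(az{^ta}) = \sum_{h'} c_f(h', ay{^ta}, 1)\,\mathbf{e}(\tr(h' \cdot a z {^ta})).
\]
The identity $\tr(h' a z {^ta}) = \tr({^ta} h' a \cdot z)$ lets me perform the substitution $h = {^ta} h' a$, equivalently $h' = {^ta^{-1}} h a^{-1}$. Multiplying through by the $\rho({^ta})$ factor and comparing coefficients of $\mathbf{e}(\tr(hz))$ on both sides via the uniqueness of Fourier expansion then yields exactly the claimed identity
\[
c_f(h,y,1) = \rho({^ta}) c_f({^ta^{-1}} h a^{-1}, a y {^ta}, 1).
\]
The only subtle point is to verify that the substitution $h \mapsto {^ta} h a$ is a bijection on $\Sym_n(F)$ (it is, since $a \in \GL_n(F)$), so no lattice indexing issues arise.
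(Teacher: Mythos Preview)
Your proposal is correct and follows essentially the same route as the paper: apply the modularity $f|_\rho\bfm(a)=f$, unwind the slash operator to get $f(z)=\rho({^ta})f(az{^ta})$, expand both sides in Fourier series, reindex via $h'={^ta^{-1}}ha^{-1}$, and compare coefficients. The paper's presentation differs only cosmetically (it expands the slashed side directly rather than first writing the scalar identity), so there is nothing to add.
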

	\begin{proof}
	By the definition of $f$, we have
	\[
	(f|_\rho \bfm(a))(z) = f(z),\qquad z \in \frakH_n^d
	\]
	for $\bfm(a) \in \bfm(\GL_n(F)) \cap \Gamma $.
	The Fourier expansion of the left hand side is
	\[
	\sum_h \rho({^{t}a}) c_f(h,ay{^{t}a},1) \bfe (\tr(haz{^{t}a})) = \sum_h \rho({^{t}a}) c_f(h,ay{^{t}a},1) \bfe (\tr({^{t}a}haz)).
	\]
	Replace $h$ to ${^{t}a^{-1}}ha^{-1}$.
	We then have
	\[
	(f|_\rho \bfm(a)) (z) = \sum_h \rho({^{t}a}) c_f({^{t}a^{-1}}ha^{-1},ay{^{t}a},1) \bfe(\tr(hz)).
	\]
	Hence for any $h \in \Sym_n(F)$, we have
	\[
	\rho({^{t}a}) c_f({^{t}a^{-1}}ha^{-1},ay{^{t}a},1) = c_f(h,y,1), \qquad y \in (\Sym_n(\bbR)_{> 0})^d.
	\]
	This completes the proof.
	\end{proof}
	
	Let $P(\Sym_n(\bbC)^d,V)$ be the space of all $V$-valued polynomials on $\Sym_n(\bbC)^d$ with coefficients in $\bbC$.
	For any Fourier coefficient $c_f(h,y,1)$, there exists a polynomial $P_{f,h} \in P(\Sym_n(\bbC)^d,V)$ such that
	\[
	P_{f,h}((y_1^{-1},\ldots,y_d^{-1})) = c_f(h,(y_1,\ldots,y_d),1), \qquad  (y_1,\ldots,y_d) \in (\Sym_n(\bbR)_{>0})^d,
	\]
	by the definition of near holomorphy of $f$.
	We suppose that $\GL_n(\bbC)^d$ acts on $P(\Sym_n(\bbC)^d,V)$ by the rule
	\[
	(a \cdot P)(z) = \rho(a) P(({a_1^{-1}}z_1{^{t}a_1^{-1}},\ldots,{a_d^{-1}}z_d{^{t}a_d^{-1}})), \qquad a = (a_1,\ldots,a_d) \in \GL_n(\bbC)^d, P \in P(\Sym_n(\bbC)^d,V).
	\]
	We denote by $\sigma$ this representation of $\GL_n(\bbC)^d$ on $P(\Sym_n(\bbC)^d,V)$.
	Then, if $\bfm({^{t}a}) \in \bfm(\GL_n(F)) \cap \Gamma$ and ${^ta}^{-1}ha^{-1} = h$, we have
	\[
	P_{f,h}((y_1^{-1},\ldots,y_d^{-1})) = (\sigma(a) P_{f,h}) ((y_1^{-1},\ldots,y_d^{-1})), \qquad (y_1,\ldots,y_d) \in (\Sym_n(\bbR)_{>0})^d.
	\]
	
	\begin{lem}\label{zariski_dense}
	The set $(\Sym_n(\bbR)_{>0})^d$ is Zariski dense in $\Sym_{n}(\bbC)^d$ over $\bbC$.
	\end{lem}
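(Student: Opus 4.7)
The plan is to identify $\Sym_n(\bbC)^d$ with the affine space $\bbA^{N}_\bbC$ where $N = d \cdot n(n+1)/2$, using the entries on and above the diagonal as coordinates. Under this identification, the real subspace $\Sym_n(\bbR)^d$ corresponds to $\bbR^N \subset \bbC^N$, and the open cone $(\Sym_n(\bbR)_{>0})^d$ is a non-empty Euclidean-open subset of $\bbR^N$ (for example, all small perturbations of $(\mathbf{1}_n,\ldots,\mathbf{1}_n)$ lie inside it, since positive definiteness is an open condition).

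Thus Zariski density over $\bbC$ reduces to the standard fact that if a polynomial $P \in \bbC[x_1,\ldots,x_N]$ vanishes on a non-empty Euclidean-open subset $U$ of $\bbR^N$, then $P$ is identically zero as a polynomial on $\bbC^N$. I would prove this fact by induction on $N$: write $P = \sum_k P_k(x_1,\ldots,x_{N-1}) x_N^k$; for each fixed $(x_1,\ldots,x_{N-1})$ in the projection of $U$, the one-variable polynomial $P(x_1,\ldots,x_{N-1},\cdot)$ has infinitely many real zeros (an interval), so each coefficient $P_k(x_1,\ldots,x_{N-1})$ vanishes on a non-empty open subset of $\bbR^{N-1}$, and the inductive hypothesis finishes the argument. (Equivalently, one may invoke analytic continuation: $P$ is entire on $\bbC^N$ and vanishes on a real open set, hence vanishes identically.)

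Combining these two observations gives the lemma: any polynomial on $\Sym_n(\bbC)^d$ vanishing on $(\Sym_n(\bbR)_{>0})^d$ vanishes as a polynomial, so the Zariski closure of $(\Sym_n(\bbR)_{>0})^d$ is all of $\Sym_n(\bbC)^d$. There is no real obstacle here; the only thing to be slightly careful about is distinguishing the Euclidean topology (used to produce an open set inside the positive cone) from the Zariski topology (in which we claim density), which is handled cleanly by the polynomial-vanishing lemma above.
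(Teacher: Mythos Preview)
Your proof is correct and follows the standard route: identify the space with affine $N$-space, observe that the positive-definite cone is Euclidean-open in $\bbR^N$, and invoke the identity principle for polynomials (via induction on the number of variables, or analytic continuation). The paper takes a different, more combinatorial approach: it fixes a polynomial $f$ with degree at most $t_{i,j}^{(k)}$ in each coordinate $x_{i,j}^{(k)}$, then explicitly constructs finite sets $S_{i,j}^{(k)} \subset \bbZ$ (diagonal entries chosen large enough relative to the off-diagonal bounds to force positive definiteness) so that the product grid $\prod S_{i,j}^{(k)}$ lies entirely inside $(\Sym_n(\bbR)_{>0})^d$ and has $|S_{i,j}^{(k)}| = t_{i,j}^{(k)}+1$; Alon's combinatorial Nullstellensatz then gives $f \equiv 0$. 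Your argument is shorter and more conceptual, while the paper's argument is constructive in that it produces an explicit finite interpolation grid inside the positive cone witnessing the vanishing. Either is perfectly adequate for the lemma's role in the paper.
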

	\begin{proof}
	Take a polynomial $f$ in the variable $x = ((x_{i,j}^{(k)})_{1 \leq i,j\leq n})_{1 \leq k \leq d} \in \Sym_{n}(\bbC)^d$.
	Suppose that $f$ is zero on $(\Sym_{n}(\bbR)_{>0})^d$.
	We assume that the degree of $f$ as a polynomial in $x_{i,j}^{(k)}$ is at most $t_{i,j}^{(k)}>0$.
	We now take sets $S_{i,j}^{(k)}$ as follows:
	If $i \neq j$, put 
	\[
	S_{i,j}^{(k)} = \left\{1, 2, \ldots, t_{i,j}^{(k)} + 1\right\}.
	\]
	If $i = j$, put 
	\[
	S_{i,i}^{(k)} = \left\{n\left(\max_{p \neq q} t_{p,q}^{(k)}\right)^2, 1+ n\left(\max_{p \neq q} t_{p,q}^{(k)}\right)^2, \ldots, t_{i,i} + n\left(\max_{p \neq q} t_{p,q}^{(k)}\right)^2\right\}.
	\]
	Then $\#(S_{i,j}^{(k)})$ is equal to $t_{i,j}^{(k)} + 1$ and for any $x_{i,j}^{(k)} \in S_{i,j}^{(k)}$ with $x_{i,j}^{(k)} = x_{j,i}^{(k)}$, the symmetric matrix $(x_{i,j}^{(k)})_{1\leq i, j \leq n}$ is a positive definite real matrix.
	Hence the polynomial $f$ is zero on
	\[
	\prod_{1 \leq i \leq j \leq n, 1 \leq k \leq d} S_{i,j}^{(k)} \subset (\Sym_{n}(\bbR)_{>0})^d.
	\]
	By \cite[Lemma 2.1]{Noga-Alon}, we have $f \equiv 0$.
	This completes the proof.
 	\end{proof}
 	
 	By Lemma \ref{zariski_dense}, a polynomial $P_{f,h}$ is uniquely determined by $f$ and $h$, and we have
 	\[
 	P_{f,h} \equiv (\sigma(a) P_{f,h})
 	\]
 	as a polynomial on $\Sym_n(\bbC)^d$ for any $\bfm({^{t}a}) \in \bfm(\GL_n(F)) \cap \Gamma$ with ${a}^{-1}h{^{t}a^{-1}} = h$.
	
	Let $\SL_j$ and $N_{j,n,\GL}$ be the subgroup and a unipotent subgroup of $\GL_n$ defined by
	\[
	\SL_j = \left\{\begin{pmatrix} a & 0 \\ 0 & \mathbf{1}_{n-j}\end{pmatrix} \, \middle | \, a \in \SL_{j}\right\}
	\]
	and
	\[
	N_{j,n,\GL} = \left\{\begin{pmatrix} \mathbf{1}_j & * \\ 0 & \mathbf{1}_{n-j}\end{pmatrix} \in \GL_n \,\middle|\, * \in \Mat_{j,n-j}\right\},
	\]
	respectively.
	Put $N_{j,n,\GL}^\op = \{{^tg} \mid g \in N_{j,n,\GL}\}$.
	
	Set $F_\infty = \prod_{v \in \bfa}F_v$.
	Take $h \in \Sym_n^{(j)}(F)$.
	Then, we have 
	\[
	{a}^{-1}h{^{t}a^{-1}} = h
	\] 
	for any $a \in \SL_j(F_\infty)N_{j,n,\GL}(F_\infty)$.
	By Lemma \ref{lem_inv_FC}, the polynomial $P_{f,h}$ is fixed by a subgroup $\bfm^{-1}(\Gamma \cap \bfm(\SL_j(F_\infty)N_{j,n,\GL}(F_\infty)))$ of $\GL_n(F)$ under the action $\sigma$.
	Since $\bfm^{-1}(\Gamma \cap \bfm(N_{j,n,\GL}(F_\infty)))$ is Zariski dense in $N_{j,n,\GL}(F_\infty)$ and $\bfm^{-1}(\Gamma \cap \bfm(\SL_j(F_\infty)))$ is Zariski dense in $\SL_j(F_\infty)$ over $\bbR$ by the Borel density theorem \cite{1960_Borel}, the polynomial $P_{f,h}$ is fixed by $\SL_{j}(F_\infty)N_{j,n,\GL}(F_\infty)$.
	Indeed, the representation $\sigma$ of $\GL_n(\bbC)$ on $P(\Sym_n(\bbC))$ is rational and hence the restriction $\sigma|_{\SL_j(F_\infty)N_{j,n,\GL}(F_\infty)}$ is a polynomial representation.
	We then obtain the following result:
	
	\begin{prop}\label{Fourier_coeff}
	We suppose a symmetric matrix $h$ lies in $\Sym_n^{(j)}(F)$.
	Then, we have
	\[
	\rho({^ta}) c_f(h,{a}y{^ta},1) = c_f(h,y,1), \qquad y \in (\Sym_n(\bbR)_{>0})^d
	\]
	for any $a \in \SL_{j}(F_\infty)N_{j,n,\GL}(F_\infty)$.
	\end{prop}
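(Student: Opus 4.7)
The statement refines Lemma \ref{lem_inv_FC} by promoting the invariance of $c_f(h,y,1)$ from the arithmetic group $\bfm^{-1}(\bfm(\GL_n(F)) \cap \Gamma)$ to the full real group $\SL_j(F_\infty) N_{j,n,\GL}(F_\infty)$, and the plan is essentially a Zariski-density argument built on top of Lemma \ref{lem_inv_FC} and the polynomial description of the Fourier coefficients given in the near-holomorphy hypothesis.

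First, I would fix $h \in \Sym_n^{(j)}(F)$ and observe that the group $\SL_j \cdot N_{j,n,\GL}$ stabilizes $h$ under the action $a \cdot h = {^ta^{-1}} h a^{-1}$: this is immediate from the block form of $h$ (its nonzero block sits in the lower-right $(n-j)\times(n-j)$ corner, which is unaffected by the action of $\SL_j$ on the upper-left block and of $N_{j,n,\GL}$ on the off-diagonal block). Consequently, for any $a \in \SL_j(F) \cdot N_{j,n,\GL}(F)$ with $\bfm(a) \in \Gamma$, Lemma \ref{lem_inv_FC} gives
\[
\rho({^ta})\, c_f(h, a y {^ta}, 1) = c_f(h, y, 1), \qquad y \in (\Sym_n(\bbR)_{>0})^d.
\]

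Next, I would rewrite this identity polynomially. By near-holomorphy, there is a unique polynomial $P_{f,h} \in P(\Sym_n(\bbC)^d, V)$ with $P_{f,h}((y_1^{-1},\ldots,y_d^{-1})) = c_f(h,(y_1,\ldots,y_d),1)$ on $(\Sym_n(\bbR)_{>0})^d$; uniqueness follows from Lemma \ref{zariski_dense}. The identity above then reads $\sigma(a) P_{f,h} = P_{f,h}$ as polynomials on $\Sym_n(\bbC)^d$, again after invoking Zariski density to pass from an equality on the positive cone to an equality of polynomials.

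The main step is the density upgrade. The congruence-subgroup condition ensures that $\bfm^{-1}(\Gamma \cap \bfm(\SL_j(F_\infty)))$ is an arithmetic lattice in the semisimple real group $\SL_j(F_\infty) = \prod_v \SL_j(F_v)$, so by the Borel density theorem \cite{1960_Borel} it is Zariski dense; similarly, $\bfm^{-1}(\Gamma \cap \bfm(N_{j,n,\GL}(F_\infty)))$ is a cocompact lattice in the unipotent group $N_{j,n,\GL}(F_\infty)$ and hence Zariski dense. Since $\sigma$ is a rational (in particular algebraic) representation of $\GL_n(\bbC)^d$ on $P(\Sym_n(\bbC)^d, V)$, its restriction to $\SL_j(F_\infty) N_{j,n,\GL}(F_\infty)$ is algebraic, so the set of $a$ fixing $P_{f,h}$ is Zariski closed. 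This closed set contains the dense arithmetic subgroup and therefore all of $\SL_j(F_\infty) N_{j,n,\GL}(F_\infty)$.

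Finally, evaluating the polynomial identity $\sigma(a) P_{f,h} = P_{f,h}$ at points of the form $(y_1^{-1},\ldots,y_d^{-1})$ with $y \in (\Sym_n(\bbR)_{>0})^d$ and unwinding the definition of $\sigma$ yields the desired formula $\rho({^ta}) c_f(h, ay{^ta}, 1) = c_f(h,y,1)$ for every $a \in \SL_j(F_\infty) N_{j,n,\GL}(F_\infty)$. The only subtle point I foresee is keeping track of the change of variables between $\sigma$ (which acts by $z \mapsto a^{-1} z \, {^ta^{-1}}$) and the substitution $y \mapsto ay {^ta}$ in $c_f$; this is a straightforward bookkeeping check once the density argument is in place.
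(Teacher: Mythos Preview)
Your proposal is correct and follows essentially the same argument as the paper: both use Lemma \ref{lem_inv_FC} together with the stabilizer observation for $h \in \Sym_n^{(j)}(F)$ to get $\sigma(a)P_{f,h}=P_{f,h}$ for the arithmetic subgroup, then invoke the Borel density theorem for $\SL_j$ and Zariski density for the unipotent factor, combined with the rationality of $\sigma$, to extend the invariance to all of $\SL_j(F_\infty)N_{j,n,\GL}(F_\infty)$. The paper presents this reasoning in the paragraphs immediately preceding the proposition rather than as a formal proof afterward, but the content is the same.
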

	
	\begin{rem}
	If $f$ is holomorphic, the similar result is already known by \cite[p.~188]{Weissauer} and \cite[Chap.~1]{1991_Freitag}.
	The proof in this paper is totally same as their proofs.
	\end{rem}
	
	Let $j$ be the $\GL_n(\bbC)^d$-valued function on $\Sp_{2n}(\bbR)^d \times \frakH_n^d$ defined by 
	\[
	j((\begin{smallmatrix}p_1&q_1\\ r_1&s_1\end{smallmatrix}),\ldots,(\begin{smallmatrix}p_d&q_d\\ r_d&s_d\end{smallmatrix}),z_1,\ldots,z_d) = (r_1z_1+d_1,\ldots,r_dz_d+s_d).
	\]
	Then for $a = (a_1,\ldots,a_d) \in \GL_n(\bbR)^d$ and $z = (z_1,\ldots,z_d) \in \frakH_n^d$, we have $j(\bfm(a),z) = ({^ta_1^{-1}},\ldots,{^ta_{d}^{-1}})$.
	The isomorphism classes of irreducible holomorphic finite-dimensional representations of $\GL_n(\bbC)^d$ is equal to the set of  highest weights.
	The set of highest weights is equal to 
	\[
	\left\{(\lambda_{1,v},\ldots,\lambda_{n,v})\in \bigoplus_{v \in \bfa} \bbZ^n \,\middle|\, \text{$\lambda_{1,v} \geq \cdots \geq \lambda_{n,v}$ for any $v \in \bfa$}\right\}.
	\]
	We denote by $\rho_\lambda$ the irreducible representation corresponding to the highest weight $\lambda$. 
	The choice of a positive root system is the same as in (\ref{pos_root_sys}).
	Put
	\[
	H_{j,n}^\bfa = \prod_{v \in \bfa}\SL_j N_{j,n,\GL}, \qquad {^tH_{j,n}^\bfa} = \prod_{v \in \bfa}\SL_jN_{j,n,\GL}^\op.
	\]
	Suppose $f$ is a holomorphic modular form on $\frakH_n^d$ of weight $(\rho_\lambda,V)$.
	Then $c_f(h) = c_f(h,y,1) \in V$ is meaningful, since $c_f(h,y,1)$ is independent of $y$.
	Hence $c_f(h)$ lies in the space of ${^tH_{j,n}^\bfa}$-fixed vectors in $V$.
	For a subgroup $H$ of $\GL_n(\bbC)^d$, we denote by $V^H$ the space of $H$-fixed vectors in $V$.
	Since $\rho_\lambda$ is a holomorphic representation, $c_f(h)$ lies in $V^{^tH^\bfa_{j,n}(\bbC)}$.
	We regard $\GL_j\times\GL_{n-j}$ as a subgroup of $\GL_n$ by
	\[
	\GL_j\times\GL_{n-j} = \left\{\begin{pmatrix}a&0\\0&d\end{pmatrix} \, \middle|\, a \in \GL_j , \, d \in \GL_{n-j}\right\}.
	\]
	As a $\GL_j(\bbC)^d \times \GL_{n-j}(\bbC)^d$-representation, $V^{{^tH_{j,n}^\bfa}(\bbC)}$ is irreducible, since the space of lowest weight vectors in $V^{^tH_{j,n}^\bfa}$ as a $\GL_j(\bbC)^d \times \GL_{n-j}(\bbC)^d$-representation is a one-dimensional space.
	Moreover, by the $\SL_j(\bbC)$-invariance, if $c_f(h) \neq 0$, we have $\lambda_{n,v} = \cdots = \lambda_{n-j+1,v}$ for any $v$.
	Note that the highest weight of $V^{\SL_j(\bbC)N_{j,n,\GL_n}^\op(\bbC)}$ is 
	\[
	\boxtimes_{v\in \bfa}((\lambda_{n-j+1,v},\ldots,\lambda_{n,v}) \boxtimes (\lambda_{1,v},\ldots,\lambda_{n-j,v}))
	\]
	as a $\GL_j(\bbC)^d \times \GL_{n-j}(\bbC)^{d}$-representation.
	For the details of the above discussion, see \cite[\S 2]{Weissauer}.
	Suppose $f$ is a modular form with respect to $\Gamma(\frakn)$ for an integral ideal $\frakn$ of $F$.
	We consider the group $\Gamma(\frakn) \cap \bfm(\GL_j(F))$.
	In this case, we have
	\[
	\rho_\lambda(a)c_f(h) = c_f(h).
	\]
	Hence we obtain $\prod_{v \in \bfa}(\det(v(a)))^{\lambda_{n,v}}c_f(h) = c_f(h)$, by $\lambda_{n,v} = \cdots = \lambda_{n-j+1,v}$ for any $v$.
	Since the image $\det(\Gamma\cap \bfm(\GL_j(F)))$ contains $1+\frakn$, the weight $\lambda_{n,v}$ is independent of $v$ by the Dirichlet's unit theorem.
	This is an analogue of Remark 4.8 in Chapter 1 of \cite{1990_Freitag}.
	Summarizing the above discussion, we have the following:
	
	\begin{prop}\label{rigidity_HMF}
	Let $f$ be a holomorphic modular form of weight $\rho_\lambda$.
	Suppose $c_f(h)$ is non-zero for some $h \in \Sym^{(j)}_n(F)$.
	Then we have $\lambda_{n,\infty_1} = \cdots = \lambda_{n,\infty_d}$ and $\lambda_{n,v} = \cdots = \lambda_{n-j+1,v,}$ for any $v\in\bfa$.
	\end{prop}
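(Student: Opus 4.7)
The claim has two parts: the within-place block equality $\lambda_{n-j+1,v} = \cdots = \lambda_{n,v}$, and the across-places equality $\lambda_{n,\infty_1} = \cdots = \lambda_{n,\infty_d}$. Both will follow from the Fourier-coefficient invariance of Proposition \ref{Fourier_coeff} combined with a branching computation for $\rho_\lambda$ and Dirichlet's unit theorem. Since $f$ is holomorphic, $c_f(h) := c_f(h,y,1)$ is independent of $y$, so one works with a single non-zero vector in $V$.

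The plan for the first part is to locate $c_f(h)$ in an explicitly identifiable subspace of $V$ and then read off the constraint on the highest weight. Proposition \ref{Fourier_coeff} already gives $\rho_\lambda({^ta}) c_f(h) = c_f(h)$ for all $a \in \SL_j(F_\infty) N_{j,n,\GL}(F_\infty)$; because $\rho_\lambda$ is a polynomial representation of $\GL_n(\bbC)^d$, the invariance extends by complexification to $^tH^\bfa_{j,n}(\bbC)$, so $c_f(h) \in V^{^tH^\bfa_{j,n}(\bbC)}$. A standard highest-weight/branching computation (of the type already used in \cite{Weissauer}) shows that this invariant subspace is irreducible as a $\GL_j(\bbC)^d \times \GL_{n-j}(\bbC)^d$-module with highest weight $\boxtimes_{v \in \bfa}\bigl((\lambda_{n-j+1,v},\ldots,\lambda_{n,v}) \boxtimes (\lambda_{1,v},\ldots,\lambda_{n-j,v})\bigr)$. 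Since $c_f(h)$ is a non-zero vector in this module and is additionally fixed by $\SL_j(\bbC)^d$, the $\GL_j$-factor must be a determinantal character at every place $v$, which forces $\lambda_{n-j+1,v} = \cdots = \lambda_{n,v}$.

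For the cross-archimedean statement, the plan is to feed $\bfm(\GL_j(\calO_F))$-invariance into Lemma \ref{lem_inv_FC}. Choose $\frakn$ with $\Gamma(\frakn) \subset \Gamma$ and consider $\bfm(a)$ for $a \in \GL_j(\calO_F)$ whose $\GL_j$-block lies in $\mathbf{1}_j + \Mat_j(\frakn)$; such $a$ stabilize $h$. The block-equality from the preceding paragraph collapses the action of $\rho_\lambda({^ta})$ on $c_f(h)$ to multiplication by $\prod_{v \in \bfa}(v(\det a))^{\lambda_{n,v}}$, so this scalar must equal $1$. The determinants of such $a$ form a finite-index subgroup of $\calO_F^\times$; passing to squares removes sign ambiguities, and Dirichlet's unit theorem then forces the exponents $(\lambda_{n,v})_v$ to be constant in $v$. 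The only step requiring genuine care—and the main obstacle I anticipate—is upgrading the arithmetic invariance of Proposition \ref{Fourier_coeff} to full $\bbC$-algebraic invariance of $c_f(h)$ under $^tH^\bfa_{j,n}(\bbC)$; this is where the Borel density theorem \cite{1960_Borel} together with the polynomial nature of $\rho_\lambda$ is essential, and the rest of the argument is then purely representation-theoretic.
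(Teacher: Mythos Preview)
Your proposal is correct and follows essentially the same route as the paper's own argument: use Proposition~\ref{Fourier_coeff} (whose proof already absorbs the Borel density step) to place $c_f(h)$ in $V^{{^tH_{j,n}^\bfa}(\bbC)}$, identify this space as the irreducible $\GL_j(\bbC)^d \times \GL_{n-j}(\bbC)^d$-module of highest weight $\boxtimes_v\bigl((\lambda_{n-j+1,v},\ldots,\lambda_{n,v})\boxtimes(\lambda_{1,v},\ldots,\lambda_{n-j,v})\bigr)$, deduce the within-place block equality from $\SL_j(\bbC)^d$-invariance, and then obtain the cross-place equality from the determinant-character identity $\prod_{v}(v(\det a))^{\lambda_{n,v}}=1$ for $a$ in a congruence subgroup of $\GL_j(\calO_F)$ via Dirichlet's unit theorem. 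The only minor inaccuracy is your framing of the ``main obstacle'': Proposition~\ref{Fourier_coeff} already delivers invariance under the full real group $\SL_j(F_\infty)N_{j,n,\GL}(F_\infty)$, so the passage to $\bbC$-points is immediate from the holomorphy of $\rho_\lambda$ and needs no further appeal to Borel density.
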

	
\section{An Embedding of a highest weight representation of $\Sp_{2n}(\bbR)$ into certain parabolic inductions}\label{local_arch}
	In this section, we prove some technical statements for induced representations of the real symplectic group of rank $n$.
	
	\subsection{Irreducible highest weight representations}
	
	Set $G_n = \Res_{F/\bbQ}\Sp_{2n}$ where $\Res$ is the Weil restriction and $F$ is a totally real field.
	In this section, we assume $F = \bbQ$ for simplicity.
	Put $\frakg_n = \mathrm{Lie}(G_n(\bbR)) \otimes_\bbR \bbC$ and $\frakk_n = \mathrm{Lie}(K_{n,\infty})\otimes_\bbR\bbC$.
	We denote by $\calZ_n$ the center of the universal enveloping algebra $\calU(\frakg_n)$.
	Note that in this setting, $G_n(\bbR)$ is equal to $\Sp_{2n}(\bbR)$ as $F=\bbQ$.
	We then obtain the well-known decomposition
	\[
	\frakg_n =  \frakk_n + \frakp_{n,+} +\frakp_{n,-}
	\]
	where $\frakp_{n,+}$ (resp.~$\frakp_{n,-}$) is the Lie subalgebra of $\frakg_n$ corresponding to the holomorphic tangent space (resp.~anti-holomorphic tangent space) of $\frakH_n$ at $\sqrt{-1}\, \mathbf{1}_n$.
	It is well-known that the Lie algebras $\frakg_n$ and $\frakk_n$ have the same Cartan subalgebra.
	We take such a Cartan subalgebra.
	Then the root system of $\frakg_n$ is 
	\[
	\Phi = \{ \; \pm (e_i + e_j), \; \pm(e_k - e_l), \quad 1\leq i \leq j \leq n, 1 \leq k < l \leq n \; \}.
	\]
	We put the set
	\begin{align}\label{pos_root_sys}
	\Phi^+ = \{ \; - (e_i + e_j), \; e_k - e_l, \quad 1\leq i \leq j \leq n, 1 \leq k < l \leq n \; \}
	\end{align}
	to be a positive root system.

	Let $\rho$ be half the sum of positive roots.
	Put 
	\[
	\Lambda = \{ \lambda = (\lambda_1, \ldots, \lambda_n) \in \bbC^n \mid \lambda_i - \lambda_{i+1} \in \bbZ, \; i = 1,\ldots,n-1 \}.
	\] 
	We say that a weight $\lambda = (\lambda_1,\ldots,\lambda_n) \in \Lambda$ is $\frakk_n$-dominant if $\lambda_i - \lambda_{i+1} \in \bbZ_{\geq 0}$ for any $1 \leq i \leq n-1$.
	Let $\Lambda^+$ be the set of $\frakk_n$-dominant weights.
	For a $\frakk_n$-dominant weight $\lambda \in \Lambda^+$, we denote by $\rho_\lambda$ an irreducible $\calU(\frakk_n)$-module with highest weight $\lambda$.
	Let $V_\lambda$  be any model of $\rho_\lambda$.
	We regard $V_\lambda$ as a module for $\frakp_n = \frakp_{n,-} + \frakk_n$ by letting $\frakp_{n,-}$ act trivially.
	Set 
	\[
	N(\lambda) = \calU(\frakg_n) \otimes_{\calU({\frakp_n})} V_\lambda .
	\]
	Then $N(\lambda)$ has a natural structure of a left $\calU(\frakg_n)$-module.
	The modules $N(\lambda)$ are often called the generalized Verma module with highest weight $\lambda$ with respect to a parabolic subalgebra  $\frakp_n$.
	It is well-known that the module $N(\lambda)$ has a unique irreducible quotient $L(\lambda)$.
	We denote by $\chi_\lambda$ an infinitesimal character of $L(\lambda)$.
	In this setting, $\chi_\lambda = \chi_\mu$ is equivalent to $\lambda = w \cdot \mu$ for some $w \in W_{n}$,
where $W_n$ is the Weyl group of $\frakg_n$ and $w \cdot \lambda = w(\lambda + \rho) - \rho$.
	Here $w \cdot \lambda$ is called the dot-action.
	See \cite[\S 1.8]{cat_o}.
	Note that by $\chi_0$ we mean the infinitesimal character of the trivial representation of $\calU(\frakg_n)$.
	For a $\calU(\frakg_n)$-module $\pi$, we say that $\pi$ has an infinitesimal character if there exists an infinitesimal character $\chi$ such that $z \cdot v = \chi(z)v$ for any $v \in \pi$ and $z \in \calZ_n$.
	We also say that $\pi$ has an infinitesimal character $\chi$ if we have $z \cdot v = \chi(z)v$ for any $v \in \pi$ and $z \in \calZ_n$.

	By the universality of $N(\lambda)$, any highest weight module of highest weight $\lambda$ is unique up to isomorphism and is isomorphic to the irreducible quotient $L(\lambda)$.
	For a $(\frakg_n, K_{n,\infty})$-module $\pi$, put
	\[
	HK(\pi) = \{v \in \pi \mid \frakp_{n,-} \cdot v = 0\}.
	\]
	Then $HK(\pi)$ is stable under the action of $K_{\infty,n}$, by $\mathrm{Ad}(K_{\infty,n})(\frakp_{n,-}) = \frakp_{n,-}$ and the definition of $(\frakg_n, K_{n,\infty})$-module.
	If $\pi$ is irreducible, $HK(\pi)$ is an irreducible $K_{n,\infty}$-representation.
	Moreover when $\pi \cong L(\lambda)$, $HK(\pi)$ is the irreducible representation with highest weight $\lambda$.
	If $\lambda = (\lambda_1, \ldots, \lambda_n) \in \Lambda^+ \cap \bbZ^n$ and $\lambda_n \geq n$, the irreducible representation $L(\lambda)$ is isomorphic to the holomorphic discrete series representation of weight $\lambda$.
	
	In this section, we study an embedding of $L(\lambda)$ into certain induced representations.
	The results play an essential role in this paper.
	Indeed, when we construct an Eisenstein series which is annihilated by $\frakp_{n,-}$, we need to consider a highest weight vector in induced representations.

	\subsection{The first reduction point and unitarizability}
	
	Recall the definition of the first reduction point in the sense of \cite{88_EHW}.
	Let $\lambda=(\lambda_1,\ldots,\lambda_n)$ be a $\frakk_n$-dominant weight with $\lambda_n=n$.
	We say that a real number $r_0$ is the first reduction point if the module $N(\lambda+r_0(-1,\ldots,-1))$ is reducible and $N(\lambda+r(-1,\ldots,-1))$ is irreducible for $r<r_0$.
	
	\begin{thm}[\cite{88_EHW} Theorem 2.10]\label{first_red_pt}
	Let $\lambda=(\lambda_1,\ldots,\lambda_n)$ be a $\frakk_n$-dominant weight with $\lambda_n=n$.
	Set $p(\lambda)=\#\{i\mid\lambda_i=n\}$ and $q(\lambda)  = \#\{i\mid \lambda_i = n+1\}$.
	Then, the first reduction point $r_0$ equals to $(p(\lambda)+q(\lambda)+1)/2$.
	\end{thm}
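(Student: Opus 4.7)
The plan is to follow the Enright--Howe--Wallach strategy: decompose the generalized Verma module $N(\lambda + r\zeta)$ with $\zeta = (-1,\ldots,-1)$ as a $K_{n,\infty}$-module, and then detect reducibility by analyzing the Shapovalov form on each $K_{n,\infty}$-isotypic piece. Because the parabolic $\frakp_n = \frakk_n + \frakp_{n,-}$ has abelian complementary nilradical $\frakp_{n,+}$, there is a vector-space isomorphism $N(\lambda+r\zeta) \cong S(\frakp_{n,+}) \otimes V_{\lambda+r\zeta}$ compatible with the $K_{n,\infty}$-action. The classical Hua--Schmid decomposition of $S(\frakp_{n,+})$ under $K_{n,\bbC} \cong \GL_n(\bbC)$ is multiplicity-free and indexed by partitions $\mu$ of length at most $n$; tensoring with $V_{\lambda+r\zeta}$ and applying the Pieri rule then gives the full $K_{n,\infty}$-type decomposition of $N(\lambda+r\zeta)$.

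Next I would compute the Shapovalov form component by component. Because $\frakp_{n,+}$ is abelian, on each $K_{n,\infty}$-type the determinant of the Shapovalov form factors into linear terms in $r$, each of the form $(\lambda + r\zeta + \rho, \beta^\vee) - m$ for a positive noncompact root $\beta = -(e_i+e_j)$ and a non-negative integer $m$ determined by $\mu$. The first reduction point $r_0$ is the smallest $r$ at which some such factor vanishes \emph{and} the corresponding singular vector survives in the generalized Verma module (rather than being zero already in $N(\lambda+r\zeta)$). Evaluating $(\lambda + r\zeta + \rho, \beta^\vee)$ explicitly, using $\lambda_n = n$ and the $\frakk_n$-dominance of $\lambda$, shows that the minimal admissible $r$ comes from those $\beta$ whose indices $i,j$ lie in the top two plateaus $\{i : \lambda_i = n\}$ and $\{i : \lambda_i = n+1\}$. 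A direct bookkeeping using the cardinalities $p(\lambda)$ and $q(\lambda)$ of these plateaus then yields $r_0 = (p(\lambda) + q(\lambda) + 1)/2$.

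The main obstacle is excluding earlier reducibility coming from factors that vanish at smaller $r$: a priori many linear factors compete, and one must check that those giving smaller $r$-values correspond to singular vectors already killed when passing from the ordinary Verma module to $N(\lambda+r\zeta)$. This is handled by a Jantzen-type sum-formula argument, exploiting $\frakk_n$-dominance together with $\lambda_n = n$ to rule out premature reducibility either from compact positive roots or from partitions $\mu$ not supported on the top two plateaus. Once this bookkeeping is complete, the minimal surviving factor is exactly the one described above, producing the asserted value of $r_0$.
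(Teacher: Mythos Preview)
The paper does not prove this theorem at all: it is stated as a citation of Theorem 2.10 in \cite{88_EHW} (Enright--Howe--Wallach), with no accompanying proof or sketch. So there is no ``paper's own proof'' to compare against; the result is imported as a black box.

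Your proposal is a reasonable outline of the Enright--Howe--Wallach strategy itself, and the ingredients you list (the $K$-type decomposition via $S(\frakp_{n,+})\otimes V_{\lambda+r\zeta}$, the Schmid decomposition, Shapovalov forms, and a Jantzen-type reducibility criterion) are indeed the ones used in \cite{88_EHW}. That said, the sketch is quite loose at the crucial step: the claim that ``direct bookkeeping'' on the plateaus $\{i:\lambda_i=n\}$ and $\{i:\lambda_i=n+1\}$ yields $r_0=(p(\lambda)+q(\lambda)+1)/2$ hides the real work, namely identifying exactly which noncompact root and which level $m$ first produce a singular vector that is \emph{not} already killed in passing from the Verma module to the parabolic Verma module. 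In \cite{88_EHW} this is done via an explicit case analysis (their Lemmas in \S7 for type $C_n$), not a general ``Jantzen sum formula'' argument; your phrasing suggests more machinery than is actually deployed there. If you were writing this up as a genuine proof rather than a pointer to the literature, you would need to make that root-by-root analysis explicit.
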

	
	Let $r_0$ be the first reduction point.
	Then for $r < r_0$, the irreducible representation $L(\lambda+r(-1,\ldots,-1))$ is unitary.
	More precisely, we have the following:
	
	\begin{thm}[\cite{88_EHW} Theorem 2.8]\label{unitary}
	With the same notation as in Theorem \ref{first_red_pt}, $L(\lambda + r(-1,\ldots,-1))$ is unitary if and only if either of the following conditions holds:
	\begin{itemize}
	\item $r \leq (p(\lambda)+q(\lambda)+1)/2$.
	\item $\lambda \in (1/2)\bbZ^n$ and $r \leq p(\lambda)+q(\lambda)/2$.
	\end{itemize}
	\end{thm}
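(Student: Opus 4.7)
The plan is to analyze the invariant (Shapovalov) Hermitian form $\langle\cdot,\cdot\rangle_r$ on the generalized Verma module $N(\lambda + r(-1,\ldots,-1))$. Since $L(\lambda + r(-1,\ldots,-1))$ is the quotient of this Verma module by the radical of $\langle\cdot,\cdot\rangle_r$, unitarizability of $L(\lambda + r(-1,\ldots,-1))$ is equivalent to positive semidefiniteness of the form. I would proceed in three stages: unitarity throughout the continuous range $r \leq r_0$, unitarity at a discrete set of ``Wallach points'' beyond $r_0$ in the half-integral case, and non-unitarity everywhere else.

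For the continuous range $r \leq r_0 = (p(\lambda)+q(\lambda)+1)/2$, I would first observe that for $r \ll 0$ the weight $\lambda + r(-1,\ldots,-1)$ is very antidominant, so $N(\lambda + r(-1,\ldots,-1))$ is irreducible and $\langle\cdot,\cdot\rangle_r$ is positive definite (one checks this directly on the low weight spaces of $\frakp_{n,+}$). As $r$ increases, the signature of the form on a given weight space changes only when the Shapovalov determinant vanishes, and by the Kostant/Shapovalov determinant formula this happens exactly at reducibility points of the Verma module. Theorem \ref{first_red_pt} identifies $r_0$ as the first such point, so $\langle\cdot,\cdot\rangle_r$ remains positive definite for $r<r_0$ and positive semidefinite at $r=r_0$ by continuity, giving unitarity of $L(\lambda + r(-1,\ldots,-1))$ in this range.

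For the discrete Wallach points $r = p(\lambda) + k/2$ with $k \leq q(\lambda)$ in the case $\lambda \in (1/2)\bbZ^n$, I would invoke Howe's theta correspondence. The reductive dual pair $(\Sp_{2n}(\bbR), O(k))$ inside $\Sp_{2nk}(\bbR)$ acts on the oscillator representation of the metaplectic cover of $\Sp_{2nk}(\bbR)$; decomposing under $O(k)$ realizes explicit unitary highest weight modules of $\Sp_{2n}(\bbR)$ whose highest weights match $\lambda + r(-1,\ldots,-1)$ at precisely these half-integral values of $r$. Unitarity is then inherited from the unitary structure of the oscillator representation, and matching highest weights with our candidate modules $L(\lambda+r(-1,\ldots,-1))$ identifies them.

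The main obstacle will be ruling out unitarity at the remaining values of $r$. Between successive Wallach points, and for all $r > r_0$ in the integral case, one must exhibit a vector of negative norm. I would use the Jantzen filtration together with its sum formula to compute the signature of $\langle\cdot,\cdot\rangle_r$ on weight spaces just above the radical, tracking how each crossing through a reduction point contributes a change in signature. The delicate combinatorics controlled by $p(\lambda)$ and $q(\lambda)$ enters precisely here: at a Wallach point a full submodule collapses into the radical, restoring positive semidefiniteness on the irreducible quotient, whereas at intermediate points only part of it does and a negative direction survives in $L(\lambda+r(-1,\ldots,-1))$. Verifying this robustly, either through an Enright--Parthasarathy style product formula for the determinant or through an explicit analysis of singular vectors in $N(\lambda+r(-1,\ldots,-1))$, is the technical heart of the argument.
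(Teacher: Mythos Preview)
The paper does not prove this statement at all: it is quoted verbatim as Theorem 2.8 of \cite{88_EHW} (Enright--Howe--Wallach) and used as a black box, primarily as input to Proposition \ref{vanishing}. So there is no ``paper's own proof'' to compare against.

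That said, your proposal is a reasonable high-level sketch of how the Enright--Howe--Wallach classification actually proceeds for $\Sp_{2n}(\bbR)$: continuity of the Shapovalov form up to the first reduction point, theta lifting from $O(k)$ to realize the discrete Wallach points unitarily, and signature/Jantzen-filtration arguments to exclude the remaining values. The outline is correct in spirit but, as you yourself flag, the hard part---showing non-unitarity between Wallach points by exhibiting a negative-norm vector---is where the real work lies, and your description there remains a plan rather than an argument. If you intend to actually carry this out, you should either follow \cite{88_EHW} directly or use Parthasarathy's Dirac inequality / the cone decomposition techniques developed there; the Jantzen sum formula alone does not obviously give you control over which side of zero the signature lands on after a crossing. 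For the purposes of this paper, however, simply citing \cite{88_EHW} is what is expected.
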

	
	By this unitarizability criterion, we obtain the vanishing of the space of nearly holomorphic modular forms.
	See Proposition \ref{vanishing}.
	
	\subsection{Yamashita's result}
	Let $B_n$ be a Borel subgroup of $G_n$ defined by
	\[
	B_n = \left\{ \begin{pmatrix} a & b \\ 0_n & {^t a^{-1}} \end{pmatrix} \in G_n\,\middle| \, 
	\begin{matrix} \text{$a$ is an upper triangular matrix}\\ \text{and $b \in \Sym_n(\bbR)$}\end{matrix}\right\}.
	\]
	We denote by $T_n$ the diagonal subgroup of $B_n$ and by $N_n$ the unipotent subgroup of $B_n$.
	For a character $\mu$ of $T_n$, set 
	\begin{align*}
	I_{B_n}(\mu) = \Ind_{B_n(\bbR)}^{G_n(\bbR)} (\mu) = \{f \colon G_n \longrightarrow \bbC \mid f(u tg) = \mu(t)\delta^{1/2}_{B_n}(t)f(g), \quad u \in N_n, \, t \in T_n, \, g \in G_n\}.
	\end{align*}
	Here $\delta_{B_n}$ is the modulas character of $B_n$.
	This is called a principal series representation of $G_n$.
	For a character $\mu$ of $T_n$, there exist characters $\mu_1, \ldots, \mu_n$ of $\bbR^\times$ such that $\mu(\mathrm{diag}(t_1,\ldots,t_n,t_1^{-1},\ldots,t_n^{-1})) = \mu_1(t_1) \cdots \mu_n(t_n)$.
	In this case, we write $\mu = \mu_1 \boxtimes \cdots \boxtimes \mu_n$.
	If a character $\mu_1$ of $\bbR^\times$ is the identity map, we denote by $\mu_1 = \det$.
	Then Yamashita proved the following:
	
	\begin{thm}[Theorem 2.6 \cite{Yamashita}]\label{Yamashita}
	Let $\mu$ be a character of $T_n$.
	Then the principal series representation $I_{B_n}(\mu)$ contains a highest weight vector of weight $\lambda = (\lambda_1, \ldots, \lambda_n)$ if and only if $\mu\delta^{1/2}_{B_n} = \det^{\lambda_n} \boxtimes \cdots \boxtimes \det^{\lambda_1}$.
	Moreover if $I_{B_n}(\mu)$ contains a highest weight vector $v$ of weight $\lambda$, the vector $v$ is unique up to constant multiple and generates $L(\lambda)$.
	\end{thm}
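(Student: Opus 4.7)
\medskip
\noindent\textbf{Proof proposal.} The plan is to decompose the paper's positive nilpotent as $\mathfrak{n}^+ = (\frakk_n \cap \mathfrak{n}^+) \oplus \frakp_{n,-}$, so that a highest weight vector $v \in I_{B_n}(\mu)$ of weight $\lambda$ is exactly a vector which (a) is a $K_{n,\infty}$-highest weight vector of weight $\lambda$, and (b) is annihilated by $\frakp_{n,-}$. My approach is to first parametrize the candidates for (a) via the Iwasawa decomposition and Frobenius reciprocity, then exploit (b) to both pin down $\mu$ and establish uniqueness, and finally conclude that any such $v$ generates $L(\lambda)$.

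For (a), the Iwasawa decomposition $G_n(\bbR) = N_n T_n^0 K_{n,\infty}$ realizes
\[
I_{B_n}(\mu)|_{K_{n,\infty}} \cong \Ind_{B_n \cap K_{n,\infty}}^{K_{n,\infty}} \bigl(\mu|_{B_n \cap K_{n,\infty}}\bigr),
\]
with $B_n \cap K_{n,\infty}$ the finite group of signed diagonal matrices in $K_{n,\infty} \cong U(n)$. By Frobenius reciprocity each occurrence of the $K_{n,\infty}$-irrep $\rho_\lambda$ supplies a line of $\frakk_n$-highest weight vectors of weight $\lambda$, producing an \emph{a priori} finite-dimensional space of candidates. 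For (b), choose the standard basis of $\frakp_{n,-}$ by lower-triangular symmetric matrices and expand each basis element via the PBW factorization $\calU(\frakg_n) = \calU(\mathfrak{n}_{B_n}) \otimes \calU(\mathfrak{t}_n) \otimes \calU(\frakk_n)$ relative to $B_n$. Restricting $v$ via Iwasawa to a function on $K_{n,\infty}$ and using the left-$B_n$ equivariance given by $\mu \delta_{B_n}^{1/2}$, the equations $X \cdot v = 0$ for $X \in \frakp_{n,-}$ translate into linear relations between $\mu \delta_{B_n}^{1/2}$ and the action of $\rho_\lambda$ on its extremal vector. Matching the diagonal components yields $n$ independent equations whose unique simultaneous solution is
\[
(\mu \delta_{B_n}^{1/2})\bigl(\mathrm{diag}(t_1,\ldots,t_n,t_1^{-1},\ldots,t_n^{-1})\bigr) = t_1^{\lambda_n} t_2^{\lambda_{n-1}} \cdots t_n^{\lambda_1},
\]
the reversal arising because the Borel $B_n$ reads weights of $\rho_\lambda$ in the reverse order from the $\frakk_n$-convention used to label the highest weight. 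The off-diagonal equations then collapse the candidate space to a single line, yielding uniqueness.

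For the generation statement, $\calU(\frakg_n) \cdot v$ is a highest weight module of highest weight $\lambda$ inside the admissible $(\frakg_n, K_{n,\infty})$-module $I_{B_n}(\mu)$, hence a quotient of $N(\lambda)$. Because its $\rho_\lambda$-isotypic component is the one-dimensional line spanned by $v$, any proper submodule of $\calU(\frakg_n) \cdot v$ would contain no highest weight vector of weight $\lambda$, which then forces the cyclic module to be the irreducible quotient $L(\lambda)$.

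The main obstacle is the calculation in step (b): extracting the precise exponent pattern $\det^{\lambda_n} \boxtimes \cdots \boxtimes \det^{\lambda_1}$ and verifying that the off-diagonal equations are consistent and collectively enforce uniqueness requires delicate bookkeeping of Iwasawa components of root vectors in $\frakp_{n,-}$ and of their action on the extremal vector of $\rho_\lambda$. This combinatorial matching is the core content of Yamashita's argument.
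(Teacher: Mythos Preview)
The paper does not prove this theorem; it is quoted verbatim as Theorem~2.6 of \cite{Yamashita} and used as a black box. There is therefore no in-paper argument to compare your proposal against.

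That said, your outline is broadly the right shape for how Yamashita's result is actually established: with the positive system $\Phi^+$ chosen in the paper one has $\mathfrak{n}^+ = (\frakk_n\cap\mathfrak{n}^+)\oplus\frakp_{n,-}$, so a $\frakg_n$-highest weight vector is precisely a $\frakk_n$-highest weight vector killed by $\frakp_{n,-}$, and the Iwasawa restriction to $K_{n,\infty}$ reduces the problem to a finite linear system in the $K$-type $\rho_\lambda$. One point in your sketch deserves tightening. In the last paragraph you argue that $\calU(\frakg_n)\cdot v$ is irreducible because ``its $\rho_\lambda$-isotypic component is the one-dimensional line spanned by $v$''. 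Uniqueness of the highest weight vector in $I_{B_n}(\mu)$ does not by itself say that the full $K$-type $\rho_\lambda$ occurs only once in $\calU(\frakg_n)\cdot v$; it says only that the joint $(\frakk_n\cap\mathfrak{n}^+,\frakp_{n,-})$-kernel inside that isotypic is one-dimensional. To conclude $\calU(\frakg_n)\cdot v\cong L(\lambda)$ you need an additional input, for instance that $I_{B_n}(\mu)$ has a unique irreducible submodule (so any nonzero submodule of $\calU(\frakg_n)\cdot v$ would itself contain a $\frakg_n$-highest weight vector, necessarily of weight $\lambda$ by infinitesimal-character and $K$-type considerations, contradicting uniqueness), or a direct multiplicity-one statement for the minimal $K$-type in the cyclic module. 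Without such an ingredient the step from ``quotient of $N(\lambda)$'' to ``equal to $L(\lambda)$'' is a genuine gap.
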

	
	For a $\frakk_n$-dominant weight $\lambda$, let $I_{B_n}(\lambda)$ be the principal series representation such that it contains a highest weight vector of weight $\lambda$.
	Note that we have
	\begin{align}\label{princ_ser}
	I_{B_n}((\lambda_1,\ldots,\lambda_n)) = I_{B_n} \left(\sgn^{\lambda_n}|\cdot|^{\lambda_n - n} \boxtimes \sgn^{\lambda_{n-1}}|\cdot|^{\lambda_{n-1} - n +1} \boxtimes \cdots \boxtimes \sgn^{\lambda_1}|\cdot|^{\lambda_1-1}\right).
	\end{align}
	
	We now obtain the embeddings of highest weight vectors and irreducible highest weight representations into principal series representations.
	For a parabolic subgroup $P=MN$ of $G_n$, we say that $P$ is standard if $P$ contains the Borel subgroup $B_n$.
	The Levi subgroup $M$ is called standard if $M$ contains $T_n$.
	Then for any character $\mu$ of $T_n$, there exists a representation $\pi$ of $M$ such that the induced representation $\Ind_{P}^{G_n(\bbR)} (\pi)$ is contained in $I_{B_n}(\mu)$.
	Then we have the following question:
	
	\begin{quest}\label{emb_highest_weight_rep}
	For a $\frakk_n$-dominant weight $\mu$ and a standard parabolic subgroup $P$ with $P=MN$, can we find a representation $\pi$ of $M$ such that $\Ind_{P}^{G_n(\bbR)}(\pi)$ contains a highest weight vector of weight $\mu$?
	If we find such a representation $\pi$, can we assume that $\pi$ is irreducible?
	\end{quest}
	
	If $n=1$, the answer of this question is well-known.
	For example, see \cite[\S 2]{muic}.
	In \cite{muic}, G.~Mui\'{c} gives the complete answer for this question for $n=2$.
	Moreover, he determines the socle series of parabolic inductions of $G_2(\bbR) = \Sp_{4}(\bbR)$.
	For the case of degenerate principal series representations, see the next subsection.
	The author does not have the complete answer for this question.
	In Lemma \ref{emb_lem} and Theorem \ref{emb_main}, we give a partial answer.
	
	\subsection{The case of degenerate principal series representations}
	
	We denote by $P_{i,n}$ a standard parabolic subgroup of $G_n$ with the standard Levi subgroup $\Res_{F/\bbQ}\GL_i \times G_{n-i}$.
	If $i=n$, the parabolic subgroup $P_{n,n}$ is called the Siegel parabolic subgroup.
	For a unitary character $\mu$ of $\GL_n(\bbR)$ and a complex number $s$, put 
	\[
	I_{P_{n,n}}(\mu,s) = \Ind_{P_{n,n}(\bbR)}^{G_n(\bbR)} \left(\mu|\det|^s\right).
	\]
	The representation $I_{P_{n,n}}(\mu,s)$ is called the degenerate principal series representation.
	For simplicity, for a character $\mu$ of $\bbR^\times$, we denote by $\mu$ the character $\mu \circ \det$ of $\GL_n(\bbR)$.
	S.T.~Lee determines the algorithm which determines the socle series of $I_{P_{n,n}}(\mu,s)$ in \cite{ST_Lee}.
	By his result, we immediately obtain the following result:
	
	\begin{lem}\label{case_deg_princ_ser}
	Let $\mu$ be a unitary character of $\bbR^\times$.
	Then $I_{P_{n,n}}(\mu,s)$ contains a highest weight vector of weight $\lambda = (\lambda_1, \ldots, \lambda_n)$ if and only if all of the following conditions hold: 
	\begin{itemize}
	\item $\lambda_1 = \cdots =\lambda_n$. 
	\item $\mu=\mathrm{sgn}^{\lambda_n}$.
	\item $s = \lambda_n -(n+1)/2$.
	\end{itemize}
	\end{lem}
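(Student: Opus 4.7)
The plan is to combine Yamashita's theorem (Theorem \ref{Yamashita}) with the standard embedding of the degenerate principal series $I_{P_{n,n}}(\mu,s)$ into a full principal series $I_{B_n}(\nu)$ via induction in stages. Yamashita then forces $\nu$ to take a very specific form, which translates directly into the three conditions on $\mu$, $s$, and $\lambda$ in the statement.

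The first step is to compute the character $\nu$ of $T_n$ for which $I_{P_{n,n}}(\mu,s) \hookrightarrow I_{B_n}(\nu)$. Since $B_n \subset P_{n,n}$, this inclusion is the usual one from induction in stages; reading off the transformation law on $T_n$ from $\delta_{P_{n,n}}^{1/2}(\bfm(a)) = |\det a|^{(n+1)/2}$ and $\delta_{B_n}^{1/2}(\bfm(\mathrm{diag}(t_1,\ldots,t_n))) = \prod_{p=1}^n |t_p|^{n-p+1}$, a direct computation yields
\[
\nu = \nu_1 \boxtimes \cdots \boxtimes \nu_n, \qquad \nu_p = \mu|\cdot|^{s + p - (n+1)/2}.
\]

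For the ``only if'' direction, a highest weight vector in $I_{P_{n,n}}(\mu,s)$ of weight $\lambda$ is automatically a highest weight vector of weight $\lambda$ in $I_{B_n}(\nu)$, so by Theorem \ref{Yamashita} together with formula (\ref{princ_ser}) one must have $\nu_p = \sgn^{\lambda_{n-p+1}}|\cdot|^{\lambda_{n-p+1} - n + p - 1}$ for every $p$. Comparing modulus exponents gives $\lambda_{n-p+1} = s + (n+1)/2$, which is independent of $p$, hence $\lambda_1 = \cdots = \lambda_n$ and $s = \lambda_n - (n+1)/2$. Comparing the sign parts then forces $\mu = \sgn^{\lambda_n}$.

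For the ``if'' direction, assume the three conditions hold and set $k = \lambda_n$. I would exhibit the highest weight vector explicitly as $f(g) = j(g, \sqrt{-1}\,\mathbf{1}_n)^{-k}$, where $j(g,z) = \det(cz+d)$ for $g = (\begin{smallmatrix} a & b \\ c & d\end{smallmatrix})$. The cocycle identity, together with $j(\bfm(a),z) = \det(a)^{-1}$ and $j(u,z) = 1$ for $u \in N_{P_{n,n}}$, verifies that $f$ lies in $I_{P_{n,n}}(\sgn^k, k - (n+1)/2)$; annihilation by $\frakp_{n,-}$ is the standard holomorphicity of $j(\cdot,\sqrt{-1}\,\mathbf{1}_n)^{-k}$ as a section of the relevant homogeneous line bundle on $G_n(\bbR)/K_{n,\infty}$, and the $K_{n,\infty}$-transformation law identifies the $\frakk_n$-weight as $(k,\ldots,k) = \lambda$. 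Alternatively one can invoke Lee's socle analysis in \cite{ST_Lee} of $I_{P_{n,n}}(\mu,s)$, which exhibits $L(\lambda)$ with $\lambda_1 = \cdots = \lambda_n$ as a submodule under our hypotheses. The main subtlety is exactly this ``if'' direction: Yamashita's theorem guarantees uniqueness of the highest weight vector inside $I_{B_n}(\nu)$, but does not a priori ensure that it lies in the degenerate subspace $I_{P_{n,n}}(\mu,s)$, which is why an explicit construction (or Lee's more refined classification) is required.
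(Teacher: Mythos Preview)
Your argument is correct, and it takes a somewhat different route from the paper's. The paper simply appeals to Lee's determination of the socle series of $I_{P_{n,n}}(\mu,s)$ in \cite{ST_Lee} and reads off the lemma as an immediate consequence. By contrast, your ``only if'' direction is self-contained: you embed $I_{P_{n,n}}(\mu,s)$ into the full principal series via induction in stages and then invoke Yamashita's Theorem~\ref{Yamashita} directly. This is exactly the mechanism the paper itself uses later in the proof of Theorem~\ref{emb_main} for the general parabolic $P_{i,n}$, so in effect you have anticipated that argument and specialized it to $i=n$. For the ``if'' direction, your explicit cocycle construction $f(g)=j(g,\sqrt{-1}\,\mathbf{1}_n)^{-k}$ is a clean way to exhibit the highest weight vector without needing Lee's finer structural results; the paper's citation of Lee of course yields more (the full composition series), but for the purposes of this lemma your construction suffices and is more elementary.
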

	
	This lemma is a partial answer of Question \ref{emb_highest_weight_rep}.
	We denote by $I_{P_{n,n}}(\lambda)$ the degenerate principal series representation of $G_n(\bbR)$ which contains a highest weight vector of weight $\lambda$.
	Note that $I_{P_{n,n}}(\lambda)$ is contained in $I_{B_n}(\lambda)$.
	In the next subsection, we give a generalization of the above lemma.

	\subsection{An embedding of $L(\lambda)$ into certain parabolic inductions}
	
	For a $\frakk_n$-dominant weight $\lambda$ with $\lambda=(\lambda_1,\ldots,\lambda_n) \in \bbZ^n$, let $r_1, \ldots, r_j$ be integers such that
	\[
	\lambda_1 = \cdots = \lambda_{r_1} \neq \lambda_{r_1+1} = \cdots = \lambda_{r_1+r_2} \neq \cdots \neq \lambda_{r_1+\cdots +r_{j-1} +1} = \cdots = \lambda_{r_1 +\cdots +r_{j}}.
	\]
	Put $s_j = \sum_{\ell = 1}^j r_\ell$.
	We denote by $\GL_\lambda$ a standard Levi subgroup of $G_n$ of the form
	\[
	\GL_{\lambda} 
	= \left\{
	\left(\begin{array}{@{}c|c@{}}
     \begin{matrix}
     a_j &			&			&		\\
     	&a_{j-1}	&			&		\\
     	&			& \ddots 	&		\\
     	&			&			& a_1	\\
     \end{matrix}
     &
     
	\\
	\hline
    
	&
     \begin{matrix}
    {^t{a}_j^{-1}}&					&			&				\\
     			&{^ta}^{-1}_{j-1}	&			&				\\
     			&					& \ddots 	&				\\
     			&					&			& {^ta}_1^{-1}	\\
     \end{matrix}
	\end{array}
	\right)
	\,\middle|\,
	a_j \in \GL_{r_j}, \ldots, a_1 \in \GL_{r_1}\right\}.
	\]
	This is isomorphic to $\GL_{r_j}\times \cdots\times\GL_{r_1}$.
	Let $P_{\lambda, n}$ be a standard parabolic subgroup of $G_n$ with the Levi subgroup $\GL_\lambda$.
	Then the induced representation 
	\[
	I_{P_{\lambda,n}}(\lambda) = \Ind_{P_{\lambda,n}(\bbR)}^{G_n(\bbR)} \left(\delta_{P_{\lambda,n}}^{-1/2} \otimes \left( \mathrm{det}_{\GL_{r_j}}^{\lambda_{s_j}} \boxtimes \cdots \boxtimes \mathrm{det}_{\GL_{r_1}}^{\lambda_{s_1}}\right)\right)
	\] 
	is a subspace of the principal series representation $I_{B_{n}} (\lambda)$ (see (\ref{princ_ser})).
	We then obtain the following result:
	
	\begin{lem}\label{emb_lem}
	The induced representation $I_{P_{\lambda,n}}(\lambda)$ contains a highest weight vector of weight $\lambda$.
	Moreover such a highest weight vector generates $L(\lambda)$.
	\end{lem}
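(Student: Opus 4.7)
The plan is to combine Yamashita's theorem (Theorem \ref{Yamashita}) with a compact--Levi equivariance check. Since the statement preceding the lemma already exhibits $I_{P_{\lambda,n}}(\lambda)$ as a subspace of $I_{B_n}(\lambda)$, and since Yamashita's theorem provides a unique (up to scalar) highest weight vector $v_\lambda$ of weight $\lambda$ in $I_{B_n}(\lambda)$ that generates $L(\lambda)$, it suffices to show that $v_\lambda$ actually lies in the submodule $I_{P_{\lambda,n}}(\lambda)$. This will automatically give both assertions of the lemma: the existence of a highest weight vector of weight $\lambda$ in $I_{P_{\lambda,n}}(\lambda)$ and the fact that it generates $L(\lambda)$.

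To verify $v_\lambda \in I_{P_{\lambda,n}}(\lambda)$, I would use that the real group $P_{\lambda,n}(\bbR)$ is generated by $B_n(\bbR)$ together with the compact part $K_{n,\infty} \cap \GL_\lambda(\bbR)$ of its Levi factor, as one sees from the Iwasawa decomposition of $\GL_\lambda(\bbR)$. The $B_n(\bbR)$-transformation law of $v_\lambda$ is built into its membership in $I_{B_n}(\lambda)$, so the task reduces to checking the equivariance under this compact Levi factor. Now $v_\lambda$ spans the $\frakk_n$-highest weight line of a copy of $V_\lambda$ inside $I_{B_n}(\lambda)|_{K_{n,\infty}}$. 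Under the identifications $K_{n,\infty} \cong U(n)$ and $\bfm^{-1}(K_{n,\infty} \cap \GL_\lambda(\bbR)) \cong O(r_j) \times \cdots \times O(r_1)$, the extremal vector of $V_\lambda$ has $U(r_\ell)$-weight $(\lambda_{s_\ell},\ldots,\lambda_{s_\ell})$ precisely because the block structure of $\lambda$ is constant on each interval $\{s_{\ell-1}+1,\ldots,s_\ell\}$. Restricting this $U(r_\ell)$-character to the orthogonal subgroup $O(r_\ell)$ yields $\sgn^{\lambda_{s_\ell}}\circ\det$, which matches the restriction of the $\GL_\lambda$-inducing character $\det^{\lambda_{s_\ell}}$ to $O(r_\ell)$. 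This places $v_\lambda$ inside $I_{P_{\lambda,n}}(\lambda)$.

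The main obstacle is this final equivariance match. It relies essentially on the hypothesis that the Levi $\GL_\lambda$ is built from exactly the constant stretches of $\lambda$; otherwise the $U(r_\ell)$-weight of the extremal vector of $V_\lambda$ would no longer collapse to a scalar multiple of the trace, and the restriction to $O(r_\ell)$ would not agree with a determinant character. A secondary point of care is the bookkeeping of the sign characters $\sgn^{\lambda_{s_\ell}}$ coming from (\ref{princ_ser}) and their compatibility with the parity of the determinant on $O(r_\ell)$; once this is reconciled, the conclusion that $v_\lambda$ generates $L(\lambda)$ transfers directly from Yamashita's theorem via the $(\frakg_n,K_{n,\infty})$-submodule inclusion.
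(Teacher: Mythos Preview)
Your overall strategy is sound: since $I_{P_{\lambda,n}}(\lambda)\subset I_{B_n}(\lambda)$ and Yamashita's theorem gives a unique highest weight vector $v_\lambda$ of weight $\lambda$ in $I_{B_n}(\lambda)$ generating $L(\lambda)$, it is enough to show $v_\lambda\in I_{P_{\lambda,n}}(\lambda)$, and it is correct that $P_{\lambda,n}(\bbR)=O_\lambda\cdot B_n(\bbR)$ with $O_\lambda=\bfm(O(r_j)\times\cdots\times O(r_1))$, so one only has to check the left $O_\lambda$-equivariance of $v_\lambda$.

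The gap is in the last step. You compute that the extremal vector of $V_\lambda$ transforms under each $U(r_\ell)\subset U(n)\cong K_{n,\infty}$ by the character $\det^{\lambda_{s_\ell}}$; this is a statement about the \emph{right} $K_{n,\infty}$-translation action on $v_\lambda$ (that is how the $K_{n,\infty}$-type is defined). Membership in $I_{P_{\lambda,n}}(\lambda)$, however, is the condition $v_\lambda(og)=\chi(o)v_\lambda(g)$ for $o\in O_\lambda$, a \emph{left} translation property. Concretely, writing $v_\lambda|_{K_{n,\infty}}(k)=\langle\rho_\lambda(k)v_{\mathrm{hw}},w\rangle$ for some $w\in V_\lambda^*$, the right-action information pins down $v_{\mathrm{hw}}$, while the desired left $O_\lambda$-equivariance is the condition $\rho_\lambda(o)^*w=\chi(o)w$. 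The Borel equivariance only forces $w$ to lie in a $\{\pm 1\}^n$-weight space of $V_\lambda^*$, which is in general not one-dimensional, so nothing you have written constrains $w$ to be an $O_\lambda$-eigenvector. (Your argument does happen to go through when $V_\lambda$ is one-dimensional, e.g.\ $\lambda=(k,\ldots,k)$, but not in general.)

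The paper closes exactly this gap by working with the full noncompact Levi rather than its compact part. For each $k\in K_{n,\infty}$ it restricts $f_k(m)=v_\lambda(mk)$ to the block-symplectic subgroup $G_\lambda\cong\prod_\ell G_{r_\ell}(\bbR)$, observes via $\mathrm{Ad}(K_{n,\infty})\frakp_{n,-}=\frakp_{n,-}$ that $f_k$ is annihilated by $\bigoplus_\ell\frakp_{r_\ell,-}$, hence lies in $HK$ of a principal series of $G_\lambda$, and then invokes Lemma~\ref{case_deg_princ_ser} together with the uniqueness of highest weight vectors to conclude that $f_k$ already sits in the degenerate principal series, i.e.\ is left $\SL_\lambda(\bbR)$-invariant. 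That is precisely the missing left-equivariance. If you want to salvage your compact-group approach, you would need an independent argument identifying the functional $w$ (for instance, an explicit formula for Yamashita's vector, or a branching/multiplicity-one statement for $V_\lambda|_{O_\lambda}$), which is more work than the paper's route.
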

	
	\begin{proof}
	Take a vector $f$ in $HK(I_{B_n}(\lambda))$.
	We denote by $G_\lambda$ the subgroup of $G_n$ defined by
	
	\[
	G_{\lambda} (\bbR)
	= \left\{
	\left(\begin{array}{@{}c|c@{}}
     \begin{matrix}
     a_j &			&			&		\\
     	&a_{j-1}	&			&		\\
     	&			& \ddots 	&		\\
     	&			&			& a_1	\\
     \end{matrix}
     &
     \begin{matrix}
     b_j &			&			&		\\
     	&b_{j-1}	&			&		\\
     	&			& \ddots 	&		\\
     	&			&			& b_1	\\
	\end{matrix}
	\\
	\hline
      \begin{matrix}
      c_j &			&			&		\\
     	&c_{j-1}	&			&		\\
     	&			& \ddots 	&		\\
     	&			&			& c_1	\\
	\end{matrix}
	&
     \begin{matrix}
    d_j &			&			&		\\
     	&d_{j-1}	&			&		\\
     	&			& \ddots 	&		\\
     	&			&			& d_1	\\
     \end{matrix}
	\end{array}\right)
	 \,\middle|\, 
	 \begin{pmatrix}a_j & b_j \\ c_j & d_j \end{pmatrix} \in G_{r_j}(\bbR), \ldots, \begin{pmatrix}a_1 & b_1 \\ c_1 & d_1 \end{pmatrix} \in G_{r_1}(\bbR)\right\}.
	\]
	This group is isomorphic to $G_{r_j}(\bbR) \times \cdots \times G_{r_1}(\bbR)$.
	We may regard $P_{\lambda,n}(\bbR) \cap G_\lambda(\bbR)$ and $B_n(\bbR) \cap G_\lambda(\bbR)$ as the Siegel parabolic subgroup of $G_\lambda(\bbR)$ and the Borel subgroup of $G_\lambda(\bbR)$, respectively.
	For $k \in K_{n,\infty}$, we define the function $f_k$ on $G_\lambda(\bbR)$ by $f_k(m) = f(mk)$.
	Then $f_k$ lies in the principal series representation of $G_\lambda(\bbR)$.
	More precisely, $f_k$ lies in 
	\[
	\bigotimes_{\ell=1}^j I_{B_{r_{j+1-\ell}}}((\lambda_{s_{j+1-\ell}},\ldots,\lambda_{s_{j+1-\ell}})).
	\]
	The Lie subalgebra $\frakp_{r_j,-} \oplus \cdots \oplus\frakp_{r_1,-}$ of $\mathrm{Lie}(G_\lambda(\bbR))\otimes_\bbR\bbC = \frakg_{r_j} \oplus \cdots \oplus \frakg_{r_1}$ acts on $f_k$ trivially for any $k \in K_{n,\infty}$.
	Indeed, for $X \in \frakp_{r_j,-} \oplus \cdots \oplus\frakp_{r_1,-}$, we have
	\[
	X \cdot f_k = X \cdot r(k) f = r(k)((\mathrm{Ad}(k^{-1})X)f) = 0
	\]
	by $\mathrm{Ad}(K_{n,\infty})\frakp_{n,-} = \frakp_{n,-}$ and the assumption for $f$.
	Here $X \cdot f_k$ is the action as the representation $\bigotimes_{\ell=1}^j I_{B_{r_{j+1-\ell}}}((\lambda_{s_{j+1-\ell}},\ldots,\lambda_{s_{j+1-\ell}}))$ and $r$ is the right translation.
	Hence $f$ lies in
	\[
	HK \left( \bigotimes_{\ell=1}^j I_{B_{r_{j+1-\ell}}}((\lambda_{s_{j+1-\ell}},\ldots,\lambda_{s_{j+1-\ell}})) \right).
	\]
	For any $\ell$, we have
	\[
	HK \left(I_{B_{r_{j+1-\ell}}}((\lambda_{s_{j+1-\ell}},\ldots,\lambda_{s_{j+1-\ell}}))\right) = HK\left( I_{P_{r_{j+1-\ell}, r_{j+1-\ell}}} \left(\lambda_{s_{j+1-\ell}},\ldots,\lambda_{s_{j+1-\ell}}\right)\right),
	\]
	by Lemma \ref{case_deg_princ_ser} and the uniqueness of highest weight vectors in the principal series representation.
	This shows that $f_k$ belongs to 
	\[
	HK\left( \bigotimes_{\ell=1}^j I_{P_{r_{j+1-\ell}, r_{j+1-\ell}}} (\lambda_{s_{j+1-\ell}},\ldots,\lambda_{s_{j+1-\ell}})\right)
	\]
	for any $k\in K_{n,\infty}$.
	We denote by $\SL_\lambda(\bbR)$ the derived subgroup of $\GL_\lambda(\bbR)$.
	By the definition of the degenerate principal series representation, $f_k$ is left $\SL_\lambda(\bbR)$-invariant.
	Hence $f$ is left $\SL_\lambda(\bbR)$-invariant.
	Since the induced representation $I_{P_{\lambda,n}}(\lambda)$ is equal to
	\[
	\{f \in I_{B_n}(\lambda) \mid \text{$f$ is left $\SL_\lambda(\bbR)$-invariant}\},
	\]
	the function $f$ lies in $I_{P_{\lambda,n}}(\lambda)$.
	This completes the proof.
	\end{proof}
	
	More precisely, we have the following:
	
	\begin{thm}\label{emb_main}
	Take a $\frakk_n$-dominant integral weight $\lambda =(\lambda_1,\ldots,\lambda_n) \in \bbZ^n$.
	Put 
	\[
	I_{P_{i,n}}(\lambda) = \Ind_{P_{i,n}(\bbR)}^{G_n(\bbR)} \left(\mathrm{sgn}^{\lambda_n} |\cdot|^{\lambda_n-n+(i-1)/2}\boxtimes L(\lambda_1,\ldots,\lambda_{n-i})\right).
	\] 
	Then $I_{P_{i,n}}(\lambda)$ contains a highest weight vector of weight $\lambda$ if $\lambda_n = \cdots = \lambda_{n-i+1}$.
	Moreover such a highest weight vector generates $L(\lambda)$.
	Conversely, if the induced representation
	\[
	\Ind_{P_{i,n}(\bbR)}^{G_n(\bbR)} \left(\mu \boxtimes L(\omega_1,\ldots,\omega_{n-i})\right)
	\]
	has a highest weight vector of weight $\lambda$, we have
	\begin{itemize}
	 \item $\lambda_n = \cdots = \lambda_{n-i+1}$.
	 \item $\mu = \mathrm{sgn}^{\lambda_n} |\cdot|^{\lambda_n-n+(i-1)/2}$.
	 \item $(\omega_1,\ldots,\omega_{n-i}) = (\lambda_1,\ldots,\lambda_{n-i})$.
	 \end{itemize}
	\end{thm}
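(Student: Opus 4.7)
The plan is to reduce both implications to Yamashita's theorem (Theorem~\ref{Yamashita}) via induction in stages, with Lemma~\ref{case_deg_princ_ser} providing the rigidity needed on the $\GL_i$-factor. For the forward direction, set $c := \lambda_n = \cdots = \lambda_{n-i+1}$. By Yamashita, there is an embedding $L(\lambda_1,\ldots,\lambda_{n-i}) \hookrightarrow I_{B_{n-i}}(\lambda_1,\ldots,\lambda_{n-i})$. A direct $\delta^{1/2}$-shift computation shows the one-dimensional character $\sgn^c |\cdot|^{c-n+(i-1)/2}\circ\det$ of $\GL_i(\bbR)$ embeds into the $\GL_i$-principal series whose diagonal-torus characters are $\chi_k = \sgn^c |\cdot|^{c-n+k-1}$ for $k=1,\ldots,i$. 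Normalized induction in stages assembles these into an embedding $I_{P_{i,n}}(\lambda) \hookrightarrow I_{B_n}(\lambda)$, where the target coincides with the principal series in (\ref{princ_ser}) for $\lambda$ by direct character comparison at each torus position. By Yamashita, $I_{B_n}(\lambda)$ contains a unique (up to scalar) highest weight vector $f$ of weight $\lambda$ generating $L(\lambda)$.

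The central technical step is to verify $f \in I_{P_{i,n}}(\lambda)$, and I adapt the argument of Lemma~\ref{emb_lem}. For each $k \in K_{n,\infty}$, consider $f_k(m) := f(mk)$ on $M_{P_{i,n}}(\bbR) = \GL_i(\bbR) \times G_{n-i}(\bbR)$. Since $\mathrm{Ad}(K_{n,\infty})\frakp_{n,-} = \frakp_{n,-}$ and $f \in HK(I_{B_n}(\lambda))$, the restriction of $f_k$ to the $G_{n-i}$-factor lies in $HK(I_{B_{n-i}}(\lambda_1,\ldots,\lambda_{n-i}))$, which by Yamashita and the uniqueness of HWVs in principal series equals $HK(L(\lambda_1,\ldots,\lambda_{n-i}))$. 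On the $\GL_i$-factor, Lemma~\ref{case_deg_princ_ser} combined with the uniqueness of HWVs in the $\GL_i$-principal series forces the restriction to be $\SL_i(\bbR)$-invariant and hence to come from the one-dimensional character $\sgn^c|\cdot|^{c-n+(i-1)/2}\circ\det$. Combining the two slices yields $f \in I_{P_{i,n}}(\lambda)$.

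For the converse, suppose $\Ind_{P_{i,n}(\bbR)}^{G_n(\bbR)}(\mu \boxtimes L(\omega_1,\ldots,\omega_{n-i}))$ contains a highest weight vector $v$ of weight $\lambda$. Every continuous character of $\GL_i(\bbR)$ factors through $\det$, so $\mu = \eta\circ\det$ with $\eta = \sgn^a|\cdot|^s$. Embedding $\mu$ into the $\GL_i$-principal series (torus character $\eta|\cdot|^{-(i-2k+1)/2}$ at position $k$) and $L(\omega)\hookrightarrow I_{B_{n-i}}(\omega)$ via Yamashita, normalized induction in stages produces an embedding of the given induced representation into $I_{B_n}(\nu')$ for an explicit character $\nu'$. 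The vector $v$ remains a highest weight vector of weight $\lambda$ in $I_{B_n}(\nu')$, so Yamashita forces $\nu'$ to coincide with the canonical character for $\lambda$ in (\ref{princ_ser}). Matching the last $n-i$ torus positions yields $\omega_j = \lambda_j$ for $j=1,\ldots,n-i$; matching the first $i$ positions gives the system $\eta|\cdot|^{-(i-2k+1)/2} = \sgn^{\lambda_{n-k+1}}|\cdot|^{\lambda_{n-k+1}-n+k-1}$ for $k=1,\ldots,i$, which is simultaneously solvable only when all $\lambda_{n-k+1}$ coincide, thereby forcing $\lambda_n = \cdots = \lambda_{n-i+1}$ and $\mu = \sgn^{\lambda_n}|\cdot|^{\lambda_n - n + (i-1)/2}$. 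The principal obstacle is the rigidity step in the forward direction, namely showing that the HWV of $I_{B_n}(\lambda)$ lies in the subspace $I_{P_{i,n}}(\lambda)$; this is exactly where Lemma~\ref{case_deg_princ_ser} becomes essential.
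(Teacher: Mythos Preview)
Your overall architecture matches the paper's: both directions are reductions to Yamashita via induction in stages, and your converse is essentially identical to the paper's. The gap is in the forward direction, at exactly the step you flag as the ``principal obstacle''.

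You restrict $f_k$ to the Levi $M_{P_{i,n}}(\bbR)=\GL_i(\bbR)\times G_{n-i}(\bbR)$ and then try to use Lemma~\ref{case_deg_princ_ser} together with ``uniqueness of HWVs in the $\GL_i$-principal series'' to force $\SL_i(\bbR)$-invariance. This does not work as written. Lemma~\ref{case_deg_princ_ser} is a statement about degenerate principal series of $\Sp_{2i}(\bbR)$, not of $\GL_i(\bbR)$. More fundamentally, $\GL_i(\bbR)/O(i)$ is not Hermitian symmetric for $i\ge 2$, so there is no $\frakp_-$ and hence no intrinsic notion of ``highest weight vector'' in a $\GL_i(\bbR)$-principal series. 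The condition $\frakp_{n,-}\cdot f=0$ gives you nothing useful when you restrict to $\bfm(\GL_i(\bbR))$, because $\frakp_{n,-}$ has trivial intersection with $\Lie(\bfm(\GL_i(\bbR)))\otimes\bbC$. So the argument you sketch for the $\GL_i$-slice has no traction.

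The repair is precisely the manoeuvre in the proof of Lemma~\ref{emb_lem}: restrict $f_k$ not to the Levi factor $\GL_i(\bbR)$ but to the \emph{symplectic} subgroup $G_i(\bbR)=\Sp_{2i}(\bbR)\subset G_n(\bbR)$ sitting in the first $i$ coordinates. Since $\frakp_{i,-}\subset\frakp_{n,-}$ and $\mathrm{Ad}(K_{n,\infty})\frakp_{n,-}=\frakp_{n,-}$, one gets $f_k|_{G_i(\bbR)}\in HK\big(I_{B_i}((c,\ldots,c))\big)$, and now Lemma~\ref{case_deg_princ_ser} (which is genuinely about $\Sp_{2i}$) together with Yamashita-uniqueness forces $f_k|_{G_i(\bbR)}\in I_{P_{i,i}}((c,\ldots,c))$, i.e.\ left $\SL_i(\bbR)$-invariance of $f$. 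This is exactly how the paper proceeds: it first invokes Lemma~\ref{emb_lem} to place $f$ in $I_{P_{\lambda,n}}(\lambda)$ (giving $\SL_\lambda$- and hence $\SL_i$-invariance), and only then runs your $G_{n-i}$-slice argument to land in $L(\lambda_1,\ldots,\lambda_{n-i})\subset I_{B_{n-i}}(\lambda_1,\ldots,\lambda_{n-i})$. Your $G_{n-i}$-slice argument is fine; you just need to replace the $\GL_i$-slice by the $G_i$-slice.
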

	\begin{proof}
	Take a vector $f$ in $HK(I_{B_n}(\lambda))$.
	By Lemma \ref{emb_lem}, $f$ lies in $HK(I_{P_{\lambda,n}}(\lambda))$.
	By Theorem \ref{Yamashita}, the induced representation $I_{P_{\lambda,n}}(\lambda)$ can be embedded into
	\[
	\Ind_{P_{i,n}(\bbR)}^{G_n(\bbR)} \left(\sgn^{\lambda_n}|\cdot|^{\lambda_n-n+(i-1)/2} \boxtimes \left(I_{B_{n-i}}(\lambda_1,\ldots,\lambda_{n-i})\right)\right).
	\]
	Then $f(g)$ is an element of the representation space of $\det^{\lambda_n} \otimes I_{B_{n-i}}((\lambda_1,\ldots,\lambda_{n-i}))$ for any $g \in G_n(\bbR)$.
	As a $\frakg_{n-i}$-representation, $\det^{\lambda_n} \otimes I_{B_{n-i}}((\lambda_1,\ldots,\lambda_{n-i}))$ is equal to $I_{B_{n-i}}((\lambda_1,\ldots,\lambda_{n-i}))$.
	For $k \in K_{n,\infty}$ and $X \in \frakp_{n-i,-}$, we have
	\[
	X \cdot f(k) = (\mathrm{Ad}(k)X \cdot f)(k) = 0
	\]
	by $\mathrm{Ad}(K_{n,\infty}(\frakp_{n-i,-})) \subset \frakp_{n,-}$ and the assumption for $f$.
	Hence $f(k)$ lies in $HK(I_{B_{n-i}}((\lambda_1,\ldots,\lambda_{n-i})))$ for any $k \in K_{n,\infty}$.
	By the Iwasawa decomposition, for any $g \in G_n(\bbR)$, an element $f(g)$ lies in the subspace $L((\lambda_1,\ldots,\lambda_{n-i})) \subset I_{B_{n-i}}(\lambda_1,\ldots,\lambda_{n-i})$.
	Hence we have
	\[
	f \in \Ind_{P_{i,n}(\bbR)}^{G_n(\bbR)} \left(\sgn^{\lambda_n}|\cdot|^{\lambda_n - n +(i-1)/2} \boxtimes L((\lambda_1,\ldots,\lambda_{n-i}))\right).
	\]
	
	Next we assume the induced representation
	\[
	\Ind_{P_{i,n}(\bbR)}^{G_n(\bbR)} \left(\mu \boxtimes L(\omega_1,\ldots,\omega_{n-i})\right)
	\]
	has a highest weight vector of weight $\lambda$.
	We denote $\mu_1$ the character of $\bbR^\times$ such that $\mu_1(\det) = \mu$.
	Let $\mu' = \mu_1|\cdot|^{-n+(i-1)/2} \boxtimes \cdots \boxtimes \mu_1|\cdot|^{-n+(i-1)/2} \boxtimes \mathrm{sgn}^{\omega_{n-i}} \boxtimes \cdots \boxtimes \mathrm{sgn}^{\omega_1}$ be a character of $T_n(\bbR)$.
	Since $L(\omega_1,\ldots,\omega_{n-i})$ can be embedded into $I_{B_{n-i}}(\omega_{n-i},\ldots,\omega_{1})$, the induced representation can be embedded into the principal series representation
	\[
	\Ind_{B_n(\bbR)}^{G_n(\bbR)} \left(\delta_{B_n}^{-1/2} \mu' \otimes \left(\mathbf{1} \boxtimes \cdots \boxtimes \mathbf{1} \boxtimes |\cdot|^{\omega_{n-i}} \boxtimes \cdots \boxtimes |\cdot|^{\omega_1}\right)\right).
	\]
	Here $\mathbf{1}$ is the trivial character.
	By the assumption, this principal series representation has a non-zero highest weight vector of weight $\lambda$.
	By Theorem \ref{Yamashita}, we have 
	\begin{itemize}
	\item $\lambda_{n} = \cdots = \lambda_{n-i+1}$.
	\item $\mu = \mathrm{sgn}^{\lambda_n} |\cdot|^{\lambda_n-n+(i-1)/2}$.
	\item $(\omega_1,\ldots,\omega_{n-i}) = (\lambda_1,\ldots,\lambda_{n-i})$.
	\end{itemize}
	This completes the proof.
	\end{proof}

	\begin{rem}
	This theorem and the previous lemma can be proved easily if the following statement is true:
	Let $P$ be a standard parabolic subgroup of a reductive Lie group $G$.
	For a representation $\pi$ of $M$ with finite length, we have
	\begin{align}\label{soc_quest}
	\mathrm{Soc}\left(\Ind_{P}^G(\pi)\right) \subset \Ind_{P}^G\left(\mathrm{Soc}(\pi)\right).
	\end{align}
	Here $\mathrm{Soc}$ mean the sum of irreducible subrepresentations.
	Indeed, if (\ref{soc_quest}) is true, Lemma \ref{emb_lem} follows from the double induction formula and \cite[Theorem 3.4.2 (ii)]{1999_Howe-Lee}.
	But the auther has no counterexamples and no idea how to show the above argument (\ref{soc_quest}).
	\end{rem}

\section{Decomposition of the space of automorphic forms and Main theorem}
	In this section, we review the general theory of automorphic forms and state the main theorem.
	
	\subsection{Definition}
	
	Let $G$ be a connected reductive group over $\bbQ$.
	We denote by $\bbA_\bbQ$ and $\bbA_{\bbQ,\fini}$ the adele ring of $\bbQ$ and the finite part of $\bbA_\bbQ$.
	Put $\frakg = \mathrm{Lie}(G(\bbR)) \otimes_\bbR \bbC$.
	Let $\calZ$ be the center of the universal enveloping algebra of $\frakg$.
	For a parabolic subgroup $P$, we denote by $A_P$ and $A_P^\infty$ the split component of $P$ and the identity component of $A_P(\bbR)$.
	Fix a minimal parabolic $\bbQ$-subgroup $P_0$ and a Levi decomposition $P_0=M_0N_0$.
	We assume that a maximal compact subgroup $K = \prod_v K_v$ of $G(\bbA_\bbQ)$ satisfies the following conditions (cf.~\cite[$\S$ I.1.4]{MW}):
	\begin{itemize}
	\item $G(\bbA_\bbQ) = P_{0} (\bbA_\bbQ) K$.
	\item $P(\bbA_\bbQ) \cap K = (M(\bbA_\bbQ) \cap K)(N(\bbA_\bbQ) \cap K)$.
	\item $M(\bbA_\bbQ) \cap K$ is a maximal compact subgroup of $M(\bbA_\bbQ)$. 
	\end{itemize}
	Here $P$ runs through all standard parabolic subgroups of $G$ with Levi decomposition $P=MN$ and $M$ is a standard Levi subgroup.

	\begin{dfn}(\cite[Definition I.2.17]{MW}).
	Let $P=MN$ be a standard parabolic subgroup of $G$.
	For a smooth function $\phi: N(\bbA_\bbQ)M(\bbQ) \backslash G(\bbA_\bbQ) \longrightarrow \bbC$, we say that $\phi$ is automorphic if it satisfies the following conditions:
	\begin{itemize}
	\item $\phi$ is right $K$-finite.
	\item $\phi$ is $\mathcal{Z}$-finite.
	\item $\phi$ is slowly increasing.
	\end{itemize}
	We denote by $\mathcal{A}(P \backslash G)$ the space of automorphic forms on $ N(\bbA_\bbQ)M(\bbQ) \bs G(\bbA_\bbQ)$.
	For simplicity, we write $\calA(G)$ when $P=G$.
	Let $\calA(A_G^\infty \bs G)$ be the space of automorphic forms on $A_G^\infty \bs G(\bbA_\bbQ)$.
	The space $\mathcal{A}(P \backslash G)$ is a $G(\bbA_\fini) \times \gk$-module by the right translation.
	\end{dfn}
	
	Fix a totally real field $F$.
	We denote by $\bbA_F$ and $\bbA_{F,\fini}$ the adele ring of $F$ and the finite part of $\bbA_F$, respectively.
	Set $G_n = \Res_{F/\bbQ} \Sp_{2n}$.
	For a non-archimedean place $v$ of $F$, put
	\[
	K_{n,v} = \Sp_{2n}(\calO_{F_v})
	\]
	where $\calO_{F_v}$ is the ring of integers of $F_v$.
	For an archimedean place $v$, put
	\[
	K_v = K_{n,\infty}.
	\]
	The maximal compact subgroup $K_n = \prod_{v}K_{n,v}$ of $G_n(\bbA_\bbQ)$ satisfies the conditions as above.
	The Lie algebra $\frakg_n=\mathrm{Lie}(G_n(\bbR))\otimes_\bbR \bbC$ is equal to $\bigoplus_{v \in \bfa} \mathfrak{sp}_{2n}(\bbC)$ and contains the abelian subalgebras $\frakp_{n,\pm}$.
	We denote by $\calZ_n$ the center of the universal enveloping algebra of $\frakg_n$.
	Set
	\[
	\calN(G_n) = \left\{ \varphi \in \calA(G_n) \,\middle|\, \text{$\varphi$ is right $\frakp_{n,-}$-finite.}\right\}.
	\]
	We say that an automorphic form in $\calN(G_n)$ is nearly holomorphic.
	For an infinitesimal character $\chi$ of $\calZ_n$, set
	\[
	\calN(G_n,\chi) = \{\varphi \in \calN(G_n) \mid \text{there exist an integer $m > 0$ such that $(\chi(z)-z)^m \varphi = 0$ for any $z \in \calZ_n$}\}.
	\]
	The purpose in this paper is to decompose $\calN(G_n,\chi)$ as a $G_n(\bbA_{\bbQ,\fini}) \times (\frakg_n, K_{n,\infty})$-module under certain assumptions for $\chi$.
	
	We then obtain the following result:
	
	\begin{prop}\label{ss_Z}
	For any $\varphi \in \calN(G_n,\chi_\lambda)$, $\varphi$ is a $\chi$-eigenfunction, i.e., $z \cdot \varphi = \chi(z) \varphi$ for any $z \in \calZ_n$.
	\end{prop}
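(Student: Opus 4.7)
The plan is to reduce the semisimplicity of $\calZ_n$ on $\calN(G_n,\chi_\lambda)$ to semisimplicity on constant terms (which carry an explicit induced representation structure) and on the cuspidal spectrum. I proceed by induction on the rank $n$, the base case $n=0$ being trivial.

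Since $\calZ_n$ acts by right translation and constant term integration over $N_P(\bbQ)\backslash N_P(\bbA_\bbQ)$ commutes with right translation, each constant term map $\varphi \mapsto \varphi_{P_{i,n}}$ is $\calZ_n$-equivariant. For $\varphi \in \calN(G_n,\chi_\lambda)$, Proposition \ref{Wh_coeff_main} puts $\varphi_{P_{i,n}}$ inside $\sum_\ell I_{P_{i,n}}(\mu_\ell,s_\ell,\pi_\ell)$, where each $\pi_\ell$ is a subrepresentation of $\calN(G_{n-i})$. By the inductive hypothesis applied to $\pi_\ell$, $\calZ_{n-i}$ acts semisimply on $\pi_\ell$; combined with the character $\mu_\ell|\cdot|^{s_\ell}$ of the $\GL_i$-factor, this forces $\calZ_n$ to act on each summand $I_{P_{i,n}}(\mu_\ell,s_\ell,\pi_\ell)$ through a single true infinitesimal character (computed via the Harish-Chandra projection compatible with parabolic induction). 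Hence $\varphi_{P_{i,n}}$, being a generalized $\chi_\lambda$-eigenvector concentrated in those summands whose infinitesimal character equals $\chi_\lambda$, is actually a true $\chi_\lambda$-eigenvector.

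Fix $z \in \calZ_n$ and set $\psi = (z - \chi_\lambda(z))\varphi \in \calN(G_n,\chi_\lambda)$. The $\calZ_n$-equivariance of constant terms yields $\psi_{P_{i,n}} = (z-\chi_\lambda(z))\varphi_{P_{i,n}} = 0$ for every $i$, so $\psi$ is cuspidal. To conclude $\psi = 0$ (rather than merely $(z-\chi_\lambda(z))\psi = 0$), I will match the prescribed constant-term data through an Eisenstein-series construction: the input data $(\mu_\ell,s_\ell,\pi_\ell)$ obtained above now gives true $\chi_\lambda$-eigenvectors, and by forming the leading term of the corresponding Eisenstein series one produces $\varphi^{\mathrm{Eis}} \in \calN(G_n,\chi_\lambda)$ that is a true $\chi_\lambda$-eigenvector and shares the same constant terms as $\varphi$. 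The difference $\varphi - \varphi^{\mathrm{Eis}}$ then has vanishing constant terms along every proper parabolic, hence is cuspidal, and any cuspidal generalized $\chi_\lambda$-eigenvector is a true $\chi_\lambda$-eigenvector by the discrete spectral decomposition of the cuspidal spectrum. Summing the two true eigenvectors $\varphi^{\mathrm{Eis}}$ and $\varphi - \varphi^{\mathrm{Eis}}$ gives the claim.

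The main obstacle is constructing the Eisenstein lift $\varphi^{\mathrm{Eis}}$ inside $\calN(G_n)$: it must (i) have the prescribed constant terms, (ii) be $\frakp_{n,-}$-finite, and (iii) be a pure (not generalized) $\calZ_n$-eigenvector. Achieving (iii) forces one to work with the leading term of the Eisenstein series at the specific parameter determined by $\mu_\ell$ and $s_\ell$, strictly avoiding any logarithmic derivatives that would spoil the pure-eigenvector property. Verifying the near-holomorphy of this leading term, and tracking infinitesimal characters compatibly through the induction on the rank, will constitute the bulk of the technical work.
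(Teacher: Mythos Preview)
Your inductive setup and the use of Proposition \ref{Wh_coeff_main} to put each constant term into a sum of induced representations on which $\calZ_n$ acts by a genuine character are exactly right, and match the paper's proof. The divergence, and the gap, is in how you finish.

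You correctly observe that knowing $\psi=(z-\chi_\lambda(z))\varphi$ is cuspidal only yields $(z-\chi_\lambda(z))^2\varphi=0$, not $(z-\chi_\lambda(z))\varphi=0$. Your proposed remedy is to build an Eisenstein lift $\varphi^{\mathrm{Eis}}\in\calN(G_n)$ matching all constant terms of $\varphi$ and then split $\varphi$ as $\varphi^{\mathrm{Eis}}+(\varphi-\varphi^{\mathrm{Eis}})$. But producing such a lift \emph{inside $\calN(G_n)$} with the prescribed constant terms is precisely the content of Conjecture \ref{conj} (1), which the paper does \emph{not} prove in general (Theorem \ref{main} establishes it only under the sufficiently-regular hypothesis, and its proof relies on absolute convergence of the Eisenstein series and vanishing of all nontrivial intertwining terms, cf.\ Proposition \ref{const_term_Eis_ser}). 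Outside that range the Eisenstein series may have poles, the leading term need not be nearly holomorphic, and matching several constant terms simultaneously is delicate. So the ``bulk of the technical work'' you anticipate is in fact an open problem in the generality required here.

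The paper sidesteps all of this with one extra structural input: Proposition \ref{decomp_NHAF_parab}. Decompose $\varphi=\sum_i\varphi_i$ with $\varphi_i\in\calN(G_n,\chi_\lambda)_{\{Q_{i,n}\}}$. The cuspidal piece $\varphi_0$ is a true eigenvector by unitarity of the cuspidal spectrum. For $i\geq 1$, the constant-term map along $P_{i,n}$ is \emph{injective} on $\calN(G_n)_{\{Q_{i,n}\}}$ (via Corollary \ref{P=Q} one has $(\varphi_i)_{P_{i,n}}=(\varphi_i)_{Q_{i,n}}$, and an automorphic form concentrated on $\{Q_{i,n}\}$ with vanishing $Q_{i,n}$-constant term is zero). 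Since this map is $\calZ_n$-equivariant and, by your own argument, lands in a space on which $\calZ_n$ acts semisimply, $(z-\chi_\lambda(z))\varphi_i$ has zero constant term, hence vanishes. No Eisenstein construction is needed; replace your third paragraph by this injectivity argument and the proof is complete.
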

	
	This will be proved in \S \ref{pf_ss_Z}.
	
	\begin{rem}
	This result is proved in \cite{Horinaga} and \cite{pss2} for $n=1,2$, respectively.
	\end{rem}

	\subsection{Decomposition according to parabolic subgroups}
	
	For parabolic $\bbQ$-subgroups $P$ and $Q$ of $G$, we say that $P$ and $Q$ are associate if the split components $A_P$ and $A_Q$ are $G(\bbQ)$-conjugate. 
	We denote by $\{P\}$ the associated class of the parabolic subgroup $P$.
	For a locally integrable function $\varphi$ on $N_P(\bbQ) \bs G(\bbA_\bbQ)$, set
	\[
	\varphi_{P}(g) = \int_{N_P(\bbQ) \bs N_P(\bbA_\bbQ)} \varphi(ng) \, dn
	\]
	where $P = M_PN_P$ is the Levi decomposition of $P$ and the Haar measure $dn$ is normalized by
	\[
	\int_{N_P(\bbQ) \bs N_P(\bbA_\bbQ)} \, dn = 1.
	\]
	The function $\varphi_P$ is called the constant term of $\varphi$ along $P$.
	If $\varphi$ lies in $\calA(P \bs G)$, $\varphi_Q$ is an automorphic form on $N_Q(\bbA_\bbQ)M_Q(\bbQ) \bs G(\bbA_\bbQ)$ for a parabolic subgroup $Q \subset P$.
	We call $\varphi$ cuspidal if $\varphi_Q$ is zero for any standard parabolic subgroup $Q$ of $G$ with $Q \subsetneq P$.
	Note that $\varphi$ is cuspidal if and only if $\varphi_P$ is zero for any maximal standard parabolic subgroups of $P$.
	We denote by $\calA_{\cusp}(P \bs G)$ the space of cusp forms in $\calA(P \bs G)$.
	Set
	\[
	M(\bbA_\bbQ)^1 = \bigcap_{\chi \in \Hom_{\mathrm{conti}}(M(\bbA_\bbQ),\bbC^\times)} \mathrm{Ker}(|\chi|).
	\]
	For $g \in G(\bbA_\bbQ)$, let $\varphi_{P,g}$ be the function on $M_P(\bbA_\bbQ)^1$ defined by $m \longmapsto \varphi(mg)$.
	Put
	\[
	\calA(G)_{\{P\}} = \left\{\varphi \in \calA(G) \,\middle|\, \begin{matrix}\text{$\varphi_{Q,ak}$ is orthogonal to all cusp forms on $M_Q(\bbA_Q)^1$}\\ \text{for any $a \in A_Q, k\in K_n$, and $Q \not \in \{P\}$}\end{matrix}\right\}.
	\] 
	We define the space $\calA(A_G^\infty \bs G)_{\{P\}}$ similarly.
	By the definition, $\calA(A_G^\infty \bs G)_{\{G\}}$ is equal to $\calA_\cusp(A_G^\infty \bs G)$.
	Then Langlands had proven the following result in \cite{langlands}:
	\begin{thm}
	With the above notation, we have
	\[
	\calA(A_G^\infty \bs G) = \bigoplus_{\{P\}}\calA(A_G^\infty \bs G)_{\{P\}},
	\]
	where $\{P\}$ runs through all associated classes of parabolic subgroups.
	\end{thm}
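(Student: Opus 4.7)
The plan is to follow Langlands' original approach as developed in \cite{MW}, proceeding by induction on the semisimple rank of the derived group of $G$. The base case (rank $0$) is trivial since every automorphic form is already cuspidal.

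First, I would establish the inductive step for constant terms. Fix $\varphi \in \calA(A_G^\infty \bs G)$ and a standard parabolic $P = MN$. For each $k \in K$, the function $m \mapsto \varphi_P(mk)$ on $A_M^\infty \bs M(\bbA_\bbQ)$ is an automorphic form in the sense of $M$, and by the inductive hypothesis applied to $M$ it admits a decomposition along associated classes of parabolic subgroups of $M$. Combined with the Levi-to-parabolic correspondence $R \mapsto R \cap M$ between standard parabolics of $G$ contained in $P$ and standard parabolics of $M$, this yields, for each $R \subset P$, a well-defined ``$R$-cuspidal part'' $\varphi_{P,R\text{-cusp}}$ of $\varphi_P$.

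Next, for each associated class $\{Q\}$ of parabolic subgroups of $G$ I would define, following \cite[\S I.3.5]{MW}, a projection $\pi_{\{Q\}}$ on $\calA(A_G^\infty \bs G)$ characterized by the requirement that $(\pi_{\{Q\}}\varphi)_P$ is the $P$-cuspidal part of $\varphi_P$ when $P \in \{Q\}$ and is orthogonal to all cusp forms on $M_P(\bbA_\bbQ)^1$ otherwise. Existence of this projection is the heart of the matter: the candidate is built by a Langlands combinatorial alternating sum over standard parabolics in the class, of the form
\[
\pi_{\{Q\}}\varphi = \sum_{P \in \{Q\}} (-1)^{\mathrm{rk}(P)-\mathrm{rk}(Q)} \cdot (\text{truncated cuspidal component of }\varphi_P),
\]
which one then verifies has constant terms with the claimed orthogonality behaviour using the Langlands combinatorial lemma on double cosets.

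It then remains to show that $\varphi = \sum_{\{Q\}} \pi_{\{Q\}}\varphi$ and that the sum is direct. For the first, I would compare constant terms: applying $(\cdot)_R$ to both sides for every standard parabolic $R$, the inductive decomposition on $M_R$ shows the two sides have identical constant terms, and by the uniqueness lemma (an automorphic form is determined by the collection of its constant terms together with its cuspidal data, ultimately a consequence of the spectral theory of Eisenstein series) they agree. For directness, suppose $\sum_{\{Q\}} \varphi_{\{Q\}} = 0$ with $\varphi_{\{Q\}} \in \calA(A_G^\infty \bs G)_{\{Q\}}$. Taking the constant term along any fixed $P$ and decomposing each $(\varphi_{\{Q\}})_P$ into its cuspidal and non-cuspidal parts on $M_P$, the defining orthogonality forces only the term with $P \in \{Q\}$ to contribute a cuspidal piece; the uniqueness of cuspidal parts then yields $(\varphi_{\{Q\}})_P = 0$ for every $P$, whence $\varphi_{\{Q\}} = 0$.

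The main obstacle is the existence of the projections $\pi_{\{Q\}}$ and the identity $\varphi = \sum_{\{Q\}}\pi_{\{Q\}}\varphi$: this is where the Langlands combinatorial identity and the full spectral theory of Eisenstein series enter, and it is precisely the content of \cite{langlands} that the paper is invoking. The inductive structure and the orthogonality bookkeeping, by contrast, are formal once the combinatorial lemma is in place.
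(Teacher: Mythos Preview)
The paper does not give its own proof of this theorem: it states the result and attributes it to Langlands \cite{langlands} (see also \cite[\S III.2]{MW}), treating it as a black-box input to the subsequent arguments. There is therefore nothing in the paper to compare your proposal against.

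As for the sketch itself, the overall architecture---induction on semisimple rank, passage to constant terms, and a directness argument via orthogonality of cuspidal components on Levi subgroups---is indeed the shape of the standard proof. Two points deserve caution. First, your alternating-sum formula for $\pi_{\{Q\}}$ with signs $(-1)^{\mathrm{rk}(P)-\mathrm{rk}(Q)}$ over $P \in \{Q\}$ is not quite right as stated (all $P$ in a fixed associate class have the same split rank, so the exponent is identically zero); this formula is closer to Arthur's truncation operator, which is a different construction. The actual projection in Langlands' argument is built more directly from the cuspidal components of constant terms, without such an alternating sum. Second, the ``uniqueness lemma'' you invoke---that an automorphic form is determined by its constant terms together with cuspidal data---is essentially the statement that a form all of whose cuspidal components vanish is zero, which is immediate from the definition; the deeper spectral theory of Eisenstein series is not needed for the decomposition along associate classes, only for the finer decomposition along cuspidal data. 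With these corrections your outline would recover the standard argument.
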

	
	Next we state the result corresponding to the above theorem.
	Put
	\[
	\calN(G_n)_{\{P\}} = \calN(G_n) \cap \calA(G_n)_{\{P\}}.
	\]
	Note that $A_{G_n}^\infty = \{1\}$.
	For $1 \leq j \leq n$, we denote by $P_{j,n}$ and $Q_{j,n}$ the standard parabolic subgroups of $G_n$ with the standard Levi subgroups $\Res_{F/\bbQ}\GL_j \times G_{n-j}$ and $(\Res_{F / \bbQ} \GL_1)^j \times G_{n-j}$, respectively.
	We put $P_{0,n} = Q_{0,n} = G_n$.
	Note that $P_{1,n} = Q_{1,n}$.
	
	\begin{prop}\label{decomp_NHAF_parab}
	With the above notation, we have
	\[
	\calN(G) = \bigoplus_{i=0}^n \calN(G_n)_{\{Q_{i,n}\}}.
	\]
	\end{prop}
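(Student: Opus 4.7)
The plan is to start from Langlands' direct-sum decomposition
\[
\calN(G_n) = \bigoplus_{\{P\}} \calN(G_n)_{\{P\}}
\]
along associated classes of $\bbQ$-parabolic subgroups (obtained by intersecting the corresponding decomposition of $\calA(G_n)$ from \cite{langlands, MW} with $\calN(G_n)$), and to rule out every class other than the $\{Q_{i,n}\}$'s. Standard parabolics of $G_n = \Res_{F/\bbQ}\Sp_{2n}$ correspond to compositions $(j_1, \ldots, j_k; m)$ with $\sum j_l + m = n$, and two such are associated iff their $\GL$-block multisets and the integer $m$ coincide. Thus $\{Q_{i,n}\}$ is precisely the class with multiset $\{1,\ldots,1\}$ ($i$ copies) and $m = n-i$, and the problem reduces to showing $\calN(G_n)_{\{P\}} = 0$ whenever the $\GL$-multiset of $P$ contains some $j_* \geq 2$.

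For such a $\{P\}$ and $\varphi \in \calN(G_n)_{\{P\}}$, I would choose the standard representative $Q \in \{P\}$ whose composition starts with a $\GL_{j_*}$-block of the largest size $j_* \geq 2$; then $Q \subset P_{j_*, n}$ and $Q \cap M_{P_{j_*, n}} = \Res_{F/\bbQ}\GL_{j_*} \times Q'$ for a standard parabolic $Q'$ of $G_{n-j_*}$. By Proposition~\ref{Wh_coeff_main}, the constant term $\varphi_{P_{j_*, n}}$ lies in a finite sum $\sum_\ell I_{P_{j_*, n}}(\mu_\ell, s_\ell, \pi_\ell)$, so that, viewed as a function on the Levi $\GL_{j_*}(\bbA_F) \times G_{n-j_*}(\bbA_F)$, its $\GL_{j_*}$-component is a sum of one-dimensional characters $\mu_\ell |\det|^{s_\ell}$. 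By the transitivity formula $\varphi_Q = (\varphi_{P_{j_*, n}})_{Q \cap M_{P_{j_*, n}}}$, the additional constant-term integration runs only over $N_{Q'} \subset G_{n-j_*}$ and leaves the $\GL_{j_*}$-factor untouched. Since cuspidal automorphic representations of $\GL_{j_*}(\bbA_F)^1$ are infinite-dimensional for $j_* \geq 2$, one-dimensional characters make no contribution to the cuspidal projection on that factor; and because cuspidality on a product Levi factorizes over its simple factors, the cuspidal projection of $\varphi_{Q, ak}$ on $M_Q(\bbA_\bbQ)^1$ vanishes for all $a, k$. Transporting by Weyl conjugation, the same vanishing holds for every standard representative of $\{P\}$.

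Combined with the defining property of $\calN(G_n)_{\{P\}}$, which already gives vanishing of the cuspidal projection of $\varphi_{Q''}$ along every $Q'' \not\in \{P\}$, this forces every constant term of $\varphi$ to have trivial cuspidal projection. The completeness part of Langlands' spectral decomposition (see \cite[\S{}III]{MW} or \cite{Franke}) then forces $\varphi = 0$, completing the proof. I expect the principal obstacle to be the final completeness step together with the bookkeeping of transferring the cuspidal-projection vanishing across all standard representatives of a class; the rest is a clean application of Proposition~\ref{Wh_coeff_main} and the infinite-dimensionality of cuspidal $\GL_{j_*}$-representations for $j_* \geq 2$.
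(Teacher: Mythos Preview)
Your approach is essentially correct and takes a genuinely different route from the paper. The paper proceeds by induction on $n$: it first uses Proposition~\ref{equiv_def_cusp} (vanishing of $\varphi_{P_{1,n}}$ forces cuspidality) to eliminate the associated classes of maximal parabolics with a $\GL_j$-block of size $j\ge 2$, and then restricts $\varphi_{P_{1,n}}$ to the $G_{n-1}$-factor of the Levi and invokes the inductive hypothesis there. You instead argue directly with Proposition~\ref{Wh_coeff_main}, observing that on any $\GL$-block the constant term is a finite sum of characters, hence carries no cuspidal component when the block has size $\ge 2$, and then appeal to completeness. Both arguments rest on the same $\SL_j$-invariance underlying Proposition~\ref{Wh_coeff_main}; yours avoids the inductive bookkeeping and makes the mechanism (characters are non-cuspidal on $\GL_{j}$ for $j\ge 2$) more transparent.

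There is one genuine gap, precisely where you anticipated it. You establish the vanishing of the cuspidal projection of $\varphi_{Q}$ only for the single representative $Q\in\{P\}$ whose composition begins with the largest block, and then invoke ``Weyl conjugation'' for the remaining standard $Q'\in\{P\}$. That transfer is not automatic: cuspidal components along associated parabolics are linked by intertwining operators, not by naive conjugation, and one cannot immediately exclude that the minimal parabolic $Q_0$ with $\varphi_{Q_0}\neq 0$ is some $Q'\in\{P\}$ other than your chosen $Q$. The fix is easy and stays within your framework: for \emph{any} standard $Q'\in\{P\}$ with Levi $\GL_{j_1'}\times\cdots\times\GL_{j_k'}\times G_m$, apply Proposition~\ref{Wh_coeff_main} with the maximal parabolic $P_{n-m,n}\supset Q'$ rather than with $P_{j_*,n}$. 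Then $\varphi_{P_{n-m,n}}$ restricted to $\GL_{n-m}$ is a sum of characters $\mu_\ell|\det|^{s_\ell}$; the further unipotent integration down to $Q'$ is along a subgroup of $\SL_{n-m}$ and leaves these characters unchanged; hence on every $\GL_{j_l'}$-block of $M_{Q'}$ one still sees only characters, and the cuspidal projection dies whenever some $j_l'\ge 2$. This handles all $Q'\in\{P\}$ uniformly and closes the argument.
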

	
	This will be proved in \S \ref{pf_decomp_NHAF_parab}.
	In the following subsection, we state the refinement of the above decomposition.
	
	\subsection{Decomposition according to cuspidal components}
	


	
	For a reductive group $H$, let $W_H$ be the Weyl group of $H$.
	Let $P$ be a standard parabolic subgroup of $G$ and $\tau$ an irreducible cuspidal automorphic representation of $M(\bbA_\bbQ)$.
	We assume that the central character $\chi_\tau$ of $\tau$ is trivial on $A_G^\infty$.
	We say that a cuspidal datum is a pair $(M, \tau)$ such that $M$ is a Levi subgroup of $G$ and that $\tau$ is an irreducible cuspidal automorphic representation of $M(\bbA_F)$.
	Take $w \in W_G$.
	Put $M^w=wMw^{-1}$ and let $P^w=M^wN^w$ be the standard parabolic subgroup with Levi subgroup $M^w$.
	The irreducible cuspidal automorphic representation $\tau^w$ of $M^w(\bbA_\bbQ)$ is defined by $\tau^w(m')=\tau(w^{-1}m'w)$ for $m'\in M^w(\bbA_\bbQ)$.
	Two cuspidal data $(M, \tau)$ and $(M', \tau')$ are called equivalent if there exists $w\in W(M)$ such that $M'=wMw^{-1}$ and that $\tau'=\tau^w$.
	Here we put
	\[
	W(M) = \left\{ w \in W \,\middle|\, 
	\begin{matrix}
	\text{$w M w^{-1}$ is a standard Levi subgroup of $G$.}\\
	\text{$w$ has a minimal length in $wW_M$.}
	\end{matrix}
	\right\}.
	\]

	Let $\mathcal{A}(A_G^\infty \bs G)_{(M, \tau)}$ is the subspace of automorphic forms in $\mathcal{A}(A_G^\infty \bs G)$ with the cuspidal support $(M, \tau)$.
	For the definition, see \cite[\S III.2.6]{MW}.
	Then the following result is well-known.
	For details, see \cite[Theorem III.2.6]{MW}.
	
	\begin{thm}
	The space $\mathcal{A}(A_G^\infty \bs G)$ is decomposed as
	\[
	\mathcal{A}(A_G^\infty \bs G(\bbA_\bbQ))=\bigoplus_{(M, \tau)} \mathcal{A}(A_G^\infty \bs G)_{(M, \tau)}.
	\]
	Here, $(M, \tau)$ runs through all equivalence classes of cuspidal data.
	\end{thm}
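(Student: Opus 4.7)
The plan is to start from the Langlands decomposition $\calA(A_G^\infty\bs G)=\bigoplus_{\{P\}}\calA(A_G^\infty\bs G)_{\{P\}}$ recorded just above and refine each summand using the spectral theory of cusp forms on the Levi subgroups. It therefore suffices to decompose each piece $\calA(A_G^\infty\bs G)_{\{P\}}$ as a direct sum indexed by equivalence classes of cuspidal data $(M,\tau)$ with $M$ a Levi of some parabolic in $\{P\}$.

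Fix a standard representative $P=M_PN_P$ of the class $\{P\}$. For any $\varphi \in \calA_{\{P\}}$ and any $g\in G(\bbA_\bbQ)$, the map $m\mapsto \delta_P^{-1/2}(m)\varphi_P(mg)$ on $A_{M_P}^\infty M_P(\bbQ)\bs M_P(\bbA_\bbQ)^1$ is a cusp form, by the very definition of $\calA_{\{P\}}$ (the orthogonality condition to cusp forms for non-associate parabolics $Q$ forces the constant term along $P$ itself to be cuspidal). Applying the spectral decomposition
\[
L^2_{\cusp}(A_{M_P}^\infty M_P(\bbQ)\bs M_P(\bbA_\bbQ))=\widehat{\bigoplus}_{\tau}m(\tau)\,\tau
\]
pointwise in $g$, I would define $\calA(A_G^\infty\bs G)_{(M_P,\tau)}$ as the subspace of $\calA_{\{P\}}$ whose constant term along $P$ is concentrated in the $\tau$-isotypic component (more precisely, whose restriction to $M_P(\bbA_\bbQ)^1$ transforms under $\tau$). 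This is the candidate decomposition at the level of constant terms.

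To promote this to a decomposition of the automorphic forms themselves, I would invoke the Langlands--Franke structure theorem: every element of $\calA_{\{P\}}$ can be expressed as an iterated residue (or derivative in the continuous parameter) of an Eisenstein series built from a cuspidal datum $(M_P,\tau)$. In this description the $\tau$ appearing in the constant term is precisely the one used in the Eisenstein construction, so the decomposition of $\varphi_P$ lifts to a decomposition of $\varphi$. Directness of the sum follows from the orthogonality of distinct cuspidal automorphic representations of $M_P$, and the necessity of the equivalence relation $(M,\tau)\sim(M^w,\tau^w)$ for $w\in W(M)$ is enforced by the functional equations of Eisenstein series, which identify the subspaces built from $(M,\tau)$ and from any Weyl conjugate.

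The main obstacle is this lifting step: passing from a decomposition of constant terms to a decomposition of the full automorphic forms. In complete generality it requires Langlands's analytic theory of Eisenstein series together with Franke's characterization of $\calA_{\{P\}}$ as the span of iterated residues of cuspidal Eisenstein series, both of which are highly non-trivial. For the purposes of this paper one expects a substantial simplification, since the later results concern the $\calZ_n$-finite spaces $\calN(G_n,\chi_\lambda)$, where the lifting reduces to a finite-dimensional problem that can be carried out explicitly using the highest-weight embeddings produced in Theorem \ref{emb_main}.
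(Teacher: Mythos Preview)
The paper does not prove this theorem at all; it simply records it as background and refers the reader to \cite[Theorem III.2.6]{MW}. So there is no ``paper's own proof'' to compare against beyond the citation.

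Your outline is a faithful high-level sketch of the argument one finds in \cite{MW}: pass to the decomposition by associate class, use that for $\varphi\in\calA(A_G^\infty\bs G)_{\{P\}}$ the constant term $\varphi_P$ restricted to $M_P$ is cuspidal, decompose the cuspidal spectrum of $M_P$ into irreducibles $\tau$, and then lift back. You also correctly isolate the genuine content: the lifting step, i.e.\ showing that the $\tau$-isotypic projection of $\varphi_P$ is again the constant term of an automorphic form on $G$, is exactly Langlands' theory of Eisenstein series and their residues, and is not elementary. Your remark that the equivalence relation $(M,\tau)\sim(M^w,\tau^w)$ is forced by the functional equations is the right mechanism. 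One small correction of emphasis: the cuspidality of $\varphi_P|_{M_P}$ for $\varphi\in\calA_{\{P\}}$ is not literally ``the very definition'' but a (standard) consequence, obtained by noting that proper parabolics of $M_P$ correspond to parabolics $Q\subsetneq P$ of $G$, which are never associate to $P$.

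Since the theorem is quoted rather than proved in the paper, and since the paper only uses it as a black box to set up the spaces $\calN(G_n)_{(M,\tau)}$, your final paragraph is also on point: for the applications in this paper one never needs the full strength of \cite{MW}, only the finite-dimensional refinements carried out explicitly in \S\ref{pf_main} using Theorem~\ref{emb_main}.
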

	
	Put $\calN(G_n)_{(M, \tau)}=\calN(G_n)\cap \mathcal{A}(G_n)_{(M, \tau)}$.
	We say that an irreducible cuspidal automorphic representation $\pi = \bigotimes_v \pi_v$ is holomorphic if $\pi_v$ is an irreducible unitary highest weight module for any $v \in \bfa$.
	
	\begin{prop}\label{decomp_NHAF_cusp_supp}
	Let $P$ be a standard parabolic subgroup of $G_n$ with the standard Levi subgroup $M$.
	\begin{enumerate}
	\item With the above notation, the space $\calN(G_n)_{(M,\pi)}$ is non-zero only if $P$ is associated to $Q_{i,n}$ for some $i$.
	\item Suppose an irreducible cuspidal automorphic representation $\Pi$ of $M_{Q_{i,n}} = (\Res_{F/\bbQ}\GL_1)^i \times G_{n-i}$ is isomorphic to $\mu_1 \boxtimes \cdots \boxtimes\mu_i \boxtimes \pi$.
	If $\calN(G_n)_{(Q_{i,n},\Pi)} \neq 0$, we have
	\begin{itemize}
	\item $\mu_1 = \cdots = \mu_{i}$.
	\item $\pi$ is a holomorphic cuspidal automorphic representation of $G_{n-i}(\bbA_\bbQ)$.
	\end{itemize}
	\end{enumerate}
	\end{prop}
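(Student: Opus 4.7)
The first statement follows directly from Proposition \ref{decomp_NHAF_parab}. The cuspidal-support decomposition refines the associate-class decomposition, so if $\calN(G_n)_{(M,\pi)}$ is non-zero, it is contained in some $\calN(G_n)_{\{P\}}$; Proposition \ref{decomp_NHAF_parab} then forces $\{P\} = \{Q_{i,n}\}$ for some $i$, and in particular $M$ is associate to $M_{Q_{i,n}}$.

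For the second statement, fix a non-zero $\varphi \in \calN(G_n)_{(Q_{i,n},\Pi)}$ with $\Pi \cong \mu_1 \boxtimes \cdots \boxtimes \mu_i \boxtimes \pi$. The plan is to pass through the larger parabolic $P_{i,n} \supset Q_{i,n}$ using Proposition \ref{Wh_coeff_main} together with transitivity of the constant term. Proposition \ref{Wh_coeff_main} produces Hecke characters $\mu_\ell'$ of $F^\times F_{\infty,+}^\times \bs \bbA_F^\times$, complex numbers $s_\ell$, and subrepresentations $\pi_\ell \subset \calN(G_{n-i})$ with
\[
\varphi_{P_{i,n}} \in \sum_\ell I_{P_{i,n}}(\mu_\ell', s_\ell, \pi_\ell).
\]
Taking the further constant term of each summand along the parabolic of $M_{P_{i,n}}$ corresponding to $Q_{i,n}$ and projecting onto the cuspidal support $\Pi$, the character $\mu_\ell' |\det|^{s_\ell}$ of $\GL_i(\bbA_F)$ restricts to the same character $\mu_\ell' |\cdot|^{s_\ell}$ on each copy of $\bbA_F^\times$ in the diagonal torus $(\bbA_F^\times)^i$. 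Matching this with $\mu_1 \boxtimes \cdots \boxtimes \mu_i$ (the modulus character shifts being uniform in each coordinate) yields $\mu_1 = \cdots = \mu_i$.

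For the holomorphy of $\pi$, I would argue place by place at infinity. Since $\varphi$ is $\frakp_{n,-}$-finite, the archimedean $(\frakg_n,K_{n,\infty})$-module generated by an appropriate Taylor coefficient of $\varphi$ contains a non-zero highest weight vector annihilated by $\frakp_{n,-}$. This vector, transported through the embedding into $I_{P_{i,n}}(\mu_\ell', s_\ell, \pi_\ell)$, becomes a highest weight vector in the archimedean component of such an induced representation. Applying Theorem \ref{emb_main} componentwise in $\bfa$ (using $G_n(\bbR) = \prod_{v \in \bfa}\Sp_{2n}(\bbR)$) forces the archimedean part of $\pi_\ell$ to contain $\boxtimes_{v \in \bfa} L(\omega_v)$ for some $\frakk_{n-i}$-dominant weights $\omega_v$. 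Consequently the cuspidal constituent $\pi$ of $\pi_\ell$ has $\pi_v$ a unitary highest weight module for each $v \in \bfa$, i.e., $\pi$ is holomorphic.

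The main technical obstacle is the careful bookkeeping between the global cuspidal-support formalism and the local archimedean highest weight analysis. In particular, one must isolate the cuspidal $\pi$-isotypic part of the possibly reducible and non-cuspidal $\pi_\ell$ appearing in Proposition \ref{Wh_coeff_main}, and verify that the local highest weight constraints from Theorem \ref{emb_main}, stated for $F = \bbQ$, piece together to a global statement after restriction of the adelic cuspidal datum to the archimedean factors.
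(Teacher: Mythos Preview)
Your argument for part (1) and for $\mu_1 = \cdots = \mu_i$ is essentially the same as the paper's: both rely on Proposition \ref{decomp_NHAF_parab} and then on Proposition \ref{Wh_coeff_main} to see that the $\GL_i$-part of the constant term factors through a character of $\GL_i(\bbA_F)$, whose restriction to the diagonal torus is the $i$-fold product of a single Hecke character.

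The holomorphy argument for $\pi$ is where you diverge, and your route is both more complicated than necessary and has a genuine gap. You invoke Theorem \ref{emb_main} to constrain the archimedean inducing data, but that theorem is stated for induced representations of the form $\Ind_{P_{i,n}(\bbR)}^{G_n(\bbR)}(\mu \boxtimes L(\omega))$, i.e.\ it already \emph{assumes} the $G_{n-i}(\bbR)$-factor is an irreducible highest weight module. It therefore cannot be applied directly to conclude that the a priori arbitrary $\pi_\ell \subset \calN(G_{n-i})$ from Proposition \ref{Wh_coeff_main} contains some $L(\omega_v)$; you would first need an independent argument that the values of a highest weight vector in the induction land in $HK(\pi_\ell)$, which is precisely the content you are trying to extract. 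The ``careful bookkeeping'' you flag is not just bookkeeping---it is the missing step.

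The paper bypasses this entirely with a one-line observation: for any $k \in K_n$, the restriction $(\varphi_{Q_{i,n}})_k|_{G_{n-i}(\bbA_\bbQ)}$ is $\frakp_{n-i,-}$-finite simply because $\mathrm{Ad}(K_{n,\infty})(\frakp_{n-i,-}) \subset \frakp_{n,-}$ and $\varphi$ itself is $\frakp_{n,-}$-finite. Hence the cuspidal component $\pi$ lives in $\calN(G_{n-i})$, and since $\pi$ is irreducible and unitary (being cuspidal), each archimedean factor $\pi_v$ is an irreducible unitary $\frakp_{n-i,-}$-finite $(\frakg_{n-i},K_{n-i,\infty})$-module, hence a unitary highest weight module. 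No appeal to Theorem \ref{emb_main} is needed.
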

	
	This will be proved in \S \ref{pf_decomp_NHAF_cusp_supp}.

	\subsection{Main theorem}
	
	For a weight $\lambda$, let $\chi_\lambda$ be the infinitesimal character with the Harish-Chandra parameter $\rho + \lambda$.
	Here $\rho = (-1,\ldots,-n)$ is half the sum of positive roots. 
	Then, for weights $\lambda$ and $\mu$, we have $\chi_\lambda = \chi_\mu$ if and only if there exists $w \in W_{G_n}$ such that $w \cdot \lambda = \mu$, where $w \cdot \lambda$ is the dot action.
	Infinitesimal characters of $\calZ_n = \calZ(\calU(\frakg_n))$ are parametrized by the set
	\[
	- \rho + \Lambda^+_\bbC = (1,\ldots,n) + \left\{(\lambda_{1},\ldots,\lambda_{n}) \in \bbC^n \,\middle|\, \text{$\lambda_{i} - \lambda_{i+1} \in \bbZ_{\geq 0}$ for any $1 \leq i \leq n-1$} \right\}.
	\]
	We say that an infinitesimal character $\chi_\lambda$ is integral if the weight $\lambda$ is integral.
	We also say that $\chi_\lambda$ is regular (resp. singular) if $\#(W_{G_n} \cdot \lambda) = \# W_{G_n}$ (resp. $\#(W_{G_n} \cdot \lambda) < \# W_{G_n}$).
	Here $W_{G_n} \cdot \lambda$ is the orbit under the dot action.
	Note that the integral infinitesimal characters are parametrized by the set $- \rho + \Lambda_{\bbZ}^+$ and the regular integral infinitesimal characters are parametrized by the set
	\[
	\{(\lambda_1,\ldots,\lambda_n) \in \bbZ^n \mid \lambda_1 \geq \cdots \geq \lambda_n \geq n\},
	\]
	where $\Lambda_\bbZ^+ = \Lambda_\bbC^+ \cap \bbZ^n$.
	We first show the integrality of infinitesimal characters:
	
	\begin{lem}\label{integrality_wt}
	Suppose $\calN(G_n,\chi)$ is non-zero.
	Then $\chi$ is integral.
	\end{lem}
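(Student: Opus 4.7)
The strategy is to extract from $\varphi$ a $(\frakg_n, K_{n,\infty})$-highest weight vector, observe that its highest weight must be integral because $K_{n,\infty}$ is compact, and then match infinitesimal characters.

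Fix a nonzero $\varphi \in \calN(G_n, \chi)$. Since $\varphi$ is $\frakp_{n,-}$-finite, the finite-dimensional subspace $\calU(\frakp_{n,-}) \varphi$ admits a nonzero $\frakp_{n,-}$-killed vector $v$: the action of the abelian Lie algebra $\frakp_{n,-}$ on nearly holomorphic forms is nilpotent, as each application strictly reduces the polynomial degree in the entries of $\mathrm{Im}(z)^{-1}$ recorded in the expansion from Section \ref{FC_NHMF} (this is essentially the content of the remark preceding Conjecture \ref{conj}). Because $[\frakp_{n,-}, \frakk_n] \subseteq \frakp_{n,-}$, the finite-dimensional $K_{n,\infty}$-module $\calU(\frakk_n) v$ is still annihilated by $\frakp_{n,-}$. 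Pick an irreducible $K_{n,\infty}$-subrepresentation of it and a $\frakk_n$-highest weight vector $w$ in that subrepresentation, of some weight $\lambda$. Compactness of $K_{n,\infty}$ (a product of copies of $U(n)$ indexed by $\bfa$) forces $\lambda$ to be integral. Since $\Phi^+$ decomposes as the positive $\frakk_n$-roots $\{e_k - e_l\}$ together with the $\frakp_{n,-}$-roots $\{-(e_i+e_j)\}$, the vector $w$ is annihilated by every positive root vector, so $w$ is a $\frakg_n$-highest weight vector of integral weight $\lambda$.

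Now compare infinitesimal characters. The cyclic submodule $\calU(\frakg_n) \cdot w$ is a quotient of the generalized Verma module $N(\lambda)$, hence $z \cdot w = \chi_\lambda(z) w$ for every $z \in \calZ_n$. On the other hand, $w \in \calU(\frakg_n) \varphi$ and $\calZ_n$ is central, so the defining condition $(z - \chi(z))^m \varphi = 0$ passes to $(z - \chi(z))^m w = 0$ for the same uniform integer $m$. Substituting $z \cdot w = \chi_\lambda(z) w$ into the latter yields $(\chi_\lambda(z) - \chi(z))^m w = 0$, and $w \neq 0$ forces $\chi(z) = \chi_\lambda(z)$ for every $z$. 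Hence $\chi = \chi_\lambda$ is integral. The only nonformal ingredient is the nilpotence of $\frakp_{n,-}$ on $\calU(\frakp_{n,-}) \varphi$, which I expect to be the main (and only) obstacle; everything else is a routine consequence of the compactness of $K_{n,\infty}$ and of Harish-Chandra's parametrization of $\calZ_n$-characters.
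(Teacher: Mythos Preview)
Your argument is correct and coincides with the paper's own proof: the paper likewise passes from $\varphi$ to an $X\cdot\varphi$ that is a $(\frakg_n,K_{n,\infty})$-highest weight vector (it simply asserts such $X\in\calU(\frakg_n)$ exists, whereas you spell out the two-step construction via a $\frakp_{n,-}$-killed vector and then a $\frakk_n$-highest weight vector), observes that the resulting weight $\lambda'$ is integral because $K_{n,\infty}$ acts, and concludes $\chi=\chi_{\lambda'}$. Your treatment of the last identification, via $(z-\chi(z))^m w=0$ together with $z\cdot w=\chi_{\lambda'}(z)w$, is in fact a little more explicit than the paper's.
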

	This will be proved in \S \ref{pf_main}.
	
	By Lemma \ref{integrality_wt}, we may assume $\chi$ is integral.
	Take $\lambda = (\lambda_v)_v = (\lambda_{1,v},\ldots,\lambda_{n,v})_v \in \bigoplus_{v \in \bfa} \{-\rho + \Lambda_{\bbZ}^+\}$.
	We say that a regular infinitesimal character $\chi_\lambda$ is sufficiently regular relative to $i$ if $\lambda_{n,v} > 2n- i +1$ for any $v$.
	Put
	\[
	\calN(G_n,\chi)_{\{P\}} = \calN(G_n,\chi) \cap \calN(G_n)_{\{P\}}
	\]
	for a parabolic subgroup $P$ of $G$.
	Note that this space is non-zero only if $P$ is associated to $Q_{i,n}$ for some $i$ by Proposition \ref{decomp_NHAF_parab}.
	Set
	\[
	\mathfrak{X}_{1} = \{\mu = \otimes_v \mu_v \in \mathrm{Hom}_{\mathrm{conti}}(F^\times F_{\infty,+}^\times \bs \bbA_{F}^\times,\bbC^\times) \mid \text{$\mu_v = \mathbf{1}_v$ for any $v \in \bfa$}\}
	\]
	and
	\[
	\mathfrak{X}_{-1} = \{\mu = \otimes_v \mu_v \in \mathrm{Hom}_{\mathrm{conti}}(F^\times F_{\infty,+}^\times \bs \bbA_{F}^\times,\bbC^\times) \mid \text{$\mu_v = \mathrm{sgn}_v$ for any $v \in \bfa$}\}.
	\]
	Here $F_{\infty,+}^\times$ is the identity component of $F_{\infty}^\times$.
	For a parabolic subgroup $P$ of $G_n$ and an irreducible subrepresentation $\pi$ of $L^2(M_P(\bbQ) \bs M_P(\bbA_\bbQ))$, let $L^2(M_P)_\pi$ be the $\pi$-isotypic component of it.
	We denote by $\delta_P$ the modulas character of $P$.
	We denote by $\Ind_{P(\bbA_\bbQ)}^{G(\bbA_\bbQ)}(\pi)$ the space of functions $\varphi$ on $G_n(\bbA_\bbQ)$ such that
	\begin{itemize}
	\item $\varphi$ lies in $\calA(P \bs G_n)$.
	\item For any $k \in K_n$, the function $m \longmapsto \delta^{-1/2}_P(m)\varphi(mk)$ on $M_P(\bbA_\bbQ)$ lies in $L^2(M_P)_\pi$.
	\end{itemize}
	We then state the main theorem:
	
	\begin{thm}\label{main}
	Fix a positive integer $i \leq n$ and a weight $\lambda$.
	\begin{enumerate}
	\item If $\calN(G_n,\chi_\lambda)_{\{Q_{i,n}\}}$ is non-zero, there exists a $\frakk_n$-dominant weight $\omega = (\omega_{1,v},\ldots,\omega_{n,v})_v$ such that
	\begin{itemize}
	\item $\chi_\omega = \chi_\lambda$.
	\item $\omega_{n,v} = \cdots = \omega_{n-i+1,v}$ for any $v \in \bfa$.
	\item $\omega_{n,v}$ is independent of $v \in \bfa$.
	\end{itemize}
	\item Suppose $\chi_\lambda$ is sufficiently regular relative to $i$ and a weight $\lambda = (\lambda_v)_v = (\lambda_{1,v},\ldots,\lambda_{n,v})_v \in \bigoplus_{v \in \bfa}\Lambda_\bbZ^+$ satisfies the following two conditions:
	\begin{itemize}
	\item $\lambda_{n,v} = \cdots = \lambda_{n-i+1,v}$ for any $v \in \bfa$.
	\item $\lambda_{n,v}$ is independent of $v \in \bfa$.
	\end{itemize}
	Let $\pi$ be an irreducible holomorphic cuspidal automorphic representation of $G_{n-i}(\bbA_\bbQ)$ with $\pi_\infty = \boxtimes_{v \in \bfa} L(\lambda_{1,v}, \ldots, \lambda_{n-i,v})$.
	We then have the isomorphism
	\[
	\calN(G_n,\chi_\lambda)_{(M_{Q_{i,n}}, \mu^{\boxtimes i} \boxtimes \pi)} 
	\cong 
	\left(\Ind_{P_{i,n}(\bbA_{F,\fini})}^{G(\bbA_{F,{\fini}})}(\mu |\cdot|^{\lambda_{n,v}-n+(i-1)/2} \boxtimes \pi)\right) 
	\boxtimes \left(\boxtimes_{v \in \bfa} L(\lambda_v)\right)
	\]
	if $\mu \in \mathfrak{X}_{(-1)^{\lambda_{n,v}}}$.
	If $\mu \not\in \mathfrak{X}_{(-1)^{\lambda_{n,v}}}$, we have $\calN(G_n,\chi_\lambda)_{(M_{Q_{i,n}}, \mu^{\boxtimes i} \boxtimes \pi)} = 0$.
	\end{enumerate}
	\end{thm}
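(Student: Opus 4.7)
The plan is to proceed in two parts, treating the two claims separately but leveraging Theorem \ref{emb_main} as the central archimedean input in both.

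For part (1), I would start from Proposition \ref{decomp_NHAF_cusp_supp}: if $\calN(G_n,\chi_\lambda)_{\{Q_{i,n}\}}$ is nonzero, then the cuspidal support must be of the form $(M_{Q_{i,n}}, \mu^{\boxtimes i}\boxtimes \pi)$ with $\pi$ an irreducible \emph{holomorphic} cuspidal representation of $G_{n-i}(\bbA_F)$, so $\pi_\infty \cong \boxtimes_{v \in \bfa} L(\eta_v)$ for some $\frakk_{n-i}$-dominant weights $\eta_v$. Choose a nonzero $\varphi$ in this component; by Proposition \ref{Wh_coeff_main} its constant term $\varphi_{P_{i,n}}$ is a sum of sections in induced representations $I_{P_{i,n}}(\mu|\cdot|^{s}\boxtimes \pi)$, and near-holomorphy of $\varphi$ forces each archimedean factor of the relevant $I_{P_{i,n}}$ to contain a highest weight vector of weight $\lambda$ (after possibly passing to a $\calU(\frakg_n)$-translate and selecting the $\boxtimes_v L(\lambda_v)$-isotypic part on the archimedean side). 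Applying the converse direction of Theorem \ref{emb_main} to the archimedean component then yields simultaneously that $\omega_{n,v}=\cdots=\omega_{n-i+1,v}$, that $\mu_v=\mathrm{sgn}^{\omega_{n,v}}|\cdot|^{\omega_{n,v}-n+(i-1)/2}$, and that $(\eta_{1,v},\ldots,\eta_{n-i,v})=(\omega_{1,v},\ldots,\omega_{n-i,v})$; the independence of $\omega_{n,v}$ on $v\in\bfa$ then comes for free from the fact that $\mu|\cdot|^{s}$ is a single Hecke character of $\bbA_F^\times$, whose archimedean exponents are constrained by global triviality on $F^\times F_{\infty,+}^\times$.

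For part (2), I would build a two-sided map between the two spaces via Eisenstein series and constant terms. Given a section $f$ in $\Ind_{P_{i,n}(\bbA_{F,\fini})}^{G(\bbA_{F,\fini})}(\mu|\cdot|^{\lambda_{n,v}-n+(i-1)/2}\boxtimes \pi)$ together with a highest weight vector $v_\infty$ inside the archimedean induced representation $I_{P_{i,n}}(\lambda_v)$ (which exists, and generates $L(\lambda_v)$, precisely by Theorem \ref{emb_main} combined with the sign condition $\mu \in \frakX_{(-1)^{\lambda_{n,v}}}$), I attach the Eisenstein series $E(f\otimes v_\infty, s)$ at $s=\lambda_{n,v}-n+(i-1)/2$. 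The sufficient regularity hypothesis $\lambda_{n,v}>2n-i+1$ together with the assumption $\lambda_{n-i,v}>2n-i+1$ on $\pi_\infty$ places the evaluation point comfortably past the first reduction point of Theorem \ref{first_red_pt} and inside the range where the Eisenstein series is known to be holomorphic; the resulting automorphic form is nearly holomorphic since $v_\infty$ generates the $\frakp_{n,-}$-finite module $L(\lambda_v)$ at each archimedean place. Injectivity is immediate from computing the $P_{i,n}$-constant term and projecting to the $\mu|\cdot|^{\lambda_{n,v}-n+(i-1)/2}\boxtimes\pi$-isotypic part, where only the leading term (the identity Weyl representative) contributes. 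For surjectivity, taking $\varphi\in \calN(G_n,\chi_\lambda)_{(M_{Q_{i,n}},\mu^{\boxtimes i}\boxtimes\pi)}$, Proposition \ref{Wh_coeff_main} gives $\varphi_{P_{i,n}}\in I_{P_{i,n}}(\mu|\cdot|^{s'}\boxtimes \pi')$ for some $s'$ and some $\pi'$; matching infinitesimal and cuspidal data pins $(s',\pi')=(\lambda_{n,v}-n+(i-1)/2,\pi)$, and subtracting the Eisenstein series built from this data produces an automorphic form in $\calN(G_n,\chi_\lambda)_{(M_{Q_{i,n}},\mu^{\boxtimes i}\boxtimes\pi)}$ whose $P_{i,n}$-constant term vanishes; Proposition \ref{decomp_NHAF_parab} then forces it into a different $\calN(G_n)_{\{Q_{j,n}\}}$, contradicting the cuspidal support.

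The vanishing statement for $\mu\notin\frakX_{(-1)^{\lambda_{n,v}}}$ falls out of the converse direction of Theorem \ref{emb_main}, which dictates the archimedean sign of $\mu$; any $\varphi$ with the wrong sign cannot have an archimedean constant term containing a highest weight vector of weight $\lambda$, hence the cuspidal component is empty. The main obstacle will be the injectivity and surjectivity analysis in part (2): specifically, ensuring that the Eisenstein series at $s=\lambda_{n,v}-n+(i-1)/2$ is actually holomorphic, that its archimedean component is exactly $\boxtimes_v L(\lambda_v)$ rather than a larger Langlands subquotient, and that no other Weyl orbit representative in the constant term formula produces a spurious section for a different inducing datum that still matches $(M_{Q_{i,n}},\mu^{\boxtimes i}\boxtimes\pi)$ modulo the equivalence relation. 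All three potential pitfalls are designed to be ruled out by the sufficient regularity hypothesis, which isolates the identity Weyl contribution and pushes the archimedean evaluation point into the irreducibility range of Theorem \ref{first_red_pt}.
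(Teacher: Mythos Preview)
Your overall architecture matches the paper's: constant terms along $P_{i,n}$ give an embedding into induced representations, and Klingen Eisenstein series furnish the inverse. But two of the ``potential pitfalls'' you flag at the end are genuine gaps, and your proposed tool for them is the wrong one.

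First, Theorem~\ref{first_red_pt} plays no role here. That result concerns reducibility of the generalized Verma module $N(\lambda)$, not holomorphy of Eisenstein series and not the structure of the $\frakp_{n,-}$-finite part of $I_{P_{i,n}}(\lambda)$. Holomorphy at $s_0=\lambda_{n,v}-n+(i-1)/2$ follows simply from absolute convergence: sufficient regularity $\lambda_{n,v}>2n-i+1$ gives $s_0>n-(i-1)/2$, the convergence threshold for the Klingen series. Identifying the archimedean $\frakp_{n,-}$-finite part of the induced representation with $\boxtimes_v L(\lambda_v)$ is a separate computation (Corollary~\ref{p_-_fin_vects} in the paper), which uses Proposition~\ref{dot-orbit} and Theorem~\ref{emb_main}, not the first reduction point. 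Finally, the vanishing of the non-identity Weyl contributions in the constant term (your ``isolating the identity contribution'') is Proposition~\ref{const_term_Eis_ser}: each non-identity term involves an archimedean intertwining operator landing in an induced representation that, by the converse direction of Theorem~\ref{emb_main}, has no highest weight vector of weight $\lambda$, hence the image of $L(\lambda)$ under that operator is zero.

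Second, your sentence ``matching infinitesimal and cuspidal data pins $(s',\pi')$'' hides a nontrivial combinatorial step. A priori several values of $s'$ could give the same infinitesimal character $\chi_\lambda$ for the induced representation $I_{P_{i,n}}(\mu|\cdot|^{s'}\boxtimes\pi)$; that exactly one of them lies in the allowed range (and equals $\lambda_{n,v}-n+(i-1)/2$) is the content of Proposition~\ref{dot-orbit}, a direct but somewhat lengthy Weyl-orbit computation. Without it you cannot conclude that the constant term lands in a single summand, so the subtraction argument for surjectivity does not close. The paper's proof invokes Propositions~\ref{Wh_coeff_main} and~\ref{dot-orbit} to get the embedding~(\ref{emb_1}), then Corollary~\ref{p_-_fin_vects} for the archimedean identification, and Proposition~\ref{const_term_Eis_ser} to show the Eisenstein map inverts it.
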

	
	This will be proved in \S \ref{pf_main}.
	
	\begin{rem}
	If an infinitesimal character $\chi$ is not sufficiently regular relative to $i$, then $\calN(G_n,\chi)_{\{Q_{i,n}\}}$ may be very complicated.
	In the case of $G = \SL_2$-case, see \cite{Horinaga}.
	\end{rem}
	
	For a regular infinitesimal character $\chi_\lambda$ with the parameter $\lambda = (\lambda_v)_v = (\lambda_{1,v},\ldots,\lambda_{n,v})_v \in \bigoplus_{v \in \bfa} \{-\rho + \lambda_{\bbZ}^+\}$, we say that $\chi_\lambda$ is sufficiently regular if we have $\lambda_{n,v} > 2n$ for any $v$.
	This condition is equivalent to that $\lambda$ is sufficiently regular for any $i$ with $1 \leq i \leq n$.
	
	\begin{cor}\label{suff_reg_case}
	Let $\lambda = (\lambda_v)_v = (\lambda_{1,v},\ldots,\lambda_{n,v})_v$ be a $\frakk_n$-dominant integral weight such that $\lambda_{n,v}$ is independent of $v \in \bfa$ and $\lambda_{n-1,v} = \lambda_{n,v}$ for any $v \in \bfa$.
	For any positive integer $i \leq n$ and a sufficiently regular infinitesimal character $\chi_\lambda$ relative to $n$, we have the direct sum decomposition
	\[
	\calN(G_n,\chi_\lambda)_{\{Q_{i,n}\}} \cong \bigoplus_{\mu,\pi} \left(\Ind_{P_{i,n}(\bbA_{F,\fini})}^{G(\bbA_{F,{\fini}})}(\mu |\cdot|^{\lambda_{n,v}-n+(i-1)/2} \boxtimes \pi)\right) 
	\boxtimes \left(\boxtimes_{v \in \bfa} L(\lambda_v)\right),
	\]
	where $\mu$ runs through all Hecke characters in $\frakX_{(-1)^{\lambda_{n,v}}}$ and $\pi$ runs through all irreducible holomorphic cuspidal representation of $G_{n-i}(\bbA_F)$ such that $\pi_\infty \cong \boxtimes_{v \in \bfa} L(\lambda_{1,v},\ldots,\lambda_{n-i,v})$.
	
	\end{cor}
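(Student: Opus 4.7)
The plan is to obtain the corollary as a direct assembly of Theorem \ref{main}(2) over cuspidal data, after first decomposing $\calN(G_n,\chi_\lambda)_{\{Q_{i,n}\}}$ by cuspidal support.

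I would begin by invoking the general cuspidal-support decomposition recalled in \S 4.3 to write
\[
\calN(G_n,\chi_\lambda)_{\{Q_{i,n}\}}=\bigoplus_{(M,\tau)}\calN(G_n,\chi_\lambda)_{(M,\tau)},
\]
where the sum runs over equivalence classes of cuspidal data with $M$ in the associate class of $M_{Q_{i,n}}$. Proposition \ref{decomp_NHAF_cusp_supp} then restricts the surviving summands to those of the form $(M_{Q_{i,n}},\mu^{\boxtimes i}\boxtimes\pi)$, where $\mu$ is a Hecke character of $F^\times\bs\bbA_F^\times$ and $\pi$ is an irreducible holomorphic cuspidal automorphic representation of $G_{n-i}(\bbA_F)$. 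This reduces the problem to identifying each $\calN(G_n,\chi_\lambda)_{(M_{Q_{i,n}},\mu^{\boxtimes i}\boxtimes\pi)}$ individually.

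For each such surviving pair $(\mu,\pi)$, I would apply Theorem \ref{main}(2). The strong sufficient regularity $\lambda_{n,v}>2n$ recalled just before the corollary guarantees sufficient regularity relative to every $i\le n$, and combined with $\frakk_n$-dominance of $\lambda$ yields the archimedean bound $\lambda_{n-i,v}\ge\lambda_{n,v}>2n-i+1$ required in Theorem \ref{main}(2). The flatness condition $\lambda_{n,v}=\cdots=\lambda_{n-i+1,v}$ is automatic for $i=1$ and is precisely the standing hypothesis $\lambda_{n-1,v}=\lambda_{n,v}$ for $i=2$; for larger $i$ where this coincidence fails, the component is forced to vanish by Theorem \ref{main}(1), matching the vanishing of the corresponding summand on the right. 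Theorem \ref{main}(2) thereby identifies each non-zero cuspidal component with the induced representation tensored with $\boxtimes_{v\in\bfa}L(\lambda_v)$, and forces it to vanish unless $\mu\in\frakX_{(-1)^{\lambda_{n,v}}}$ and $\pi_\infty\cong\boxtimes_v L(\lambda_{1,v},\ldots,\lambda_{n-i,v})$.

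Summing these component-wise isomorphisms over all admissible $(\mu,\pi)$ produces the claimed direct sum decomposition. The only step requiring genuine attention is the bookkeeping in checking the hypotheses of Theorem \ref{main}(2) for each $i$ (and observing the vanishing in the remaining cases via Theorem \ref{main}(1)); once the cuspidal-support decomposition and Proposition \ref{decomp_NHAF_cusp_supp} are in place, the corollary is essentially a repackaging of Theorem \ref{main}.
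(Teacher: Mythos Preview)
Your overall strategy matches the paper's: decompose $\calN(G_n,\chi_\lambda)_{\{Q_{i,n}\}}$ by cuspidal support via Proposition~\ref{decomp_NHAF_cusp_supp}, then apply Theorem~\ref{main}(2) term by term. However, there is a genuine gap in how you pin down the archimedean component of $\pi$.

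You write that Theorem~\ref{main}(2) ``forces [the component] to vanish unless \ldots\ $\pi_\infty\cong\boxtimes_v L(\lambda_{1,v},\ldots,\lambda_{n-i,v})$.'' But Theorem~\ref{main}(2) takes this archimedean type as a \emph{hypothesis}, not a conclusion. A priori, a holomorphic cuspidal $\pi$ with some other highest weight $(\omega_{1,v},\ldots,\omega_{n-i,v})$ could contribute, provided the resulting induced representation still carries the infinitesimal character $\chi_\lambda$. The paper closes this gap with an auxiliary Weyl-group computation, Lemma~\ref{lem1}: under $\lambda_{n,v}>2n$, the only $\frakk_n$-dominant integral weight in the dot-orbit of $\lambda$ with nonnegative last entry is $\lambda$ itself. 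Since any highest weight vector occurring in the constant term has, by Theorem~\ref{emb_main}, weight of the form $(\omega_{1,v},\ldots,\omega_{n-i,v},s_0',\ldots,s_0')$, Lemma~\ref{lem1} forces this tuple to equal $\lambda_v$, whence $\omega_{j,v}=\lambda_{j,v}$ for $j\le n-i$.

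The same missing ingredient undermines your appeal to Theorem~\ref{main}(1) for $i\ge 3$: that statement only asserts the existence of \emph{some} $\frakk_n$-dominant $\omega$ in the dot-orbit satisfying $\omega_{n,v}=\cdots=\omega_{n-i+1,v}$; it does not say $\lambda$ is the unique candidate. Without Lemma~\ref{lem1} you cannot conclude that the left-hand side vanishes merely from the failure of the flatness condition for $\lambda$. (Your parenthetical remark that the right-hand side then also vanishes is separately unjustified: the right-hand side is an abstract tensor product and does not obviously collapse when the archimedean embedding of $L(\lambda_v)$ into $I_{P_{i,n}}(\lambda_v)$ fails.)
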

	
	This will be proved in \ref{proof_cor_main}.
	
\section{Whittaker Fourier coefficients of nearly holomorphic automorphic forms}
	In this section, we compute the constant terms of nearly holomorphic automorphic forms.
	
	\subsection{Definition}
	Let $F$ be a totally real field with degree $d$.
	We fix a non-trivial additive character $\psi = \otimes_v \psi_v$ of $F \bs \bbA_F$ as follows:
	If $F=\bbQ$, let
	\begin{align*}
	\psi_p(x) &= \exp(-2\pi\sqrt{-1}\, y), \qquad x \in \bbQ_p,  \\
	\psi_\infty(x) &= \exp(2\pi\sqrt{-1} \, x),\qquad x \in \bbR, 
	\end{align*}
	where $y \in \cup_{m=1}^\infty p^{-m}\bbZ$ such that $x-y \in \bbZ_p$.
	In general, for an archimedean place $v$ of $F$, put $\psi_v = \psi_\infty$ and for a non-archimedean place $v$ with the rational prime $p$ divisible by $v$, put $\psi_v(x) = \psi_p(\mathrm{Tr}_{F_v/\bbQ_p}(x))$.
	Then for any continuous character $\Psi$ of $N_{P_{n,n}}(\bbQ) \bs N_{P_{n,n}}(\bbA_\bbQ)$, there exists a unique symmetric matrix $h \in \Sym_n(F)$ such that
	\[
	\Psi(n) = \psi(\mathrm{tr}(hn))
	\]
	for any $n \in N_{P_{n,n}}(\bbA_\bbQ)$.
	Let $\psi_h$ be the additive character of $N_{P_{n,n}}(\bbQ) \bs N_{P_{n,n}}(\bbA_\bbQ)$ defined by $n \longmapsto \psi(\mathrm{tr}(hn))$.
	
	Take an automorphic form $\varphi \in \calN(G_n)$.
	For any $h \in \Sym_n(F)$, put
	\[
	\Wh_{\varphi,h}(g) = \int_{N_{P_{n,n}}(F) \bs N_{P_{n,n}}(\bbA_F)} \varphi(ng) \overline{\psi}_h(n) \, dn.
	\]
	Here $\overline{\psi}_h$ is the complex conjugate of $\psi_h$.
	We then have the Whittaker-Fourier expansion
	\[
	\varphi(ng) = \sum_{h \in \Sym_{n}(F)} \Wh_{\varphi,h}(g)\psi_h(n), \qquad n \in N_{P_{n,n}}(\bbA_\bbQ), \, g \in G_n(\bbA_\bbQ).
	\]
	In the following subsections, we study the Whittaker-Fourier coefficients $\Wh_{\varphi,h}$.
	
	\subsection{The relation between automorphic forms on $G_n(\bbA_\bbQ)$ and modular forms on $\frakH_n^d$}
	
	We denote the complexification of $K_{n,\infty}$ by $K_{n,\bbC} \cong \prod_{v \in \bfa} \GL_n(\bbC)$.
	Let $j$ be a $K_{n,\bbC}$-valued function on $\prod_{v \in \bfa}\frakH_n \times \prod_{v \in\bfa} \Sp_{2n}(F_v)$ defined by
	\[
	j((g_v)_v, (z_v)_v) = (c_v z_v + d_v)_{v}, \qquad (g_v)_v = \left(\begin{pmatrix}a_v & b_v \\ c_v & d_v\end{pmatrix}\right)_v \in \prod_{v \in\bfa} \Sp_{2n}(\bbR), \qquad (z_v)_v \in \prod_{v \in \bfa}\frakH_n.
	\]
	For a finite-dimensional representation $\rho$ of $K_{n,\bbC}$, put $j_\rho = \rho \circ j$.
	For a congruence subgroup $\Gamma$ of $G_n(\bbQ)$, let $N_\rho(\Gamma)$ be the space of nearly holomorphic modular forms of weight $\rho$ with respect to $\Gamma$ and $M_\rho(\Gamma)$ the subspace of $N_\rho(\Gamma)$ consisting of holomorphic functions.
	We embed $\Gamma$ into $G_n(\bbA_{\bbQ,\fini})$ diagonally.
	Then we obtain the open compact subgroup $K_\Gamma$ of $G_n(\bbA_{\bbQ,\fini})$ which is the closure of $\Gamma$.
		
	We denote by $(\rho,V)$ a finite-dimensional representation of $K_{n,\infty}$.
	To simplify the notation, we write $(\rho,V)$ the holomorphic representation of $K_{n,\bbC}$ corresponding to the representation $(\rho,V)$ of $K_{n,\infty}$.
	Let $K_{\text{fin}}$ be an open compact subgroup of $G_n(\bbA_{\bbQ,\text{fin}})$.
	Let $\Gamma$ be the projection of $G_n(\bbQ) \cap K_\fini G_n(\bbR)$ to $G_n(\bbR)$.
	By the strong approximation, we have $G_n(\bbA_\bbQ) = G_n(\bbQ)G_n(\bbR)K_\fini$.
	We now obtain the isomorphism
	\[
	G_n(\bbQ) \bs G_n(\bbA_\bbQ) /  K_{n,\infty} K_\fini \cong \Gamma \bs \frakH_n^d
	\]
	by the map 
	\[
	\gamma  g_\infty k \longmapsto g_\infty(\mathbf{i})
	\]
	for any $\gamma \in G_n(\bbQ), g_\infty \in G_n(\bbR)$, and $k \in K_\fini$.
	Here $\mathbf{i} = (\sqrt{-1}\, \mathbf{1}_n,\ldots,\sqrt{-1}\, \mathbf{1}_n) \in \frakH_n^d$.
	For a nearly holomorphic modular form $f \in N_\rho (\Gamma)$, we define a function $\varphi_{f}$ on $G_n(\bbA_\bbQ)$ by
	\begin{align}\label{lift_to_group}
	\varphi_{f}(g) = (f|_\rho g_\infty) (\mathbf{i}), \qquad g = \gamma g_\infty k, \,\, \gamma \in G_n(\bbQ), \,\, g_\infty\in G_{n}(\bbR), \, \, k\in K_\fini.
	\end{align}
	Note that the $V$-valued function $\varphi_{f}$ is left $G_n(\bbQ)$-invariant and right $K_{n}$-finite.
	Moreover we have
	\begin{align}\label{act_on_F_f}
	\varphi_{f}(gk_\infty) = \rho(k_\infty)^{-1} \varphi_{f}(g)
	\end{align}
	for $g \in G_n(\bbA_\bbQ)$ and $k_\infty\in K_{n,\infty}$.
	Let $(\rho^*,V^*)$ be the contragredient representation of $(\rho,V)$.
	For $v^* \in V^*$, we have a scalar valued function $g \longmapsto \langle \varphi_{f}(g), v^* \rangle$ on $G_n(\bbA_\bbQ)$.
	For simplicity of notation, we denote by $\langle \varphi_{f}, v^* \rangle$ the scalar valued function.
	For an automorphic form $\varphi$ and an irreducible finite-dimensional representation $\tau$ of $K_{n,\infty}$, we say that $\varphi$ has the $K_{n,\infty}$-type $\tau$ if the representation $\langle r(k) \varphi \mid k \in K_{n,\infty}\rangle_\bbC$ of $K_{n,\infty}$ is isomorphic to $\tau$ where $r$ is the right translation.
	Then the automorphic form $\langle \varphi_{f}, v^* \rangle$ has the $K_{n,\infty}$-type $\rho^*$ by the formula (\ref{act_on_F_f}).
	By the definition of $\varphi_{f}$, the open compact group $K_\fini$ fixes the automorphic form $\langle \varphi_{f}, v^* \rangle$ under the right translation.
	We denote by $\calA(G_n)^{K_\fini}_\rho$ the space of $K_\fini$-fixed automorphic forms with the $K_{n,\infty}$-type $\rho$.
	We then obtain the map
	\[
	C^\infty(\frakH,\rho)^{\Gamma} \otimes V^* \longrightarrow \calA( G_n)^{K_\fini}_{\rho^*} \colon f \otimes v^* \longmapsto \langle \varphi_{f},v^* \rangle.
	\]
	Here $C^\infty(\frakH_n^d,\rho)^{\Gamma}$ is the space of slowly increasing $V$-valued $C^\infty$-functions $f$ on $\frakH_n^d$ such that $f|_\rho \gamma = f$ for any $\gamma \in \Gamma$.
	
	\begin{lem}
	Let $(\rho,V)$ be an irreducible finite-dimensional representation of $K_{n,\infty}$.
	We regard $\rho$ as the irreducible holomorphic representation of $K_{n,\bbC}$.
	Let $K_\fini$ be an open compact subgroup of $G(\bbA_{\bbQ,\fini})$.
	Let $\Gamma$ be the congruence subgroup of $G_{n}(\bbR)$ corresponding to the open compact subgroup $K_\fini$.
	With the above notation, we obtain the isomorphism
	\[
	C^\infty(\frakH,\rho)^{\Gamma} \otimes V^* \cong \calA (G_n)^{K_\fini}_{\rho^*} \colon f \otimes v^* \longmapsto \langle \varphi_{f},v^* \rangle.
	\]
	\end{lem}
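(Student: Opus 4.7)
The plan is to verify that the forward map is well-defined and then construct an explicit inverse, exploiting strong approximation and a Schur-type correspondence between $V$-valued right $\rho^{-1}$-equivariant functions on $G_n(\bbA_\bbQ)$ and scalar $\rho^*$-isotypic automorphic forms via the pairing $V \times V^* \to \bbC$. Well-definedness of the forward map follows from strong approximation $G_n(\bbA_\bbQ) = G_n(\bbQ) G_n(\bbR) K_\fini$ together with $f|_\rho \gamma = f$ for $\gamma \in \Gamma$ and the cocycle identity $j(g_1 g_2, z) = j(g_1, g_2 \cdot z) j(g_2, z)$. The transformation rule (\ref{act_on_F_f}) combined with the natural pairing yields $r(k_\infty) \langle \varphi_f, v^* \rangle = \langle \varphi_f, \rho^*(k_\infty) v^* \rangle$, so the $K_{n,\infty}$-span of $\langle \varphi_f, v^* \rangle$ as $v^*$ ranges over $V^*$ is a copy of $\rho^*$, placing the image inside $\calA(G_n)^{K_\fini}_{\rho^*}$. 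Slow growth and $\calZ_n$-finiteness transfer from the scalar functions back to the $V$-valued function componentwise.

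For the inverse, decompose $\calA(G_n)^{K_\fini}_{\rho^*} = \bigoplus_\alpha M_\alpha$ into copies $M_\alpha \cong \rho^*$. For each $\alpha$, fix a $K_{n,\infty}$-equivariant isomorphism $T_\alpha \colon V^* \to M_\alpha$, and using finite-dimensionality $V \cong V^{**}$ define a $V$-valued function $\Phi_\alpha$ on $G_n(\bbA_\bbQ)$ by the rule $\langle \Phi_\alpha(g), v^* \rangle = T_\alpha(v^*)(g)$ for all $v^* \in V^*$. A direct computation using the contragredient relation $\langle \rho(k_\infty)^{-1} v, v^* \rangle = \langle v, \rho^*(k_\infty) v^* \rangle$ gives $\Phi_\alpha(g k_\infty) = \rho(k_\infty)^{-1} \Phi_\alpha(g)$, while left $G_n(\bbQ)$-invariance and right $K_\fini$-invariance are inherited from the automorphic forms $T_\alpha(v^*)$. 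To descend $\Phi_\alpha$ to a modular form $f_\alpha$ on $\frakH_n^d$, invert (\ref{lift_to_group}): for $z \in \frakH_n^d$, choose $g_\infty \in G_n(\bbR)$ with $g_\infty \cdot \mathbf{i} = z$ and put $f_\alpha(z) = \rho(j(g_\infty, \mathbf{i})) \Phi_\alpha(g_\infty)$. Well-definedness uses that $k \mapsto j(k, \mathbf{i})$ realizes the standard embedding $K_{n,\infty} \hookrightarrow K_{n,\bbC}$, while the modularity $f_\alpha|_\rho \gamma = f_\alpha$ follows from left $\Gamma$-invariance of $\Phi_\alpha$ and the cocycle identity. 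By construction $\varphi_{f_\alpha} = \Phi_\alpha$, so any $\psi = \sum_\alpha T_\alpha(v_\alpha^*) \in \calA(G_n)^{K_\fini}_{\rho^*}$ can be written as $\psi = \sum_\alpha \langle \varphi_{f_\alpha}, v_\alpha^* \rangle$, giving surjectivity.

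For injectivity, suppose $\sum_i f_i \otimes v_i^* $ lies in the kernel with $\{f_i\}$ linearly independent in $C^\infty(\frakH_n^d, \rho)^\Gamma$. Because $\rho$ is irreducible as a $K_{n,\bbC}$-representation, for any non-zero $\varphi_{f_i}$ the values $\{\varphi_{f_i}(g) : g \in G_n(\bbA_\bbQ)\}$ span all of $V$ (the $K_{n,\infty}$-orbit of a single non-zero value already spans the irreducible representation $V$), and the $V^*$-pairing map $v^* \mapsto \langle \varphi_{f_i}, v^* \rangle$ is then injective. Moreover, by the explicit construction above, distinct $f_i$ produce distinct copies of $\rho^*$ inside the isotypic decomposition $\bigoplus_\alpha M_\alpha$, so the relation $\sum_i \langle \varphi_{f_i}, v_i^* \rangle = 0$ forces every $v_i^* = 0$. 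The main obstacle is the bookkeeping required in this last step, namely aligning a basis of $C^\infty(\frakH_n^d, \rho)^\Gamma$ with the abstract isotypic decomposition of $\calA(G_n)^{K_\fini}_{\rho^*}$; once the inverse construction shows that the assignment $f \mapsto \varphi_f$ identifies these bases with multiplicity spaces of $\rho^*$, the conclusion is formal, and the isomorphism $f \otimes v^* \mapsto \langle \varphi_f, v^* \rangle$ is established.
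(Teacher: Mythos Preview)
The paper does not prove this lemma itself but refers to \cite[\S 2.2]{Horinaga}, so no direct comparison of approaches is possible. Your construction of the inverse via the isotypic decomposition is the standard one and is correct; the surjectivity argument is fine. Two gaps remain, however.

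First, you have not shown that the forward map lands in $\calA(G_n)^{K_\fini}_{\rho^*}$. The paper's definition of $C^\infty(\frakH_n^d,\rho)^{\Gamma}$ requires only slow growth and $\Gamma$-modularity, not $\calZ_n$-finiteness, so for a generic $f$ the function $\langle \varphi_f, v^* \rangle$ need not be an automorphic form. Your remark about $\calZ_n$-finiteness ``transferring back'' concerns the inverse direction, not this one. Either the source must be restricted to $\calZ$-finite functions or the target enlarged; the paper only invokes the lemma in the nearly holomorphic setting, where $\calZ$-finiteness is automatic, so the imprecision is harmless there, but as a proof of the lemma as literally stated this step is missing.

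Second, your injectivity argument is not complete, as you yourself flag. That linearly independent $f_i$ give pairwise distinct copies $M_i$ of $\rho^*$ does not imply that $\sum_i M_i$ is direct, so one cannot immediately conclude that $\sum_i \langle \varphi_{f_i}, v_i^* \rangle = 0$ forces each term to vanish. A clean fix avoids the isotypic bookkeeping: take $\{v_i^*\}$ to be a basis of $V^*$ with dual basis $\{v_j\}$, write $\varphi_{f_i}(g) = \sum_j \Phi_{ij}(g)\,v_j$, and use the equivariance $\varphi_{f_i}(gk) = \rho(k)^{-1}\varphi_{f_i}(g)$ to rewrite the hypothesis as $\sum_{i,j} \langle \rho(k)^{-1} v_j, v_i^*\rangle\,\Phi_{ij}(g) = 0$ for all $k \in K_{n,\infty}$. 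Linear independence of the matrix coefficients of the irreducible representation $\rho$ then gives $\Phi_{ij}(g) = 0$ for all $i,j$, hence each $f_i = 0$.
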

	
	For the proof, see \cite[\S 2.2]{Horinaga}.
	More precisely, we have the following isomorphism:
	\begin{align}\label{corr_MF_AF}
	N_\rho(\Gamma) \otimes V^* \cong \calN(G_n)^{K_\fini}_{\rho^*},
	\end{align}
	where $\calN(G_n)^{K_\fini}_{\rho^*}$ is the subspace of all $\frakp_{n,-}$-finite functions in $\calA(G_n)^{K_\fini}_{\rho^*}$.
	Similarly, we have
	\begin{align}\label{corr_MF_AF_hol}
	M_\rho(\Gamma) \otimes V^* \cong HK \left(\calN(G_n)^{K_\fini}_{\rho^*} \right).
	\end{align}
	For details, see section $5$ and $7$ in \cite{90_shimura}, and \cite[Remark 2.2]{Horinaga}.
	Under this correspondence, for a nearly holomorphic modular form $f \in N_\rho(\Gamma)$,
	we have
	\begin{align}\label{rel_FC_MF_AF}
	\Wh_{\langle\varphi_f,v^*\rangle,h}\left(\begin{pmatrix}\mathbf{1}_n & n_\infty \\ & \mathbf{1}_n\end{pmatrix} \begin{pmatrix}a_\infty &  \\ & {^ta_\infty^{-1}}\end{pmatrix}k_\infty g_\fini\right) = \left\langle \rho \left({^ta_\infty} \right) c_f(h,y, \gamma), \rho^*(k_\infty)v^* \right\rangle \mathbf{e}(\tr(hz))
	\end{align}
	for
	\[
	\begin{pmatrix}\mathbf{1}_n & n_\infty \\ & \mathbf{1}_n\end{pmatrix} \begin{pmatrix}a_\infty &  \\ & {^ta_\infty^{-1}}\end{pmatrix} \in P_{n,n}(\bbR), \, y = a_\infty{^t a_\infty},\, z= n_\infty+\sqrt{-1}\, y , k_\infty \in K_{n,\infty}, \, g_\fini \in G_n(\bbA_{\bbQ,\fini}).
	\]
	Here $\gamma$ is an element of $G_n(\bbQ)$ such that $\gamma^{-1} g_\fini \in K_\fini G_n(\bbR)$.
	
	For a nearly holomorphic modular form $f$ of weight $\rho$ and $\gamma \in G_n(\bbQ)$, let 
	\[
	(f|_\rho\gamma)(z) = \sum_h c_f(h,y,\gamma) \bfe(\tr (hz))
	\]
	be the Fourier expansion of $f|_\rho\gamma$.
	The following statement follows from (\ref{rel_FC_MF_AF}) and Proposition \ref{Fourier_coeff}.
	
	\begin{lem}\label{left_SL_inv}
	For any $h \in \Sym_n^{(j)}(F)$ and any nearly holomorphic automorphic form $\varphi \in \calN(G_n)$, the Whittaker-Fourier coefficient $\Wh_{\varphi,h}$ is left $\bfm(\SL_j(\bbA_F)N_{j,n,\GL}(\bbA_F))$-invariant.
	\end{lem}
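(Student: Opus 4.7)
The plan is to reduce the adelic invariance to the archimedean invariance proven in Proposition \ref{Fourier_coeff}, using the explicit formula (\ref{rel_FC_MF_AF}) as the bridge between the classical and adelic pictures. By the isomorphism (\ref{corr_MF_AF}) applied on each $K_\fini$-fixed isotypic component, I may assume $\varphi = \langle \varphi_f, v^* \rangle$ for a nearly holomorphic modular form $f$ of weight $\rho$ with respect to a congruence subgroup $\Gamma$ attached to $K_\fini$, and some $v^* \in V^*$.

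Fix $g \in G_n(\bbA_\bbQ)$ together with an element $\gamma \in G_n(\bbQ)$ such that $\gamma^{-1} g_\fini \in K_\fini$, as in the hypotheses of (\ref{rel_FC_MF_AF}). I would choose an open subgroup $U \subset \SL_j(\bbA_{F,\fini}) N_{j,n,\GL}(\bbA_{F,\fini})$ small enough that $\bfm(U) \subset \gamma K_\fini \gamma^{-1}$. Since $\SL_j N_{j,n,\GL}$ is a product of a simply connected semisimple group and a unipotent group, strong approximation yields
\[
	\SL_j(\bbA_F) N_{j,n,\GL}(\bbA_F) = \SL_j(F) N_{j,n,\GL}(F) \cdot \SL_j(F_\infty) N_{j,n,\GL}(F_\infty) \cdot U,
\]
so any $a$ as in the lemma decomposes as $a = \gamma_0 \, c_\infty \, u$ with $\gamma_0 \in \SL_j(F) N_{j,n,\GL}(F)$, $c_\infty \in \SL_j(F_\infty) N_{j,n,\GL}(F_\infty)$, and $u \in U$.

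The invariance then factors into three pieces, traversed in the order rational, finite-adelic, archimedean. For the rational factor, the substitution $n \mapsto \bfm(\gamma_0) n \bfm(\gamma_0)^{-1}$ in the Whittaker integral, together with $G_n(F)$-invariance of $\varphi$ and $|\det \gamma_0|_{\bbA} = 1$, gives $\Wh_{\varphi, h}(\bfm(\gamma_0) x) = \Wh_{\varphi, {}^t\gamma_0 h \gamma_0}(x)$, which equals $\Wh_{\varphi, h}(x)$ since ${}^t\gamma_0 h \gamma_0 = h$ for $\gamma_0 \in \SL_j N_{j,n,\GL}(F)$ and $h \in \Sym_n^{(j)}(F)$. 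For $\bfm(u)$ with $u \in U$, the inclusion $\bfm(u) \in \gamma K_\fini \gamma^{-1}$ forces $\gamma^{-1} \bfm(u) g_\fini \in K_\fini$, so the \emph{same} $\gamma$ may be used in (\ref{rel_FC_MF_AF}) for both $g$ and $\bfm(u) g$, and hence the two Whittaker coefficients coincide. For the archimedean factor $\bfm(c_\infty)$, applying (\ref{rel_FC_MF_AF}) after the shift replaces $y$ by $c_\infty y \, {}^t c_\infty$ and $z$ by $c_\infty z \, {}^t c_\infty$; the exponential is unchanged because ${}^t c_\infty h c_\infty = h$, and Proposition \ref{Fourier_coeff} applied to the nearly holomorphic modular form $f|_\rho \gamma$ shows that $\rho({}^t c_\infty) c_f(h, c_\infty y \, {}^t c_\infty, \gamma) = c_f(h, y, \gamma)$. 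Concatenating the three invariances along $a = \gamma_0 c_\infty u$ yields $\Wh_{\varphi, h}(\bfm(a) g) = \Wh_{\varphi, h}(g)$.

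The main obstacle is the finite-adelic step, because the open subgroup $U$ must be chosen dependent on $g$ through $\gamma$. This is precisely where the flexibility of strong approximation for $\SL_j N_{j,n,\GL}$ is essential: it allows $U$ to be any open subgroup and therefore accommodates the $g$-dependent choice. Once this finite-adelic obstruction is absorbed, the remaining work is a direct unpacking of the classical--adelic translation (\ref{rel_FC_MF_AF}) together with the archimedean-local Proposition \ref{Fourier_coeff}.
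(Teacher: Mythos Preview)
Your proof is correct and follows essentially the same route as the paper's: both reduce to the archimedean case via strong approximation for $\SL_j N_{j,n,\GL}$, handle the rational and finite-adelic pieces by elementary manipulations (left $G_n(\bbQ)$-invariance and the choice of a $g$-dependent open compact subgroup), and then invoke formula (\ref{rel_FC_MF_AF}) together with Proposition~\ref{Fourier_coeff} for the archimedean invariance. The only cosmetic difference is that the paper phrases the finite-adelic step as right $K$-invariance of the function $m\mapsto \Wh_{\varphi,h}(mg)$, whereas you phrase it as keeping the same $\gamma$ in (\ref{rel_FC_MF_AF}); these are equivalent.
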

	\begin{proof}
	The left $\bfm(\SL_j(F)N_{j,n,\GL}(F))$-invariance is clear by the definition of $\varphi$ and $\Wh_{\varphi,h}$.
	For any $g \in G_n(\bbA_F)$ and $m \in \bfm(\SL_j(\bbA_F)N_{j,n,\GL}(\bbA_F))$, put $\Wh_{\varphi,h,g}(m) = \Wh_{\varphi,h}(mg)$.
	Since $\varphi$ is right $K_n$-finite, there exists an open compact subgroup $K$ of $\bfm(\SL_j(\bbA_{F,\fini})N_{j,n,\GL}(\bbA_{F,\fini}))$ such that $K$ fixes the function $\Wh_{\varphi,h,g}$ on $\bfm(\SL_j(\bbA_F)N_{j,n,\GL}(\bbA_F))$ under the right translation.
	By the strong approximation, it remains to show that $\Wh_{\varphi,h}$ is left $\bfm(\SL_j(F_\infty)N_{j,n,\GL}(F_\infty))$-invariant.
	Take $a \in \SL_j(F_\infty)$ and $n \in N_{j,n,\GL}(F_\infty)$.
	We assume that $\varphi$ is equal to $\langle \varphi_f,v^*\rangle$ for a nearly holomorphic modular form $f$ of weight $(\rho,V)$ and $v^* \in V^*$.
	For
	\[
	\begin{pmatrix}\mathbf{1}_n & n_\infty \\ & \mathbf{1}_n\end{pmatrix} \begin{pmatrix}a_\infty &  \\ & {^ta_\infty^{-1}}\end{pmatrix} \in P_{n,n}(\bbR), \, y = a_\infty{^t a_\infty},\, z= n_\infty+\sqrt{-1}\, y , k_\infty \in K_{n,\infty}, \, g_\fini \in G_n(\bbA_{\bbQ,\fini}),
	\]
	we have
	\begin{align*}
	\Wh_{\varphi,h}\left(\bfm(an)\begin{pmatrix}\mathbf{1}_n & n_\infty \\ & \mathbf{1}_n\end{pmatrix} \begin{pmatrix}a_\infty &  \\ & {^ta_\infty^{-1}}\end{pmatrix}k_\infty g_\fini\right)
	&=\Wh_{\langle\varphi_f,v^*\rangle,h} \left(\bfm(an)\begin{pmatrix}\mathbf{1}_n & n_\infty \\ & \mathbf{1}_n\end{pmatrix} \bfm(a_\infty)k_\infty g_\fini\right)\\
	&= \langle \rho({^t(ana_\infty)}) c_f(h,any{^t(an)}, \gamma), \rho^*(k_\infty)v^*\rangle \\
	&\qquad \qquad \qquad \qquad \qquad \qquad \qquad  \times \mathbf{e}(\tr(hanz{^t(an)}))
	\end{align*}
	by (\ref{rel_FC_MF_AF}).
	Since $h$ lies in $\Sym_n^{(j)}(F)$, we have $\mathbf{e}(\tr(haz{^ta})) = \mathbf{e}(\tr(hz))$.
	By Proposition \ref{Fourier_coeff}, we have
	\[
	\rho({^t(ana_\infty)}) c_f(h,any{^t(an)}, \gamma) = \rho({^t(a_\infty)})\rho({^t(an)}) c_f(h,any{^t(an)}, \gamma) = \rho({^ta_\infty})c_f(h,y,\gamma).
	\]
	Hence, we have
	\begin{align*}
	\Wh_{\varphi,h}\left(\bfm(an)\begin{pmatrix}\mathbf{1}_n & n_\infty \\ & \mathbf{1}_n\end{pmatrix} \begin{pmatrix}a_\infty &  \\ & {^ta_\infty^{-1}}\end{pmatrix}k_\infty g_\fini\right)
	&=\left\langle \rho \left({^ta_\infty}\right) c_f(h,y, \gamma), \rho^*(k_\infty)v^* \right\rangle \mathbf{e}(\tr(hz)) \\
	&=\Wh_{\varphi,h}\left(\begin{pmatrix}\mathbf{1}_n & n_\infty \\ & \mathbf{1}_n\end{pmatrix} \begin{pmatrix}a_\infty &  \\ & {^ta_\infty^{-1}}\end{pmatrix}k_\infty g_\fini\right).
	\end{align*}
	This completes the proof.
	\end{proof}

	\subsection{Whittaker-Fourier coefficients and constant terms}
	Fix $1 \leq j \leq n$.
	Let $U_1$ and $U_2$ be unipotent subgroups of $\Sp_{2n}$ defined by
	\[
	U_1= \left\{\begin{pmatrix}\mathbf{1}_j&*&0&0\\0&\mathbf{1}_{n-j}&0&0_{n-j}\\&&\mathbf{1}_j&0\\&&*&\mathbf{1}_{n-j}\end{pmatrix} \right\}, \qquad 
	U_2= \left\{\begin{pmatrix}\mathbf{1}_j&0&*&*\\0&\mathbf{1}_{n-j}&*&0_{n-j}\\&&\mathbf{1}_j&0\\&&0&\mathbf{1}_{n-j}\end{pmatrix} \right\}.
	\]
	Then $U_1$ and $U_2$ are subgroups of the unipotent subgroup $N_{P_{j,n}}$.
	Take $\varphi \in \calN(G_n)$.
	We compute a constant term of $\varphi$ along $P_{j,n}$.
	
	\begin{lem}\label{Wh_coeff}
	For $\varphi \in \calN(G_n)$, we have
	\[
	\varphi_{P_{j,n}}(g) = \sum_{h\in \Sym^{(j)}_n(F)} \Wh_{\varphi,h}(g)
	\]
	for $g \in G_n(\bbA_\bbQ)$.
	\end{lem}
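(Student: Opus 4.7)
The plan is to decompose the unipotent radical as $N_{P_{j,n}} = U_1 \cdot U_2$ (with $U_2$ normal and abelian), apply Fubini, identify the inner $U_2$-integral as a partial Whittaker-Fourier expansion that picks out exactly $h \in \Sym_n^{(j)}(F)$, and absorb the outer $U_1$-integral using Lemma \ref{left_SL_inv}.

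First I would check the structural facts: $U_2$ is the center of $N_{P_{j,n}}$ (equivalently, $U_2 = N_{P_{j,n}} \cap N_{P_{n,n}}$) and
\[
\dim U_1 + \dim U_2 = j(n-j) + \left(j(n-j) + \tfrac{j(j+1)}{2}\right) = \dim N_{P_{j,n}},
\]
which, together with $U_1 \cap U_2 = \{1\}$, yields $N_{P_{j,n}} = U_1 U_2$ as an algebraic variety with $U_2 \triangleleft N_{P_{j,n}}$. Fubini and a conjugation change of variables (admissible since conjugation by $U_1(\bbA_F)$ preserves $U_2(\bbA_F)$ and its Haar measure) then give
\[
\varphi_{P_{j,n}}(g) = \int_{U_1(F) \bs U_1(\bbA_F)} \int_{U_2(F) \bs U_2(\bbA_F)} \varphi(u_2 u_1 g) \, du_2 \, du_1.
\]

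For the inner integral, I would substitute the Whittaker-Fourier expansion $\varphi(u_2 g') = \sum_{h \in \Sym_n(F)} \Wh_{\varphi,h}(g')\psi_h(u_2)$ with $g' = u_1 g$, valid because $U_2 \subset N_{P_{n,n}}$. Orthogonality of characters reduces the integration to those $h$ for which $\psi_h|_{U_2}$ is trivial. Since $U_2$ corresponds precisely to the subspace of symmetric matrices with vanishing bottom-right $(n-j) \times (n-j)$ block, its trace-pairing annihilator is exactly $\Sym_n^{(j)}$, so
\[
\int_{U_2(F) \bs U_2(\bbA_F)} \varphi(u_2 u_1 g) \, du_2 = \sum_{h \in \Sym_n^{(j)}(F)} \Wh_{\varphi,h}(u_1 g).
\]

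For the outer integral, a direct computation using $\bfm(a) = \begin{pmatrix} a & 0 \\ 0 & {}^t a^{-1} \end{pmatrix}$ applied to $a = \begin{pmatrix} \mathbf{1}_j & * \\ 0 & \mathbf{1}_{n-j} \end{pmatrix}$ identifies $U_1 = \bfm(N_{j,n,\GL})$ as subgroups of $G_n$. For $h \in \Sym_n^{(j)}(F)$, Lemma \ref{left_SL_inv} then yields left $U_1(\bbA_F)$-invariance of $\Wh_{\varphi,h}$, so the outer integral over the unit-volume quotient $U_1(F) \bs U_1(\bbA_F)$ collapses trivially to the desired sum. I expect the main technical obstacle to be the careful verification of the decomposition $N_{P_{j,n}} = U_1 U_2$ with $U_2$ normal, together with justifying the termwise integration in the Whittaker expansion; both are standard once one sets up the two-step nilpotent structure of $N_{P_{j,n}}$ correctly.
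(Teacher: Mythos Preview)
Your proof is correct and follows essentially the same approach as the paper's: decompose $N_{P_{j,n}} = U_1 U_2$, use Fourier analysis on $U_2 \subset N_{P_{n,n}}$ to isolate $h \in \Sym_n^{(j)}(F)$, then invoke Lemma~\ref{left_SL_inv} (via the identification $U_1 = \bfm(N_{j,n,\GL})$) to collapse the outer $U_1$-integral. One minor inaccuracy: $U_2$ is \emph{not} the center of $N_{P_{j,n}}$ --- conjugation by an element of $U_1$ with off-diagonal block $b$ sends the $\Sym_j$-component $s_1$ of $U_2$ to $s_1 + b\,{}^ts_2 + s_2\,{}^tb$ --- but $U_2$ is normal, which is all you actually use, and your alternative description $U_2 = N_{P_{j,n}} \cap N_{P_{n,n}}$ is correct and already yields normality.
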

	\begin{proof}
	For $g \in G_n(\bbA_\bbQ)$, one has
	\begin{align*}
	\varphi_{U_2}(g) 
	&= \int_{U_2(F) \bs U_2(\bbA_F)} \varphi(ng) \, dn = \int_{U_2(F) \bs U_2(\bbA_F)} \left(\sum_{h\in \Sym_n(F)} \Wh_{\varphi,h}(g) \psi_h(n)\right) \, dn\\
	&= \int_{U_2(F) \bs U_2(\bbA_F)} \left(\sum_{h\in \Sym^{(j)}_n(F)} \Wh_{\varphi,h}(g)\right) \, dn = \sum_{h\in \Sym^{(j)}_n(F)} \Wh_{\varphi,h}(g).
	\end{align*}
	Indeed, if $h \not\in \Sym_n^{(j)}(F)$, the character $\psi_h$ is non-trivial on the group $U_2(F) \bs U_2(\bbA_F)$.
	By $N_{P_{j,n}}(\bbA_\bbQ)=U_1(\bbA_F)U_2(\bbA_F)$, $U_1(\bbA_F) = \bfm(N_{j,n,\GL}(\bbA_F))$, and Lemma \ref{left_SL_inv}, the constant term $\varphi_{P_{j,n}}(g)$ is equal to
	\begin{align*}
	\varphi_{P_{j,n}}(g) = \int_{U_1(F) \bs U_1(\bbA_F)} \left(\sum_{h\in \Sym^{(j)}_n(F)} \Wh_{\varphi,h}(ng)\right) \, dn = \sum_{h\in \Sym^{(j)}_n(F)} \Wh_{\varphi,h}(g) = \varphi_{U_2}(g).
	\end{align*}
	This completes the proof.
	\end{proof}
	
	Next proposition is a key statement in this paper.
	
	\begin{prop}\label{Wh_coeff_main}
	For any nearly holomorphic modular form $\varphi \in \calN(G_n)$, there exists a finite number of Hecke characters $\mu_1,\ldots,\mu_\ell$ of $F^\times \bs \bbA_F^\times$ and subrepresentations $(\pi_1,V_1),\ldots,(\pi_\ell,V_\ell)$ of $\calN(G_{n-j})$ such that $\varphi_{P_{j,n}}$ lies in the sum
	\[
	\sum_{k=1}^\ell \Ind_{P_{j,n}(\bbA_\bbQ)}^{G_n(\bbA_\bbQ)} (\mu_k \boxtimes \pi_k).
	\]
	\end{prop}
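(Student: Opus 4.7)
The plan is to combine Lemma~\ref{Wh_coeff} with the $\bfm(\SL_j)$-invariance from Lemma~\ref{left_SL_inv} to pin down the shape of $\varphi_{P_{j,n}}$ on the Levi $M_{P_{j,n}}(\bbA_\bbQ)=\GL_j(\bbA_F)\times G_{n-j}(\bbA_\bbQ)$, and then to apply harmonic analysis on the idele class group to extract the Hecke characters. From Lemma~\ref{Wh_coeff}, $\varphi_{P_{j,n}}=\sum_{h\in\Sym_n^{(j)}(F)}\Wh_{\varphi,h}$, and each term on the right is left $\bfm(\SL_j(\bbA_F)N_{j,n,\GL}(\bbA_F))$-invariant by Lemma~\ref{left_SL_inv}. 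The unipotent factor $\bfm(N_{j,n,\GL}(\bbA_F))$ sits inside $N_{P_{j,n}}(\bbA_\bbQ)$ and is already absorbed by the constant-term integration, so $\varphi_{P_{j,n}}$ itself is left $\bfm(\SL_j(\bbA_F))$-invariant.

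Next, using the Iwasawa decomposition $G_n(\bbA_\bbQ)=P_{j,n}(\bbA_\bbQ)K_n$, I would set $f_k(m):=\delta_{P_{j,n}}^{-1/2}(m)\varphi_{P_{j,n}}(mk)$ for $m\in M_{P_{j,n}}(\bbA_\bbQ)$ and $k\in K_n$. By strong approximation for $\SL_j$, the left $\SL_j(\bbA_F)$-invariance forces $f_k(\bfm(a)g)$ to depend on $a\in\GL_j(\bbA_F)$ only through $\det(a)$, giving a function $F_k\colon(F^\times\bs\bbA_F^\times)\times(G_{n-j}(\bbQ)\bs G_{n-j}(\bbA_\bbQ))\to\bbC$ with $f_k(\bfm(a)g)=F_k(\det(a),g)$. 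Since $\varphi_{P_{j,n}}$ is automorphic and hence $K_n$-finite, $\calZ_n$-finite, and slowly increasing, the family $\{F_k\}_{k\in K_n}$ lies in a finite-dimensional space carrying a natural commuting action of $\bbA_F^\times$ by translation in the first argument and of $\calZ_n$ via the Harish-Chandra homomorphism into the center attached to $M_{P_{j,n}}$.

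Decomposing this finite-dimensional space into $\bbA_F^\times$-eigenspaces then produces finitely many Hecke characters $\mu_1,\ldots,\mu_\ell$ of $F^\times\bs\bbA_F^\times$ and expresses each $F_k$ as $\sum_{r=1}^\ell\mu_r(\det(a))\,\xi_{k,r}(g)$, where the $\xi_{k,r}$ are functions on $G_{n-j}(\bbQ)\bs G_{n-j}(\bbA_\bbQ)$. The subrepresentation $(\pi_r,V_r)$ generated by $\{\xi_{k,r}\}_{k\in K_n}$ lies in $\calN(G_{n-j})$, because $\varphi$ is $\frakp_{n,-}$-finite and the embedding $G_{n-j}\hookrightarrow M_{P_{j,n}}$ sends $\frakp_{n-j,-}$ into $\frakp_{n,-}$, so restriction to the $G_{n-j}$-component preserves $\frakp_{n-j,-}$-finiteness. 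Reassembling the Hecke-eigencomponents then yields $\varphi_{P_{j,n}}\in\sum_{r=1}^\ell\Ind_{P_{j,n}(\bbA_\bbQ)}^{G_n(\bbA_\bbQ)}(\mu_r\boxtimes\pi_r)$. The main technical obstacle is to guarantee that the $\bbA_F^\times$-action on the finite-dimensional span of the $F_k$ is semisimple, i.e.\ that one obtains honest Hecke-character eigenspaces rather than generalized eigenspaces which would introduce $\log\|\det(a)\|$-factors incompatible with the stated form of the induced representations; one would address this either by exploiting $\frakp_{n,-}$-finiteness directly to rule out such log-polynomial growth, or by establishing a precursor of the $\calZ_n$-semisimplicity of Proposition~\ref{ss_Z}.
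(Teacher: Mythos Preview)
Your overall strategy matches the paper's, but the step you flag as a ``technical obstacle'' is in fact the entire substance of the proof, and neither of your proposed fixes works as stated.

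The circularity: you suggest invoking Proposition~\ref{ss_Z} to rule out generalized eigenspaces. In the paper the logical order is reversed --- Proposition~\ref{ss_Z} is proved \emph{using} Proposition~\ref{Wh_coeff_main} (see \S\ref{pf_ss_Z}). So that route is not available.

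The missing mechanism: your other suggestion, ``exploit $\frakp_{n,-}$-finiteness directly,'' is correct in spirit but you do not say how. The paper's device is the classical--adelic dictionary (\ref{rel_FC_MF_AF}): near holomorphy of $\varphi$ means each Fourier coefficient $c_f(h,y,\gamma)$ is a \emph{polynomial} in the entries of $y^{-1}$, of total degree bounded by some $N_\varphi$ independent of $h$, $m_2$, $k$. Hence the $\GL_j(F_\infty)$-representation generated by the restriction $(\varphi_{P_{j,n}})_k(\,\cdot\,,m_2)|_{\GL_j(F_\infty)}$ sits inside the rational representation $\sigma$ on the finite-dimensional space $P(\Sym_n(F_\infty\otimes_\bbR\bbC),V)_{\le N_\varphi}$ of \S\ref{FC_NHMF}. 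A rational representation of $\GL_j(F_\infty)$ that is $\SL_j$-invariant factors through $\det$ into a rational representation of $(\bbR^\times)^d$, which is automatically a direct sum of algebraic characters --- no logarithms can occur. This archimedean semisimplicity, together with the fact that the global function is already $\SL_j(\bbA_F)$-invariant and hence factors through $\det$, forces the $\bbA_F^\times$-module generated by $\phi=(\varphi_{P_{j,n}})_k(\,\cdot\,,m_2)$ to split as a finite direct sum of Hecke characters rather than merely admitting a Hecke-character filtration.

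Without this polynomial realization your argument only yields $\varphi_{P_{j,n}}\in\sum_k\Ind(\mu_k\boxtimes\pi_k')$ where the $\pi_k'$ could a priori involve $\log|\det|$-twists, which is strictly weaker than the proposition.
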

	
	\begin{proof}
	We may assume that $\varphi$ has a $K_{n,\infty}$-type $\rho^*$.
	We denote by $V_{\rho^*}$ a model of $\rho^*$.
	We assume that $\varphi$ corresponds to $f \otimes v^*$ by the isomorphism (\ref{corr_MF_AF}).
	For $k \in K_n$, let $(\varphi_{P_{j,n}})_k$ be the function on $\GL_j(\bbA_F) \times G_{n-j}(\bbA_\bbQ)$ defined by
	\[
	(\varphi_{P_{j,n}})_k(m_1,m_2) = \varphi_{P_{j,n}}(\bfm(m_1)m_2k) = \varphi_{P_{j,n}}(m_2\bfm(m_1)k), \qquad (m_1,m_2) \in \GL_j(\bbA_F) \times G_{n-j}(\bbA_\bbQ).
	\]
	Note that $(\varphi_{P_{j,n}})_k$ is right $\SL_j(\bbA_F)$-invariant.
	Indeed, for any $(m_1,m_2) \in \GL_j(\bbA_F) \times G_{n-j}(\bbA_F)$ and $h \in \SL_j(\bbA_F)$, one obtains
	\[
	(\varphi_{P_{j,n}})_k(m_2 \, \bfm(m_1h)) = (\varphi_{P_{j,n}})_k(m_2 \, \bfm(h \cdot h^{-1}m_1hm_1^{-1} \cdot m_1)) = (\varphi_{P_{j,n}})_k(m_2 \, \bfm(m_1)).
	\]
	Fix $m_2 \in G_{n-j}(\bbA_\bbQ)$.
	Put $\phi = (\varphi_{P_{i,n}}(\,\cdot\,,m_2))_k$.
	Let $\tau_{k,m_2,\infty}$ be the representation of $\GL_j(F_\infty)$ generated by $(\varphi_{P_{i,n}}(\,\cdot\,,m_2))_k$.
	By the formula (\ref{rel_FC_MF_AF}) and Lemma \ref{P=Q}, the automorphic form $(\varphi_{P_{i,n}})_k(\,\cdot\,,m_2)$ on $\GL_j(\bbA_F)$ can be realized on the space $P(\Sym_n(F_\infty\otimes_\bbR\bbC),V_{\rho^*})$ of $V_{\rho*}$-valued polynomials on $\Sym_n(F_\infty \otimes_\bbR \bbC)$.
	Indeed, for any $h \in \Sym^{(j)}_n(F)$, the Whittaker-Fourier coefficient $r(m_2k)\Wh_{\varphi,h}|_{\GL_j(\bbA_F)}$ is an element of $P(\Sym_n(F_\infty\otimes_\bbR\bbC),V_\rho)$ by (\ref{rel_FC_MF_AF}).
	By the near holomorphy of $\varphi$, there exists a constant $N_\varphi$, which is independent of $h, m_2$ and $k$, such that the total degree of $r(m_2k)\Wh_{\varphi,h}|_{\GL_j(\bbA_F)}
	$ is bounded by $N_\varphi$.
	Hence, the representation $\tau_{k,m_2,\infty}$ can be realized on the space of polynomials $P(\Sym_n(F_\infty\otimes_\bbR\bbC),V_\rho)_{\leq N_\varphi}$ with the total degree $\leq N_\varphi$.
	Note that the automorphic form $r(m_2k)\Wh_{\varphi,h}|_{\GL_j(\bbA_F)}$ is right $\SL_j(\bbA_F)$-invariant.
	Therefore, there exist characters $\chi_1, \ldots,\chi_\ell$ of $\GL_j(F_\infty)$, which are independent of $m_2$, such that $\tau_{k,m_2,\infty}$ decomposes as a sum of $\chi_1,\ldots,\chi_\ell$ with finite multiplicities.
	Let $\tau$ be the representation of $\GL_j(\bbA_F)$ generated by $\phi$.
	Since $\phi$ is right $\SL_j(\bbA_F)$-invariant, the representation $\tau$ factors through the determinant map.
	By the above discussion, the infinity component of $\tau$ decomposes as a sum of characters.
	Hence $\tau$ decomposes as a sum of characters $\mu_1,\ldots,\mu_\ell$ with finite multiplicities.
	
	Let $\sigma$ be the automorphic representation of $M_{P_{i,n}}(\bbA_\bbQ)$ generated by $(\varphi_{P_{j,n}})_{k}$.
	Then we have the decomposition $\sigma \cong \bigoplus_{\mu} \mu \boxtimes \sigma[\mu]$, where $\sigma[\mu]$ is the $\mu$-isotypic component of $\sigma$.
	Note that $\sigma[\mu]$ is an automorphic representation of $G_n(\bbA_\bbQ)$ and $\sigma[\mu]$ is zero for almost all $\mu$.
	Hence the automorphic form $(\varphi_{P_{j,n}})_k$ decompose as $\sum_{\mu} \phi_\mu$ such that $\phi_\mu \in \mu \boxtimes \sigma[\mu]$.
	Note that $\sigma[\mu]$ is generated by $\phi_\mu$.
	We claim that $\phi_\mu$ is nearly holomorphic for any $\mu$.
	Let $a$ be a positive integer such that $\frakp_{n,-}^a \cdot \varphi = 0$.
	For any $X_1,\ldots,X_a \in \frakp_{n-j,-}$, we have
	\[
	\sum_\mu X_1 \cdots X_a \cdot \phi_\mu = X_1\cdots X_a \cdot (\varphi_{P_{j,n}})_k = ((\mathrm{Ad}(k)X_1 \cdots X_a) \cdot \varphi_{P_{j,n}})_k = 0.
	\]
	Since automorphic forms $\phi_\mu$ are linearly independent, the automorphic form $X_1 \cdots X_a \cdot \phi_\mu$ is zero for any $X_1,\ldots, X_a \in \frakp_{n-j,-}$.
	Hence the claim holds.
	By the above discussion, the automorphic representation $\sigma$ decomposes as $\bigoplus_\mu \mu \boxtimes \sigma[\mu]$, where $\sigma[\mu]$ is zero or nearly holomorphic.
	This means that for any $k \in K_n$, there exist Hecke characters $\mu_1,\ldots,\mu_\ell$ and subrepresentations $(\pi_\ell,V_\ell)$ of $\calN(G_{n-j})$ such that the automorphic form $(\varphi_{P_{j,n}})_k$ lies in the representation space of $\bigoplus_q \mu_q \boxtimes \pi_q$.
	Note that the choices of $\mu_q$ and $\pi_q$ depend on $k$.
	
	Let $K_\fini$ be an open compact subgroup of $G_n(\bbA_{F,\fini})$ such that $K_\fini \cdot \varphi = \varphi$.
	Since the double coset $P_{n,j}(\bbA_\bbQ) \bs G_n(\bbA_\bbQ)/ K_\fini$ is finite, it suffices to consider a finite set of elements in $K_n$.
	Let $\{k_1,\ldots,k_\ell\}$ be a set of representatives of it.
	There exist Hecke characters $\mu_{1,k_q}, \ldots, \mu_{i_q,k_q}$ and subrepresentations $\pi_{1,k_q}, \ldots, \pi_{i_q,k_q}$ such that the automorphic form $(\varphi_{P_{j,n}})_k$ lies in the representation space of $\sum_{q=1}^{\ell}\sum_{r = 1}^{i_q} \mu_{r,k_q} \boxtimes \pi_{r,k_q}$.
	This completes the proof.
	\end{proof}
	
	We compute the constant term along $Q_{i,n}$.
	
	\begin{cor}\label{P=Q}
	For $\varphi \in \calN(G_n)$, we have
	\[
	\varphi_{P_{j,n}} = \varphi_{Q_{j,n}}.
	\]
	for any $j$.
	\end{cor}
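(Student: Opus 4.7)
The plan is to reduce the equality $\varphi_{P_{j,n}} = \varphi_{Q_{j,n}}$ to a left-invariance property of $\varphi_{P_{j,n}}$ under a specific unipotent group, and then to deduce that invariance from Lemma \ref{Wh_coeff} together with Lemma \ref{left_SL_inv}. First, I would analyze the relationship between the two unipotent radicals. Since $Q_{j,n} \subset P_{j,n}$ and $Q_{j,n}$ differs from $P_{j,n}$ only by replacing the $\GL_j$-factor in the Levi by its diagonal torus, the quotient $N_{Q_{j,n}}/N_{P_{j,n}}$ is canonically identified with $\bfm(U_j)$, where $U_j \subset \GL_j$ denotes the upper triangular unipotent subgroup embedded into the upper-left block of $\GL_n$. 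Concretely, $N_{Q_{j,n}} = \bfm(U_j) \ltimes N_{P_{j,n}}$.

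Second, I would apply Fubini and the compatibility of Haar measures. Using the normalization $\int_{N_P(\bbQ)\backslash N_P(\bbA_\bbQ)}\,dn = 1$ for $P = P_{j,n}$ and $P = Q_{j,n}$, the quotient measure on $\bfm(U_j)(F)\backslash \bfm(U_j)(\bbA_F)$ has total volume one, and the iterated integral gives
\[
\varphi_{Q_{j,n}}(g) = \int_{\bfm(U_j)(F) \bs \bfm(U_j)(\bbA_F)} \varphi_{P_{j,n}}(ug)\,du.
\]
Hence it suffices to show that $\varphi_{P_{j,n}}$ is left $\bfm(U_j)(\bbA_F)$-invariant.

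Third, I would expand $\varphi_{P_{j,n}}$ via Lemma \ref{Wh_coeff} as
\[
\varphi_{P_{j,n}}(g) = \sum_{h \in \Sym_n^{(j)}(F)} \Wh_{\varphi,h}(g),
\]
and apply Lemma \ref{left_SL_inv} to each term. That lemma asserts that each $\Wh_{\varphi,h}$, for $h \in \Sym_n^{(j)}(F)$, is left $\bfm(\SL_j(\bbA_F)N_{j,n,\GL}(\bbA_F))$-invariant. Since upper triangular unipotent matrices in $\GL_j$ have determinant one, $U_j \subset \SL_j$, so in particular each $\Wh_{\varphi,h}$ is left $\bfm(U_j)(\bbA_F)$-invariant. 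Summing over $h$ transfers this invariance to $\varphi_{P_{j,n}}$, and consequently the integrand in the displayed formula is constant in $u$, yielding $\varphi_{Q_{j,n}}(g) = \varphi_{P_{j,n}}(g)$.

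There is no substantial obstacle here beyond organizing the bookkeeping correctly: the only potentially delicate step is verifying that the semidirect product decomposition $N_{Q_{j,n}} = \bfm(U_j) \ltimes N_{P_{j,n}}$ is compatible with the normalized Haar measures, so that Fubini produces precisely the inner constant term $\varphi_{P_{j,n}}$ with no extra Jacobian factor. Once that is checked, the argument is immediate from the two cited lemmas.
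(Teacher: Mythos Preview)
Your proof is correct and follows essentially the same approach as the paper: both reduce $\varphi_{Q_{j,n}}$ to an integral of $\varphi_{P_{j,n}}$ over the quotient $N_{Q_{j,n}}/N_{P_{j,n}}$ (identified with a unipotent subgroup of $\bfm(\SL_j)$), and then invoke the left $\bfm(\SL_j(\bbA_F))$-invariance of $\varphi_{P_{j,n}}$. You are simply more explicit than the paper in deriving that invariance from Lemma~\ref{Wh_coeff} and Lemma~\ref{left_SL_inv}, whereas the paper states it as an established fact.
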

	\begin{proof}
	Put $U_3 = N_{P_{j,n}} \bs N_{Q_{j,n}}$.
	Then we have
	\[
	\varphi_{Q_{j,n}}(g) = \int_{U_3(\bbQ) \bs U_3(\bbA_\bbQ)} \varphi_{P_{j,n}} (ng) \, dn.
	\]
	We may identify the group $U_3$ with a unipotent subgroup of $\bfm(\SL_j(\bbA_F))$.
	Since $\varphi_{P_{j,n}}$ is left $\bfm(\SL_j(\bbA_F))$-invariant, we have $\varphi_{P_{j,n}} = \varphi_{Q_{j,n}}$.
	\end{proof}

	We say that a modular form $f$ is cuspidal if the Fourier coefficient $c_f(h,y,\gamma)$ is zero for any $h \not>0$ and and $\gamma \in \Sp_{2n}(F)$.
	
	\begin{prop}\label{equiv_def_cusp}
	Let $f$ be a nearly holomorphic modular form of weight $(\rho,V)$.
	Let $\langle\varphi_{f},v^*\rangle$ be an automorphic form corresponding to $f$ and $v^* \in V^*$.
	Then, the following conditions are equivalent:
	\begin{itemize}
	\item[(1)] $f$ is a cusp form.
	\item[(2)] $\langle\varphi_{f},v^*\rangle$ is cuspidal for any $0 \neq v^* \in V^*$.
	\item[(3)] The constant term of $\langle\varphi_{f},v^*\rangle$ along $P_{1,n}$ is zero for any $0 \neq v^* \in V^*$.
	\end{itemize}
	\end{prop}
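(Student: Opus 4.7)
The proof will string together the three implications \((1)\Rightarrow(2)\Rightarrow(3)\Rightarrow(1)\), using as the only tools Lemma \ref{Wh_coeff} (which writes $\varphi_{P_{j,n}}$ as $\sum_{h\in\Sym_n^{(j)}(F)}\Wh_{\varphi,h}$) together with the Fourier-coefficient formula (\ref{rel_FC_MF_AF}) relating $\Wh_{\langle\varphi_f,v^*\rangle,h}$ to $c_f(h,y,\gamma)$. The implication \((2)\Rightarrow(3)\) is immediate since $P_{1,n}$ is a proper parabolic.

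For \((1)\Rightarrow(2)\): if $f$ is cuspidal then $c_f(h,y,\gamma)=0$ whenever $h\not>0$, and by (\ref{rel_FC_MF_AF}) the Whittaker coefficient $\Wh_{\langle\varphi_f,v^*\rangle,h}$ vanishes identically for all such $h$ and every $v^*$. For any $j\ge 1$, every $h\in\Sym_n^{(j)}(F)$ has rank at most $n-j<n$, hence is not positive definite, so Lemma \ref{Wh_coeff} gives $\langle\varphi_f,v^*\rangle_{P_{j,n}}=0$. Since $P_{1,n},\dots,P_{n,n}$ exhaust the maximal standard proper parabolics of $G_n$ (vanishing of the constant term along each maximal proper parabolic implies the same along every proper parabolic), $\langle\varphi_f,v^*\rangle$ is cuspidal for every $v^*$.

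The main work is \((3)\Rightarrow(1)\). Assume $\langle\varphi_f,v^*\rangle_{P_{1,n}}\equiv 0$ for every $v^*$. By Lemma \ref{Wh_coeff}, $\sum_{h\in\Sym_n^{(1)}(F)}\Wh_{\varphi,h}(g)=0$ for all $g$. Left-translating by $n_0\in N_{P_{n,n}}(\bbA_\bbQ)$ and using $\Wh_{\varphi,h}(n_0g)=\psi_h(n_0)\Wh_{\varphi,h}(g)$, I obtain
\[
\sum_{h\in\Sym_n^{(1)}(F)}\psi_h(n_0)\,\Wh_{\varphi,h}(g)\;=\;0
\]
as a function of $n_0\in N_{P_{n,n}}(F)\bs N_{P_{n,n}}(\bbA_\bbQ)$. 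Distinct $h\in\Sym_n(F)$ yield distinct characters of this abelian quotient, so by Fourier uniqueness each $\Wh_{\varphi,h}(g)$ vanishes for $h\in\Sym_n^{(1)}(F)$. To upgrade this to all $h$ of rank $<n$, I use the identity $\bfm(a)^{-1}n_b\bfm(a)=n_{a^{-1}ba^{-T}}$ for $a\in\GL_n(F)$. Combined with left $G_n(\bbQ)$-invariance of $\varphi$ and the product formula $|\det a|_{\bbA_F}=1$ (for the Jacobian of $b\mapsto aba^T$), a change of variable yields
\[
\Wh_{\varphi,h}(\bfm(a)g)\;=\;\Wh_{\varphi,\,a^Tha}(g),\qquad a\in\GL_n(F).
\]
Given any $h_0\in\Sym_n(F)$ with $\rank(h_0)<n$, pick $a\in\GL_n(F)$ with $ae_1\in\ker(h_0)$; then $a^Th_0a\in\Sym_n^{(1)}(F)$, so $\Wh_{\varphi,h_0}\equiv 0$.

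Finally, translating back through (\ref{rel_FC_MF_AF}): the vanishing of $\Wh_{\langle\varphi_f,v^*\rangle,h_0}$ at $p_\infty k_\infty g_\fini$ reads $\langle\rho({}^ta_\infty)c_f(h_0,y,\gamma),\rho^*(k_\infty)v^*\rangle\mathbf{e}(\tr(h_0z))=0$; varying $v^*\in V^*$ and $k_\infty\in K_{n,\infty}$ forces $c_f(h_0,y,\gamma)=0$ for every $y$ and every $\gamma$. Combined with (NH3), which gives vanishing for $h$ not positive semidefinite, one obtains $c_f(h,y,\gamma)=0$ whenever $h\not>0$, so $f$ is a cusp form. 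The principal obstacle is the passage from the single identity $\sum_h\Wh_{\varphi,h}=0$ (a relation between functions of $g$) to the individual vanishing of each $\Wh_{\varphi,h}$; this is resolved by exploiting the left action of $N_{P_{n,n}}(\bbA_\bbQ)$ to convert the relation into a Fourier series on an abelian quotient.
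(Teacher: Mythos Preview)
Your proof is correct and takes essentially the same approach as the paper, resting on Lemma \ref{Wh_coeff}, formula (\ref{rel_FC_MF_AF}), and the $\GL_n(F)$-action on Whittaker coefficients. For $(3)\Rightarrow(1)$ the paper argues by contradiction via right translation by $\bfm(a_\fini)$, whereas you first isolate each $\Wh_{\varphi,h}$ by $N_{P_{n,n}}(\bbA_\bbQ)$-translation and Fourier uniqueness before applying the $\GL_n(F)$-equivariance directly; these are minor variations of the same idea, and your version is arguably a bit cleaner.
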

	\begin{proof}
	Suppose $f$ is a cusp form.
	Put $\varphi = \langle\varphi_{f},v^*\rangle$.
	By (\ref{rel_FC_MF_AF}), if $\Wh_{\varphi,h}$ is non-zero, $\langle c_f(h,y,\gamma), \rho^*(k_\infty)v^* \rangle$ is non-zero for some $y, \gamma$, and $k_\infty$.
	Hence, by the definition of cusp forms, the Whittaker-Fourier coefficient $\Wh_{\varphi,h}$ is zero if $h$ is not positive definite.
	Let $P$ be a maximal parabolic subgroup of $G_n$.
	The constant term $\varphi_P$ is zero by Lemma \ref{Wh_coeff}.
	Hence (1) implies (2).
	The statement (2) implies (3) clearly.
	
	We assume the constant term $\varphi_{P_{1,n}}$ is zero.
	Assume that $f$ is not cuspidal.
	Then there exists a Fourier coefficient $c_{f}(h,y,\gamma)$ is non-zero for some non-positive definite matrix $h$.
	Take $a \in \GL_n(F)$ such that the $(1,1)$-th entry of $ah{^ta}$ is zero.
	We denote by $a_\fini$ an element of $G_n(\bbA_{\bbQ,\fini})$ such that
	\[
	(a_{\fini})_v = a
	\]
	for any non-archimedean place $v$ of $F$.
	Put $\varphi' = r(\bfm(a_\fini))\varphi$. 
	Then we have 
	\[
	\Wh_{\varphi',h}|_{G_n(\bbR)} = \Wh_{\varphi,ah{^ta}}|_{G_n(\bbR)}.
	\]
	Since $\varphi_{P_{1,n}}$ is zero, $\varphi'_{P_{1,n}}$ is zero.
	This contradicts to $c_f(h,y,\bfm(a)) \neq 0$ by (\ref{rel_FC_MF_AF}).
	Hence $f$ is cuspidal.
	This completes the proof.
	\end{proof}
	
	\begin{rem}
	This proposition is a generalization of \cite[Lemma 5]{Asgari-Schmidt}.
	\end{rem}

	\subsection{Proof of Proposition \ref{decomp_NHAF_parab}}\label{pf_decomp_NHAF_parab}
	
	We proof inductively in $n$.
	When $n=1$, this is proved in \cite[\S 3.8]{Horinaga}.
	Let $P$ be a maximal parabolic subgroup of $G_n$ with $P \neq P_{1,n} = Q_{1,n}$ i.e., the Levi subgroup of $P$ is $\Res_{F/\bbQ}\GL_j \times G_{n-j}$ for some $j \geq 2$.
	We assume $\calN(\Sp_{2n})_{\{P\}} \neq 0$.
	Take $\varphi \in \calN(\Sp_{2n})_{\{P\}}$.
	By the definition of $P$ and $\calN(\Sp_{2n})_{\{P\}}$, the constant term $\varphi_{P_{1,n}}$ along $P_{1,n}$ is zero.
	Then $\varphi$ is a cusp form by Proposition \ref{equiv_def_cusp}.
	This contradicts to $G \neq P$.
	Hence $\calN(\Sp_{2n})_{\{P\}} =0$ if $P$ is maximal and $P \neq P_{1,n}$.
	
	Next we consider the restriction $(r(\bfm(a)k)\varphi_{P_1})|_{G_{n-1}(\bbA_\bbQ)}$ for $a \in \GL_1(\bbA_F)$ and $k \in K_n$.
	We may assume the function $(r(\bfm(a)k)\varphi_{P_1})|_{G_{n-1}(\bbA_\bbQ)}$ is non-zero for some $a$ and $k$.
	Then by the assumption there exists $j$ such that $(r(\bfm(a)k)\varphi_{P_1})|_{G_{n-1}(\bbA_\bbQ)}$ lies in $\calN(G_{n-1})_{\{Q_{j,n-1}\}}$.
	In this case, the Whittaker-Fourier coefficient $\Wh_{\varphi,h}$ is zero if $\rank(h)<j$.
	By Lemma \ref{Wh_coeff} and Proposition \ref{equiv_def_cusp}, the constant term $\varphi_{Q_{j,n}}$ is a cuspidal automorphic form in $\calA(Q_{j,n}\bs G_n)$.
	This completes the proof.

	\subsection{Proof of Proposition \ref{decomp_NHAF_cusp_supp}}\label{pf_decomp_NHAF_cusp_supp}
	
	The statement (1) is clear by Proposition \ref{decomp_NHAF_parab}.
	We show (2).
	Take $\varphi \in \calN(G_n)_{\{Q_{i,n}\}}$.
	For $k \in K_n$, we define a function $(\varphi_{Q_{i,n}})_k$ on $\GL_i(\bbA_F) \times G_{n-i}(\bbA_{\bbQ})$ by $(\varphi_{Q_{i,n}})_k(m) = \varphi_{Q_{i,n}}(mk)$.
	By Proposition \ref{Wh_coeff_main}, the restriction $(\varphi_{Q_{i,n}})_k|_{\GL_i(\bbA_F)}$ decomposes as a sum of characters.
	Hence one has $\mu_1 = \cdots = \mu_{i}$.
	It remains to show $(\varphi_{Q_{i,n}})_k|_{G_{n-i}(\bbA_\bbQ)}$ is nearly holomorphic, i.e., $\frakp_{n-i,-}$-finite.
	By $\mathrm{Ad}(K_{n,\infty})(\frakp_{n-i,-}) \subset \frakp_{n,-}$ and $\frakp_{n,-}$-finiteness of $\varphi$, the automorphic form $(\varphi_{Q_{i,n}})_k|_{G_{n-i}(\bbA_\bbQ)}$ is $\frakp_{n,-}$-finite for any $k$.
	This completes the proof.

	\subsection{Proof of Proposition \ref{ss_Z}}\label{pf_ss_Z}
	
	We proceed inductively for $n$.
	If $n=1$, this is proved in \cite[p.~10]{pss1}.
	We assume that for any $j < n$, any generalized $\calZ_j$-eigen nearly holomorphic automorphic form on $G_j(\bbA_\bbQ)$ has an infinitesimal character.
	Let $\varphi$ be a nearly holomorphic automorphic form on $G_n(\bbA_\bbQ)$.
	If $\varphi$ is cuspidal, $\varphi$ generates a unitary $G(\bbA_{\bbQ_\fini})\times (\frakg_n,K_{n,\infty})$-module of finite length.
	Hence there exists a finite collection of cusp forms $\varphi_1,\ldots,\varphi_\ell$ such that $\varphi_\ell$ generates an irreducible representation for any $\ell$.
	In particular, $\varphi_\ell$ has an infinitesimal character for any $\ell$.
	
	We may assume $\varphi$ is orthogonal to all cusp forms.
	Hence we may assume $\varphi$ lies in $\calN(G_n)_{\{Q_{j,n}\}}$ for some $0 < j$.
	By Proposition \ref{Wh_coeff_main}, there exist finite number of subrepresentations $\Pi_1,\ldots,\Pi_\ell$ of $\calA(G_{n-i})$, which are generalized eigenspace of some infinitesimal character, and characters $\mu_1,\ldots,\mu_\ell$ of $\GL_j(\bbA_F)$ such that $\varphi_{P_{j,n}}$ is non-zero and lies in 
	\[
	\sum_{j=1}^\ell \Ind_{P_{i,n}(\bbA_\bbQ)}^{G_n(\bbA_\bbQ)}(\mu_\ell \boxtimes \Pi_{\ell}).
	\]
	Then we may assume $\Pi_\ell$ is a subrepresentation of $\calN(G_{n-i})$ by Proposition \ref{decomp_NHAF_cusp_supp}.
	By the assumption, $\Pi_\ell$ has an infinitesimal character.
	Then the induced representation $\Ind_{P_{i,n}(\bbA_\bbQ)}^{G_n(\bbA_\bbQ)}(\mu_\ell \boxtimes \Pi_{\ell})$ has an infinitesimal character.
	Hence $\varphi$ decomposes as a sum of nearly holomorphic automorphic forms which have infinitesimal characters by the injectivity of constant terms along $P_{j,n}$ on $\calN(G_n)_{\{Q_{j,n}\}}$.
	This completes the proof.
	
	\section{A decomposition of the space of holomorphic modular forms}
	
	\subsection{A decomposition of $M_\rho(\Gamma)$}
	
	Let $(\rho,V)$ be an irreducible finite-dimensional representation of $K_{n,\bbC}$.
	By (\ref{corr_MF_AF}), we have
	\[
	M_\rho(\Gamma) \otimes V^* \cong HK\left(\calN(G_n)^{K_\fini}_{\rho^*}\right).
	\]
	Here put
	\[
	HK\left(\calN(G_n)^{K_\fini}_{\rho^*}\right) = \{\varphi \in \calN(G_n)^{K_\fini}_{\rho^*} \mid \frakp_{n,-}\cdot\varphi=0\}.
	\]
	By the decomposition along associated classes of parabolic subgroups, we obtain the following result:
	
	\begin{lem}
	There exist subspaces $M_\rho(\Gamma)_{\{Q_{i,n}\}}$ of $M_\rho(\Gamma)$ such that
	\[
	\bigoplus_{i=0}^n M_\rho(\Gamma)_{\{Q_{i,n}\}} = M_\rho(\Gamma).
	\]
	Moreover, for any $i$, we have
	\[
	\bigoplus_{j \leq i} M_\rho(\Gamma)_{\{Q_{j,n}\}} = \left\{f \in M_\rho(\Gamma) \,\middle|\, \begin{matrix}\text{ $c_f(h,y,\gamma) = 0$ for any $\gamma \in \Sp_{2n}(F)$}\\ 
	\text{and any $h \in \Sym^{(i)}_n(\bbR)$}\end{matrix}\right\}.
	\]
	\end{lem}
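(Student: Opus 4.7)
The plan is to transfer the parabolic decomposition of $\calN(G_n)$ from Proposition \ref{decomp_NHAF_parab} through the isomorphism (\ref{corr_MF_AF_hol}), and then read off the Fourier-coefficient characterization using Lemma \ref{Wh_coeff} together with the formula (\ref{rel_FC_MF_AF}) that relates the Whittaker coefficient $\Wh_{\varphi_{f,v^{*}},h}$ to the classical Fourier coefficient $c_f(h,y,\gamma)$.

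For the first claim, let $f \in M_\rho(\Gamma)$ and write $\varphi_{f}$ for the associated $V$-valued automorphic form. I would declare $f \in M_\rho(\Gamma)_{\{Q_{i,n}\}}$ if, under the decomposition $\calN(G_n) = \bigoplus_{j=0}^{n} \calN(G_n)_{\{Q_{j,n}\}}$ of Proposition \ref{decomp_NHAF_parab}, every scalar component $\langle \varphi_{f}, v^{*}\rangle$ lies in $\calN(G_n)_{\{Q_{i,n}\}}$. Because each summand $\calN(G_n)_{\{Q_{i,n}\}}$ is a $G_n(\bbA_{\bbQ,\fini}) \times (\frakg_n, K_{n,\infty})$-submodule, the decomposition is compatible with passing to $K_\Gamma$-fixed vectors, extracting the $\rho^{*}$-isotypic component for the $K_{n,\infty}$-action, and taking the $\frakp_{n,-}$-annihilated subspace. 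Transporting via (\ref{corr_MF_AF_hol}) then yields $M_\rho(\Gamma) = \bigoplus_{i=0}^{n} M_\rho(\Gamma)_{\{Q_{i,n}\}}$.

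For the Fourier-support identity, I argue in both directions. Suppose first that $c_f(h,y,\gamma) = 0$ for every $\gamma \in \Sp_{2n}(F)$ and every $h \in \Sym_n^{(i)}(F)$. Then (\ref{rel_FC_MF_AF}) forces $\Wh_{\varphi_{f,v^{*}},h} = 0$ for all such $h$ and all $v^{*}$; by Lemma \ref{Wh_coeff} this gives the vanishing of the constant terms $\varphi_{P_{j,n}}$ for the corresponding values of $j$, and Proposition \ref{decomp_NHAF_cusp_supp} together with the injectivity of these constant terms on the $\{Q_{j,n}\}$-isotypic pieces rules out any component of $\varphi_{f,v^{*}}$ in $\calN(G_n)_{\{Q_{j,n}\}}$ with $j$ strictly outside the allowed range. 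Conversely, given $f_j \in M_\rho(\Gamma)_{\{Q_{j,n}\}}$, Proposition \ref{Wh_coeff_main} pins down the cuspidal support of $\varphi_{P_{j,n}}$, and the $\bfm(\SL_j(\bbA_F) N_{j,n,\GL}(\bbA_F))$-invariance of the Whittaker coefficients from Lemma \ref{left_SL_inv}, combined with the holomorphy of $f_j$, restricts the Fourier support of $f_j$ to matrices $h$ of rank at least $n-j$, which produces the desired vanishing.

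The main obstacle is the precise rank-to-parabolic-class bookkeeping required to separate the summands by their Fourier supports. Concretely, for the nontrivial inclusion one needs to show that a nonzero $f_j \in M_\rho(\Gamma)_{\{Q_{j,n}\}}$ always admits some non-vanishing Fourier coefficient at an $h$ of rank exactly $n-j$ after conjugation by a suitable $\gamma \in \Sp_{2n}(F)$. This is an injectivity statement for an iterated Siegel operator on the $\{Q_{j,n}\}$-component. I expect it to follow by recursively applying Proposition \ref{equiv_def_cusp} together with the cuspidal-support constraint of Proposition \ref{decomp_NHAF_cusp_supp}(2), which forces the archimedean part of the cuspidal datum at the Levi of $Q_{j,n}$ to be a holomorphic cusp form on $G_{n-j}$, whose positive-definite Fourier coefficients then detect $f_j$.
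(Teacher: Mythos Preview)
Your overall strategy matches the paper's: transport Proposition~\ref{decomp_NHAF_parab} through the isomorphism (\ref{corr_MF_AF_hol}), use the $K_{n,\infty}$-stability of each piece $\calN(G_n)_{\{Q_{i,n}\}}$ to descend to subspaces of $M_\rho(\Gamma)$, and then identify the Fourier-coefficient condition via (\ref{rel_FC_MF_AF}) together with Lemma~\ref{Wh_coeff}. The paper's own argument for the second assertion is literally the single line ``follows from (\ref{rel_FC_MF_AF}).''

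Where you overshoot is in the machinery brought to bear on the second step. Proposition~\ref{Wh_coeff_main} and Lemma~\ref{left_SL_inv} play no role here, and the ``obstacle'' you flag is not one. Both inclusions follow from two facts already built into the Langlands decomposition, combined with Lemma~\ref{Wh_coeff}: first, for $\varphi_j\in\calN(G_n)_{\{Q_{j,n}\}}$ one has $(\varphi_j)_{P_{k,n}}=0$ whenever $k>j$, because no associate of $M_{Q_{j,n}}=(\GL_1)^j\times G_{n-j}$ occurs as a Levi inside $M_{P_{k,n}}=\GL_k\times G_{n-k}$ (the symplectic block $G_{n-j}$ is too large); second, the constant term along $P_{j,n}$ is injective on $\calN(G_n)_{\{Q_{j,n}\}}$. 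Via Lemma~\ref{Wh_coeff} the first fact translates into the vanishing of $c_{f_j}(h,y,\gamma)$ for $h\in\Sym_n^{(k)}$ with $k>j$, giving one inclusion; the second, together with the $\GL_n(F)$-equivariance $\Wh_{\varphi,h}(\bfm(a)g)=\Wh_{\varphi,\,{}^t a h a}(g)$, forces any nonzero $f_j$ to have a nonvanishing Whittaker coefficient at some $h$ of rank exactly $n-j$, which handles the other. The detour through Proposition~\ref{equiv_def_cusp} and Proposition~\ref{decomp_NHAF_cusp_supp}(2) is unnecessary.
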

	\begin{proof}
	By Proposition \ref{decomp_NHAF_parab}, we have
	\[
	M_\rho(\Gamma) \otimes V^* \cong HK(\calN(G_n)^{K_\fini}_{\rho^*})) = \bigoplus_{i=0}^n HK\left(\calN(G_n)^{K_\fini}_{\rho^*,\{Q_{i,n}\}}\right).
	\]
	Since each space $HK\left(\calN(G_n)^{K_\fini}_{\rho^*,\{Q_{i,n}\}}\right)$ is closed under the right translation of $K_{n,\infty}$, there exist subspaces 
	\[
	M_\rho(\Gamma)_{\{Q_{i,n}\}}
	\] of $M_\rho(\Gamma)$
	such that
	\[
	M_\rho(\Gamma)_{\{Q_{i,n}\}} \otimes V^* \cong HK\left(\calN(G_n)^{K_\fini}_{\rho^*,\{Q_{i,n}\}}\right).
	\]
	Hence the first assertion holds.
	The second assertion follows from (\ref{rel_FC_MF_AF}).
	This completes the proof.
	\end{proof}
	
	
	\subsection{Square-integrable modular forms and the vanishing theorem}
	
	For $\varphi \in \calN(G_n)_{\{Q_{i,n}\}}$, there exist functions
	\[
	\varphi_{P_{i,n}}(\mu,s_0,\pi) \in  I_{P_{i,n}}(\mu|\cdot|^{s_0}\boxtimes\pi)
	\]
	such that $\varphi_{P_{i,n}} = \sum \varphi_{P_{i,n}}(\mu,s_0,\pi)$.
	
	 \begin{lem}
	 With the above notation, we consider the following condition ($*$):
	 \[
	 \text{If $\varphi_{P_{i,n}}(\mu,s_0,\pi)$ is non-zero, the real number $s_0$ is less than $0$.}
	 \]
	 Then $\varphi$ is square-integrable if and only if the condition ($*$) holds.
	 \end{lem}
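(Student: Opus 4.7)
The plan is to reduce the lemma to Langlands' square-integrability criterion for automorphic forms, which characterizes square-integrability via the ``exponents'' of the cuspidal components of constant terms along (maximal) parabolic subgroups (cf.~\cite[Lemma I.4.11]{MW}). Roughly, $\varphi \in L^2$ if and only if, for each standard maximal parabolic $P = MN$, every exponent of $\varphi_P$ along the split center $A_M$ of $M$ lies in the open negative chamber determined by the positive root of $A_M$ in $N$.

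First I would exploit the assumption $\varphi \in \calN(G_n)_{\{Q_{i,n}\}}$ to restrict attention to $P_{i,n}$. By Proposition \ref{decomp_NHAF_cusp_supp} the cuspidal support lies in $(M_{Q_{i,n}}, \mu^{\boxtimes i}\boxtimes\pi)$, and by Corollary \ref{P=Q} we have $\varphi_{P_{i,n}} = \varphi_{Q_{i,n}}$. For any standard maximal parabolic $P_{j,n}$ not associated to $Q_{i,n}$, the definition of $\calA(G_n)_{\{Q_{i,n}\}}$ forces $\varphi_{P_{j,n}}$ (and its translates $r(ak)\varphi_{P_{j,n}}$ for $a\in A_{P_{j,n}}$, $k\in K_n$) to be orthogonal to all cusp forms on $M_{P_{j,n}}(\bbA_\bbQ)^1$, so it contributes no cuspidal exponents; the Langlands criterion thus reduces to the analysis of exponents along $P_{i,n}$ itself (and its associates, which are conjugate-linked by intertwining operators and carry the same real part of exponents).

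Next I would read off the exponent along $A_{P_{i,n}}^{\infty}$ from the data. The split center $A_{P_{i,n}}^{\infty}$ is one-dimensional, identified with the scalar matrices $t\mathbf{1}_i$ in the $\GL_i$-block, and the unique positive root of $A_{P_{i,n}}^{\infty}$ in $N_{P_{i,n}}$ is positive on $t > 0$. Each summand $\varphi_{P_{i,n}}(\mu, s_0, \pi)$ lies in $I_{P_{i,n}}(\mu|\cdot|^{s_0}\boxtimes \pi)$, where by our normalization the induction includes the $\delta_{P_{i,n}}^{1/2}$-twist; since $\mu$ and the central character of the cuspidal $\pi$ are unitary, the restriction of the inducing character to $A_{P_{i,n}}^{\infty}$ is exactly $|t|^{i\cdot s_0}$. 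Hence the exponent is a positive multiple of $s_0$, and Langlands' criterion collapses to $\mathrm{Re}(s_0) < 0$; since $s_0$ is real here, this is condition $(*)$. The linear independence of exponents (on different summands) ensures that square-integrability of the sum is equivalent to the condition being satisfied summand by summand.

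The main obstacle will be making the last step rigorous when several summands share the same $s_0$ but distinct $(\mu,\pi)$, and in asserting that the constant terms along parabolics only associated (but not equal) to $P_{i,n}$ do not introduce new exponents outside the negative chamber that are not already controlled by the $s_0$'s above. Both points are standard: the first is handled by decomposing $\varphi_{P_{i,n}}$ into its isotypic summands under the action of $A_{P_{i,n}}^\infty$ and applying the criterion componentwise, and the second follows from the fact that intertwining operators between induced representations from associated parabolics permute exponents by elements of the relative Weyl group, preserving negativity of the real parts for unitary cuspidal data.
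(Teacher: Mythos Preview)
Your approach is the same as the paper's: the proof there is a single line, ``The statement follows immediately from \cite[Lemma I.4.11]{MW}.'' You have correctly identified the relevant criterion and expanded on how to apply it.

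One small imprecision worth flagging: the Langlands criterion in \cite[I.4.11]{MW} is phrased in terms of the cuspidal exponents along the standard parabolics in the associate class carrying the cuspidal support, which here is $\{Q_{i,n}\}$, not the maximal parabolics as such. Your reduction to the single exponent on the one-dimensional torus $A_{P_{i,n}}^\infty$ is morally right but elides the passage from $A_{Q_{i,n}}^\infty$ (which is $i$-dimensional) to $A_{P_{i,n}}^\infty$. What makes this harmless is precisely the $\bfm(\SL_i)$-invariance established earlier (Corollary \ref{P=Q} and the surrounding discussion): the cuspidal datum on $M_{Q_{i,n}}$ is $\mu^{\boxtimes i}\boxtimes\pi$, so the exponent on $A_{Q_{i,n}}^\infty$ is, up to the fixed shift by $\rho_{Q_{i,n}}-\rho_{P_{i,n}}$, the diagonal character $(s_0,\ldots,s_0)$. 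A short computation then shows that the open negative chamber condition for $Q_{i,n}$ collapses to the single inequality $s_0<0$, exactly as you assert. With that clarification your argument is complete and matches the paper.
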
 
	 \begin{proof}
	 The statement follows immediately from \cite[Lemma I.4.11]{MW}:
	 \end{proof}
	 
	 We then obtain the following corollary:
	 \begin{cor}\label{leq_0}
	 Let $\varphi$ be a holomorphic automorphic form i.e., $\frakp_{n,-} \cdot \varphi = 0$.
	 Then as a $(\frakg_n,K_{n,\infty})$-module, $\varphi$ generates unitary $(\frakg_n,K_{n,\infty})$-module.
	 In particular, if $\lambda_{n,v} \leq 0$ for some $v$, a holomorphic modular form of weight $(\lambda_{1,v},\ldots,\lambda_{n,v})_v$ is zero or constant function.
	 \end{cor}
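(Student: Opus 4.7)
The plan is to prove the two assertions in sequence, with the unitarity statement serving as the main bridge to the vanishing/constant conclusion.

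For the unitarity assertion, since $\varphi$ is $\frakp_{n,-}$-annihilated, the $(\frakg_n, K_{n,\infty})$-submodule it generates is a highest weight module at each archimedean place, hence isomorphic to $\boxtimes_v L(\lambda_v)$ by the universality of generalized Verma modules discussed in Section \ref{local_arch}. To establish unitarity, the strategy is to verify the square-integrability condition $(*)$ of the preceding lemma; combining Proposition \ref{Wh_coeff_main} (which expands $\varphi_{P_{i,n}}$ in terms of induced representations $I_{P_{i,n}}(\mu|\cdot|^{s_0} \boxtimes \pi)$) with Theorem \ref{emb_main} (which forces $s_0 = \lambda_{n,v} - n + (i-1)/2$ and $\lambda_{n,v}$ independent of $v$ whenever the corresponding summand of the constant term is nonzero) should pin down precisely which exponents $s_0$ can occur. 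Square-integrability then gives unitarity of $\boxtimes_v L(\lambda_v)$ by embedding into $L^2$.

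For the second assertion, the hypothesis $\lambda_{n,v} \leq 0$ at some place $v$ lets me invoke Theorem \ref{unitary} via the now-established unitarity of $L(\lambda_v)$. Writing $\lambda_v = \mu + r(-1,\ldots,-1)$ with $\mu_n = n$, we have $r = n - \lambda_{n,v} \geq n$. The unitarity bounds $r \leq (p(\mu) + q(\mu) + 1)/2$ and $r \leq p(\mu) + q(\mu)/2$, together with $p + q \leq n$, force $p(\mu) = n$ and $q(\mu) = 0$, i.e.\ $\mu = (n,\ldots,n)$ and $\lambda_v = 0$. Then Proposition \ref{rigidity_HMF} asserts that if the associated modular form $f$ has any nonzero Fourier coefficient $c_f(h)$ at some nontrivial $h \in \Sym_n^{(j)}(F)$, then $\lambda_{n,v'}$ is constant over $v'$ and $\lambda_{n-j+1,v'} = \cdots = \lambda_{n,v'}$ at each place. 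Applying the unitarity-forced vanishing at every $v'$ propagates $\lambda = 0$ everywhere, contradicting the existence of a nonconstant $f$ of nonzero weight; hence $f$ is constant (possibly zero). If instead only the $h = 0$ Fourier coefficient is nonzero, then $f$ is trivially constant.

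The main obstacle is the unitarity step: establishing $(*)$ in full generality. When $\lambda_{n,v} < (n+1)/2$, the bound $s_0 = \lambda_{n,v} - n + (i-1)/2 < 0$ is immediate for every $i$, and square-integrability follows directly. The harder range is $\lambda_{n,v} \geq (n+1)/2$, where some exponents $s_0 \geq 0$ are admissible a priori. In this regime the argument must either show that the offending summands $\varphi_{P_{i,n}}(\mu,s_0,\pi)$ actually vanish when $\varphi$ is holomorphic (perhaps by exploiting that the archimedean factor $\pi_v$ itself must be a highest weight module, narrowing the possible inducing data), or invoke the external fact that $L(\lambda_v)$ is automatically unitary whenever it appears as the archimedean component of a nonzero holomorphic automorphic form (e.g.\ holomorphic discrete series unitarity when $\lambda_{n,v} \geq n$, plus a Wallach-set argument in the intermediate range). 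I expect reconciling these two subranges to be the most delicate point.
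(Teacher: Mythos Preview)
The paper does not prove this corollary in-text; it writes only ``The proof is the same as in \cite{RC_Horinaga}.'' A line-by-line comparison is therefore impossible, and I assess your reconstruction on its own terms.

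For the first assertion (unitarity for arbitrary holomorphic $\varphi$), your diagnosis is accurate: the square-integrability route via condition~$(*)$ of the preceding lemma works only when every exponent $s_0 = \lambda_{n,v} - n + (i-1)/2$ is negative, i.e.\ when $\lambda_{n,v} < (n+1)/2$. For larger weights, holomorphic Klingen Eisenstein series furnish non-$L^2$ examples whose archimedean component is nonetheless a unitary discrete series, so a separate argument is required in that range. You flag this honestly and correctly; the paper offers no further guidance here beyond the external reference.

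For the second assertion your EHW computation is correct: unitarity of $L(\lambda_v)$ together with $\lambda_{n,v} \leq 0$ forces $\lambda_v = 0$. The gap lies in how you pass from $\lambda_v = 0$ at one place to $f$ being constant. Your case split invokes Proposition~\ref{rigidity_HMF}, but that proposition extracts information only from Fourier coefficients at rank-deficient $h$ (its proof uses $\Gamma \cap \bfm(\GL_j(F))$ with $j \geq 1$); when $\varphi$ is cuspidal every nonzero Fourier coefficient has $h$ positive definite, and the proposition is silent. Your dichotomy ``some rank-deficient $h$ contributes'' versus ``only $h = 0$ contributes'' omits this case. The clean fix bypasses Proposition~\ref{rigidity_HMF} entirely: once EHW gives $\lambda_v = 0$ at the single place $v$, the unitary $(\frakg_n,K_{n,\infty})$-module generated by $\varphi$ is trivial at that archimedean factor, so $\varphi$ is right $\Sp_{2n}(F_v)$-invariant; strong approximation for $\Sp_{2n}$ (simply connected, $\Sp_{2n}(F_v)$ noncompact) makes $\Sp_{2n}(F)\Sp_{2n}(F_v)$ dense in $\Sp_{2n}(\bbA_F)$, forcing $\varphi$ to be constant. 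In the non-cuspidal case you had already observed, via Theorem~\ref{emb_main}, that $\lambda_{n,v}$ is independent of $v$, so the detour through Proposition~\ref{rigidity_HMF} was unnecessary there as well.
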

	 
	 The proof is the same as in \cite{RC_Horinaga}.
	 Then we can show the vanishing of the space of nearly holomorphic Hilbert-Siegel modular forms:
	 \begin{prop}\label{vanishing}
	 With the above notation, assume $n > 1$.
	 If $\lambda \neq 0$ and $\lambda_{n,v} = 0$ for some $v$, we have $N_{\rho_{\lambda}} (\Gamma) = 0$ for any congruence subgroup $\Gamma$. 
	 \end{prop}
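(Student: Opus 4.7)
Suppose for contradiction that $0 \neq f \in N_{\rho_\lambda}(\Gamma)$ with $\lambda \neq 0$ and $\lambda_{n,v_0} = 0$ for some $v_0 \in \bfa$. Via the isomorphism (\ref{corr_MF_AF}), pick $v^* \in V^*$ so that $\varphi := \langle \varphi_f, v^*\rangle \in \calN(G_n)^{K_\fini}_{\rho_\lambda^*}$ is non-zero; its $K_{n,v_0}$-type is $\rho_{\lambda_{v_0}}^*$ of highest weight $(-\lambda_{n,v_0}, -\lambda_{n-1,v_0}, \ldots, -\lambda_{1,v_0}) = (0, -\lambda_{n-1,v_0}, \ldots, -\lambda_{1,v_0})$. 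The plan is to extract from $\varphi$ a non-zero holomorphic automorphic form corresponding to a modular form of weight $\rho_\mu$ with $\mu \neq 0$ and $\mu_{n,v} \leq 0$ for some $v$, then contradict Corollary \ref{leq_0}.

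Since $\frakp_{n,-} = \bigoplus_{v \in \bfa} \frakp_{n,-,v}$ is abelian and $\varphi$ is $\frakp_{n,-}$-finite, the module $S(\frakp_{n,-}) \cdot \varphi$ has finite dimension. Choose $X = \prod_v X_v$ with $X_v \in S^{d_v}(\frakp_{n,-,v})$ of maximal total degree such that $\psi := X \cdot \varphi \neq 0$; then $\frakp_{n,-} \cdot \psi = 0$, so by (\ref{corr_MF_AF_hol}) $\psi$ corresponds to a non-zero holomorphic modular form $g \in M_{\rho_\mu}(\Gamma)$. Next perform the $K_{v_0}$-weight analysis: as a $K_{v_0} \cong \GL_n(\bbC)$-module, $\frakp_{n,-,v_0}$ is $\Sym^2(\bbC^n)$ with highest weight $(2, 0, \ldots, 0)$, so the $K_{v_0}$-irreducible summands of $S^{d_{v_0}}(\frakp_{n,-,v_0})$ have highest weights $\alpha_{v_0}$ with non-negative components and $\alpha_{v_0,1} \geq 2$ whenever $d_{v_0} \geq 1$. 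By the Littlewood--Richardson decomposition of $S^{d_{v_0}}(\frakp_{n,-,v_0}) \otimes \rho_{\lambda_{v_0}}^*$, the highest $K_{v_0}$-weight $\eta_{v_0}$ of the $K_{v_0}$-type supporting $\psi$ satisfies $\eta_{v_0,1} \geq \alpha_{v_0,1}$; dualizing to recover the modular-form weight gives $\mu_{n,v_0} = -\eta_{v_0,1} \leq -\alpha_{v_0,1} \leq 0$.

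If $d_{v_0} \geq 1$, then $\mu_{n,v_0} < 0$, so $\mu_{v_0} \neq 0$ and $\mu \neq 0$; Corollary \ref{leq_0} then forces $g = 0$, contradicting $\psi \neq 0$. If $d_{v_0} = 0$, maximality of $X$ together with commutativity of $\frakp_{n,-}$ implies that $\varphi$ itself is annihilated by $\frakp_{n,-,v_0}$, so $\mu_{v_0} = \lambda_{v_0}$ with $\mu_{n,v_0} = 0$; when $\lambda_{v_0} \neq 0$, Corollary \ref{leq_0} again yields the contradiction. The subtle remaining subcase is $d_{v_0} = 0$ together with $\lambda_{v_0} = 0$, i.e.\ trivial weight at $v_0$; here the hypothesis $n > 1$ becomes essential, since the partial holomorphicity of $\varphi$ at $v_0$ combined with triviality of the $K_{v_0}$-action forces $f$ to be constant in the $v_0$-variable, while the Koecher-type argument of \cite[Proposition 4.1]{Horinaga} (adapted to the Hilbert setting) then reduces the problem to the analogous statement on $\frakH_n^{d-1}$, permitting induction on the number of archimedean places $d$ or iteration of the argument at any $v \neq v_0$ with $\lambda_v \neq 0$. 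The main obstacle is this last reduction, together with making the Littlewood--Richardson weight tracking rigorous at the level of the specific vector $\psi$ (as opposed to arbitrary summands of the tensor product).
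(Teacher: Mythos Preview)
Your proposal shares the correct opening move with the paper: pass to the automorphic side, apply elements of $\frakp_{n,-}$ to produce a nonzero $\frakp_{n,-}$-annihilated vector $\psi$, and invoke Corollary~\ref{leq_0}. However, the Littlewood--Richardson weight-tracking contains errors that make your case analysis collapse.

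First, as a $K_{n,\bbC}$-module $\frakp_{n,-}$ has weights $-(e_i+e_j)$ (these are \emph{positive} roots in the paper's system $\Phi^+$), so its highest weight is $(0,\ldots,0,-2)$, not $(2,0,\ldots,0)$. More seriously, even after fixing the sign, the LR inequality you write goes the wrong way for your purposes, and in any case the conclusion ``$d_{v_0}\geq 1 \Rightarrow \mu_{n,v_0}<0$'' is simply false: applying a root vector for $-(e_i+e_j)$ with $i\leq j<n$ shifts the weight by $-(e_i+e_j)$, leaving the $n$-th coordinate unchanged. So one can have $d_{v_0}\geq 1$ while $\mu_{n,v_0}=\lambda_{n,v_0}=0$. (There is also a smaller slip: maximality of the \emph{total} degree of $X$ with $d_{v_0}=0$ does not force $\frakp_{n,-,v_0}\cdot\varphi=0$; you would need to maximize $d_{v_0}$ first.)

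The genuine dichotomy is therefore not $d_{v_0}\geq 1$ versus $d_{v_0}=0$, but $\mu\neq 0$ versus $\mu=0$. One always has $\mu_{n,v_0}\leq \lambda_{n,v_0}=0$, so Corollary~\ref{leq_0} forces the resulting holomorphic form to be zero or constant; the only obstruction is $\mu=0$, i.e.\ $\psi$ is a nonzero constant. This is precisely the case you relegate to a vague ``reduction to $d-1$ places,'' which you yourself flag as the main obstacle. The paper does not attempt any such reduction. Instead it uses the action of $G_n(\bbA_{\bbQ,\fini})$: choose $\varphi'\in\pi_\infty$ of nonzero $\frakk_n$-dominant weight with $\frakp_{n,-}\cdot\varphi'$ landing in the constants. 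Since $\varphi'$ is not constant, some $g_\fini$ gives $r(g_\fini)\varphi'-\varphi'\neq 0$; this difference is annihilated by $\frakp_{n,-}$ (as $r(g_\fini)$ commutes with $\frakp_{n,-}$ and fixes constants), hence is a nonzero highest weight vector of the same nonzero weight $\lambda'$. Since $n>1$, any $\frakk_n$-dominant weight one $\frakp_{n,-}$-step above $0$ has last coordinate $0$, so $\lambda'_{n,v}=0$ for all $v$, and Corollary~\ref{leq_0} yields the contradiction. This finite-adelic translation trick is the key idea missing from your proposal.
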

	 \begin{proof}
	 We denote by $V$ the representation space of $\rho_\lambda$.
	 Suppose $N_{\rho_{\lambda}} (\Gamma)$ is non-zero.
	 Take $f \in N_{\rho_{\lambda}} (\Gamma)$.
	 Let $\varphi_f$ be a nearly holomorphic automorphic form corresponding to $f$ and $v^* \in V$.
	 Let $\pi_\infty$ be the $(\frakg_n,K_{n,\infty})$-module generated by $\varphi_f$.
	 Then there exist a highest weight $\mu = (\mu_{1,v},\ldots,\mu_{n,v})_v$ in $\pi_\infty$ such that $\mu_{n,v} \leq \lambda_{n,v}$ for any $v \in \bfa$.
	 By Corollary \ref{leq_0}, $\mu$ is $0$.
	 Hence there exists $Y \in \calU(\frakg_n)$ such that $Y \cdot \varphi_f$ is a constant function.
	 We denote by $\varphi'$ an element of $\pi_\infty$ which satisfies that there exists $Y' \in \frakp_{n,-}$ such that $Y'\cdot \varphi$ is a constant function.
	 Then we may assume $\varphi'$ is $\frakk_n$-dominant with non-zero weight $(\lambda'_{1,v},\ldots,\lambda'_{n,v})_v$.
	 If $\varphi' = r(g_\fini)\varphi'$ for any $g_\fini \in G_n(\bbA_{\bbQ,\fini})$, $\varphi'$ is a constant function, since $G_n(\bbQ)G_n(\bbA_{\bbQ,\fini})$ is dense in $G_n(\bbA_\bbQ)$.
	 This is contradiction, since the weight of $\varphi'$ is non-zero
	 .
	 Hence there exists $g_\fini \in G_n(\bbA_{\bbQ,\fini})$ such that $r(g_\fini)\varphi'-\varphi'$ is non-zero.
	 By the choice of $\varphi'$, the automorphic form $r(g_\fini)\varphi'-\varphi'$ is highest weight vector.
	 Then there exists a non-zero holomorphic modular form of weight $\mu$.
	 This contradicts to Corollary \ref{leq_0}.
	 Hence $M_\rho(\Gamma)$ is zero.
	 This completes the proof.
	 \end{proof}

\section{Constant terms of Eisenstein series of Klingen type}
	\subsection{Definition and basic properties}
	Take a Hecke character $\mu$ and a cuspidal automorphic representation $\pi$ of $G_{2(n-j)}(\bbA_\bbQ)$.
	For a section $f_s$ of $\Ind_{P_{j,n}(\bbA_\bbQ)}^{G_n(\bbA_\bbQ)}(\mu|\cdot|^s \boxtimes \pi)$, we say that $f_s$ is standard if $f|_{K_n}$ is independent of $s$.
	Take a standard section $f_s$ of $\Ind_{P_{j,n}(\bbA_\bbQ)}^{G_n(\bbA_\bbQ)}(\mu|\cdot|^s \boxtimes \pi)$.
	Set
	\[
	E(g,s,f) = \sum_{\gamma \in P_{j,n}(\bbQ) \bs G_n(\bbQ)} f_s(\gamma g), \qquad g \in G_n(\bbA_\bbQ).
	\]
	We say that the Eisenstein series $E(\,\cdot\,,f,s)$ is of Klingen type or Klingen Eisenstein series.
	This is absolutely convergent if $\mathrm{Re}(s)>n-(j-1)/2$ and can be continued meromorphically to the whole $s$-plane .
	We compute the constant terms of Klingen Eisenstein series and discuss its properties. 
	
	For a parabolic subgroup $P$ and an automorphic form $\varphi$, we say that $\varphi$ is concentrated on $P$ if $\varphi$ lies in $\calA(G_n)_{\{P\}}$.
	The following statement is proved in \cite[Proposition 3.3]{WS_KR} when $j=n$.
	
	\begin{prop}\label{concentrate}
	Let $\mu$ be a Hecke character of $F^\times F_{\infty,+}^\times \bs \bbA_F^\times$ and $\pi$ an irreducible holomorphic representation of $G_{n-i}(\bbA_\bbQ)$.
	For a standard section $f_s$ of $\Ind_{P_{i,n}(\bbA_\bbQ)}^{G_n(\bbA_\bbQ)}(\mu|\cdot|^s \boxtimes \pi)$, the Eisenstein series $E(\, \cdot \,,s,f)$ is concentrated on $Q_{i,n}$.
	\end{prop}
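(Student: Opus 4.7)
The plan is to compute the constant term $E(\cdot,s,f)_Q$ for an arbitrary standard parabolic $Q$ of $G_n$ via the Bruhat decomposition, and to show that its projection onto $\calA_{\cusp}(M_Q(\bbA_\bbQ)^1)$ vanishes unless $Q$ is associated to $Q_{i,n}$. First I would assume $\mathrm{Re}(s) > n - (i-1)/2$ so that the Eisenstein series converges absolutely, and apply the standard formula (cf.~\cite[\S II.1.7]{MW})
\[
E(g,s,f)_Q \;=\; \sum_{w} \sum_{\gamma \in (w^{-1}P_{i,n}w \cap M_Q)(\bbQ) \bs M_Q(\bbQ)} \int_{(N_Q \cap wN_{P_{i,n}}w^{-1})(\bbA_\bbQ) \bs N_Q(\bbA_\bbQ)} f_s(w^{-1}\gamma n g)\,dn,
\]
where $w$ runs through a system of representatives of the double cosets $W_{M_Q} \bs W_{G_n} / W_{M_{P_{i,n}}}$. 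For each $w$, the inner double sum-integral, viewed as a function of $m \in M_Q(\bbA_\bbQ)^1$, is an Eisenstein-type series on $M_Q$ induced from the parabolic $w^{-1}P_{i,n}w \cap M_Q$ with inducing data obtained from $\mu|\cdot|^s \boxtimes \pi$ by conjugation.

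The main step is then the cuspidal support analysis. Since $\mu|\cdot|^s$ is a character of $\GL_i(\bbA_F)$ factoring through the determinant, viewed as an automorphic representation of $\GL_i$ it has cuspidal support $(T_i,\,\mu^{\boxtimes i}|\cdot|^{(s+(i-1)/2,\,\ldots,\,s-(i-1)/2)})$ on the diagonal torus $T_i$. Consequently, $\mu|\cdot|^s \boxtimes \pi$ on $M_{P_{i,n}}$ has cuspidal support on the Levi $T_i \times G_{n-i} = M_{Q_{i,n}}$, with cuspidal representation of the form $\mu^{\boxtimes i}|\cdot|^{\bullet} \boxtimes \pi$. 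By the transitivity of cuspidal support, each term in the above Bruhat decomposition lies in $\calA(M_Q)_{\{Q'\}}$ for the standard parabolic $Q' \subset M_Q$ whose Levi is (up to conjugation) $w^{-1}M_{Q_{i,n}}w \cap M_Q$. For this piece to have a non-zero projection onto $\calA_{\cusp}(M_Q(\bbA_\bbQ)^1)$, one needs $Q' = M_Q$, i.e., $w^{-1}M_{Q_{i,n}}w \supseteq M_Q$; matching dimensions forces $M_Q = w^{-1}M_{Q_{i,n}}w$ and hence $Q$ is associated to $Q_{i,n}$.

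Putting this together, for any $Q$ not in the associated class of $Q_{i,n}$, every summand in the Bruhat decomposition of $E(\cdot,s,f)_Q$ is orthogonal to $\calA_{\cusp}(M_Q(\bbA_\bbQ)^1)$ (as a function on $M_Q(\bbA_\bbQ)^1$), so the same is true of $E(\cdot,s,f)_{Q,ak}$ for any $a \in A_Q$, $k \in K_n$. By definition of $\calA(G_n)_{\{Q_{i,n}\}}$, this gives the claim in the region of absolute convergence; the conclusion extends to all $s$ by meromorphic continuation, since standard sections and the cuspidal-projection operators are analytic in $s$.

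The main obstacle is the combinatorial bookkeeping of the double cosets $W_{M_Q} \bs W_{G_n} / W_{M_{P_{i,n}}}$ and verifying that for each representative $w$ the inducing parabolic $w^{-1}P_{i,n}w \cap M_Q$ is proper in $M_Q$ whenever $M_Q \neq w^{-1}M_{Q_{i,n}}w$. The key simplifying input, which makes the argument go through for general $i$ (not just the Siegel case $i=n$ treated in \cite[Proposition 3.3]{WS_KR}), is that the inducing character $\mu|\cdot|^s$ on $\GL_i$ is one-dimensional, so its cuspidal support is forced onto $T_i$, aligning the cuspidal support of the whole datum precisely with $M_{Q_{i,n}}$.
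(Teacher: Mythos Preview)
Your proposal is correct and follows essentially the same route as the paper: compute the constant term along an arbitrary standard parabolic via the Bruhat decomposition (the formula from \cite[\S II.1.7]{MW}), and show that the cuspidal projection on $M_Q$ vanishes unless $Q$ is associated to $Q_{i,n}$. The paper carries this out more concretely rather than invoking transitivity of cuspidal supports: it first uses the cuspidality of $\pi$ to see that any $w$ contributing a non-zero term must normalize the block $G_{n-i}$ (so $M_P \supset G_{n-i}$), and then observes that on the remaining $\GL$-part the function $f_{w,s}$ transforms by a character $\omega_w$ coming from $\mu|\cdot|^s$; since a character of $\GL_k$ is cuspidal only for $k=1$, the $\GL$-blocks of $M_P$ are forced to be $\GL_1$'s, giving $P=Q_{i,n}$. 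Your abstract formulation (the inducing data $\mu|\cdot|^s\boxtimes\pi$ has cuspidal support on $T_i\times G_{n-i}=M_{Q_{i,n}}$, hence so does the Eisenstein series) is exactly the same mechanism, just expressed in the language of cuspidal supports; the ``key simplifying input'' you isolate---that $\mu|\cdot|^s$ is one-dimensional, pushing its cuspidal support down to $T_i$---is precisely the paper's observation that $\omega_w$ is a character. The paper does not separately treat the meromorphic continuation step you mention; the constant-term formula holds as an identity of meromorphic functions, so the argument applies at every regular $s$ directly.
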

	\begin{proof}
	Let $P$ be a standard parabolic subgroup of $G_n$.
	Suppose $E(\,\cdot\,,s,f)_P$ has a non-trivial cuspidal component i.e., there exists a cusp form $\varphi$ on $M_P(\bbA_\bbQ)^1$ such that the Petersson inner product of $\varphi$ and $E(\,\cdot\,,s,f)_P|_{M_P(\bbA_\bbQ)^1}$ is non-zero.
	
	It is well-known that
	\begin{align}\label{const_term_Klingen}
	E(g,s,f)_P = \sum_{w \in W_{P_{i,n}} \bs W_{G_n} / W_{P}} \sum_{\gamma \in M_P(\bbQ) \cap w^{-1}P_{i,n}(\bbQ)w \bs M_P(\bbQ)} f_{w,s}(\gamma g)
	\end{align}
	where
	\[
	f_{w,s}(g) = \int_{N_P(\bbQ) \cap w^{-1}P_{i,n}(\bbQ) w \bs N_P(\bbA_\bbQ)} f_s(wng) \, dn. 
	\]
	For details, see \cite[Proposition II.1.7]{MW}.
	By this formula and the cuspidality of $\pi$, we may assume $P$ contains the subgroup $G_{n-i}$ and an element $w$ normalizes the symplectic group $G_{n-i}$.
	In particular, $P$ contains $Q_{i,n}$.
	
	Since $f$ is left $N_P(\bbA_\bbQ)$-invariant, we have
	\[
	f_{w,s}(g) = \int_{N_P(\bbA_\bbQ) \cap w^{-1}P_{i,n}(\bbA_\bbQ)w \bs N_P(\bbA_\bbQ)} f_s(wng) \, dn.
	\]
	Clearly, for $m \in M_P(\bbA_\bbQ) \cap w^{-1}\GL_i(\bbA_\bbQ)w$, we have
	\[
	f_{w,s} (mg) = \omega_w(m) f_{w,s}(g)
	\]
	for some character $\omega_w$ on $M_P(\bbA_\bbQ) \cap w^{-1}\GL_i(\bbA_\bbQ)w$.
	Since $E(\,\cdot\,,s,f)_P$ has a non-trivial cuspidal component, the character $\omega_w$ is a cusp form on the group $M_P(\bbA_\bbQ) \cap w^{-1}\GL_i(\bbA_\bbQ)w$.
	Hence $M_P(\bbA_\bbQ) \cap w^{-1}\GL_i(\bbA_\bbQ)w$ is a product of $\Res_{F/\bbQ}\GL_1$.
	Then we have $P=Q_{j,n}$.
	This completes the proof.
	\end{proof}
	
	\begin{cor}
	For any $K$-finite standard section $f$, the Eisenstein series $E(\,\cdot\,,s,f)$ and $E(\,\cdot\,,s,f)_{P_{i,n}}$ have the same set of poles and zeros.
	\end{cor}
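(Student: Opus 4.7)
The easy half is immediate: since the constant term along $P_{i,n}$ is obtained by integrating $E(g,s,f)$ over the compact quotient $N_{P_{i,n}}(\bbQ) \bs N_{P_{i,n}}(\bbA_\bbQ)$, holomorphicity of $E(\,\cdot\,,s,f)$ at $s_0$ descends to $E(\,\cdot\,,s,f)_{P_{i,n}}$, and vanishing at $s_0$ likewise descends. Thus the pole order of the constant term is at most that of $E$, while every zero of $E$ is a zero of $E_{P_{i,n}}$ of at least the same multiplicity.

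For the reverse direction I would establish the following injectivity lemma: \emph{every non-zero automorphic form appearing as a Laurent coefficient of $E(\,\cdot\,,s,f)$ at some $s_0$ has a non-zero constant term along $P_{i,n}$.} Granting this, if $E(g,s,f) = \sum_{k \geq -N} a_k(g)(s-s_0)^k$ with $a_{-N} \not\equiv 0$, then $(a_{-N})_{P_{i,n}} \not\equiv 0$, which combines with the easy half to match pole orders exactly. For zeros, if $E(\,\cdot\,,s_0,f)_{P_{i,n}} \equiv 0$, applying the lemma to the zeroth Laurent coefficient $\varphi = E(\,\cdot\,,s_0,f)$ shows $E(\,\cdot\,,s_0,f) \equiv 0$, while the converse is trivial.

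To establish the injectivity lemma, the plan is to exploit the constant-term formula (\ref{const_term_Klingen}), which represents $E(\,\cdot\,,s,f)_{P_{i,n}}$ as a finite sum indexed by $w \in W_{P_{i,n}} \bs W_{G_n} / W_{P_{i,n}}$ of intertwining-operator images of $f_s$. Each $w$ produces a contribution whose Levi-eigencharacter on the $\GL_i$-part of $M_{P_{i,n}} = \Res_{F/\bbQ}\GL_i \times G_{n-i}$ is the $w$-twist of $\mu|\cdot|^s$; these characters are distinct for different $w$ away from a countable set of $s$, so the summands are linearly independent by asymptotic analysis along the split torus $A_{P_{i,n}}$. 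In particular the identity Weyl element contributes $f_s$ itself, so vanishing of the full constant term forces $f_{s_0} \equiv 0$ and hence $E(\,\cdot\,,s_0,f) \equiv 0$. Applying the same computation term-by-term to the principal part of the Laurent expansion at $s_0$ handles the pole case.

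The main obstacle is controlling the intertwining operators $M(w,s)$ at the special values of $s_0$ where they themselves may be singular: the naive asymptotic-separation argument can fail when several summands have poles of equal orders that partially cancel. To circumvent this, I would clear denominators by multiplying through by a suitable polynomial in $s$ so that all terms become holomorphic in a neighborhood of $s_0$, perform the linear-independence argument on this regularized object, and then read off injectivity at $s_0$ by matching Laurent coefficients. The concentrated-on-$\{Q_{i,n}\}$ conclusion of Proposition \ref{concentrate} guarantees that no cuspidal contributions along parabolic subgroups not associate to $Q_{i,n}$ can interfere with the Levi-character separation, so the only possible cancellations are among the $w$-twists indexed above.
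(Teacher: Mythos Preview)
Your reduction to an injectivity lemma for Laurent coefficients is exactly right, and matches the paper's strategy. The easy half and the clearing-poles maneuver $F = (s-s_0)^k E(\,\cdot\,,s,f)$ also appear in the paper's argument.

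Where you diverge is in the proof of the injectivity lemma itself, and here there is a genuine gap. You propose separating the Weyl-group summands in the constant-term formula by their Levi characters, isolating the identity contribution $f_s$. The problem is precisely the one you flag but do not resolve: at a point $s_0$ where some non-identity intertwining term $f_{w,s}$ has a pole, the individual summands are not defined at $s_0$, and your fix of multiplying through by a polynomial $p(s)$ vanishing at $s_0$ kills the identity term $p(s)f_s$ along with the poles. After regularization the leading Laurent coefficient of the sum at $s_0$ need not see $f_{s_0}$ at all, so you cannot conclude $f_{s_0}=0$ from vanishing of the regularized constant term. The asymptotic-exponent argument can in principle be made to work, but it requires tracking polynomial-in-$H$ factors when exponents collide, and this is not what you have written.

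The paper avoids this entirely. Rather than dissecting the constant-term formula, it uses Proposition~\ref{concentrate} (and \cite[Lemma 3.7]{langlands}) to conclude that every Laurent coefficient of $E(\,\cdot\,,s,f)$ lies in $\calA(G_n)_{\{Q_{i,n}\}}$. On this space the constant-term map along $Q_{i,n}$ is injective by the general Langlands theory: a nonzero form concentrated on $\{Q_{i,n}\}$ must have nonzero cuspidal component along some associate of $Q_{i,n}$, hence along $Q_{i,n}$ itself. Since $E_{Q_{i,n}}$ is obtained from $E_{P_{i,n}}$ by a further unipotent integration, a zero or pole of $E_{P_{i,n}}$ forces the same for $E_{Q_{i,n}}$, and injectivity then transfers it back to $E$. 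This bypasses any analysis of individual intertwining operators and works uniformly at every $s_0$.
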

	\begin{proof}
	Fix a complex number $s_0$.
	Let $\Pi$ be the $G(\bbA_{\bbQ,\fini})\times(\frakg_n,K_{n,\infty})$-representation generated by $E(\,\cdot\,,s_0,f)$.
	By Proposition \ref{concentrate}, the map
	\[
	\Pi \longrightarrow C^\infty(G(\bbA_\bbQ)) \colon E \longmapsto E_{Q_{i,n}}
	\]
	is injective if the Eisenstein series is holomorphic at $s=s_0$.
	Moreover we have
	\[
	E(g,s,f)_{N_{Q_{i,n}}} = \int_{N_{Q_{i,n}}(\bbQ)N_{P_{i,n}}(\bbA_\bbQ) \bs N_{Q_{i,n}}(\bbA_\bbQ)} E(ng,s,f)_{P_{i,n}}\, dn
	\]
	and so they have the same set of zeros.
	
	We assume that $E(\,\cdot\,,s,f)$ has a pole of order $k$ at $s_0$ and $E(\,\cdot\,,s,f)_{Q_{i,n}}$ has a order of $k_0$ at $s_0$.
	Suppose $E(\,\cdot\,,s,f)_{P_{i,n}}$ has a pole of order $\ell$ at $s_0$.
	Then we have $k_0 \leq \ell \leq k$.
	Put $F = (s-s_0)^k E(\,\cdot\,,s,f)$.
	If $F_{Q_{i,n}} =0$, we have $F=0$ at $s=s_0$, by Proposition \ref{concentrate} and \cite[Lemma 3.7]{langlands}.
	Hence we have $k_0 \geq k$.
	This completes the proof.
	\end{proof}
	
	\subsection{Unramified calculations}
	In this subsection, put $G_n = \Sp_{2n}$.
	For $j \leq i \leq n$, put
	\[
	w_{i,j,n} =
	\left(
	\begin{array}{ccc|ccc}
	\mathbf{1}_{i-j}	&					&						&							&					&	\\
						&0_j 				&						& 							&-\mathbf{1}_j	&	\\
						&					&\mathbf{1}_{n-i}		&							&					&	\\
	\hline
						&					&						&\mathbf{1}_{i-j}			&					&	\\
						&\mathbf{1}_j		&						&							&0_j				&	\\
						&					&						&							&					&\mathbf{1}_{n-i}
	\end{array}
	\right) \in \Sp_{2n}(F).
	\]
	Fix $1\leq i \leq n$ and a non-archimedean place $v$.
	For a unitary character $\mu_v$ of $F_v^\times$, let $f_{s,v}$ be a standard section of $\Ind_{P_{i,n}(F_v)}^{G_n(F_v)} (\mu_v|\cdot|^s \boxtimes \pi_v)$ with $f(1)=1$, where $\pi_v$ is an unramified representation of $\Sp_{2(n-i)}(F_v)$ such that $\pi_v$ is a unique unramified constituent of the principal series representation
	\[
	\Ind_{B_{n-i}(F_v)}^{G_{n-i}(F_v)} \left(|\cdot|^{\alpha_1} \boxtimes \cdots \boxtimes |\cdot|^{\alpha_{n-i}}\right),
	\]
	i.e., the Satake parameter of $\pi_v$ is $\{q_v^{\pm\alpha_1},\ldots,q_v^{\pm\alpha_{n-i}}\} \in (\bbC^\times)^n/\mathfrak{S}_{n-i} \times (\bbZ/2\bbZ)^{n-i}$.
	Here $q_v$ is the order of the residue field of $F_v$.
	Put
	\[
	\xi_{i,v,\pi}(s) = \prod_{\ell=1}^{i} L_v(s+\ell-(i+1)/2,\pi_v,\mu_v) \times \prod_{1 \leq p < q \leq i} L_v(2s-i-1+p+q, \mu_v^2).
	\]
	In this case, by the Gindikin-Karpelevich formula, we have
	\[
	f_{w_{j},s}(1) = \frac{\xi_{j,v,\pi} (s+(i-j)/2)}{\xi_{j,v,\pi}(s+1+(i-j)/2)}.
	\]
	Here $L_v(s,\pi_v,\mu_v)$ is the standard $L$-function of $\pi_v$ twisted by $\mu_v$, i.e., with the above notation, we define
	\[
	L_v(s,\pi_v,\mu_v) = 
	\begin{dcases}
	L_v(s,\mu_v) \times \prod_{k=1}^{n-i} \left(L_v(s-\alpha_k,\mu_v)L_v(s+\alpha_k,\mu_v) \right) & \text{if $i < n$}.\\
	L_v(s,\mu_v) & \text{if $i=n$}.
	\end{dcases}
	\]
	
	\subsection{Degenerate Eisenstein series on $\GL_n$}
	
	In this subsection, we recall the definition of degenerate Eisenstein series on $\GL_n$.
	For $j \leq n$, put
	\[
	P_{j,n,\GL} = \left\{\begin{pmatrix}a & b \\ 0 & d\end{pmatrix} \in \GL_n \, \middle|\, a \in \GL_j, d \in \GL_{n-j}, b \in \Mat_{j,n-j}\right\}.
	\]
	For Hecke characters $\mu_1$ and $\mu_2$ of $F^\times F^\times_{\infty,+} \bs \bbA_F^\times$ and complex numbers $s$ and $t$, set 
	\[
	I_{j,n,\GL}(\mu_1,\mu_2,s,t) = \Ind_{P_{j,n,\GL}(\bbA_F)}^{\GL_n(\bbA_F)} \left(\mu_1|\cdot|^s \boxtimes \mu_2|\cdot|^t\right).
	\]
	For a standard section $f_s$ of $I_{j,n,\GL}(\mu_1,\mu_2,s,t)$, the Eisenstein series $E(g,f,s)$ is called degenerate Eisenstein series.
	In this case, degenerate Eisenstein series converges absolutely for $\mathrm{Re}(s-t) > n/2$.
	For other properties of degenerate Eisenstein series, see \cite{Hanzer-Muic}.
	In this paper, we only use the absolutely convergence of degenerate Eisenstein series for $\mathrm{Re}(s-t)>n/2$.
	
	\subsection{Constant terms of Klingen Eisenstein series}
	
	We rewrite the constant term of Klingen Eisenstein series along $P_{i,n}$ by intertwining operators and degenerate Eisenstein series on $\GL_i$.
	Let $f_s$ be a standard section of $\Ind_{P_{i,n}(\bbA_\bbQ)}^{G_n(\bbA_\bbQ)}(\mu|\cdot|^s \boxtimes \pi)$ for a Hecke character $\mu$ of $F^\times F_{\infty,+}^\times \bs \bbA_F^\times$ and a cuspidal representation $\pi$ of $G_{n-i}(\bbA_\bbQ)$.
	For $0 \leq j \leq i$, we recall that the constant term of $E(\cdot,f,s)$ along $P_{i,n}$ is expressed by
	\begin{align}\label{const_term_klingen}
	\sum_{j=0}^i \sum_{\gamma \in M_{P_{i,n}}(\bbQ) \cap w_j^{-1}P_{i,n}(\bbQ)w_j \bs M_{P_{i,n}}(\bbQ)} f_{w_j,s}(\gamma g).
	\end{align}
	Here we put
	\[
	f_{w,s}(g) = \int_{N_{P_{i,n}}(\bbA_\bbQ) \cap w^{-1}P_{i,n}(\bbA_\bbQ)w \bs N_{P_{i,n}}(\bbA_\bbQ)} f_s(wng) \, dn.
	\]
	Note that if $w = w_j$ for some $j$, the Levi subgroup $M_{P_{i,n}}(\bbQ) \cap w^{-1}P_{i,n}(\bbQ)w$ is isomorphic to $\GL_{i-j} \times \GL_j \times \Sp_{2(n-i)}$.
	For $w = w_j$ and $k \in K_n$, the restriction $r(k)f_{w_j}|_{\GL_i(\bbA_F)}$ is a standard section of 
	\[
	I_{j,i,\GL}(\mu,\mu^{-1}, s+n-(i+j-1)/2,-(s-n+i-(j+1)/2)).
	\]

	We compute the constant terms of Klingen Eisenstein series as follows: 
	
	\begin{prop}\label{const_term_Eis_ser}
	Let $\lambda = (\lambda_v)_v = (\lambda_{1,v},\ldots,\lambda_{n,v})_v$ be a $\frakk_n$-dominant weight.
	Suppose $\lambda_{n,v}$ is independent of $v$ and $\lambda_{n,v} = \cdots = \lambda_{n-i+1,v} > 2n-i+1$.
	Let $\pi$ be an irreducible holomorphic cuspidal automorphic representation of $G_{n-i}(\bbA_\bbQ)$ and $\mu$ a Hecke character of $F^\times F_{+,\infty}^\times\bs\bbA_F^\times$ such that $\pi_v \cong L(\lambda_{1,v},\ldots,\lambda_{n-i,v})$ and $\mu_v = \mathrm{sgn}^{\lambda_{n,v}}$ for any $v \in \bfa$.
	Let $f_s = \otimes_v f_{v,s}$ be a factorisable standard section of $\Ind_{P_{i,n}(\bbA_\bbQ)}^{G_n(\bbA_\bbQ)}(\mu|\cdot|^s\boxtimes\pi)$ such that $\otimes_{v \in \bfa} f_{v,\lambda_{n,v}-n+(j+1)/2}$ lies in the highest weight submodule $\boxtimes_{v\in \bfa}L(\lambda_v)$ of $\Ind_{P_{i,n}(\bbR)}^{G_n(\bbR)}(\mu|\cdot|^{\lambda_{n,v}-n+(i-1)/2}\boxtimes\pi_v))$.
	Then, we have
	\[
	E(g,f,\lambda_{n,v}-n+(i-1)/2)_{P_{i,n}} = \otimes_v f_{v,\lambda_{n,v}-n+(i-1)/2}(g).
	\]
	\end{prop}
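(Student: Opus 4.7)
The plan is to evaluate $E(g,s,f)_{P_{i,n}}$ via the Bruhat-type formula (\ref{const_term_klingen}) at $s_0:=\lambda_{n,v}-n+(i-1)/2$ and show that the only surviving contribution is the $j=0$ term. The hypothesis $\lambda_{n,v}>2n-i+1$ gives $s_0>n-(i-1)/2$, which lies in the abscissa of absolute convergence for $E(\,\cdot\,,s,f)$; hence
\[
E(g,s_0,f)_{P_{i,n}}=\sum_{j=0}^{i}\sum_{\gamma\in M_{P_{i,n}}(\bbQ)\cap w_{i,j,n}^{-1}P_{i,n}(\bbQ)w_{i,j,n}\bs M_{P_{i,n}}(\bbQ)} f_{w_{i,j,n},s_0}(\gamma g)
\]
is valid termwise, and the $j=0$ summand is simply $f_{s_0}(g)$. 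It thus suffices to prove $f_{w_{i,j,n},s_0}\equiv 0$ for every $1\leq j\leq i$.

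Since $f_s=\otimes_v f_{v,s}$ is factorisable, the intertwining integral factors as $f_{w_{i,j,n},s_0}=\otimes_v M(w_{i,j,n},s_0)_v f_{v,s_0}$, so it is enough to show that each archimedean factor vanishes. Fix $v\in\bfa$. By construction, $f_{v,s_0}$ is a highest weight vector of weight $\lambda_v$ generating $L(\lambda_v)$ inside $I_{P_{i,n}}(\mu_v|\cdot|^{s_0}\boxtimes L(\lambda_{1,v},\ldots,\lambda_{n-i,v}))$, and by induction by stages this module embeds into a principal series $I_{B_n}(\chi)$ with $\chi\delta_{B_n}^{1/2}=\det^{\lambda_{n,v}}\boxtimes\cdots\boxtimes\det^{\lambda_{1,v}}$, per (\ref{princ_ser}). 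The operator $M(w_{i,j,n},s_0)_v$ is a $(\frakg_n,K_{n,\infty})$-equivariant map $I_{B_n}(\chi)\to I_{B_n}(w_{i,j,n}\cdot\chi)$, so it sends the highest weight vector $f_{v,s_0}$ either to $0$ or to a non-zero highest weight vector of the same weight $\lambda_v$ in $I_{B_n}(w_{i,j,n}\cdot\chi)$.

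The latter possibility is ruled out by the converse direction of Theorem \ref{Yamashita}. The Weyl element $w_{i,j,n}$ acts on the diagonal torus by inverting the coordinates in positions $i-j+1,\ldots,i$ while fixing all others; combined with $\lambda_{n-k+1,v}=\lambda_{n,v}$ for $k\in\{i-j+1,\ldots,i\}$, a short direct calculation yields
\[
\bigl((w_{i,j,n}\cdot\chi)\delta_{B_n}^{1/2}\bigr)_k=\sgn^{\lambda_{n,v}}|\cdot|^{2(n-k+1)-\lambda_{n,v}}
\]
for such $k$. Yamashita requires this to equal $\det^{\lambda_{n,v}}=\sgn^{\lambda_{n,v}}|\cdot|^{\lambda_{n,v}}$, forcing $\lambda_{n,v}=n-k+1$. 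Taking $k=i$ (which lies in the range for every $j\geq 1$) gives $\lambda_{n,v}=n-i+1$, contradicting $\lambda_{n,v}>2n-i+1$. Hence $M(w_{i,j,n},s_0)_v f_{v,s_0}=0$ for every $j\geq 1$, and the proposition follows. The main subtlety I anticipate is justifying the factorisation of $f_{w_{i,j,n},s_0}$ and the $(\frakg_n,K_{n,\infty})$-equivariant interpretation of the local archimedean intertwiner at the specific value $s=s_0$; but since $s_0$ lies in the absolute convergence range of all the defining integrals, these technical points should be handled by standard arguments.
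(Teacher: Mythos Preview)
Your proposal is correct and follows essentially the same strategy as the paper's proof: both use absolute convergence at $s_0$ to justify the termwise formula (\ref{const_term_klingen}) and the factorisation of the intertwining integrals, and both kill the $j\neq 0$ terms by showing that the archimedean intertwiner annihilates the highest weight vector because the target representation admits no highest weight vector of weight $\lambda_v$. The only difference is in how this last step is carried out: the paper observes that $f_{w_{i,j,n},s_0,\infty}$ lands in a representation $\Pi_\infty$ parabolically induced from $\GL_{i-j}(\bbR)\times\GL_j(\bbR)\times G_{n-i}(\bbR)$ and invokes Theorem \ref{emb_main} together with the double induction formula, whereas you push all the way down to the principal series $I_{B_n}(w_{i,j,n}\cdot\chi)$ and apply Theorem \ref{Yamashita} directly with an explicit coordinate computation. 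Since Theorem \ref{emb_main} is itself proved by reducing to Yamashita, the two arguments are equivalent; your version is a bit more hands-on, while the paper's is more structural. One phrasing point: the local intertwiner $M(w_{i,j,n},s_0)_v$ is not literally a map $I_{B_n}(\chi)\to I_{B_n}(w_{i,j,n}\cdot\chi)$, but rather maps into $\Pi_\infty$, which in turn embeds into $I_{B_n}(w_{i,j,n}\cdot\chi)$ by induction in stages (embedding $L(\lambda_{1,v},\ldots,\lambda_{n-i,v})$ into $I_{B_{n-i}}(\lambda_{1,v},\ldots,\lambda_{n-i,v})$); this is harmless for your argument but worth stating precisely.
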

	
	\begin{proof}
	To simplicity, we assume $F=\bbQ$.
	The same proof will work for any totally real field $F$.
	For a place $v$ of $\bbQ$, put
	\[
	f_{s,w,v}(g) = \int_{N_{P_{i,n}}(F_v) \cap w^{-1}P_{i,n}(F_v)w \bs N_{P_{i,n}}(F_v)} f_s(wng) \, dn.
	\]
	Since the standard section $f_s$ is factorisable, we have $f_{w,s}(g) = \prod_v f_{w,s,v}(g)$.
	
	At $s_0 = \lambda_{n,v} - n + (i - 1)/2$, by our assumption, the Eisenstein series $E(g,s,f)$ converges absolutely.
	Hence, the local integral $f_{w,v,s}$, the Euler products $\prod_{v} f_{w,v,s}$ and the sum over $M_{P_{i,n}}(\bbQ) \cap w_j^{-1}P_{i,n}(\bbQ)w_j \bs M_{P_{i,n}}(\bbQ)$ as in (\ref{const_term_klingen}) converge absolutely for $s=s_0$.
	Therefore, it remains to show that the archimedean local integral $f_{w,s,\infty}$  vanishes at $s=s_0$.
	
	By Theorem \ref{emb_main}, the induced representation $\Ind_{P_{i,n}(\bbR)}^{G_n(\bbR)}(\mathrm{sgn}^{\lambda_{n,v}}|\cdot|^{s_0}\boxtimes L(\lambda_1,\ldots,\lambda_n))$ has a highest weight vector of weight $\lambda$.
	The map $f_{s_0} \longmapsto f_{w,s_0,\infty}$ defines an intertwining map from $\Ind_{P_{i,n}(\bbR)}^{G_n(\bbR)}(\mathrm{sgn}^{\lambda_{n,v}}|\cdot|^{s_0}\boxtimes L(\lambda_1,\ldots,\lambda_n))$ to an induced representation $\Pi_\infty$ of $G_n(\bbR)$ induced from an irreducible representation of $\GL_{i-j}(\bbR) \times \GL_j(\bbR) \times G_{n-i}(\bbR)$.
	It is easy to show that $\Pi_\infty$ does not have a highest weight vector of weight $\lambda$ by Theorem \ref{emb_main} and the double induction formula, if $j \neq 0$.
	Hence if $j \neq 0$, the archimedean local integral $f_{w,s,\infty}$ vanishes at $s=s_0$.
	This completes the proof.

	\end{proof}
	
	\subsection{Computations of dot-orbits of certain weights}
	
	In this subsection, we compute the dot-orbits of certain weights.
	For simplicity, we assume $F=\bbQ$.
	Put
	\[
	I_{P_{i,n}} (s,\mu,L(\lambda_1,\ldots,\lambda_{n-i})) = \Ind_{P_{i,n}(\bbR)}^{G_n(\bbR)} \left(\mu|\cdot|^{s-n+(i-1)/2} \boxtimes L(\lambda_1,\ldots,\lambda_{n-i})\right)
	\]
	Recall that, we have
	\[
	\left(I_{P_{i,n}} (s,\mu,L(\lambda_1,\ldots,\lambda_{n-i})) \right)_{\frakp_-\fin} \neq 0
	\]
	only if $s \in \bbZ_{\leq \lambda_{n-i}}$, by Theorem \ref{emb_main}.
	Note that if $\left(I_{P_{i,n}} (s,\mu,L(\lambda_1,\ldots,\lambda_{n-i})) \right)_{\frakp_-\fin}$ contains a unitary highest weight module, we have $s \in \bbZ_{\geq 0}$.
	We will compute all candidates of $s$ such that
	\[
	\left(I_{P_{i,n}} (s,\mu,L(\lambda_1,\ldots,\lambda_{n-i}))\right)_{\frakp_-\fin} \neq 0
	\]
	and $I_{P_{i,n}} (s,\mu,L(\lambda_1,\ldots,\lambda_{n-i}))$ has a given infinitesimal character $\chi$.
	Put
	\[
	X = \{0,\ldots,\lambda_{n-i}\}, \qquad Y = \left(X \cap \bbZ_{\leq n-(i-1)/2}\right) \cup \left(X \cap \{2n-i+2, 2n-i+3, \ldots\}\right) \subset X.
	\]
	For elements $x_1,x_2 \in X$, we say that $x_1$ is equivalent to $x_2$ if and only if the induced representations $I_{P_{i,n}} (x_1,\mu,L(\lambda_1,\ldots,\lambda_{n-i}))$ and $I_{P_{i,n}} (x_2,\mu,L(\lambda_1,\ldots,\lambda_{n-i}))$ have a same infinitesimal character.
	This gives the equivalence condition $\sim$.
	Then the inclusion $Y \xhookrightarrow{\quad} X$ induces a surjective map $Y \longrightarrow X/\sim$, since $I_{P_{i,n}} (s,\mu,L(\lambda_1,\ldots,\lambda_{n-i}))$ and $I_{P_{i,n}} (2n-i+1-s,\mu,L(\lambda_1,\ldots,\lambda_{n-i}))$ have the same infinitesimal character.
	The goal of this subsection is the following:
	
	\begin{prop}\label{dot-orbit}
	The natural map $Y \longrightarrow X/\sim$ is bijective.
	\end{prop}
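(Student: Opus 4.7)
The plan is to reduce the equivalence $\sim$ to equality of a multiset of absolute values attached to a short arithmetic progression, after which the statement is elementary.

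First I would compute the Harish-Chandra parameter of $I_{P_{i,n}}(s,\mu,L(\lambda_1,\ldots,\lambda_{n-i}))$ by induction in stages. Normalized parabolic induction preserves Harish-Chandra parameters, so it suffices to add the parameters of each factor of the Levi $\GL_i\times G_{n-i}$. The one-dimensional character $\mu|\cdot|^{s-n+(i-1)/2}$ on $\GL_i$ has Harish-Chandra parameter $(c+(i-1)/2,\ldots,c-(i-1)/2)$ with $c=s-n+(i-1)/2$, giving the tuple $(s-n+i-1,s-n+i-2,\ldots,s-n)$; while $L(\lambda_1,\ldots,\lambda_{n-i})$ has parameter $(\lambda_1-1,\ldots,\lambda_{n-i}-(n-i))$ in the paper's convention $\rho=(-1,\ldots,-(n-i))$. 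Concatenating, the induced representation has Harish-Chandra parameter
\[
(s-n+i-1,\ldots,s-n,\lambda_1-1,\ldots,\lambda_{n-i}-(n-i)),
\]
and since $W_{G_n}$ acts by signed permutations, the infinitesimal character depends only on the multiset of absolute values of these coordinates.

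Set $M(s)=\{\,|s-n+k|:0\le k\le i-1\,\}$ and $A=\{\,|\lambda_j-j|:1\le j\le n-i\,\}$, both as multisets. Then $s_1\sim s_2$ if and only if $M(s_1)\cup A=M(s_2)\cup A$ as multisets, and since multisets form a cancellative commutative monoid under union, this is equivalent to $M(s_1)=M(s_2)$. This cancellation is the key reduction; it makes the problem purely combinatorial in $s$ and entirely eliminates the role of $\lambda$ beyond its defining $X$.

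For surjectivity, the substitution $k\mapsto i-1-k$ shows $M(s)=M(2n-i+1-s)$, so $s\sim 2n-i+1-s$. For any $x\in X\setminus Y$ one has $n-(i-1)/2<x\le 2n-i+1$, hence $2n-i+1-x\in[0,n-(i-1)/2]\cap X\subseteq Y$, so every class meets $Y$.

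For injectivity, I would compute $\max M(s)$ explicitly for $s\in Y$. If $s\le n-(i-1)/2$, the progression $\{s-n,\ldots,s-n+i-1\}$ has midpoint $\le 0$ and the negative endpoint dominates, so $\max M(s)=n-s$; if $s\ge 2n-i+2$, all terms are positive and $\max M(s)=s-n+i-1$. On either side of $Y$ the assignment $s\mapsto\max M(s)$ is strictly monotone in $s$, while a cross-comparison between the two sides would force $n-s_1=s_2-n+i-1$, i.e.\ $s_1+s_2=2n-i+1$, which is impossible because $s_1\ge 0$ and $s_2\ge 2n-i+2$ give $s_1+s_2\ge 2n-i+2$. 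Thus distinct $s_1,s_2\in Y$ yield distinct $M(s)$, and the map $Y\to X/\sim$ is injective. No single step is delicate; the only conceptual move is the multiset-cancellation that isolates $M(s)$ from the $\lambda$-dependent noise, after which everything reduces to a one-line maximum computation.
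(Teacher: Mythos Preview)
Your argument is correct and considerably cleaner than the paper's own proof. The paper does not isolate the multiset $M(s)$; instead it works with the full Harish-Chandra parameter $(\lambda_1-1,\ldots,\lambda_{n-i}-(n-i),\ell-(n-i+1),\ldots,\ell-n)$ and carries out a lengthy case analysis on the permutation part $\sigma$ of $w=\tau\sigma\in W_{G_n}$, tracking $\sigma(n)$, then $\sigma(j)$ for $j<\sigma(n)$, and so on, separately for the subset $Z=X\cap\bbZ_{\le n-(i-1)/2}$ and for $Y\setminus Z$. Your cancellation step $M(s_1)\cup A=M(s_2)\cup A\Rightarrow M(s_1)=M(s_2)$ eliminates the $\lambda$-block $A$ entirely and reduces the problem to comparing two length-$i$ arithmetic progressions of absolute values; after that, injectivity on $Y$ follows from the single invariant $\max M(s)$, which is $n-s$ on one piece and $s-n+i-1$ on the other, with ranges that cannot overlap. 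This buys you a proof that is essentially coordinate-free and avoids all the bookkeeping on $\sigma$; the paper's approach, by contrast, would generalize more directly if one ever had to compare induced representations with \emph{different} inducing data on the $G_{n-i}$ factor, where your cancellation would fail.

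One small point: in your surjectivity paragraph you assert $2n-i+1-x\in X$ without comment. This needs $2n-i+1-x\le\lambda_{n-i}$, which is not automatic from $x\in X$. It holds because either $\lambda_{n-i}<n-(i-1)/2$, in which case $X=Y$ and there is nothing to prove, or $\lambda_{n-i}\ge n-(i-1)/2$, in which case $2n-i+1-x<n-(i-1)/2\le\lambda_{n-i}$. Since surjectivity is already recorded in the paper's setup, this is cosmetic.
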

	\begin{proof}
	It is sufficient to show that the map is injective.
	Put $Z = \{0,\ldots,\lambda_{n-i}\} \cap \bbZ_{\leq n-(i-1)/2}$.
	We first show that the natural map $Z \xhookrightarrow{\quad} Y \longrightarrow X/\sim$ is injective.
	We may assume that the natural map from $Z$ is not injective, i.e., there exist integers $\ell, \ell' \in Z$ such that $\ell < \ell'$ and $\ell \sim \ell'$.
	Note that the infinitesimal character of the induced representation $I_{P_{i,n}} (s,\mu,L(\lambda_1,\ldots,\lambda_{n-i}))$ has the Harish-Chandra parameter $\rho + (\lambda_1,\ldots,\lambda_{n-i},s,\ldots,s)$.
	Put $\lambda(s) = \rho + (\lambda_1,\ldots,\lambda_{n-i},s,\ldots,s)$.
	Hence, there exists $w \in W$ such that
	\[
	w(\lambda_1-1,\ldots,\lambda_{n-i}-(n-i),\ell-(n-i+1),\ldots,\ell-n) = (\lambda_1-1,\ldots,\lambda_{n-i}-(n-i),\ell'-(n-i+1),\ldots,\ell'-n).
	\]
	Since $W \cong \mathfrak{S}_n \ltimes (\bbZ/2\bbZ)^n$, we can decompose $w$ as $w = \tau \cdot \sigma$ with $\tau \in (\bbZ/2\bbZ)^n$ and $\sigma \in \mathfrak{S}_n$.
	
	We first evaluate $\sigma(n)$.
	Assume that $\sigma(n) > n-i$.
	We then have
	\[
	|\ell - n| = |\ell' - \sigma(n)|.
	\]
	Hence one gets $\ell = n + \ell' -\sigma(n)$ or $n+\sigma(n)-\ell'$.
	Since $\ell < \ell'$ and $\sigma(n) \leq n$, we have $\ell = n + \sigma(n) -\ell'$.
	We then have $2n - i = n+n-i < \ell + \ell' < 2\ell'$ and hence $\rho_{n,i} < \ell'$.
	This contradicts to $\ell' \in Y$.
	
	Next, we may assume $\sigma(n) \leq n-i$.
	In this case, $|\ell -n |$ equals to $|\lambda_{\sigma(n)} - \sigma(n)|$.
	Since $\lambda_{\sigma(n)} \geq \ell \geq 0$, one obtains $\lambda_{\sigma(n)} = n + \sigma(n) - \ell$.
	Hence $\lambda_{\sigma(n)}$ is greater than or equal to $\sigma(n)$.
	
	We claim that for any $1 \leq j < \sigma(n)$, we have $\sigma(j) = j$.
	We may assume that there exists a positive integer $k$ such that $k \geq \sigma(n)$ and $\sigma(k) \leq \sigma(n)-1$.
	If $k \leq n-i$, we get $|\lambda_{k} - k| = |\lambda_{\sigma(k)} - \sigma(k)|$.
	Since $k \geq \sigma(k)$, one gets $\lambda_{\sigma(k)} = k + \sigma(k) - \lambda_{k}$.
	Now, we have $\lambda_{\sigma(n)} = n + \sigma(n) - \ell$.
	Hence, we obtain
	\begin{align*}
	k + \sigma(k) - \lambda_{k} = \lambda_{\sigma(k)} \geq  \lambda_{\sigma(n)} = n + \sigma(n) - \ell \\
	\ell \geq n-k + \sigma(n) - \sigma(k) + \lambda_k > \lambda_k.
	\end{align*}
	This contradicts to $\ell \leq \lambda_k$.
	We may assume $k > n-i$.
	We then have $|\ell - k| = |\lambda_{\sigma(k)} - \sigma(k)|$ and hence $\lambda_{\sigma(k)} = \sigma(k)+k-\ell$.
	Since $\ell = n + \sigma(n) - \lambda_{\sigma(n)}$, one gets $\lambda_{\sigma(k)} = \sigma(k) + k - \ell = \sigma(k) - \sigma(n) + k-n +\lambda_{\sigma(n)} < \lambda_{\sigma(n)}$.
	This contradicts to the choice of $k$.
	Hence the claim holds.
	
	By the claim, one gets $j = \sigma(\sigma(n)) > \sigma(n)$.
	Suppose that $j \leq n-i$.
	We then have $|\lambda_{j} - j| = |\lambda_{\sigma(n)} - \sigma(n)|$ and hence $\lambda_{j} = j + \sigma(n) - \lambda_{\sigma(n)} = j - n + \ell$.
	This contradicts to $\lambda_{j} \geq \ell$.
	Hence we conclude that $j > n-i$.
	In this case, we have $|\lambda_{\sigma(n)} - \sigma(n)| = |\ell' - j|$ and hence $\ell' = j + \sigma(n) - \lambda_{\sigma(n)} = j -n + \ell$.
	This contradicts to $\ell < \ell'$.
	To conclude that there are no pair $(\ell,\ell') \in Z \times Z$ such that $ \ell < \ell'$ and $\ell \sim \ell'$.
	Hence the natural map from $Z$ is injective.
	
	We finally show that the natural map from $Y$ is injective.
	We may assume $\lambda_{n,v} > 2n-i+1$.
	Indeed, if $\lambda_{n,v} \leq 2n-i+1$, the set $Y$ is equal to $Z$.
	Take $\ell \in Y \setminus Z$.
	We may assume that there exists $\ell \neq \ell' \in Y$ such that $\ell' \sim \ell$ and $w = \tau\sigma \in W$ such that $w(\lambda(\ell')) = \lambda(\ell)$.
	Since all entries of $\lambda(\ell)$ and $\lambda(\ell')$ are positive, $\tau$ is trivial.
	Then, it is easy to show that $w = 1$.
	This is contradiction.
	Hence we may assume $\ell' \in Z$.
	We claim that $\sigma(p) = p$ for $p = 1, \ldots, n-i$.
	If there exists $q \geq n-i+1$ such that $\sigma(q) \leq n-i$, we have $|\ell'-q| = \lambda_{\sigma(q)} - \sigma(q)$.
	Since $\lambda_{\sigma(q)} \geq \ell'$, we have 
	\[
	\ell' = q + \sigma(q) -\lambda_{\sigma(q)} < n + n-i - (2n - i + 1) < 0.
	\]
	This is contradiction.
	Hence the claim holds.
	Put $q = \sigma(n-i+1)$.
	By the above claim, one get $q \geq n-i+1$.
	We then have $\ell -(n- i + 1) = |\ell' - q|$ and hence $\ell' = q +n- i +1 - \ell \leq 2n - i +1 -\ell < 0$ by $\ell > 2n-i+1$.
	This is contradiction.
	Therefore the map from $Y$ is injective.
	This completes the proof.
	\end{proof}
	
	We then obtain the following corollary.
	
	\begin{cor}\label{p_-_fin_vects}
	We assume that $F = \bbQ$.
	Fix a positive integer $i \leq n$ and a $\frakk_n$-dominant weight $(\lambda_1,\ldots,\lambda_{n-i}) \in \bbZ^{n-i}$ such that $\lambda_{n-i} \geq n - (i-1)/2$.
	Then for any positive integer $s_0 \geq n-(i-1)/2$, the space of $\frakp_{n,-}$-vectors of $\Ind_{P_{i,n}(\bbR)}^{G_n(\bbR)} (\mu_\infty |\cdot|^{s_0-n+(i-1)/2}\boxtimes L(\lambda_1,\ldots,\lambda_{n-i}))$ is equal to
	\[
	L(\lambda_1,\ldots,\lambda_{n-i},s_0,\ldots,s_0)
	\]
	if $\mu_\infty = \mathrm{sgn}^{s_0}$.
	\end{cor}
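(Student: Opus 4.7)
The forward inclusion follows directly from Theorem~\ref{emb_main}: the induced representation $V := \Ind_{P_{i,n}(\bbR)}^{G_n(\bbR)}(\mathrm{sgn}^{s_0}|\cdot|^{s_0-n+(i-1)/2}\boxtimes L(\lambda_1,\ldots,\lambda_{n-i}))$ contains a highest weight vector of weight $\lambda(s_0) := (\lambda_1,\ldots,\lambda_{n-i},s_0,\ldots,s_0)$ generating a submodule isomorphic to $L(\lambda(s_0))$, which is $\frakp_{n,-}$-locally finite as a quotient of the generalized Verma module $N(\lambda(s_0))$.

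For the reverse inclusion, my plan is first to identify $HK(V) := \{v \in V : \frakp_{n,-}v = 0\}$ as a $K_{n,\infty}$-module. Since $[\frakk_n, \frakp_{n,-}] \subseteq \frakp_{n,-}$, the space $HK(V)$ is $K_{n,\infty}$-stable, and any $\frakk_n$-highest weight vector in $HK(V)$ is automatically a $\frakg_n$-highest weight vector in $V$. By the converse part of Theorem~\ref{emb_main}, combined with the uniqueness statement of Yamashita's theorem (Theorem~\ref{Yamashita}) applied through the embedding $V \hookrightarrow I_{B_n}(\lambda(s_0))$, such a vector must have weight exactly $\lambda(s_0)$ and the space of them is one-dimensional. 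Decomposing the semisimple $K_{n,\infty}$-module $HK(V)$ into $\frakk_n$-isotypic components, this forces $HK(V) \cong V_{\lambda(s_0)} = HK(L(\lambda(s_0)))$, so $HK(V) \subseteq L(\lambda(s_0))$.

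To conclude $V_{\frakp_{n,-}\text{-fin}} = L(\lambda(s_0))$, suppose for contradiction the containment is strict. Then $V_{\frakp_{n,-}\text{-fin}}/L(\lambda(s_0))$ is a nonzero $\frakp_{n,-}$-locally finite $(\frakg_n,K_{n,\infty})$-module. By nilpotence of $\frakp_{n,-}$ on finite-dimensional cyclic $\calU(\frakp_{n,-})$-submodules, it admits a $\frakg_n$-highest weight vector $\bar v$ of some $\frakk_n$-dominant weight $\omega$, giving an embedding $L(\omega) \hookrightarrow V/L(\lambda(s_0))$ and thus a composition factor with $\chi_\omega = \chi_{\lambda(s_0)}$. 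Using Proposition~\ref{dot-orbit}, whose bijection $Y \to X/\sim$ together with the assumption $s_0 \geq n-(i-1)/2$ places $s_0$ in (or directly identifies it with) the canonical representative of its equivalence class, combined with the converse direction of Theorem~\ref{emb_main} applied in the principal series $I_{B_n}(\lambda(s_0))$ containing $V$, one forces $\omega = \lambda(s_0)$. But then by Yamashita's uniqueness the corresponding highest weight vector already sits in the socle $L(\lambda(s_0))$, contradicting $\bar v \neq 0$ in the quotient.

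The main obstacle is this last step: pinning down which $\frakk_n$-dominant $\omega$ sharing $\chi_{\lambda(s_0)}$ can actually arise from highest weight vectors in the subquotient $V/L(\lambda(s_0))$. The key is to track the weight $\omega$ of a lift $v \in V$ of $\bar v$, whose ``near highest weight'' relations $\frakp_{n,-} v,\frakn_{\frakk_n}^+ v \subseteq L(\lambda(s_0))$ translate, after passing to the principal series embedding, into constraints on the dot-orbit that Proposition~\ref{dot-orbit} and the lower bound $s_0 \geq n-(i-1)/2$ rule out for all $\omega \neq \lambda(s_0)$.
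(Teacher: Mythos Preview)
Your overall strategy matches the paper's: use Theorem~\ref{emb_main} for the inclusion $L(\lambda(s_0))\subset V_{\frakp_{n,-}\fin}$, identify $HK(V)$ with the single $K_{n,\infty}$-type $V_{\lambda(s_0)}$ via the converse of Theorem~\ref{emb_main} and Yamashita's uniqueness, and then rule out any further composition factor $L(\omega)$ of $V_{\frakp_{n,-}\fin}$. Your identification of $HK(V)$ is correct and is exactly what the paper uses at the end of its argument.

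The gap is precisely where you flag it: forcing $\omega=\lambda(s_0)$. Your appeal to Proposition~\ref{dot-orbit} does not do this. That proposition classifies which values of the parameter $s$ in $I_{P_{i,n}}(s,\mu,L(\lambda_1,\ldots,\lambda_{n-i}))$ yield the same infinitesimal character; it says nothing about which $\frakk_n$-dominant weights lie in the full dot-orbit $W_{G_n}\cdot\lambda(s_0)$. Likewise, the converse of Theorem~\ref{emb_main} constrains highest weight vectors \emph{in $V$}, not in the subquotient $V/L(\lambda(s_0))$, so it cannot be applied to $\bar v$ directly. Your ``near highest weight'' sketch does not bridge this.

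What the paper actually does here is an explicit Weyl-orbit computation. From $HK(V)\subset L(\lambda(s_0))$ one deduces that every weight of $V_{\frakp_{n,-}\fin}$ is $\leq\lambda(s_0)$, so any extra composition factor $L(\omega)$ satisfies $\omega<\lambda(s_0)$. In the paper's positive system the noncompact positive roots are $-(e_i+e_j)$, so $\omega<\lambda(s_0)$ forces $\omega_n\geq s_0$. The paper then reruns the case analysis from the \emph{proof} of Proposition~\ref{dot-orbit} (not its statement) with this extra inequality to show that no $\frakk_n$-dominant $\omega\neq\lambda(s_0)$ in $W_{G_n}\cdot\lambda(s_0)$ can satisfy $\omega_n\geq s_0$ when $s_0\geq n-(i-1)/2$. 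That is the missing ingredient in your argument: you need the constraint $\omega_n\geq s_0$ coming from the order relation, together with a hands-on orbit calculation, rather than the bijection $Y\to X/\!\sim$.
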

	\begin{proof}
	The representation $L(\lambda_1,\ldots,\lambda_{n-i},s_0,\ldots,s_0)$ is contained in the induced representation by Theorem \ref{emb_main}.
	We may assume that the space of $\frakp_{n,-}$-finite vectors is not equal to it.
	We consider the constituents of the induced representation.
	Then there exists a $\frakk_n$-dominant weight $\mu$ such that $L(\mu)$ occur in the constituent of the induced representation.
	By the assumption, the representations $L(\mu)$ and $L(\lambda_1,\ldots,\lambda_{n-i},s_0,\ldots,s_0)$ have the same infinitesimal characters.
	We claim $\mu \not < (\lambda_1,\ldots,\lambda_{n-i},s_0,\ldots,s_0)$.
	We assume that $\mu < (\lambda_1,\ldots,\lambda_{n-i},s_0,\ldots,s_0)$.
	There exists an element $w \in W_{G_n}$ such that $w \cdot (\lambda_1,\ldots,\lambda_{n-i},s_0,\ldots,s_0) = \mu$.
	By the assumption $\mu < (\lambda_1,\ldots,\lambda_{n-i},s_0,\ldots,s_0)$, one has $\mu_n \geq s_0$.
	Recall the proof of Proposition \ref{dot-orbit}.
	In particular, by considering the value of the $n$-th entry of $w \cdot (\lambda_1,\ldots,\lambda_{n-i},s_0,\ldots,s_0)$ and $\mu_n \geq s_0$, one can show the equality $\mu = (\lambda_1,\ldots,\lambda_{n-i},s_0,\ldots,s_0)$ easily.
	This contradicts to $\mu < (\lambda_1,\ldots,\lambda_{n-i},s_0,\ldots,s_0)$.
	Hence one has $\mu \not < (\lambda_1,\ldots,\lambda_{n-i},s_0,\ldots,s_0)$.
	Then, there exists a highest weight vector $v$ of weight $\omega$ such that $v$ lies in the induced representation and $\omega \neq (\lambda_1,\ldots,\lambda_{n-i},s_0,\ldots,s_0)$.
	This contradicts to Theorem \ref{emb_main}.
	This completes the proof.
	\end{proof}
	\subsection{Proof of Lemma \ref{integrality_wt} and Theorem \ref{main}}\label{pf_main}
	
	\begin{proof}[Proof of Lemma \ref{integrality_wt}]
	Take a nearly holomorphic automorphic from $\varphi \in \calN(G_n,\chi_\lambda)$.
	Then there exists an element $X \in \calU(\frakg_n)$ such that $X \cdot \varphi$ is a highest weight vector.
	We denote by $\lambda' = (\lambda_1,\ldots,\lambda_n)$ the weight of $X \cdot \varphi$.
	Since $X \cdot \varphi$ generates a quotient representation of $N(\lambda')$ as a $(\frakg_n,K_\infty)$-module, $X \cdot \varphi$ has an infinitesimal  character $\chi_{\lambda'}$.
	Hence we have the equality $\chi_\lambda=\chi_{\lambda'}$.
	Then we have the integrality of $\lambda$.
	Indeed, since $K_{n,\infty}$ acts on $N(\lambda')$, the weight $\lambda'$ must be integral.
	This completes the proof of Lemma \ref{integrality_wt}.
	\end{proof}
	
	\begin{proof}[Proof of Theorem \ref{main}]
	We use the same notation as in the proof of Lemma \ref{integrality_wt}.
	By Proposition \ref{Wh_coeff_main} and Proposition \ref{dot-orbit}, there exists a number $s_0$ such that
	\[
	\calN(G_n,\chi_\lambda)_{(M_{Q_{i,n}}, \mu^{\boxtimes i} \boxtimes \pi)} \xhookrightarrow{\qquad} \Ind_{P_{i,n}(\bbA_{\bbQ})}^{G_{n}(\bbA_{\bbQ})} (\mu|\cdot|^{s_0} \boxtimes \pi).
	\]
	Then one has $s_0 = \lambda_{n,v}-n+(i-1)/2$.
	Moreover, by the near holomorphy, the right hand side contained in the space of $\frakp_{n,-}$-finite vectors in the right hand side.
	By Corollary \ref{p_-_fin_vects}, we have the embedding
	\begin{align}\label{emb_1}
	\calN(G_n,\chi_\lambda)_{(M_{Q_{i,n}}, \mu^{\boxtimes i} \boxtimes \pi)} \xhookrightarrow{\qquad} \left(\Ind_{P_{i,n}(\bbA_{\bbQ,\fini})}^{G_{n}(\bbA_{\bbQ,\fini})} (\mu|\cdot|^{s_0} \boxtimes \pi)\right) \boxtimes (\boxtimes_{v \in \bfa}L(\lambda_v)).
	\end{align}
	Conversely, take an element $\varphi$ in the right hand side.
	Let $\varphi_s$ be the standard section of $\Ind_{P_{i,n}(\bbA_\bbQ)}^{G_n(\bbA_\bbQ)}$ such that $\varphi_{s_0} = \varphi$.
	By Proposition \ref{const_term_Eis_ser}, the Klingen Eisenstein series gives the converse map of (\ref{emb_1}).
	This shows that the embedding (\ref{emb_1}) is isomorphism.
	This completes the proof.

	\end{proof}
	
	\subsection{Proof of Corollary \ref{suff_reg_case}}\label{proof_cor_main}
	
	We first compute the dot orbits of certain weights.

	\begin{lem}\label{lem1}
	Assume $F=\bbQ$.
	Let $\lambda$ be a $\frakk_n$-dominant integral weight such that $\lambda_{n} > 2n$ and $w$ an element of the Weyl group of $W_{G_n}$.
	Then, if $\omega = (\omega_1,\ldots,\omega_n)=w \cdot \lambda$ is $\frakk_n$-dominant, we have $\omega=\lambda$ or $\omega_n < 0$.
	\end{lem}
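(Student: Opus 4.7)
The plan is to analyze the dot action of the Weyl group $W_{G_n} \cong \mathfrak{S}_n \ltimes (\bbZ/2\bbZ)^n$ explicitly on coordinates, using the hypothesis $\lambda_n > 2n$ to rule out all but the trivial signed permutation when $\omega_n \geq 0$.

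First, I would record the shifted coordinates. With $\rho = (-1, -2, \ldots, -n)$ (as stated in the paper), put $\mu_i = \lambda_i - i$, so that $\lambda + \rho = (\mu_1, \ldots, \mu_n)$. Since $\lambda$ is $\frakk_n$-dominant, $\mu_1 > \mu_2 > \cdots > \mu_n$, and the hypothesis $\lambda_n > 2n$ gives $\mu_i \geq \lambda_n - i > 2n - i \geq n$ for every $i$, so in particular all $\mu_i > n > 0$. Write $w = \tau \sigma$ with $\sigma \in \mathfrak{S}_n$ and $\tau = (\epsilon_1, \ldots, \epsilon_n) \in \{\pm 1\}^n$, so that $\omega_i = (w \cdot \lambda)_i = \epsilon_i \mu_{\sigma(i)} + i$.

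Next I would treat the sign on the last coordinate. If $\epsilon_n = -1$, then $\omega_n = -\mu_{\sigma(n)} + n$, and since $\mu_{\sigma(n)} > n$, we get $\omega_n < 0$, which is the second alternative. So I may assume $\epsilon_n = +1$, and in this case $\omega_n = \mu_{\sigma(n)} + n > 2n > 0$. Now I run a downward induction on $i$: assuming $\epsilon_{i+1} = \cdots = \epsilon_n = +1$ and $\mu_{\sigma(i+1)} > \cdots > \mu_{\sigma(n)}$ (so $\omega_{i+1} = \mu_{\sigma(i+1)} + (i+1)$), the $\frakk_n$-dominance inequality $\omega_i \geq \omega_{i+1}$ becomes
\[
\epsilon_i \mu_{\sigma(i)} \geq \mu_{\sigma(i+1)} + 1 > 0,
\]
which forces $\epsilon_i = +1$ and $\mu_{\sigma(i)} > \mu_{\sigma(i+1)}$. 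Iterating to $i = 1$, I conclude $\tau = 1$ and that $\sigma$ preserves the strict ordering of the $\mu$'s. Since $\mu_1 > \cdots > \mu_n$ are all distinct, this forces $\sigma = 1$, hence $w = 1$ and $\omega = \lambda$.

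This is essentially a direct calculation, and the only delicate step is verifying that $\lambda_n > 2n$ is enough to force $\mu_i > n$ for all $i$ (and hence the strict inequality $\epsilon_i \mu_{\sigma(i)} > 0$ in the induction). The point is that the threshold $2n$ is tuned precisely so that the sign $\epsilon_n = -1$ immediately violates positivity of $\omega_n$, and then the dominance inequalities propagate upward with no room for a nontrivial permutation. No deeper structural input is needed beyond the explicit form of $\rho$ and the description of $W_{G_n}$ as signed permutations.
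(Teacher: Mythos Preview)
Your proof is correct and follows essentially the same approach as the paper: decompose $w$ as a signed permutation acting on the shifted coordinates $\mu_i = \lambda_i - i$, and use $\mu_i > n$ (from $\lambda_n > 2n$) to force either a negative last entry or the trivial element. Your downward induction is a more explicit and carefully written version of the paper's terse case analysis (the paper asserts that $I \neq \emptyset$ when $\sigma \neq 1$ and that a flipped sign produces a non-positive entry, which is precisely what your induction verifies step by step).
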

	\begin{proof}
	Suppose that $w \in W_{G_n}$ is a non-trivial element such that $w \cdot \lambda$ is $\frakk_n$-dominant.
	The element $w$ decomposes as $w = \tau_{I} \sigma$ such that $\tau_{I} \in (\bbZ/2\bbZ)^n$ and $\sigma \in \mathfrak{S}_n$ for some $I \subset \{1,\ldots,n\}$.
	Here $\mathfrak{S}_n$ is the symmetric group of degree $n$ and $\tau_I$ means that
	\[
	\tau_I(\lambda_1,\ldots,\lambda_n) = (\varepsilon_I(1)\lambda_1,\ldots,\varepsilon_I(n)\lambda_n), \qquad \varepsilon_I(k) = 
	\begin{cases}
	+1 & \text{if $k \not\in I$}\\
	-1 & \text{if $k \in I$}.
	\end{cases}
	\]
	In this case, $w\cdot \lambda$ is equal to
	\begin{align}\label{wt1}
	\tau_I\sigma(\lambda_1-1,\ldots,\lambda_n-n) + (1,\ldots,n).
	\end{align}
	Since this is $\frakk_n$-dominant, $I$ is non-empty, if $\sigma \neq 1$.
	Suppose $\sigma$ is non-trivial.
	Then some entry of (\ref{wt1}) is non-positive.
	Hence the lemma follows.
	Next we suppose $\sigma$ is trivial.
	Then $I$ is non-empty by $w \neq 1$.
	Hence some entry of (\ref{wt1}) is non-positive.
	This completes the proof.
	\end{proof}
	
	\begin{proof}[Proof of Corollary \ref{suff_reg_case}]
	Recall that we have
	\[
	\calN(G_n,\chi)_{\{Q_{i,n}\}} = \bigoplus_{\pi} \calN(G_n,\chi_\lambda)_{(M_{Q_{i,n}},\pi)},
	\]
	where $\pi$ runs through all cuspidal representations of $M_{Q_{i,n}}(\bbA_\bbQ)$.
	For an irreducible automorphic representation $\pi$ of $M_{Q_{i,n}}(\bbA_\bbQ)$, we denote by $\mu$ and $\tau$ the automorphic representations of $\GL_1(\bbA_\bbQ)$ and $G_{n-i}(\bbA_\bbQ)$ such that $\pi \cong \mu^{\boxtimes i} \boxtimes \tau$.
	By Proposition \ref{Wh_coeff_main}, if $\calN(G_n,\chi_\lambda)_{(M_{i,n},\pi)} \neq 0$, the automorphic representation $\tau$ is holomorphic.
	Let $(\omega_{1,v},\ldots,\omega_{n-i,v})$ be the highest weight of $\tau$.
	Then, by Lemma \ref{lem1}, we have $\omega_{j,v} = \lambda_{j,v}$ for any $1 \leq j \leq n-i$ and $v \in \bfa$.
	If $\mu \not \in \mathfrak{X}_{(-1)^{\lambda_{n,v}}}$, we have $\calN(G_n,\chi)_{(\mu^{\boxtimes i}\boxtimes \tau)} = 0$, by Theorem \ref{main} (2).
	Hence we have
	\[
	\calN(G_n,\chi)_{\{Q_{i,n}\}} = \bigoplus_{\pi} \calN(G_n,\chi_\lambda)_{(M_{Q_{i,n}},\pi)},
	\]
	where $(M_{Q_{i,n}},\pi)$ runs through the same set as in the statement.
	Therefore the statement follows from Theorem \ref{main} (2).
	\end{proof}

\section{Application for surjectivity of global Siegel operator}
	In this section, we prove the surjectivity of Siegel operators under certain conditions.

	\subsection{Siegel operators $\Phi$ and constant terms}
	
	We use the same notation as in \S \ref{FC_NHMF}.
	Let $f$ be a holomorphic Hilbert Siegel modular form on $\frakH_n^d$ of weight $(\rho_\lambda,V)$ with $\lambda=(\lambda_{1,v},\ldots,\lambda_{n,v})_v \in \bigoplus_{v \in \bfa} \bbZ^n$.
	We then define the Siegel operator $\Phi$ by
	\[
	\Phi(f)(z) = \lim_{t \rightarrow \infty} f \left(\begin{pmatrix}t & 0 \\ 0 & z \end{pmatrix}\right), \qquad z \in \frakH_{n-1}^d.
	\]
	Then the Fourier expansion of $\Phi(f)$ is
	\[
	\Phi(f)(z) = \sum_{h  \in \Sym_{n-1}(F)} c_f \left(\left(\begin{smallmatrix} 0 & 0 \\ 0 & h \end{smallmatrix}\right)\right) \mathbf{e}_{h}(z), \qquad z \in \frakH_{n-1}^d.
	\]
	By Proposition \ref{Fourier_coeff}, if $h$ lies in $\Sym_n^{(1)}(F)$, $c_f(h)$ lies in the space of $N_{1,n,\GL}^\op(\bbR)$-fixed vectors in $V$.
	Since the representation $\rho_\lambda$ is holomorphic, $c_f(h)$ lies in the space of $N_{1,n,\GL}^\op(\bbC)$-fixed vectors in $V$.
	The Levi subgroup $\GL_{1}(\bbC) \times \GL_{n-1}(\bbC)$ of $\GL_n(\bbC)$ acts on the space $V^{N_{j,n,\GL}^\op(\bbC)}$ naturally.
	Then as a $\GL_{n-1}(\bbC)$-representation, $V^{N_{j,n,\GL}^\op(\bbC)}$ is irreducible with highest weight $(\lambda_{1,v},\ldots, \lambda_{n-1,v})_v$.
	Hence we may regard $\Phi(f)$ as a Hilbert-Siegel modular form of weight $\rho_{\lambda'}$.
	Here $\lambda'$ is equal to $(\lambda_{1,v},\ldots,\lambda_{n-1,v})_v$.
	
	Suppose $f$ is modular with respect to a congruence subgroup $\Gamma$.
	Fix a set of complete representatives $\{\gamma_1,\ldots,\gamma_h\}$ of
	\[
	P_{i,n}(\bbQ) \bs G_n(\bbQ) / \Gamma.
	\]
	We then define the global Siegel operator $\widetilde{\Phi}$ with respect to $\Gamma$ by
	\[
	\widetilde{\Phi}(f) = (\Phi(f|_{\rho_\lambda}g_\ell))_{1 \leq \ell \leq h}.
	\]
	If $n=2$ and $F=\bbQ$, this operator $\widetilde{\Phi}$ is defined in \cite{boe-Ib}.
	For each $1 \leq \ell \leq h$, the function $\Phi(f|_{\rho}g_\ell)$ on $\frakH_{n-1}^d$ is modular with respect to $\Gamma_i = g_i\Gamma g_i^{-1} \cap \Sp_{2(n-1)}(F)$.
	Hence we obtain the linear map
	\[
	\widetilde{\Phi} \colon M_{\rho_\lambda}(\Gamma) \longrightarrow  \bigoplus_{\ell=1}^h M_{\rho_\lambda'}(\Gamma_i).
	\]
	Note that the operator depends on the choice of $g_i$ but not essentially.
	Indeed, if we replace $g_1$ to $g_1' \in P_{i,n}(F)g_1\Gamma$, we have the isomorphism
	\[
	M_{\rho_\lambda'}(g_i\Gamma g_i^{-1} ) \longrightarrow M_{\rho_\lambda'}(g_i'\Gamma g_i'^{-1}) \colon f \longmapsto f|_{\rho_\lambda}g_ig_i'^{-1}.
	\]
	
	\begin{quest}
	When the global Siegel operator $\widetilde{\Phi}$ is surjective?
	If it is not surjective, can we determine the image of it?
	\end{quest}
	
	In general, $\widetilde{\Phi}$ is not surjective.
	For a few cases, the images are determined.
	For the details, see \cite[p.~123]{boe-Ib}
	
	We represent the Siegel operator $\widetilde{\Phi}$ in terms of automorphic forms on $G_n(\bbA_\bbQ)$.
	We then give the general theory for surjectivity of Siegel operator $\widetilde{\Phi}$.
	For a $\frakk_n$-dominant integral weight $\lambda \in \bigoplus_{v\in \bfa}\bbZ^{n}$, put
	\[
	\mathcal{H}(G_n,\lambda) = \{\varphi \in \calN(G_n,\chi) \mid \text{$\varphi$ has $K_{n,\infty}$-type $\rho_\lambda$ and $\frakp_{n,-} \cdot \varphi = 0$}\}.
	\]
	Then by (\ref{corr_MF_AF}), we have the isomorphism
	\[
	M_{\rho_\lambda}(\Gamma)	\otimes V_\lambda \cong \mathcal{H}(G_n,\lambda).
	\]
	Note that if $\lambda_{n,v} > 2n$ for any $v \in \bfa$, the space $\mathcal{H}(G_n,\lambda)$ is equal to
	\[
	\mathcal{H}(G_n,\lambda) = \{\varphi \in \calN(G_n,\chi_\lambda) \mid \frakp_{n,-}\cdot \varphi = 0\}
	\]
	by Lemma \ref{lem1}.
	The following lemma states the relation of the Siegel operator $\widetilde{\Phi}$ and constant terms.
	
	\begin{lem}\label{comm_diag}
	With the above notation, the following diagram is commutative:
	\[
	\begin{CD}
	M_{\rho_\lambda}(\Gamma)	\otimes V_\lambda									@>{\sim}>>	\mathcal{H}(G_n,\lambda)^{K_\Gamma}\\
	@V{\widetilde{\Phi} \otimes \mathrm{Id}}VV														@VVV \\
	\left(\bigoplus_{\ell=1}^h M_{\rho_\lambda'}(\Gamma_\ell)\right) \otimes V_{\lambda}		@>{\quad}>>	\bigoplus_{\ell=1}^h\mathcal{H}(G_{n-1},\lambda')^{K_{\Gamma_\ell}} 
	\end{CD}
	\]
	Here the right vertical arrow is defined by $\varphi \longmapsto ((r(g_{\ell,\fini})\varphi_{P_{1,n}})|_{G_{n-1}})_\ell$.
	\end{lem}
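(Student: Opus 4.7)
The plan is to unwind both vertical maps in terms of Fourier expansions and use formula (\ref{rel_FC_MF_AF}) together with Lemma \ref{Wh_coeff} to identify them. Fix $f \in M_{\rho_\lambda}(\Gamma)$, a vector $v^* \in V_\lambda^*$, and set $\varphi = \langle \varphi_f, v^* \rangle \in \mathcal{H}(G_n,\lambda)^{K_\Gamma}$. It suffices to show, for each $\ell$, that $(r(g_{\ell,\fini})\varphi_{P_{1,n}})|_{G_{n-1}(\bbA_\bbQ)}$ is exactly the automorphic form on $G_{n-1}(\bbA_\bbQ)$ attached via the isomorphism (\ref{corr_MF_AF}) (applied to $G_{n-1}$ and $\Gamma_\ell$) to the pair $(\Phi(f|_{\rho_\lambda} g_\ell), \bar v^*)$, where $\bar v^*$ denotes the restriction of $v^*$ to $V_\lambda^{N_{1,n,\GL}^\op(\bbC)}$, the latter being an irreducible $\GL_{n-1}(\bbC)^d$-module of highest weight $\lambda'$ by Proposition \ref{Fourier_coeff} and the discussion preceding Proposition \ref{rigidity_HMF}.

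First, by Lemma \ref{Wh_coeff},
\[
\varphi_{P_{1,n}}(g) = \sum_{h \in \Sym_n^{(1)}(F)} \Wh_{\varphi,h}(g),
\]
so $(r(g_{\ell,\fini})\varphi_{P_{1,n}})(g_\infty) = \sum_{h \in \Sym_n^{(1)}(F)} \Wh_{\varphi,h}(g_\infty g_{\ell,\fini})$ for $g_\infty \in G_n(\bbR)$. Next I would write each $h \in \Sym_n^{(1)}(F)$ in the block form $h = \begin{pmatrix} 0 & 0 \\ 0 & h'\end{pmatrix}$ with $h' \in \Sym_{n-1}(F)$ and apply (\ref{rel_FC_MF_AF}) to an element of the form $\bigl(\begin{smallmatrix}\mathbf{1}_n & n_\infty \\ & \mathbf{1}_n\end{smallmatrix}\bigr)\bfm(a_\infty) k_\infty g_{\ell,\fini}$ with $\bfm(a_\infty)$ and $n_\infty$ supported in the $G_{n-1}$-block, so that $z = (n_\infty + \sqrt{-1}\, a_\infty{}^t a_\infty) \in \frakH_{n-1}^d$ (sitting in $\frakH_n^d$ as the lower right corner). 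Then $\bfe(\tr(hz))$ reduces to $\bfe(\tr(h'z))$, and $\rho_\lambda({}^t a_\infty) c_{f}(h,y,\gamma_\ell)$ — with $\gamma_\ell$ any rational representative satisfying $\gamma_\ell^{-1} g_{\ell,\fini} \in K_\fini G_n(\bbR)$ — lands in the $N_{1,n,\GL}^\op(\bbC)$-fixed subspace since $c_{f|g_\ell}(h) \in V_\lambda^{N_{1,n,\GL}^\op(\bbC)}$ by Proposition \ref{Fourier_coeff}.

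The second step is to compare this with $\Phi(f|_{\rho_\lambda} g_\ell)$. By definition of $\Phi$, its Fourier expansion as a $V_\lambda^{N_{1,n,\GL}^\op(\bbC)}$-valued function on $\frakH_{n-1}^d$ is
\[
\Phi(f|_{\rho_\lambda} g_\ell)(z) = \sum_{h' \in \Sym_{n-1}(F)} c_{f|g_\ell}\!\left(\begin{smallmatrix} 0 & 0\\ 0 & h'\end{smallmatrix}\right)\bfe(\tr(h' z)),
\]
where the terms with $h' \notin \Sym_{n-1}(F)_{\geq 0}$ vanish by the cusp condition on $f$. Plugging $(\Phi(f|_{\rho_\lambda} g_\ell), \bar v^*)$ into the isomorphism (\ref{corr_MF_AF}) for $G_{n-1}$ and writing out the result via the analogue of (\ref{rel_FC_MF_AF}) gives precisely the same expression obtained in the previous paragraph, after noting that the highest weight $\GL_{n-1}(\bbC)^d$-representation on $V_\lambda^{N_{1,n,\GL}^\op(\bbC)}$ is identified with $\rho_{\lambda'}$.

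The main obstacle is purely notational: carefully tracking the identification of the $N_{1,n,\GL}^\op(\bbC)$-fixed subspace of $V_\lambda$ with $V_{\lambda'}$ as $\GL_{n-1}(\bbC)^d$-modules, and ensuring that the restriction $v^* \mapsto \bar v^*$ is compatible with the slash actions on both sides so that the pairing in (\ref{rel_FC_MF_AF}) matches for $G_n$ and $G_{n-1}$. Once this identification is spelled out, the equality of the two expressions is immediate term-by-term on the Fourier side, and the commutativity of the diagram follows from the injectivity of the Fourier expansion and the density of the Iwasawa coordinates used above.
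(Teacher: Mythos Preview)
Your proposal is correct and follows essentially the same approach as the paper: both arguments verify commutativity by comparing Whittaker--Fourier coefficients on the two paths, using formula (\ref{rel_FC_MF_AF}) and the identity $c_{\Phi(f|_{\rho_\lambda}g_\ell)}(h') = c_{f|g_\ell}\!\left(\begin{smallmatrix}0&0\\0&h'\end{smallmatrix}\right)$. Your write-up is somewhat more detailed in that you explicitly invoke Lemma \ref{Wh_coeff} to express the constant term $\varphi_{P_{1,n}}$ as a sum of Whittaker coefficients, whereas the paper's proof is terser and jumps directly to the Fourier-coefficient comparison; but the substance is the same.
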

	\begin{proof}
	
	Take $f \otimes v^* \in M_{\rho_\lambda}(\Gamma)	\otimes V_{\lambda}$.
	Then by the left vertical arrow, we have a modular form $(\Phi(f|_{\rho}\gamma_\ell))_\ell \otimes v^*$ which lies in $\left(\bigoplus_{\ell=1}^h M_{\rho_\lambda'}(\Gamma_\ell)\right) \otimes V_{\lambda}$.
	Put $\Phi_\ell(f) = \Phi(f|_{\rho}\gamma_\ell)$.
	For each $\ell$, the corresponding automorphic form has the $h$-th Whittaker-Fourier coefficient
	\[
	\Wh_{\langle\varphi_{\Phi_\ell(f)},v^*\rangle,h}\left(\begin{pmatrix}\mathbf{1}_n & n_\infty \\ & \mathbf{1}_n\end{pmatrix} \begin{pmatrix}a_\infty &  \\ & {^ta_\infty^{-1}}\end{pmatrix}k_\infty g_\fini\right) = \left\langle \rho_{\lambda'}\left({^ta_\infty}\right) c_{\Phi_\ell(f)}(h,y,\gamma), \rho_{\lambda'}^*(k_\infty)v^*\right\rangle \mathbf{e}(\tr(hz))
	\]
	at $h \in \Sym_{n-1}(F)$ for
	\[
	\begin{pmatrix}\mathbf{1}_{n-1} & n_\infty \\ & \mathbf{1}_{n-1} \end{pmatrix} \begin{pmatrix}a_\infty &  \\ & {^ta_\infty^{-1}}\end{pmatrix} \in P_{n-1,n-1}(\bbR), \, k_\infty \in K_{n-1,\infty}, \, g_\fini \in G_{n-1}(\bbA_{\bbQ,\fini})
	\]
	and
	\[
	y = a_\infty{^t a_\infty} \in (\Sym_{n-1}(\bbR)_{>0})^d,\, z= n+\sqrt{-1}\, y \in \frakH_{n-1}^{d}
	\]
	by (\ref{rel_FC_MF_AF}).
	Here $\gamma$ is an element of $G_{n-1}(\bbQ)$ such that $\gamma^{-1}g_\fini \in G_{n-1}(\bbR)K_{\Gamma_\ell}$.
	For $h \in \Sym_{n-1}(F)$, put
	\[
	h' = \begin{pmatrix}0 & 0 \\ 0 & h \end{pmatrix} \in \Sym_n^{(1)}(F).
	\]
	We then have
	\[
	c_{\Phi_\ell(f)}(h,y,1) = c_f\left(h',\left(\begin{smallmatrix}1 & 0 \\ 0 & y \end{smallmatrix}\right),\left(\begin{smallmatrix}1 & 0 \\ 0 & \gamma \end{smallmatrix}\right)\right),
	\]
	for $y \in (\Sym_{n}(\bbR)_{>0})^d$ and $\gamma \in G_n(\bbQ)$, by the definition of the Siegel operator $\Phi$.
	Hence the diagram is commutative.
	This completes the proof.
	\end{proof}

	\subsection{Surjectivity of $\Phi$}
	
	Let $\lambda = (\lambda_{1,v},\ldots,\lambda_{n,v})_v$ be a $\frakk_n$-dominant integral weight with $\lambda_{n,v} \leq n$ for any $v \in \bfa$.
	Put $\lambda' = (\lambda_{1,v},\ldots,\lambda_{n-1,v})_v$.
	Then $\chi_{\lambda'}$ is sufficiently regular if $\chi_\lambda$ is sufficiently regular.
	For an integral ideal $\frakn$ of $F$, put
	\[
	\Gamma_0(\frakn) = \left\{g = \begin{pmatrix} a & b \\ c & d \end{pmatrix}\Sp_{2n}(\calO_F) \, \middle|\, a,b,c,d \in \Mat_n(\calO_F), c \in \Mat_n(\frakn) \right\}.
	\]
	We say that an integral ideal $\frakn$ is square-free if there exist mutually distinct prime ideals $\frakp_\ell$ such that $\frakn = \prod_{\ell}\frakp_\ell$.
	
	\begin{thm}\label{surjectivity}
	Let $\lambda$ be a $\frakk_n$-dominant weight.
	Suppose that $\lambda_{n,v}$ is independent of $v$ and $\lambda_{n-1,v} = \lambda_{n,v}$ for any $v$.
	For a square-free integral ideal $\frakn$ of $F$, if $n >1$ and $\lambda_{n,v} > 2n$, the global Siegel operator $\widetilde{\Phi}$ with respect to $\Gamma_0(\frakn)$ is surjective.
	\end{thm}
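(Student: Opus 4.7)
The plan is to translate the surjectivity of $\widetilde{\Phi}$ via Lemma \ref{comm_diag} into the surjectivity of a constant-term map between spaces of holomorphic automorphic forms, and then to construct the required preimages as Klingen Eisenstein series supplied by Proposition \ref{const_term_Eis_ser}.

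By Lemma \ref{comm_diag}, it suffices to establish the surjectivity of
\[
\mathcal{H}(G_n,\lambda)^{K_0(\frakn)} \longrightarrow \bigoplus_{\ell=1}^{h} \mathcal{H}(G_{n-1},\lambda')^{K_{\Gamma_\ell}}, \qquad \varphi \longmapsto \bigl((r(g_{\ell,\fini})\varphi_{P_{1,n}})|_{G_{n-1}(\bbA_\bbQ)}\bigr)_\ell,
\]
where $\lambda' = (\lambda_{1,v},\ldots,\lambda_{n-1,v})_v$ and $K_0(\frakn)$ is the open compact subgroup of $G_n(\bbA_{\bbQ,\fini})$ attached to $\Gamma_0(\frakn)$. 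Under our assumptions, $\lambda'_{n-1,v}=\lambda_{n,v}$ is independent of $v$ and exceeds $2n>2(n-1)$, so $\chi_{\lambda'}$ is sufficiently regular on $G_{n-1}$ and Theorem \ref{main} applied to $G_{n-1}$ decomposes the target into $\{Q_{j,n-1}\}$-pieces. A non-zero $j$-th piece forces $\lambda_{n,v}=\lambda_{n-1,v}=\cdots=\lambda_{n-j,v}$ for every $v$, and is then identified via its constant term along $P_{j,n-1}$ with $\bigoplus_{\mu,\pi}\Ind_{P_{j,n-1}(\bbA_{F,\fini})}^{G_{n-1}(\bbA_{F,\fini})}(\mu|\cdot|^{\lambda_{n,v}-(n-1)+(j-1)/2}\boxtimes \pi)\boxtimes (\boxtimes_{v\in \bfa} L(\lambda'_v))$, with $\mu$ ranging over $\frakX_{(-1)^{\lambda_{n,v}}}$ and $\pi$ over holomorphic cuspidal representations of $G_{n-1-j}(\bbA_F)$ of the prescribed archimedean type.

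For each admissible triple $(\mu,\pi,j)$ I would construct a preimage as a Klingen Eisenstein series on $G_n$ attached to the parabolic $P_{j+1,n}$ with cuspidal datum $\mu^{\boxtimes(j+1)}\boxtimes \pi$, specialised at $s=\lambda_{n,v}-n+j/2$. The chain of equalities $\lambda_{n,v}=\cdots=\lambda_{n-j,v}$ is exactly the hypothesis of Proposition \ref{const_term_Eis_ser}, which guarantees both absolute convergence and holomorphy at this value and places the resulting Eisenstein series inside $\mathcal{H}(G_n,\lambda)_{\{Q_{j+1,n}\}}$ by Proposition \ref{concentrate}. Taking a further constant term along $P_{1,n}\supset P_{j+1,n}$ and restricting to $G_{n-1}$ is computed via the double coset decomposition $(\ref{const_term_klingen})$; the same vanishing mechanism used in the proof of Proposition \ref{const_term_Eis_ser}, justified by Theorem \ref{emb_main}, eliminates the contribution of every non-identity Weyl representative on the highest weight line $\boxtimes_v L(\lambda_v)$, leaving only the identity contribution, which lands exactly in the targeted $\{Q_{j,n-1}\}$-piece on the $G_{n-1}$-side.

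The main obstacle is controlling the level: the constructed Klingen Eisenstein series must be $K_0(\frakn)$-invariant and must simultaneously lift the prescribed $K_{\Gamma_\ell}$-invariant datum on each $P_{1,n}(\bbQ)$-double coset. Away from the primes dividing $\frakn$ the local components are spherical and the lift is essentially forced. At a prime $\frakp\mid\frakn$ the square-free hypothesis makes $K_0(\frakn)_\frakp$ a parahoric subgroup of Iwahori type; the remaining local task is to show that the $P_{1,n}$-Jacquet functor carries the $K_0(\frakn)_\frakp$-fixed subspace of $\Ind_{P_{j+1,n}(F_\frakp)}^{G_n(F_\frakp)}(\mu_\frakp|\cdot|^s\boxtimes \pi_\frakp)$ onto the corresponding parahoric-fixed subspace in the induced target on $G_{n-1}(F_\frakp)$. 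I would verify this via the Iwahori--Matsumoto presentation of the parahoric Hecke algebra, or equivalently via Bernstein's second adjunction, which reduces the claim to a dimension count of parahoric invariants in principal series that is tractable precisely because $\frakn$ is square-free. Combining these local lifts with the archimedean highest weight vector furnished by Theorem \ref{emb_main} and strong approximation yields the required $K_0(\frakn)$-invariant global preimage for every summand; summing the contributions over all admissible $j$ then proves the surjectivity of $\widetilde{\Phi}$.
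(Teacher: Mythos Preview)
Your overall architecture coincides with the paper's: translate via Lemma \ref{comm_diag}, decompose both sides along associated classes $\{Q_{i,n}\}$ and $\{Q_{i-1,n-1}\}$, identify each piece with an explicit induced representation through Theorem \ref{main}, and produce preimages as Klingen Eisenstein series controlled by Proposition \ref{const_term_Eis_ser}. Where you diverge is in the two technical steps, and in both places the paper takes a shorter route than you propose.

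First, you plan to compute $E(f,s)_{P_{1,n}}|_{G_{n-1}}$ by the double-coset formula and kill the non-identity Weyl terms ``by the same vanishing mechanism as in Proposition \ref{const_term_Eis_ser}''. Note that $P_{1,n}\supset P_{j+1,n}$ is false for $j\geq 1$; these are different maximal parabolics, and the Weyl representatives for $E(f,s)_{P_{1,n}}$ are not the $w_{j}$ treated in Proposition \ref{const_term_Eis_ser}, so that proposition does not literally apply. The paper sidesteps this computation entirely: since Theorem \ref{main} gives isomorphisms of both $\calH(G_n,\lambda)_{\{Q_{i,n}\}}$ and $\calH(G_{n-1},\lambda')_{\{Q_{i-1,n-1}\}}$ with explicit induced representations via the constant terms along $P_{i,n}$ and $P_{i-1,n-1}$ respectively, it is enough to build a section $F$ in $\Ind_{P_{i,n}}^{G_n}(\mu|\cdot|^{s_0}\boxtimes\pi)^{K_\Gamma}$ whose restriction to $\GL_1\times G_{n-1}$ recovers the prescribed $f_\ell$. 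Proposition \ref{const_term_Eis_ser} then supplies a $\varphi$ with $\varphi_{P_{i,n}}=F$, and no $P_{1,n}$-constant-term computation is needed.

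Second, for the square-free level control the paper does not invoke parahoric Hecke algebras or Bernstein adjunction. One first defines $F$ on $P(\bbA_\fini)\,g_\ell\,K_\Gamma$ (where $P$ has Levi $\GL_1\times\GL_{i-1}\times G_{n-i}$) from the data $f_\ell$; the only issue is whether $F$ is actually left $\bfm(\SL_i)$-invariant, i.e.\ descends to $\Ind_{P_{i,n}}^{G_n}$. At the infinite places this is Theorem \ref{emb_main}. At a prime $\frakp\mid\frakn$ the paper observes, via the Iwasawa decomposition and reduction modulo $\frakn$, that the natural surjection
\[
P(\bbA_\fini)\backslash G_n(\bbA_\fini)/K_\Gamma\longrightarrow P_{i,n}(\bbA_\fini)\backslash G_n(\bbA_\fini)/K_\Gamma
\]
is in fact a bijection: both sides become double cosets in $\Sp_{2n}(\calO_F/\frakn)\cong\prod_\ell\Sp_{2n}(\calO_F/\frakp_\ell)$, and a Bruhat count for parabolics of the finite symplectic groups shows the two sides have equal cardinality (this is where square-freeness enters). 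Hence $F$ is automatically $\SL_i(\bbA_\fini)$-invariant. This is both more elementary and more explicit than the Hecke-algebraic argument you outline, and it is precisely the step that fails without the square-free hypothesis.
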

	\begin{proof}
	Put $\Gamma = \Gamma_0(\frakn)$.
	Fix a set $\{\gamma_1,\ldots,\gamma_h\}$ of complete representatives of $P_{1,n}(\bbQ) \bs G_n(\bbQ)/\Gamma$.
	By Lemma \ref{comm_diag}, it suffices to show the surjectivity of the map 
	\begin{align}\label{phi_op_AF}
	\mathcal{H}(G_n,\lambda)^{K_\Gamma} \longrightarrow \bigoplus_{\ell}\mathcal{H}(G_{n-1},\lambda')^{K_{\Gamma_\ell}} \colon \varphi \longmapsto ((r(g_{\ell,\fini})\varphi_{P_{1,n}})|_{G_{n-1}})_\ell.
	\end{align}
	Note that we have the decomposition 
	\[
	\mathcal{H}(G_n,\lambda)^{K_\Gamma} = \bigoplus_{i=0}^n \mathcal{H}(G_n,\lambda)^{K_\Gamma}_{\{Q_{i,n}\}}, \qquad \mathcal{H}(G_n,\lambda)^{K_\Gamma}_{\{Q_{i,n}\}} = \calH(G_n,\lambda)^{K_\Gamma} \cap \calN(G_n)_{\{Q_{i,n}\}}.
	\]
	Then the kernel of the map (\ref{phi_op_AF}) is equal to $\mathcal{H}(G_n,\lambda)^{K_\Gamma}_{\{G\}}$, the space of cusp forms in $\mathcal{H}(G_n,\lambda)^{K_\Gamma}$.
	Moreover, by Lemma \ref{Wh_coeff} and the assumption for $\lambda$, the image of $\mathcal{H}(G_n,\lambda)^{K_\Gamma}_{\{Q_{i,n}\}}$ is contained in the space $\mathcal{H}(G_{n-1},\lambda')_{\{Q_{i-1,n-1}\}}$ for any $i$.
	
	Fix $i\neq0$.
	It suffices to show the surjectivity of the following map
	\[
	\mathcal{H}(G_n,\lambda)^{K_\Gamma}_{\{Q_{i,n}\}} \longrightarrow \bigoplus_{\ell}\mathcal{H}(G_{n-1},\lambda')^{K_{\Gamma_\ell}}_{\{Q_{i-1,n-1}\}} \colon \varphi \longmapsto ((r(g_{\ell,\fini})\varphi_{P_{1,n}})|_{G_{n-1}})_\ell.
	\]
	By taking the constant term map along $Q_{i-1,n-1}$ and Theorem \ref{main}, we have
	\[
	\mathcal{H}(G_{n-1},\lambda')_{\{Q_{i-1,n-1}\}} \cong \bigoplus_{\mu,\pi} \left(\Ind_{P_{i-1,n-1}(\bbA_{\bbQ,\fini})}^{G_{n-1}(\bbA_{\bbQ,\fini})}\mu|\cdot|^{s'_0}\boxtimes \pi\right)\boxtimes HK(L(\lambda')).
	\]
	Here $\mu$ and $\pi$ runs through the same set as in Theorem \ref{main} and $s'_0$ is equal to $\lambda_{n-i+1,v}-n+i/2$.
	We take functions 
	\[
	f_\ell \in \bigoplus_{\mu,\pi,\ell} \left(\Ind_{P_{i-1,n-1}(\bbA_{\bbQ,\fini})}^{G_{n-1}(\bbA_{\bbQ,\fini})}\mu|\cdot|^{s'_0}\boxtimes \pi\right)^{K_{\Gamma_\ell}}\boxtimes HK(L(\lambda')).
	\]
	To show the surjectivity, it suffices to show that there exists a holomorphic automorphic form $\varphi \in \calH(G_n,\lambda)^{K_\Gamma}$ such that
	\[
	(r(g_\ell)\varphi)_{P_{i,n}}|_{G_{n-1}(\bbA_\bbQ)} = f_\ell.
	\]
	By Theorem \ref{main}, the constant term map along $P_{i,n}$ induces
	\[
	\mathcal{H}(G_{n},\lambda')_{\{Q_{i,n}\}} \cong \bigoplus_{\mu,\pi} \left(\Ind_{P_{i,n}(\bbA_{\bbQ,\fini})}^{G_{n}(\bbA_{\bbQ,\fini})}\mu|\cdot|^{s_0}\boxtimes \pi\right)^{K_\Gamma}\boxtimes HK(L(\lambda)).
	\]
	Here $\mu$ and $\pi$ runs through the same set as above and $s_0 = \lambda_{n,v}-n+(i+1)/2$.
	We define a function $F$ in this space as follows:
	Put
	\[
	F(n \, \bfm(m_1)m_2g_\ell k_\fini) = \mu(\det(m_1))|\det(m_1)|^{\lambda_{n,v}}r(m_2)(f_\ell)
	\]
	for $n \in N_{P_{i,n}}(\bbA_\bbQ), m_1 \in \GL_1(\bbA_F) \times \GL_{i-1}(\bbA_F), m_2 \in G_{n-i}(\bbA_\bbQ)$, and $k_\fini \in K_\Gamma$.
	Here $r$ is the right translation.
	Since $f_\ell$ is right $K_{\Gamma_\ell}$-invariant, $F$ is well-defined.
	Then $F$ is a function on $P_{1,n}(\bbA_\bbQ)G_n(\bbA_{\bbQ,\fini})$.
	
	In order to define the value $F(g)$ for $g \in G_n(\bbR)$, we discuss the induced representation at archimedean places.
	For a $\frakk_{n-1}$-dominant weight $\omega$, a character $\mu$, a complex number $s$ and any function 
	\[
	\alpha \in HK\left(\Ind_{P_{i,n}(\bbR)}^{G_n(\bbR)}(\mu|\cdot|^s \boxtimes L(\omega))\right), 
	\]
	we have
	\[
	\alpha(k) \in HK(L(\omega)), \qquad k \in K_{n,\infty}
	\]
	by $\mathrm{Ad}(K_{n,\infty})\frakp_{n-i,-} \subset \frakp_{n,-}$.
	Hence if the space $HK\left(\Ind_{P_{i,n}(\bbR)}^{G_n(\bbR)}(\mu|\cdot|^s \boxtimes L(\omega))\right)$ is non-zero, for any $w \in HK(L(\omega))$, there exists a function $\alpha \in HK\left(\Ind_{P_{i,n}(\bbR)}^{G_n(\bbR)}(\mu|\cdot|^s \boxtimes L(\omega))\right)$ such that $\alpha(1) = w$.
	Indeed, if $\alpha|_{K_{n,\infty}}=0$, we have $\alpha \equiv 0$.
	Take $k \in K_{n,\infty}$ such that $\alpha(k) \neq 0$.
	Since $HK(L(\omega))$ is irreducible, there exists finite collections of complex numbers $c_p$ and $k_p \in K_{n-1,\infty}$ such that
	\[
	w=\sum_p c_p (k_p\cdot\alpha(k)) = \sum_p c_p (r(kk_p) \alpha)(1).
	\]
	If we replace $\alpha$ by $\sum_p c_p (r(kk_p)\alpha)$, we have $\alpha(1) =w$.
	We decompose $f_\ell$ as a sum
	\[
	f_\ell = \sum_q {v_{\fini,\ell,q}} \otimes v_{\infty,\ell,q}
	\]
	where 
	\[
	v_{\fini,\ell,q} \in \bigoplus_{\mu,\pi,\ell} \left(\Ind_{P_{i-1,n-1}(\bbA_{\bbQ,\fini})}^{G_{n-1}(\bbA_{\bbQ,\fini})}\mu|\cdot|^{s'_0}\boxtimes \pi\right)^{K_{\Gamma_\ell}}, \quad v_{\infty,\ell,q} \in \bigoplus_{\mu,\pi,\ell} HK(L(\lambda')).
	\]
	We then take a function $F_{\infty,\ell,q}$ in 
	\[
	\bigoplus_{\mu,\pi}  \Ind_{P_{i,n}(\bbR)}^{G_{n}(\bbR)}(\mu|\cdot|^{s_0}\boxtimes L(\lambda'))
	\]
	such that
	\[
	F_{\infty,\ell,q}(1) = v_{\infty,\ell,q}.
	\]
	Put
	\[
	F(n \, \bfm(m_1)m_{2,\fini}g_\ell k_\fini g_\infty) = \mu(\det(m_1))|\det(m_1)|^{\lambda_{n,v}}\sum_q (m_{2,\fini} \cdot {v_{\fini,\ell,q}}) \otimes F_{\infty,\ell,q}(g_\infty)
	\]
	for $n \in N_{P_{i,n}}(\bbA_\bbQ), m_1 \in \GL_1(\bbA_F) \times \GL_{i-1}(\bbA_F), m_{2,\fini} \in G_{n-i}(\bbA_{\bbQ,\fini}), k_\fini \in K_\Gamma$, and $g_\infty \in G_n(\bbR)$.
	Then we have $\frakp_{n,-} \cdot F = 0$ and $F(g g_\ell k_\fini) = g \cdot f_\ell$ for $g \in G_{n-1}(\bbA_\bbQ)$ and $k_\fini \in K_\Gamma$.
	Note that the choice of $F$ is not unique.
	The function $F|_{\Sp_{2n}(G_n(\bbA_{\bbQ,\fini}))}$ lies in an induced representation induced from the parabolic subgroup $P$ with the Levi subgroup $\Res_{F/\bbQ} \GL_1 \times \Res_{F/\bbQ} \GL_{i-1} \times G_{n-i}$ of $G_n$.

	If $F$ is left $\bfm(\SL_j(\bbA_\bbQ))$-invariant, there exists $\varphi \in \calH(G_n,\lambda)^{K_\Gamma}$ such that $\varphi_{P_{i,n}} = F$.
	Indeed, if $F$ is so, $F$ lies in the space of induced representations
	\[
	\bigoplus_{\mu,\pi} \left(\Ind_{P_{i,n}(\bbA_{\bbQ,\fini})}^{G_{n}(\bbA_{\bbQ,\fini})}\mu|\cdot|^{s_0}\boxtimes \pi\right)^{K_\Gamma}\boxtimes HK(L(\lambda)).
	\]
	This space is equal to the image of the constant terms map along $P_{i,n}$ of $\calH(G_n,\lambda)^{K_\Gamma}$ by the assumption for $\lambda$.
	To show the left $\bfm(\SL_i(\bbA_\bbQ))$-invariance of $F$, we consider the following map :
	\begin{align}\label{bijective}
	P(\bbA_{\bbQ,\fini}) \bs G_n(\bbA_{\bbQ,\fini})/K_\Gamma \longrightarrow P_{i,n}(\bbA_{\bbQ,\fini}) \bs G_{n}(\bbA_{\bbQ,\fini})/K_\Gamma \colon P(\bbA_{\bbQ,\fini}) g K_\Gamma \longmapsto P_{i,n}(\bbA_{\bbQ,\fini})gK_\Gamma.
	\end{align}
	Here $P$ is the standard parabolic subgroup of $G_n$ with the Levi subgroup $\Res_{F / \bbQ}\GL_1 \times \Res_{F/\bbQ} \GL_{i-1} \times G_{n-i}$.
	Put $K_{n,\fini} = \prod_{v < \infty}\Sp_{2n}(\calO_{F_v})$.
	By the Iwasawa decomposition, we have
	\[
	P(\bbA_{\bbQ,\fini}) \bs G_n(\bbA_{\bbQ,\fini})/K_\Gamma = (P(\bbA_{\bbQ,\fini}) \cap K_{n,\fini}) \bs K_{n,\fini} / K_\Gamma.
	\]
	We then divide by the normal subgroup $K_{\Gamma(\frakn)}$ of $K_{n,\fini}$.
	It is well-known that $K_{n,\fini}/K_{\Gamma(\frakn)} \cong \Sp_{2n}(\calO_{F}/\frakn)$.
	If $\frakn = \prod_\ell \frakp_\ell$, the group $\Sp_{2n}(\calO_{F}/\frakn)$ is isomorphic to $\prod_{\ell}\Sp_{2n}(\calO_F/\frakp_\ell)$.
	The subgroups $(P_{i,n}(\bbA_{\bbQ,\fini}) \cap K_{n,\fini})/K_{\Gamma(\frakn)}, (P(\bbA_{\bbQ,\fini}) \cap K_{n,\fini})/K_{\Gamma(\frakn)}$ and $K_\Gamma / K_{\Gamma(\frakn)}$ are parabolic subgroups of $\prod_{\ell}\Sp_{2n}(\calO_F/\frakp_\ell$ with the Levi subgroups $\GL_i \times \Sp_{2(n-i)}$, $\GL_1 \times \GL_{i-1} \times \Sp_{2(n-i)}$, and $\GL_n$.
	By the Bruhat decomposition, the orders of each hand side of \ref{bijective} is same.
	Hence the map \ref{bijective} is bijective.
	It implies that $F$ is left $\bfm(\SL_i(\bbA_{\bbQ,\fini}))$-invariant.
	For the details, see the proof of \cite[Lemma 8.1]{B-SP}.
	By Theorem \ref{emb_main}, $F$ is left $\bfm(\SL_i(\bbR))$-invariant.
	Hence, $F$ is left $\bfm(\SL_i(\bbA_\bbQ))$-invariant.
	This completes the proof.
	\end{proof}

\end{document}